\documentclass[11pt]{article}
\usepackage[utf8]{inputenc}
\usepackage{times}
\usepackage{bbm}
\usepackage{amsmath}
\usepackage{comment}
\usepackage{amsthm}
\usepackage{amsfonts}
\usepackage{amssymb}
\usepackage{bm}
\usepackage{bbm}
\usepackage{graphicx}
\usepackage[colorinlistoftodos]{todonotes}
\usepackage[english]{babel}
\usepackage{siunitx}
\usepackage{textcomp}
\usepackage{listings}
\usepackage[margin=1in]{geometry}
\usepackage[ruled,vlined]{algorithm2e}
\usepackage{indentfirst}
\usepackage{setspace}
\usepackage{enumerate}
\usepackage{thm-restate}
\usepackage{graphicx}
\usepackage{subcaption}
\usepackage{tikz}
\usetikzlibrary{calc}
\usepackage[percent]{overpic}

\usepackage{amsthm}

\theoremstyle{plain}
\newtheorem{theorem}{Theorem}[section]
\newtheorem{corollary}[theorem]{Corollary}
\newtheorem{lemma}[theorem]{Lemma}

\newtheorem*{theorem*}{Theorem}

\theoremstyle{definition}
\newtheorem{definition}[theorem]{Definition}

\newtheorem{fact}[theorem]{Fact}

\theoremstyle{remark}
\newtheorem{remark}[theorem]{Remark}
\newtheorem*{remark*}{Remark}

\usepackage{xcolor}

\definecolor{linkcol}{RGB}{0,70,160}
\definecolor{citecol}{RGB}{0,120,70}
\definecolor{urlcol}{RGB}{150,60,0}

\usepackage[
    colorlinks=true,
    linkcolor=linkcol,
    citecolor=citecol,
    urlcolor=urlcol,
    breaklinks=true,
    bookmarksopen=true,
    bookmarksnumbered=true
]{hyperref}

\newcommand{\R}{\mathbb{R}}

\newcommand{\E}{\mathbb{E}}
\newcommand{\Z}{\mathbb{Z}}
\newcommand{\Pp}{\mathbb{P}}

\newcommand{\pr}{\mathbb{P}}

\newcommand{\eps}{\varepsilon}

\newcommand{\mix}{\text{mix}}

\newcommand{\gap}{\mathrm{gap}}
\newcommand{\ber}{\mathrm{Ber}}

\newcommand{\Th}{\tau_\mix}
\newcommand{\Thh}{\tau_\mix(h)}

\newcommand{\dary}{\mathcal T_h}
\newcommand{\dreg}{\mathcal{\hat T}_h}
\newcommand{\daryv}{\mathcal T_h^v}
\newcommand{\dregv}{\mathcal{\hat T}_h^v}
\newcommand{\pu}{p_{\mathsf{u}}}
\newcommand{\ps}{p_{\mathsf{s}}}
\newcommand{\dist}{\mathrm{dist}}

\title{Uniqueness and Mixing in the Low-Temperature Random-Cluster Model on Trees and Random Graphs}

\author{Antonio Blanca \thanks{Department of CSE, Pennsylvania State University, ablanca@cse.psu.edu.  Research supported in part by NSF CAREER grant CCF-2143762.} \and Reza Gheissari \thanks{Department of Mathematics, Northwestern University, gheissari@northwestern.edu. Resarch supported in part by NSF CAREER grant DMS-2440509 and NSF DMS grant 2246780.} \and Heehyun Park \thanks{Department of CSE, Pennsylvania State University, hbp5148@psu.edu. Research supported in part by NSF CAREER grant CCF-2143762.} \and Xusheng Zhang \thanks{Department of Computer Science, Durham University, xusheng.zhang@durham.ac.uk. Research supported in part by EPSRC New Investigator Award UKRI155}
}

\date{}

\begin{document}

\maketitle

\thispagestyle{empty}

\begin{abstract}
    We study the random-cluster model on trees and treelike graphs at low temperatures. This is a model of dependent percolation parametrized by an edge probability $p\in (0,1)$ and a clustering weight $q\in [1,\infty)$, generalizing independent Bernoulli percolation ($q=1$) and closely related to the classical ferromagnetic Ising and Potts spin systems at integer $q$.
    For $q>2$, approximately sampling from this model on graphs of degree at most $\Delta$ is computationally hard. 
    At parameter $p$ below the tree uniqueness threshold $\pu(q,\Delta)$, it is expected that sampling is easy and local Markov chains mix rapidly on all bounded degree graphs. 
    On typical graphs (e.g., random regular graphs), the same is predicted at $p > \ps(q,\Delta)$, where $\ps(q,\Delta)$ is a second uniqueness transition point on the $\Delta$-regular wired tree.

Our first result establishes this non-uniqueness/uniqueness phase transition at $\ps(q,\Delta)$ for all $q$ on the infinite $\Delta$-regular wired tree, resolving a conjecture of H{\"a}ggstr{\"o}m (1996). For this, we establish  weak spatial mixing at $p>\ps(q,\Delta)$ under sufficiently wired boundary conditions. 
    We use this understanding of decay of correlations on the tree to obtain novel algorithmic implications. In particular, we show that on the wired tree on $n$ vertices, whenever $q>1$ and $p>\ps(q,\Delta)$, the mixing time of random-cluster Glauber dynamics is a near-optimal $n^{1+o(1)}$. 
    We then extend these results on spatial and temporal mixing from the tree to treelike geometries with mostly wired boundaries and use them to show that the random-cluster Glauber dynamics mix rapidly on the random $\Delta$-regular graph for all $p>\ps(q,\Delta)$ as long as $q \ge C \log \Delta$, providing an efficient sampling algorithm for both the random-cluster and Potts models in this context.    
    The best prior sampling results for these models that held for all $p>\ps(q,\Delta)$ required $q\ge \Delta^{5}$ for integer $q$ and $q \ge \exp(\Omega(\Delta))$ for non-integer~$q$.
\end{abstract}

\vfill

 \pagebreak

\setcounter{page}{1} 
\section{Introduction}
We study the random-cluster model and its local Glauber dynamics Markov chain on trees and treelike graphs in the low-temperature regime. The random-cluster model is an extensively studied model of random subgraphs both of interest in its own right and for its deep connections to the ferromagnetic Ising and Potts models; see the book~\cite{grimmett2006random} for more.
Formally, on a finite graph $G = (V,E)$, 
the random-cluster model is the probability distribution over $A \subseteq E$ given by 
\begin{align}\label{eq:random-cluster-measure}
    \mu_{G,p,q} (A) \propto p^{|A|} (1-p)^{|E\setminus A|} q^{c_G(A)}
\end{align}
where $c_G(A)$ counts the number of connected components of the subgraph $(V,A)$, and $p \in (0,1)$ and $q > 0$ are model parameters.

At $q=1$, the distribution~\eqref{eq:random-cluster-measure} is simply the independent edge percolation, while at integer $q\ge 2$ it admits a coupling to the  classical ferromagnetic Ising and Potts models.
This coupling 
allows samples generated for one model to be converted into samples from the other and is also at the heart of the famous Swendsen–Wang Markov chain, one of the most widely used algorithms for sampling from these distributions.  We study the random-cluster model when $q>1$ is any real number. The $q<1$ regime is also of interest, but qualitatively very different as it exhibits negative rather than positive associations; see~\cite{log-concave-ii} for important work on sampling from the $q<1$ random-cluster model at all $p$ on all graphs.

The random-cluster model generically undergoes a phase transition as the parameter $p$ is varied. At low values of $p$ (high temperature), correlations between edges decay exponentially fast in their distance and no large connected components appear. In contrast, at large values of $p$ (low temperature), a unique ``giant'' component of linear size emerges and all other cluster sizes have exponential tails on their diameters. 
For $q>2$, however, there is a \emph{coexistence} regime of these two different phases at intermediate values of $p$.

The specifics of this phase transition depend on the underlying graph.
For instance, the coexistence region is only at a single critical point $p = p_{\mathsf{c}}$ when the graph is amenable (e.g., the integer lattice graph $\Z^d$~\cite{duminil2019sharp}). 
On non-amenable graphs (e.g., infinite trees, random graphs and more generally graphs with exponential volume growth), the coexistence regime can take up an entire interval of parameter values, which we denote by $(\pu,\ps)$. This poses special algorithmic and mathematical challenges as 
the transition points $\pu$ and $\ps$ do not align with the point at which a typical sample from the distribution transitions from high vs.\ low temperature behavior. 
Notably, the coexistence regime $(\pu,\ps)$
is characterized by the slow convergence of natural Markov chains and underlies the computational hardness of approximate counting and sampling for the random-cluster and Potts models for $q>2$~\cite{goldberg2012approximating,GSVY}.

Focusing on Markov chain sampling, we recall the definition of the heat-bath Glauber dynamics for the random-cluster model, which we henceforth refer to as \emph{random-cluster dynamics}. In discrete-time, this is the Markov chain which from configuration $X_t\subseteq E$, generates $X_{t+1}$ by 
\begin{enumerate}\setlength{\itemsep}{2pt}
    \item Picking an edge $e \in E$ uniformly at random;
    \item Setting $X_{t+1}$ to be 
    \begin{align}\label{eq:Glauber-update-rule}
        X_{t+1}  = 	X_t \cup \{e\} \text{ with probability } \Big\{\begin{array}{ll}
	\hat p := \frac{p}{q(1-p)+p} & \mbox{if $e$ is a ``cut-edge'' in $(V,X_t)$;} \\
	p & \mbox{otherwise;}
    \end{array}
    \end{align}
    and $X_{t+1} = X_t \setminus \{e\}$ otherwise.
\end{enumerate}
The mixing time of a Markov chain is the number of steps it takes 
it to be close in total-variation to the stationary distribution
$\mu_{G,p,q}$ from a worst-case initialization; namely, 
\begin{align}\label{eq:tmix}
    T_{\mix} = \min \Big\{t\ge 0: \max_{X_0} \|\mathbb P(X_t \in \cdot \mid X_0) - \mu_{G,p,q}\|_{\textsc{tv}} <1/4 \Big\}\,.
\end{align}
When $q\in (1,2]$, the mixing time of the random-cluster dynamics is expected to always be polynomial on any input graph for any $p \in (0,1)$;
this was shown at $q=2$ in~\cite{GuoJerrum}.
When $q>2$, the expectation is that at high temperatures $p<\pu$ the random-cluster dynamics mix quickly,
slow down exponentially in the intermediate coexistence regime $p\in (\pu,\ps)$, but interestingly become rapidly mixing again as soon as $p>\ps$. 
This trichotomy of behaviors has been fully characterized on the complete graph~\cite{BGJ,BS-MF,GSV,GLP}.

Beyond the complete graph, considerable work has focused on establishing this trichotomy for other non-amenable graph families, especially trees and random graphs. These will be the focus of our investigation. 
Here, the model behavior for general $q>1$ and $p \in (0,1)$ poses significant mathematical challenges and remains not fully understood at low temperatures, neither in terms of its decorrelation properties nor its algorithmic tractability.
On the $\Delta$-regular tree, the random-cluster measure is simply a product distribution and therefore not especially interesting,
so it is typically considered with
\emph{boundary conditions} that force leaves to be counted as part of the same connected component, capturing the effect of external connections when the tree is part of a larger ambient graph.

In the extremal case where all leaves are wired together by the boundary condition,  H{\"a}ggstr{\"o}m~\cite{haggstrom1996random}
showed that for $p<\pu$, there is decay of correlations between the edges at the top of the tree and the boundary, and notably that there is a unique Gibbs random-cluster measure on the infinite $\Delta$-regular wired tree. We formalize the notion of Gibbs uniqueness below but note here that determining whether there is a unique or multiple infinite Gibbs measures is a central question in probability theory.
In~\cite{BG21} these results were refined and used to show fast mixing of the random-cluster dynamics on random $\Delta$-regular graphs up to $\pu$. 
Non-uniqueness and lack of correlation decay were also shown
for the $p\in (\pu,\ps)$ regime on the wired tree in~\cite{haggstrom1996random}. On random regular graphs,
slow mixing when $p\in (\pu,\ps)$ was shown in~\cite{COGGRSV-Metastability-Potts-RRG} for integer $q$, and in~\cite{Bencs-et-al-RC-RRG} for general real $q$.  

Our focus will be understanding Gibbs uniqueness, decay of correlations, and implications for Markov chain mixing on trees and random $\Delta$-regular graphs in the remaining parameter regime of $p>\ps$. 

\subsection{Our results}

Our first results are on the uniqueness and decorrelation properties on $\Delta$-regular trees of the random-cluster measure when $p>\ps$, two questions that are intimately related.
Since, as mentioned, the free-boundary tree is a trivial product measure, the question of random-cluster uniqueness is phrased for the wired tree where all leaves are connected by the boundary condition. 
An \emph{infinite-volume Gibbs measure} on the $\Delta$-regular wired tree $\mathcal T_\infty$ is a measure over subsets $A \subset E(\mathcal T_\infty)$ such that for any finite set $\Lambda \subset E(\mathcal T_\infty)$, resampling the configuration on $\Lambda$ given the exterior configuration $A(\Lambda^c)$ is consistent with the distribution~\eqref{eq:random-cluster-measure} on $\Lambda$ with  boundary wirings induced by $A(\Lambda^c)$, where all infinite components of $A(\Lambda^c)$ are considered wired together. We are more precise about this in Section~\ref{sec: uniqueness}. 

As mentioned,~\cite{haggstrom1996random} showed that there is a unique random-cluster Gibbs measure when $p<\pu$, and non-uniqueness for $p\in (\pu,\ps)$ where 
\begin{align}\label{eq:p-Haggstrom}
    \ps = \ps(q,\Delta) := \frac{q}{\Delta + q - 2}.
\end{align} 
(The threshold $\pu$ has also been identified, but only implicitly as it does not admit a closed form.)

The long-standing Conjecture 1.9 of~\cite{haggstrom1996random} (see also Conjecture 10.97 in the book~\cite{grimmett2006random}) predicted that above $\ps$, the random-cluster model has a unique infinite-volume Gibbs measure on the wired tree. The works~\cite{Jonasson,GrimmettJanson} showed that this is the case for sufficiently large $p\gg \ps$, namely,  
$p \gtrsim \frac{\log \Delta}{\Delta}$.
Our first main theorem resolves this conjecture and completes the phase diagram of the random-cluster model on the wired tree.

\begin{restatable}{theorem}{uniqueness}\label{thm:uniqueness}
    Fix $\Delta\ge 3$ and $q>1$ real. For all $p>\ps$, there is a unique random-cluster measure on the infinite complete  $\Delta$-regular wired tree. 
\end{restatable}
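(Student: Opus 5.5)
Uniqueness follows once the free-type and wired-type extremal measures coincide, and I will show this via weak spatial mixing of the root edge under sufficiently wired boundary conditions. By positive association (FKG) and monotonicity in boundary conditions, every infinite-volume Gibbs measure on $\mathcal T_\infty$ is stochastically sandwiched between two extremal measures, so it suffices to show these agree. The maximal one is $\mu^1=\lim_h \mu_{\dary}^{\mathrm{wired}}$. The minimal one is not the free measure, since infinite components are wired together. Following H{\"a}ggstr{\"o}m, I would realize it as the limit of the tree-recursion fixed points. Because edges are conditionally independent given which subtrees connect to infinity, the marginal law of each edge is determined by the probability $x$ that a vertex is connected to infinity within its subtree.

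Step one is to write the one-dimensional recursion $x=F(x)$. A child edge is open and the child's subtree reaches infinity. The weighting is $\hat p$ versus $p$ according to whether the root is already connected to the boundary through its other children. Gibbs measures then correspond to fixed points of $F$ on $[0,1]$, and uniqueness amounts to showing $F$ has a unique fixed point (or, more precisely, that the iteration from the wired starting point and from the smallest relevant starting point converge to the same value). Step two computes $F'(0)$. Linearizing at $x=0$ gives $F'(0)=(\Delta-1)\hat p$ with $\hat p=p/(q(1-p)+p)$. The condition $(\Delta-1)\hat p>1$ is equivalent to $p>q/(\Delta+q-2)=\ps$, so for $p>\ps$ the zero fixed point is repelling. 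Consequently any nontrivial Gibbs measure must lie at a positive fixed point, and the "free-type" branch that exists on $(\pu,\ps)$ has collapsed into $0$ by the time $p=\ps$.

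Step three, the main obstacle, is to rule out two distinct positive fixed points for every $p>\ps$. Earlier works only managed this for $p\gtrsim \log\Delta/\Delta$, so a direct convexity argument on $F$ presumably fails near $\ps$. My approach is a weak spatial mixing contraction rather than global shape analysis. I would show that for boundary conditions in which a set of leaves of density bounded below is connected to infinity (``sufficiently wired''), the law of $x_{\text{root}}$ contracts exponentially in depth $h$. I would establish the contraction in a potential-transformed variable $\phi(x)$, chosen for instance as a log-odds-type variable in which $F$ becomes a sum of bounded contributions, and show $|\phi(F(x))-\phi(F(y))|\le (1-\delta)|\phi(x)-\phi(y)|$ on the interval $[x_*,1]$. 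Here $x_*>0$ is the lower bound guaranteed because $0$ is repelling: starting from any positive value, iterates escape a neighborhood of $0$ after $O(1)$ steps. If the contraction proves delicate near $\ps$, I would fall back to a two-step argument that shows $F(x)-x$ changes sign only once on $(0,1]$, using the explicit binomial form of $F$ and $q>1$.

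Finally, I would pass to infinite volume. Any Gibbs measure, conditioned on the configuration outside depth $h$, induces boundary conditions that are wired on the infinite components. Almost surely the resulting set of leaves connected to infinity is dense enough, since otherwise the induced $x$ would be near $0$, contradicting repulsion. Weak spatial mixing then forces the marginals on any finite edge set to agree with $\mu^1$ up to $e^{-ch}$. Letting $h\to\infty$ shows every Gibbs measure equals $\mu^1$.
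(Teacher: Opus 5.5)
\textbf{Genuine gap.} Your final architecture matches the paper's: weak spatial mixing under sufficiently wired boundary conditions, plus the claim that every Gibbs measure induces such boundaries. What is missing is the step that makes it apply to an \emph{arbitrary} Gibbs measure.

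Fixed points of the one-dimensional $F$ describe only homogeneous measures. Uniqueness must be proved against Gibbs measures whose induced boundary conditions can be arbitrarily inhomogeneous. So ``sufficiently wired'' has to be a \emph{local} property: a positive fraction of wired leaves below every vertex at some distance $\ell$ above the leaves. A global density bound is not enough. Wire only the leaves below one child of the root of $\dary$. Every other child then has message exactly $1$ (since $\Phi(1)=1$), and the root message tends to $\Phi(y^*)$ rather than the wired limit $\Phi(y^*)^d=y^*$.

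Your justification that a Gibbs measure yields dense wiring (``otherwise the induced $x$ would be near $0$, contradicting repulsion'') is not an argument. Instability of $0$ for the homogeneous recursion places no constraint on the deep boundary of a general Gibbs measure. The missing input is that any DLR measure stochastically dominates i.i.d.\ Bernoulli$(\hat p)$ bond percolation, because each edge is open with conditional probability at least $\hat p$. This percolation is supercritical since $d\hat p>1$, which is exactly the inequality you computed as $F'(0)>1$. The subtrees below distinct leaves of $\dreg$ are disjoint, so each leaf lies in an infinite cluster independently with probability at least the survival probability $\theta>0$ of a $\mathrm{Bin}(d,\hat p)$ Galton--Watson tree. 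The induced single-component boundary is therefore $\theta$-wired. Chernoff plus a union bound then give a $\theta/2$-fraction of wired leaves below every vertex at height $\sqrt h$, with probability $1-e^{-cd^{\sqrt h}}$.

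\textbf{Consequences for your other steps.} The lower bound $x_*$ has the same problem. ``Iterates escape a neighborhood of $0$ after $O(1)$ steps'' is not uniform in the starting value, and it says nothing about heterogeneous leaf data, where most leaf messages equal $1$ exactly. The paper instead gets $f\ge 1+\eps$ at height $\sqrt{h_v}$ directly from the wired-leaf density and the survival probability $\varphi$ of the dominating percolation, giving connection probability at least $\varphi\theta/2$.

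Your one-step potential contraction on all of $[x_*,1]$ for the multivariate, heterogeneous map is asserted without proof, and it is not needed. Monotonicity of $\Phi$ sandwiches every message between the homogeneous iterates of $g$ started from $1+\eps$ and from $M$. These iterates converge to the unique nontrivial fixed point $y^*$, and $g'(y^*)<1$ is used only near $y^*$.

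Your fallback (a single sign change of $F(x)-x$) is exactly the one-dimensional input needed. Contrary to your guess, it works for all $p>\ps$: the paper shows $g''$ changes sign at most once. The earlier restriction to $p\gtrsim\log\Delta/\Delta$ came from requiring every vertex of a cutset to be wired, not from the fixed-point analysis.

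Finally, the opening reduction to two extremal measures is unnecessary. The obvious candidate for the minimal one, the free limit $\bigotimes\mathrm{Ber}(\hat p)$, is not even a Gibbs measure for this specification when $p>\ps$. Your final comparison with $\mu^1$ already handles every Gibbs measure directly.
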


We state Theorem~\ref{thm:uniqueness} for the infinite $\Delta$-regular tree, which has the advantage of being a transitive graph, but our proof first establishes the corresponding result for the $(\Delta-1)$-ary tree.
We show that the measure on such trees has exponential decay of correlations and \emph{weak spatial mixing} for typical boundary conditions that are ``sufficiently wired", 
from which Theorem~\ref{thm:uniqueness} follows.
More generally, we prove that if $\xi$ is a random boundary condition on the complete $(\Delta-1)$-ary tree of height $h$, denoted $\dary$, that is \emph{$\theta$-wired}, meaning each leaf is part of a single wired boundary component with probability at least $\theta>0$ independently, then with high probability over $\xi$, for every fixed $r$ and all sufficiently large $h$,  
\begin{align}\label{eq:intro-wsm}
    \|\mu^\xi_{\dary} (A(\mathcal T_r)  \in \cdot ) - \mu_{\dary}^1(A(\mathcal T_r)\in \cdot) \|_{\textsc{tv}} \le e^{ - c_* h(1-o(1))}\,,
\end{align}
where $c_*$ is explicitly given in terms of the derivative of a tree recursion function at its fixed point. 
Here, $\mu_{\dary}^\eta$ is the random-cluster measure on $\dary$ with boundary condition $\eta$,  $\eta = 1$ is used for the all-wired one, and $A(\mathcal T_r)$ denotes the random-cluster configuration on the subtree $\mathcal T_r$.

We are able to bootstrap this notion of weak spatial mixing on finite trees with sufficiently wired boundary conditions to mixing time guarantees for the random-cluster dynamics on treelike graphs in the entire low-temperature uniqueness region. The first result of this form gives a near-optimal mixing time for the random-cluster dynamics on trees with the all-wired boundary condition throughout the entire low-temperature uniqueness region. 

\begin{theorem}\label{thm:intro-wired-tree-mixing-time}
        Fix any $\Delta\ge 3$, $q>1$ real and $p>\ps$. The random-cluster dynamics on 
        the complete $(\Delta-1)$-ary tree on $n$ vertices
        with the all-wired boundary condition has mixing time $n^{1+o(1)}$.  
\end{theorem}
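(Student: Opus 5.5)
We will prove the bound by a recursive block-dynamics argument across the levels of $\dary$, in the spirit of the tree mixing analyses of Martinelli--Sinclair--Weitz, with the weak spatial mixing estimate~\eqref{eq:intro-wsm} as the key input. Write $n \asymp (\Delta-1)^h$, and let $T(h)$ denote the mixing time (or inverse spectral gap, or log-Sobolev constant, whichever closes the recursion most easily) of the dynamics on $\dary$ under the worst boundary condition in a class $\mathcal B$ of \emph{sufficiently wired} boundary conditions. The all-wired condition lies in $\mathcal B$, and $\mathcal B$ must be closed under the conditioning that occurs when we look at a subtree. Fix a height $\ell = \ell(h)$ with $1 \ll \ell \ll h$. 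We decompose $\dary$ into the top part $\mathcal T_{h-\ell}$ and the $(\Delta-1)^{h-\ell}$ disjoint subtrees of height $\ell$ hanging below it. The boundary condition felt by a bottom subtree is its wired leaves, together with a possible extra wiring of its root to the boundary through the outside configuration.

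\textbf{Step 1: the bottom blocks.} Each bottom subtree sees a boundary in which all its leaves are wired and, at most, its root is additionally joined to that wired component. This is again essentially an all-wired tree of height $\ell$, so its block dynamics can be handled by induction on the height.

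\textbf{Step 2: the top block.} The top block is $\mathcal T_{h-\ell}$, whose boundary $\xi$ at its leaves is induced by the bottom configurations. A leaf $v$ of the top tree is wired exactly when its hanging subtree connects $v$ to the bottom boundary. Under the wired measure this event has probability bounded below by the fixed point $\theta>0$ of the tree recursion, which is nondegenerate for $p>\ps$, and it is approximately independent across leaves. Hence $\xi$ is $\theta$-wired in the sense preceding~\eqref{eq:intro-wsm}. We then use the weak spatial mixing statement in the form: with high probability over such $\xi$, the top $r$ levels decorrelate at rate $e^{-c_* h}$. Converting this into contraction of the two-block dynamics requires a \emph{path coupling} or \emph{censoring} argument. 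Using monotonicity (for $q>1$ the measure is monotone in the boundary condition), we sandwich between the all-wired boundary and a $\theta$-wired one, and use WSM to show that the influence of the bottom blocks on the top block, and vice versa, decays like $e^{-c\ell}$ per level of separation.

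\textbf{Step 3: recursion.} The standard block-dynamics comparison gives
\[
T(h) \le C\,\mathrm{gap}_{\mathrm{block}}^{-1} \cdot \max\{T(\ell), T(h-\ell)\}.
\]
The block gap is $1-O(e^{-c\ell}\cdot\mathrm{poly})$, so with $\ell$ growing slowly the constant overhead per recursion level is $1+o(1)$. Over the $O(h/\ell)$ levels of the recursion this yields a spectral gap of order $n^{-1-o(1)}$. The factor needed to pass from gap to mixing time, $\log(1/\mu_{\min}) = O(n)$, must be avoided to get $n^{1+o(1)}$. We will therefore run the recursion on the log-Sobolev constant, or use a spatial-mixing-plus-censoring argument that gives the mixing time directly.

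\textbf{Main obstacle.} The main difficulty is that WSM holds only for typical, sufficiently wired boundaries and not for worst-case ones. Inside the recursion the top block can see atypical $\xi$; for instance, many bottom subtrees can be disconnected from the bottom boundary. We plan to handle this with a burn-in argument. Starting from the all-wired initialization, monotone coupling shows that after a short time the bottom configurations are stochastically dominated from below by their stationary law. The induced $\xi$ is then $\theta$-wired with high probability over the trajectory, and we restrict the analysis to these good events, paying only a union bound over the $n^{O(1)}$ time steps.
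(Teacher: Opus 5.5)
There is a genuine gap in Steps 2--3. With your decomposition, weak spatial mixing gives neither the claimed block gap $1-O(e^{-c\ell}\,\mathrm{poly})$ nor any contraction. Your top block $\mathcal T_{h-\ell}$ and the hanging subtrees of height $\ell$ do not overlap, so there is no ``separation'' for WSM to act across. WSM only says that an edge at distance $k$ from the boundary of the block being resampled feels that boundary with error about $\beta^k$. The top block has order $d^{h-\ell}$ edges within distance $O(1)$ of its leaves, and each is affected by the bottom configuration with probability $\Omega(1)$; the same holds for bottom edges near the roots. So two copies of the block chain started from the extremal configurations keep a macroscopic disagreement along the interface.

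Adding an overlap of $\ell$ levels does not rescue a thin bottom layer. The part of the top block outside the overlap still has about $d^{h-2\ell}$ edges, and a union bound against $\beta^{\ell}$ fails for $\ell\ll h$; recall that $\beta_*=g'(y^*)$ can be arbitrarily close to $1$ near $\ps$.

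The missing idea is the paper's choice of heavily overlapping blocks: $B_0$ is the top $(1-\delta)h$ levels and $B_1$ the bottom $(1-\delta)h$ levels. After $B_1$ is resampled, the induced boundary of $B_0$ is $\theta$-wired. WSM at distance $h_1=(1-2\delta)h$ then couples the small piece $B_0\setminus B_1$, which has only $d^{\delta h}$ edges, with error $d^{\delta h}\beta^{(1-2\delta)h}$. The next $B_1$ update couples everything, so the systematic scan block chain mixes in $O(1)$ steps (Lemma~\ref{lem: coupling time 2 block dynamics}). This forces the multiplicative reduction $h\mapsto(1-\delta)h$, with only $O(\log h)$ recursion levels. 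That is what lets the polylogarithmic per-level losses and the $d^{\delta h}$ cost of the bottom block combine into $n(\log n)^{O(\log\log n)}$.

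Two further steps are not justified as stated.

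First, if the bottom block is the union of the hanging subtrees, the fixed top configuration wires their roots together in groups. That block is then not a product of nearly all-wired trees, and induction on height does not cover it. The paper needs a separate censoring argument (Lemma~\ref{lem: 3rd bound kappa}): burn in until one tree connects its root to the boundary, after which the others decouple into root-wired trees. This costs a factor $O(d_\kappa\log d_\kappa)\,O(h^2)$.

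Second, your burn-in fix for atypical boundaries does not fit a spectral-gap or log-Sobolev block comparison, which needs control of every block under worst-case boundary conditions. Moreover, the known entropy-factorization route on trees goes through the Potts coupling and needs integer $q$. The paper instead works with mixing times throughout. It simulates the systematic scan block dynamics by a censored single-edge chain (Lemma~\ref{lem: simulate systematic dynamics}), which only needs the induced block boundaries to lie in the good class $\mathcal F(\cdot)$ except with probability $\rho$. This also sidesteps the $\log(1/\mu_{\min})$ loss.
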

We in fact establish a stronger version of the above theorem for the more general class of $\theta$-wired boundary conditions as in~\eqref{eq:intro-wsm}, and the proof is robust to $O(1)$ modifications to the graph structure. In particular, it applies also to the complete $\Delta$-regular tree. 
Previously,~\cite{BCSV-Trees} established that the mixing time 
of the random-cluster dynamics is $O(n \log n)$ but only when $q$ is an integer, using the coupled Potts model to factorize entropy. Our result applies for general real $q > 1$, rather than only integer values.
We also note that worst-case boundary conditions with multiple connected components on trees can induce exponentially slow mixing at arbitrary $p$~\cite{BCSV-Trees}. 

Using our notion of weak spatial mixing for sufficiently wired trees 
when $p>p_{\mathsf{s}}$ together with the mixing time guarantees of Theorem~\ref{thm:intro-wired-tree-mixing-time}, we are able to obtain fast mixing 
for the random-cluster dynamics
in the low-temperature region $p>\ps$ for random $\Delta$-regular graphs. 
Low-temperature mixing and sampling for the random-cluster and Potts model on random graphs have seen much attention in recent years. In particular,~\cite{HeJePe23,Galanis_Goldberg_Smolarova_2025} gave algorithms based on cluster expansion machinery or random-cluster dynamics from good initializations to get fast sampling for the entire low-temperature regime under assumptions on $q$ being sufficiently large (asymptotically $\Delta^{\Omega(\Delta)}$ for $\Delta$ large). For integer $q$, this dependency was improved to $\Delta\ge \Delta_0$ and $q\ge \Delta^C$ in~\cite{carlson2022algorithms} and to $q\ge \Delta^5$ in~\cite{galanis_et_al:LIPIcs.STACS.2026.39}. At the same time,~\cite{BG24-PTRF} showed that for all real $q>1$, for $p$ sufficiently large (though far from $\ps$), random-cluster dynamics are fast to mix from arbitrary initialization. Our main result on random graphs is fast (worst-case initialization) mixing of random-cluster dynamics for all $p>\ps$ for $q \ge C_0\log \Delta$ for some constant $C_0 > 0$. 

\begin{theorem}\label{thm:intro-rrg-mixing}
    Fix $\Delta \ge 3$, and $q \ge C_0\log \Delta$ for  large $C_0 > 0$. For $p>\ps$, with probability $1-o(1)$, a random $\Delta$-regular $n$-vertex graph $G$ is such that random-cluster dynamics on $G$ has mixing time $n^{1+o(1)}$. 
\end{theorem}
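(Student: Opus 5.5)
We reduce the problem to the treelike local geometry of $G$ and glue local mixing estimates together through a block dynamics. The setup uses standard facts about random $\Delta$-regular graphs. With probability $1-o(1)$, every ball of radius $R=\epsilon\log_{\Delta-1} n$ around a vertex contains at most $O(1)$ cycles. With the same probability, $G$ is a good expander, so at $p>\ps$ the random-cluster configuration contains a unique giant component, and all other clusters have exponential tails. Write the ball around $v$ as $B_R(v)$. Given the configuration outside it, the induced boundary condition on its leaves is what the giant component produces from outside. The first key step is to prove that this induced boundary is $\theta$-wired in the sense of~\eqref{eq:intro-wsm}, with an appropriate decoupling, with high probability under the stationary measure.

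\textbf{Step 1: $\theta$-wiredness of the boundary.} We must show that each leaf of $B_R(v)$ connects through the exterior to the giant with probability at least $\theta$, roughly independently. This step needs the hypothesis $q\ge C_0\log\Delta$. Large $q$ makes the low-temperature phase strongly ordered: the probability that a vertex is isolated from the giant is small, about $(1-p)^{\Delta}q$-suppressed. A union and Peierls-type bound over the $\Delta^{O(k)}$ connected sets of size $k$ then beats the entropy once $q\gtrsim\log\Delta$. We would prove this by a comparison with the wired-tree fixed point, or by a cluster/contour argument on the expander. It would also be stated along the trajectory of the dynamics started from the all-wired state, for $n^{1+o(1)}$ steps.

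\textbf{Step 2: local mixing.} Inside each $B_R(v)$, the graph is a tree plus $O(1)$ edges. The stronger, $\theta$-wired, perturbation-robust version of Theorem~\ref{thm:intro-wired-tree-mixing-time} then shows that the dynamics restricted to the ball mixes in $|B_R|^{1+o(1)}$ steps. Weak spatial mixing~\eqref{eq:intro-wsm} with rate $e^{-c_*R}$ shows that the configuration near $v$ forgets the boundary. This holds uniformly over the typical $\theta$-wired boundaries from Step~1.

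\textbf{Step 3: globalize.} We use monotonicity of the dynamics for $q\ge1$ and run the grand coupling from the all-wired and all-free starts. First we show a burn-in: from the free start, within $n^{1+o(1)}$ steps the giant forms and the boundary of every ball becomes $\theta$-wired. We would obtain this from the fast-mixing result at large $p$ of~\cite{BG24-PTRF}, via a comparison, or via a direct drift argument. After burn-in, Steps~1--2 combined with a censoring argument (Peres--Winkler) or a block-dynamics spectral argument show the following: the top and bottom chains agree near each vertex except with probability $n^{-\Omega(\epsilon c_*)}+e^{-\Omega(R)}$. This requires the window to satisfy $R$ large enough that $e^{-c_*R}\ll 1/n$, which may need $R=C\log n$ with the cycle structure still sparse. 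A union bound over the $n$ edges then yields $T_{\mix}=n^{1+o(1)}$.

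\textbf{Main obstacle.} We expect Step~1 to be the hardest part. It requires proving that the boundary is sufficiently wired, with enough independence across leaves to invoke~\eqref{eq:intro-wsm}, uniformly along the dynamics and not only at stationarity. The competing free-like phase must be excluded for $p$ just above $\ps$. This is exactly where $q\ge C_0\log\Delta$ enters, and we would attempt it first through a Peierls argument on the expander.
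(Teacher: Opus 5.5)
Your proposal has a genuine gap in Step~3, and it comes from misplacing where $q\ge C_0\log\Delta$ is needed. You correctly note that the union bound over $\Theta(n)$ edges needs the per-edge disagreement to be $o(1/n)$. However, your fix of taking $R=C\log n$ is not available. In a random $\Delta$-regular graph, a ball of radius $r$ has tree excess of order $(\Delta-1)^{2r}/n$. So beyond radius $\frac12\log_{\Delta-1}n$ the balls contain polynomially many cycles. For $R=C\log n$ with $C$ large the ball is the whole graph, since the diameter is $(1+o(1))\log_{\Delta-1}n$. At such radii none of the tree or treelike WSM and local mixing estimates apply.

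The paper therefore works at $R=\alpha\log_d n$ with $\alpha<\frac14$ and $d=\Delta-1$, so that every ball is at most unicyclic. At this radius the WSM error is $C(\beta_*+o(1))^{R/2}$, where $\beta_*=g'(y^*)$ is the derivative of the message recursion at its attractive fixed point. This rate is intrinsic to the recursion. How densely the boundary is wired, which is what your Peierls argument would improve, only affects the prefactor.

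The missing ingredient is a bound that makes $c_*$ large, namely $g'(y^*)<d^{-a}$ with $a=6/\alpha$. This is exactly what $q\ge C_0\log\Delta$ buys in the paper, in two steps:
\begin{itemize}
\item It shows $g'(y^*(p))$ is maximized at $p=\ps$. This holds because $y^*(p)$ increases in $p$ and $g'(y^*)=\frac{1+(q-1)(1-p)}{p}\cdot\frac{dz^{d-1}(z-1)^2}{(z^d-1)^2}$ with $z=(y^*)^{1/d}$, where both factors decrease in $p$.
\item At $p=\ps$ it shows that $y^*$ lies beyond the unique root of an explicit auxiliary function whose sign at $y^*$ is equivalent to the desired inequality.
\end{itemize}
Without such a step, you only get disagreement $n^{-\Omega(\alpha c_*)}$ with $c_*$ possibly small, and the union bound fails.

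Conversely, Step~1 needs neither large $q$ nor a Peierls argument, and the obstacle you single out disappears with the right burn-in. Every heat-bath update opens the chosen edge with probability at least $\hat p$, whatever the current state. So once every edge has been updated (coupon collector, $O(n\log n)$ steps), the chain started from the empty configuration stochastically dominates i.i.d.\ Bernoulli$(\hat p)$ percolation. Moreover, $p>\ps$ is exactly $\hat p>1/(\Delta-1)$, so this percolation is supercritical for every $q>1$.

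After the burn-in, by the Peres--Winkler censoring inequality it suffices to censor all updates outside $B(e)$. This freezes the boundary condition on $B(e)$, so nothing needs to be controlled along the trajectory or at stationarity. There is also no free-like phase to exclude, since for $p>\ps$ the trivial fixed point of the recursion is unstable.

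What you need instead is a percolation statement on the random regular graph with three parts:
\begin{itemize}
\item non-giant clusters have size $O(\log n)$;
\item at most $O(1)$ boundary vertices of each ball lie in non-singleton non-giant clusters;
\item all but $O(1)$ boundary vertices independently reach the giant with probability at least $1/K$.
\end{itemize}
This gives $(\theta,Q)$-wired rather than $\theta$-wired boundaries. That is why both the WSM and the local mixing bounds must be extended to $O(1)$ defects: extra wirings, degree irregularities, and one cycle.
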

We note that in both Theorem~\ref{thm:intro-rrg-mixing} and Theorem~\ref{thm:intro-wired-tree-mixing-time}, the $n^{1+o(1)}$ is $n(\log n)^{O(\log \log n)}$. 
Recall that the $\ps$ threshold is sharp as the mixing time of the random-cluster dynamics on random graphs becomes exponential in $n$ if $q>2$ in the interval $p \in (\pu,\ps)$~\cite{COGGRSV-Metastability-Potts-RRG,Bencs-et-al-RC-RRG}. Given that the weak spatial mixing and mixing time inputs to this theorem hold for all $p>\ps$ and $q>1$, it is natural to expect that Theorem~\ref{thm:intro-rrg-mixing} also holds for all $q>1$. The main obstacle to this in our proofs is that we use a localization scheme
that allows us to deduce the global mixing from the mixing time on balls of radius $\Theta(\log n)$ centered at each vertex.
This approach requires performing a union bound over all $n$ vertices, and therefore requires a large value of $c_*$ in~\eqref{eq:intro-wsm} to ensure a sufficiently strong decay, and this only holds for sufficiently large $q$.

For integer $q$, Theorem~\ref{thm:intro-rrg-mixing} provides the best range of $q$ for which there is a sampling algorithm for the ferromagnetic $q$-state Potts model in its low-temperature regime on random $\Delta$-regular graphs, with $q$ growing logarithmically with $\Delta$ as opposed to the polynomial dependence established in earlier work. 
It also implies an $n^{2+o(1)}$ mixing time for the Swendsen--Wang dynamics via the comparison estimates of~\cite{Ullrich-random-cluster}.

\subsection{Proof ideas}
We now give some more details on the proof ideas of the paper for establishing Theorems~\ref{thm:uniqueness},~\ref{thm:intro-wired-tree-mixing-time}, and \ref{thm:intro-rrg-mixing}. This subsection is organized according to those theorems, and the corresponding sections in the paper are Section~\ref{sec:trees-uniqueness-WSM},~\ref{sec: fast mixing} and~\ref{sec: rrg mixing} respectively. 

\medskip\noindent\textbf{Uniqueness and spatial mixing on trees.} \ 
We begin by sketching our approach to establishing uniqueness and decay of correlations on the wired $\Delta$-regular tree. At a high level, the reason $\ps$ from~\eqref{eq:p-Haggstrom} is predicted as the threshold for uniqueness, is that when $p>\ps$, one has that $\hat p$ defined in~\eqref{eq:Glauber-update-rule} is bigger than $1/(\Delta -1)$, and the random-cluster measure stochastically dominates a supercritical Bernoulli percolation on the tree. Thus, any random-cluster measure on the tree has a positive density of infinite components that are all connected together in the wired tree, and at $p>\ps$ any such positive density should be enough to make the model behave like it does when all of its leaves are wired together. 
 
Prior work on spatial mixing for random-cluster models on trees at low temperatures (e.g.,~\cite{BG24-PTRF,Galanis_Goldberg_Smolarova_2025}) justified this last step by looking for a cut of the tree separating the leaves from the root, such that \emph{every} vertex on that cut is connected to the wired boundary component. The complement of this event is a percolation event for vertices that are not connected to the wired boundary component. However, for general $q$, the transition point for this percolation event is strictly bigger than $\ps$; indeed this was the approach of~\cite{Jonasson} for proving uniqueness down to the higher threshold $p\gtrsim \frac{\log \Delta}{\Delta}$. 

To get down to the sharp threshold $p>\ps$, instead of using branching processes, we perform a recursion on suitable messages $f(v)$ associated to vertices of a tree, given by a transformation of the probability in its subtree of that vertex $v$ having a path down to the wired boundary component in its subtree. For the appropriate choice of monotone transformation, one then gets a recurrence of the form
\begin{align*}
    f(w) = \prod_{v \text{ child of } w} \Phi(f(v))
\end{align*}
for an explicit function $\Phi$. 
This message function and recursion are inspired by high-temperature random-cluster recursions on trees from~\cite{blanca2020sampling} and~\cite{blanca2023sampling}. 

This tree recursion exactly captures the low-temperature behavior of the random-cluster model on trees, and when $p>\ps$ it exhibits two fixed points for the subtree connection probability: one at $0$, and another at $z^*\in (0,1)$. Notably, as soon as $p>\ps$, the fixed point corresponding to connection probability zero becomes unstable and thus as long as the connection probability is uniformly bounded away from zero, near the bottom of the tree, the recursion pushes it to $z^*$. The strict attractivity of the fixed point at $z^*$ implies this convergence is exponentially fast from any boundary condition that has positive probability of its leaves being connected to infinity, independently. Indeed, the exponential rate can be identified exactly with the derivative of the recursion function at $z^*$. In this manner, we obtain the claimed weak spatial mixing of~\eqref{eq:intro-wsm}, with its sharp exponential rate, among boundary conditions with a positive density of randomly wired leaves. 

To obtain uniqueness from weak spatial mixing, we show that  the boundary condition induced on $\partial \dary$ by any infinite-volume measure is sufficiently wired when $p > \ps$. Weak spatial mixing implies that the influence 
of the boundary condition is asymptotically the same as that of the wired boundary condition as $h \to \infty$. Therefore, every infinite-volume Gibbs measure has the same marginals on every finite set.

\medskip\noindent\textbf{Mixing time on wired trees.} \ 
Having established decay of correlations between the boundary and distant points (at least if the boundary is sufficiently wired), we now describe how that is used to bound the mixing time on trees with wired boundary conditions. A standard tool in the Markov chain mixing time literature for such implications is bounding the mixing time on  graphs by the mixing time on smaller self-similar graphs via \emph{block dynamics} (see e.g.,~\cite{Martinelli-notes}). 
We consider two blocks:
$B_0$ consisting of the edges in the top $(1-\delta) h$ levels of the tree, and $B_1$ containing those in the bottom $(1-\delta) h$ levels. We choose $\delta$ small, so that there is significant overlap between the blocks $B_0$ and $B_1$.
The block dynamics is the non-local Markov chain that in each step picks a block $B\in \{B_0,B_1\}$ and resamples its entire configuration conditionally on the configuration on~$E(\mathcal{T}_h)\setminus B$.

Since our spatial mixing estimates are only for sufficiently wired boundary conditions (and in fact are not true between free and wired boundary conditions), we also define a class of admissible boundary conditions on which we can recurse. 
This family of sufficiently wired boundary conditions on trees of depth $h$, denoted $\mathcal F(h)$, is defined as boundary conditions that are wired enough to satisfy~\eqref{eq:intro-wsm}. If we then let
\begin{align*}
    \Th (h) = \max\nolimits_{\xi \in \mathcal F(h)} T_{\mix}(\mathcal T_h^\xi)\,,
\end{align*}
with $\mathcal T_h^\xi$ denoting the $(\Delta-1)$-ary tree of height $h$ with boundary condition $\xi$, our main recursive estimate for the mixing time of the random-cluster dynamics on trees is the following: 
\begin{align}
\label{eq: tree recursion intro}
    \Th (h) \le h^{O(1)} \cdot \max \Big\{\Th((1-\delta) h)  \frac{|\mathcal T_h|}{|\mathcal T_{(1-\delta) h}|} , (\Delta -1)^{\delta h}  \Th( (1-\delta) h)\Big\}\,.
\end{align}
Iterating this bound down in scale leads to Theorem~\ref{thm:intro-wired-tree-mixing-time} since $h = O(\log n)$. The first term in the maximum is coming from the mixing time of $B_0$ and the second from $B_1$.

The key idea in this recursion is the following. Fix an arbitrary initial configuration $X_0$. If block $B_1$ updates first, then because the random-cluster distribution dominates a supercritical Bernoulli percolation on the tree when $p>\ps$, a positive density of the leaves of $B_0$ will be wired to the bottom of $B_1$ and therefore wired together through the boundary conditions. This then induces a $\theta$-wired boundary on $B_0$ to use the weak spatial mixing of~\eqref{eq:intro-wsm}, and couple the configuration on $B_0 \setminus B_1$ to a stationary sample. Finally, when $B_1$ is updated again, its induced boundary conditions by $B_0 \setminus B_1$ are equal to the stationary sample, and therefore the configuration on $B_1$ will with probability $1$ couple to a stationary sample. 

Some technical hurdles arise, however. The block dynamics method goes via a spectral decomposition and therefore necessitates consideration of worst-case induced boundary conditions, whereas our spatial mixing on trees is only for sufficiently wired boundary conditions. We therefore use the censoring method of~\cite{PW} to ``simulate" the block dynamics, and toss out the low-probability events of developing boundary conditions outside our class $\mathcal F$. Moreover, the block update on $B_1$ is not resampling on a tree, or independent trees, but a union of up to $(\Delta-1)^{\delta h}$ many trees whose roots are possibly wired together (destroying their product structure); this requires careful decoupling of the interactions between the trees through the configuration on $B_0 \setminus B_1$. 

\medskip\noindent\textbf{Mixing time on random regular graphs.} \
We finally describe how spatial and temporal mixing on trees are combined to give mixing time bounds on the random regular graph in its low-temperature regime. By standard localization methods for monotone Markov chains, it is sufficient to show that after time $n (\log n)^{O(\log \log n)}$, for any fixed edge $e\in E(G)$, the probability that $X_t^1$ and $X_t^0$ (the random-cluster dynamics initialized from $X_0^1 = E$ and $X_0^0 = \emptyset$ respectively) agree on $e$ is at least $1-o(1/n)$. 

The first stage of the analysis is to wait a \emph{burn-in period} of $T_{\textsc{Burn}}= O(n \log n)$ for all edges to have been updated at least once, and for the chains to induce sufficiently wired boundary conditions on the ball $B(e)$ centered at an endpoint of $e$ and of radius $r = \frac 15 \log_{\Delta -1} n$.  

After this burn-in period, the chains $X_{T_{\textsc{Burn}}}^0$ and $X_{T_{\textsc{Burn}}}^1$ are sandwiched between a Bernoulli $\hat p$ percolation for $\hat p$ from~\eqref{eq:Glauber-update-rule} and the all-wired configuration. By again using the censoring lemma of~\cite{PW}, it is sufficient to freeze the induced boundary conditions on $B(e)$ and only allow updates in $B(e)$ for $t \ge T_{\textsc{Burn}}$. If the boundary conditions induced by $X_{T_{\textsc{Burn}}}^0$ and $X_{T_{\textsc{Burn}}}^1$ are denoted $\eta^0, \eta^1$ respectively, we claim that with high probability $\eta^0,\eta^1$ are such that the following hold when $p>\ps$:  
\begin{enumerate}[(P1)]\setlength{\itemsep}{2pt}
    \item The mixing time on $B(e)$ with boundary conditions $\eta^0$ or $\eta^1$ is at most $|B(e)| (\log n)^{O(\log \log n)}$. 
    \item If $q>C_0 \log \Delta$, the total variation distance on $e$ between $\mu_{B(e)}^{\eta^0}$ and $\mu_{B(e)}^{\eta^1}$ is $o(1/n)$. 
\end{enumerate}
The formal statement can be found in Lemma~\ref{lem:P1-P2}. Theorem~\ref{thm:intro-rrg-mixing} follows from the combination of these, because after a time $S = n (\log n)^{O(\log \log n)}$ so that the number of updates on $B(e)$ exceeds its mixing time with the induced boundary conditions, the probability of $X_{T_{\textsc{Burn}} +S}^0(e) \ne X_{T_{\textsc{Burn}}+ S}^1(e)$ will be $o(1/n)$.

The properties (P1) and (P2) holding are essentially $O(1)$-sized modifications of~\eqref{eq:intro-wsm} and Theorem~\ref{thm:intro-wired-tree-mixing-time}. Indeed, while $B(e)$ is not necessarily a tree, it has at most one cycle. Also, the boundary conditions $\eta^0$ induced on $B(e)$, while not all-wired, stochastically dominate a distribution that except on $O(1)$ many leaves, independently wires other leaves with  probability at least $\theta>0$. The last fact uses properties of supercritical percolation on the random regular graph, as each leaf of $B(e)$ has a positive probability of being connected to the giant component of the Bernoulli($\hat p$) percolation on $G\setminus B(e)$: see Lemma~\ref{lem:rrg-percolation-properties}. 

The extensions of spatial and temporal mixing from trees with $\theta$-wired boundary conditions to allow for these $O(1)$-sized defects are carried out in Section~\ref{sec: treelike extensions}. 
Spatial and temporal mixing on trees with similar defects had to be handled in~\cite{BG21} in the high-temperature $p<\pu$ case, but the situation at $p>\ps$ is more delicate. Unlike $p<\pu$,  
at low temperatures, a defect (e.g., a cycle near the root edge $e$) macroscopically changes the probability of root to boundary connection. The key point is that it changes it ``in the same way" for the wired boundary as for the random $\theta$-wired boundary, and thus the spatial mixing should still hold. 

To make this more precise, to establish (P1) we carry out the mixing time recursion in~\eqref{eq: tree recursion intro} for a larger family of graphs which is technically more challenging.
For (P2), we use the explicit characterization of the weak spatial mixing rate $c_*$ of~\eqref{eq:intro-wsm} in terms of the derivative of the recursion function, to see that $q$ logarithmically large in $d$ is sufficient for the TV distance with the choice of $r = \frac{1}{5}\log_{\Delta-1} n$ to be $o(1/n)$. 
In addition, the presence of a cycle introduces additional challenges as it breaks the tree recurrence. We are able to directly bound the message value at the ``topmost'' vertex of the cycle utilizing the partition function for the random-cluster on the cycle with arbitrary vertices wired together by the boundary condition. 

\section{Uniqueness and Weak Spatial Mixing on regular trees}\label{sec:trees-uniqueness-WSM}

In this section, we consider the Gibbs uniqueness of wired infinite-volume random-cluster distributions on the infinite $\Delta$-regular tree.
We establish the uniqueness of this measure
using a decay of correlation property known
as weak spatial mixing (WSM) that is of interest in its own right. 
We compile the required definitions and notations in Section~\ref{subsec:rc}, establish the WSM property in Section~\ref{sec: wsm}, and establish our uniqueness result (Theorem~\ref{thm:uniqueness} from the introduction) in Section~\ref{sec: uniqueness}.

\subsection{Random-cluster boundary conditions}
\label{subsec:rc}

The random-cluster distribution can be defined with \emph{boundary conditions} that capture the effect of external connections or wirings when, for instance, the graph is embedded in a larger ambient graph.
To define this notion, let $\Lambda\subseteq V(G)$ be an arbitrary subset of vertices.
A \emph{boundary condition} on $\Lambda$ is a partition
$
\xi = (P_1,\dots,P_k)
$
of $\Lambda$. Vertices belonging to the same part of the partition are said to be \emph{wired} together.

As such, the random-cluster distribution on $G$ with parameters $p \in (0,1)$ and $q > 0$ and boundary condition $\xi$ on $\Lambda \subseteq V(G)$ is defined as
\begin{align}\label{eq:random-cluster-with-bc}
\mu_{G,p,q}^{\xi}(A)
\;=\;
\frac{
p^{|A|}(1-p)^{|E \setminus A|}
\, q^{c_G(A,\xi)}
}{
Z_{G,p,q}^{\xi}
},
\end{align}
where 
$Z_{G,p,q}^{\xi}$ is the corresponding normalizing constant or partition function, and
$c_G(A,\xi)$ denotes the number of connected components in the subgraph $(V,A)$ 
when taking into account the wirings in $\xi$. (We will typically drop the parameter subscripts $p$ and $q$ to simplify the notation.)

Two important special cases of boundary conditions are the \emph{free} boundary condition, which corresponds to the partition of only singletons, and is equivalent to having no boundary conditions, 
and the \emph{wired} boundary condition, which corresponds to $\xi$ having only one part containing all the vertices from $\Lambda$. With slight abuse of notation we use $\xi=0$ and $\xi=1$ to denote the free and wired boundary condition, respectively.

By the FKG inequality for the random-cluster model, there is a stochastic relation between random-cluster models on $G$ with different boundary conditions when $q \geq 1$. Namely, if $\xi$ is a coarser partition than $\xi'$, then it has ``more wirings" and $\mu_{G}^{\xi} \succeq \mu_{G}^{\xi'}$ holds. Therefore, for a fixed $\Lambda$, the extremal boundary conditions are precisely the free and wired ones we just described. 

\subsection{Weak Spatial Mixing on finite trees}\label{sec: wsm}

We establish WSM for certain structured classes of
boundary conditions.
These classes 
contain at most one non-trivial wired boundary component, and are the relevant class needed for establishing the uniqueness of the wired infinite random-cluster distribution on regular trees.
We introduce some useful notation next.
Recall that we use
$\dreg = (V(\dreg),E(\dreg))$ to denote the complete $\Delta$-regular tree of height $h$. 
Similarly, we set $d = \Delta - 1$, and let $\dary$ denote the complete $d$-ary tree. 
The trees $\dreg$ and $\dary$ differ only on the number of children of the root vertex~$\rho$. 
Unless otherwise specified, boundary conditions on a finite tree $\mathcal T$ are partitions of $\Lambda = \partial \mathcal T$, the set of leaves of $\mathcal T$.

\begin{definition}\label{def:single-component-bc}
    A boundary condition $\xi$ for a tree $\mathcal T$ is \textit{single-component} if the boundary partition has at most one non-singleton element.
\end{definition}

\begin{definition}\label{def:theta-wired-bc}
    A distribution over single-component boundary conditions on the boundary of a tree $\mathcal T$ is \emph{$\theta$-wired} if the distribution of the wired component stochastically dominates the distribution over random subsets $A \subseteq \partial \mathcal{T}$ where each vertex of $\partial\mathcal{T}$ is included in $A$ with probability $\theta$ independently.
\end{definition}

We will show that a form of WSM holds on $\Delta$-regular trees for 
\emph{$\theta$-wired} boundary conditions with high probability over the choice of the boundary condition. We first show that under typical $\theta$-wired boundary conditions the boundary is sufficiently connected so that vertices that are not too close to the boundary already have a uniform bias toward being connected to the wired boundary component. We then use a tree recursion to prove decay of influence of the boundary condition up the tree, yielding the WSM property. 

To state the result formally, we introduce necessary notation first. We use $\dist(u, v)$ to denote the graph distance between vertices $u$ and $v$, and for any vertex $v$ of a tree $\mathcal{T}$ we let $\dist(v, \partial \mathcal{T}) = \min_{w \in \partial \mathcal{T}} \dist(v, w)$.
In addition, we use $\mathcal C_1(\xi)$ to denote the unique non-singleton component of 
a single-component boundary condition $\xi$ if there is one; otherwise $\mathcal C_1(\xi)$ is any singleton element. We use $v \sim \mathcal C_1(\xi)$ for the event that vertex $v$ is connected to $\mathcal C_1(\xi)$.

\begin{theorem}\label{thm wsm}
Let $\Delta \ge 3$, $q > 1$, $p > \ps$, and let $\mathsf M$ be a $\theta$-wired distribution on single-component boundary conditions of the complete $\Delta$-regular tree $\dreg$ of height $h$ and $\xi \sim \mathsf{M}$. Then there exist constants $c > 0$, $C>0$, $\beta\in(0,1)$, and $\gamma > 0$ depending only on $\Delta$, $p$, $q$ and $\theta$, such that for any vertex $v\in V(\dreg)$ with $\dist(v,\partial\dreg) \ge \gamma$, with probability at least $1-e^{ - c d^{ \sqrt{\dist(v,\partial \dreg)}}}$,
\begin{align*}
    |\mu_{\dreg}^1 (v\sim \partial\dreg) - \mu_{\dreg}^\xi(v\sim \mathcal C_1(\xi))|\le C \beta^{\dist(v,\partial\dreg)}\,.
\end{align*}
Moreover, the same holds for the complete $d$-ary tree $\dary$. 
\end{theorem}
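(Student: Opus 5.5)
We propose to prove the statement via a tree recursion on connection probabilities. For a vertex $w$ of the tree with subtree $\mathcal T_w$ and the boundary condition restricted to $\partial\mathcal T_w$, let $z(w)=\mu^{\xi}_{\mathcal T_w}(w\sim \mathcal C_1(\xi))$. Here the subtree measure is taken with the single wired component $\mathcal C_1(\xi)\cap\partial\mathcal T_w$, and we also keep track of the fact that the component is wired to the outside. To handle that outside wiring exactly, we work with a ratio message
\[
f(w)=\frac{Z^{\xi}_{\mathcal T_w}(w\not\sim\mathcal C_1)}{Z^{\xi}_{\mathcal T_w}(w\sim \mathcal C_1)}\cdot(\text{suitable normalization}).
\]
Alternatively, we use the probability that $w$ is \emph{not} connected to the wired component, weighted by $q$. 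Summing over the states of the edges from $w$ to its children, and using that children's subtrees interact only through $w$ and the single wired class, we obtain
\[
f(w)=\prod_{v\text{ child of }w}\Phi(f(v))
\]
for an explicit Möbius-type $\Phi$ depending on $p,q$. Each child either connects $w$ to the wired class (through an open edge to a connected child) or contributes an independent factor. We then identify the one-dimensional map $F(x)=\Phi(x)^d$ governing the wired case, in which all messages at a given depth are equal. We check that for $p>\ps$ (equivalently $\hat p>1/d$) the point $x=$"disconnected" is a repelling fixed point, since its derivative there is $d\hat p>1$. We also check that there is a unique other fixed point $x^*$ (connection probability $z^*\in(0,1)$), with $|F'(x^*)|<1$, and that $F$ maps a compact interval $I$ bounded away from the repelling point into itself, contracting in a suitable metric. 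Set $\beta$ slightly larger than the contraction factor there.

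The second step is the \emph{initialization}: we show that under a typical $\theta$-wired $\xi$, every vertex at distance $\gamma$ from the boundary within the relevant region has $z(w)\ge\epsilon$ for some $\epsilon=\epsilon(\Delta,p,q,\theta)>0$. By monotonicity (FKG) and the domination $\mu\succeq$ Bernoulli$(\hat p)$ percolation with $d\hat p>1$, the event $w\sim\mathcal C_1$ contains the event that the Bernoulli$(\hat p)$ cluster of $w$ in $\mathcal T_w$ reaches a boundary vertex of $\mathcal C_1$. Since $\mathcal C_1$ dominates an independent $\theta$-subset, a supercritical Galton--Watson argument shows that the conditional probability, given $\xi$, is at least $\epsilon$ except with probability small in $\gamma$. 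The probability bound $1-e^{-cd^{\sqrt{\ell}}}$, where $\ell=\dist(v,\partial\dreg)$, is obtained by choosing an intermediate level $m=\lceil\sqrt\ell\,\rceil$ above the boundary. Let $B_u$ be the bad event, for a vertex $u$ at height $m$, that $z(u)<\epsilon$. Each $B_u$ depends only on $\xi$ restricted to $\partial\mathcal T_u$, so the events are independent across the $d^{\ell-m}$ vertices at height $m$ below $v$. Each $B_u$ has probability at most $e^{-c' d^{m}}$ by a large-deviation bound for the number of wired leaves reached by the supercritical cluster. A union bound over at most $d^{\ell}$ vertices then still leaves $e^{-cd^{\sqrt\ell}}$, since $d^{\sqrt\ell}\gg \ell\log d$ once $\ell\ge\gamma$.

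On the good event, all messages at height $m$ lie in $I$. We then iterate upward. The heterogeneous recursion $f(w)=\prod_v\Phi(f(v))$ is coordinatewise monotone. Sandwiching by the homogeneous iterates from the extreme values of $I$, using $F(\min I)$ and $F(\max I)$, gives
\[
|f(v)-x^*|\le C\beta^{\ell-m}.
\]
The same holds for the all-wired measure, whose messages start at full connection and converge to $x^*$ as well. We absorb $\beta^{-m}=\beta^{-\sqrt\ell}$ by slightly enlarging $\beta$, and transfer back from $f$ to $z$ by Lipschitz continuity of the transformation on $I$.

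For $\dreg$ versus $\dary$, only the root has $\Delta$ children; this changes one step by a bounded factor, which is absorbed into $C$.

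The main obstacle, I expect, is the analytic one: choosing the transformation $f$ so that the recursion is exactly multiplicative, and proving that $F$ has exactly two fixed points with global attraction to $x^*$ from $I$ for \emph{all} $q>1$ and $p>\ps$. I would first try to show that $\log F$ in the variable $\log x$ is concave, which yields at most two fixed points and a contraction on $I$. If global contraction fails, I would instead use a few steps of monotone iteration to reach a neighbourhood of $x^*$ where $|F'|<1$.
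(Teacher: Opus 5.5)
Your architecture matches the paper's. It uses the multiplicative recursion $f(u)=\prod_{w\in N(u)}\Phi(f(w))$; note that the choice that factors exactly is $f=1+qZ^{1}/Z^{0}$, whereas the inverse ratio $Z^{0}/Z^{1}$ does not factor. It then initializes all messages at height about $\sqrt{\ell}$ in a compact interval $[1+\varepsilon,M]$ by Chernoff plus a union bound, and sandwiches between the homogeneous iterates of $g=\Phi^{d}$ started at $1+\varepsilon$ and at $M$. It finishes with local contraction at the nontrivial fixed point and the $1/q$-Lipschitz transfer back to connection probabilities.

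The gap is in the analytic fact everything rests on. For $p>\ps$, $g$ must have exactly one fixed point $y^*$ in $(1,\infty)$, with $g(y)>y$ on $(1,y^*)$ and $g(y)<y$ on $(y^*,\infty)$. Without this the sandwich does not close: the lower iterate from $1+\varepsilon$ converges to the smallest fixed point above $1$, and the upper iterate from $M$ converges to the largest.

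Your proposed route, concavity of $s\mapsto\log g(e^{s})$, is false precisely in the interesting regime. Write $\Phi(x)=\frac{x+a}{(1-p)x+b}$ with $a=(q-1)(1-p)$ and $b=p+a$. Then
\[
\frac{d^{2}}{ds^{2}}\log g(e^{s})\Big|_{s=0}=\frac{d\,p(1-p)(q-2)}{\bigl(1+(q-1)(1-p)\bigr)^{2}},
\]
which is strictly positive for every $q>2$. This is consistent with the fact that for $q>2$ and $p\in(\pu,\ps)$ the same map has three fixed points, which concavity would forbid. Your fallback of iterating until $|g'|<1$ only gives contraction near a fixed point. It does nothing to exclude a second nontrivial one.

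What is needed is a single-inflection argument instead of concavity. As in the paper, $g''(y)$ is a positive factor times $(d-1)p\bigl(1+(q-1)(1-p)\bigr)-2(1-p)\bigl(y+(q-1)(1-p)\bigr)$. This is decreasing in $y$, so $g''$ changes sign at most once, from $+$ to $-$. The same holds in your variable $s=\log y$, where the sign of $\frac{d^2}{ds^2}\log g(e^s)$ is that of $ab-(1-p)e^{2s}$.

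Consequently $h(y)=g(y)-y$ has $h'$ possibly increasing and then decreasing, with $h'(1)=d\hat p-1>0$ and $h'(y)\to-1$ (since $g'(y)\to 0$). So $h'$ vanishes exactly once, and $h$ rises from $h(1)=0$ and then falls to $-\infty$. This gives a unique zero $y^*>1$ with $h'(y^*)<0$, i.e.\ $g'(y^*)<1$. With this in place the rest of your plan goes through.

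One smaller point concerns the initialization. The bound $\Pr_\xi(z(u)<\epsilon)\le e^{-c'd^{m}}$ must be a quenched statement, and an annealed large-deviation bound does not give it. Derive it instead from Chernoff on the number of wired leaves of $\partial\mathcal T_u$, together with a deterministic lower bound on the Bernoulli$(\hat p)$ probability of hitting any leaf set of density at least $\theta/2$; a second-moment estimate provides this. Also, $\theta$-wiredness gives only stochastic domination, not independence of the events $B_u$, though your union bound does not need independence.
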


We in fact prove Theorem~\ref{thm wsm} with the sharp $\beta$ for the exponential rate of decorrelation: see Remark~\ref{rem:choice-of-beta}. 
We rely on tree recursions 
to establish Theorem~\ref{thm wsm}.
To set them up,
for any $\mathcal T$,
let $\mathcal T^v$ denote the same tree rooted at vertex $v \in V(\mathcal T)$. For any  $u \in V(\mathcal{T}^v)$, let $\mathcal{T}^{v, u}$ denote the subtree at $u$ in $\mathcal T^v$. 

Fixing a single-component boundary condition $\xi$ on $\partial \mathcal T^v$, let
$Z^{\xi, 0}_{\mathcal{T}^v}$ denote the contribution to the partition function of the random-cluster configurations on $\mathcal{T}^v$ under the boundary condition $\xi$ in which $v$ is not connected to the wired component $\mathcal C_1(\xi)$;
analogously define $Z^{\xi, 1}_{\mathcal{T}^v}$
for the total weight of the configurations
that connect $v$ to $\mathcal C_1(\xi)$. (Note that $Z^{\xi}_{\mathcal{T}^v} = Z^{\xi, 0}_{\mathcal{T}^v}+Z^{\xi, 1}_{\mathcal{T}^v}$.)
For any vertex $u \neq v$, let $\xi_{(u)}$ be the single-component boundary condition induced on $\partial \mathcal T^{v, u}$ by $\xi$. 

\begin{definition}\label{def:message recursion}
    For any finite rooted $\mathcal{T}^v$ and any single-component boundary condition $\xi$, define the \emph{message function} $f^\xi_{\mathcal{T}^v} : V(\mathcal{T}^v) \to [1,\infty) \cup \{\infty\}$  by 
    \begin{align}\label{message def}
        f^\xi_{\mathcal{T}^v}(u) = q\frac{Z^{\xi_{(u)}, 1}_{\mathcal{T}^{v, u}}}{Z^{\xi_{(u)}, 0}_{\mathcal{T}^{v, u}}} + 1.
    \end{align}
    For $u \in \partial \mathcal T^v$, we define $f^\xi_{\mathcal{T}^v}(u) = 1$ if $u \notin \mathcal C_1(\xi)$, $f^\xi_{\mathcal{T}^v}(u) = +\infty$ if $u\in \mathcal C_1(\xi)$. 
\end{definition}
The tree structure allows us to formulate a recursion on the message functions. 
Let $N(u)$ denote the set of children of a vertex $u \in V(\mathcal{T}^v)$.
The following lemma is a generalization of the high-temperature tree recursion argument in~\cite[Fact 3.2]{blanca2023sampling} and its proof is deferred to Appendix~\ref{appendix:tree-recursion}. 

\begin{lemma}\label{lem:recursion}
    For any finite rooted tree $\mathcal{T}^v$ and any single-component boundary condition $\xi$, the message function $f^\xi_{\mathcal{T}^v}$ satisfies the following for every internal vertex $u \in V(\mathcal{T}^v)$:
    \begin{align}\label{message recursion}
    f^\xi_{\mathcal{T}^v}(u) = \prod_{w \in N(u)}\Phi(f^\xi_{\mathcal{T}^v}(w))
    \end{align}
    where $\Phi(x) = \frac{x + (q-1)(1-p)}{(1-p)x + p + (q-1)(1-p)}$ for $x \in [1, \infty)$ and $\Phi(\infty) = (1-p)^{-1}$.
\end{lemma}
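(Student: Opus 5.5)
The plan is a direct decomposition of the partition function of the subtree $\mathcal T^{v,u}$ according to the states of the edges from $u$ to its children, followed by an algebraic simplification. Fix an internal vertex $u$ with children $N(u)$. A configuration on $\mathcal T^{v,u}$ is a choice, for each $w \in N(u)$, of the state of the edge $uw$ together with a configuration on $\mathcal T^{v,w}$. Because $\xi$ is single-component and $\mathcal T^{v,u}$ is a tree, $u \sim \mathcal C_1(\xi)$ holds iff there is a child $w$ with $uw$ open and $w \sim \mathcal C_1(\xi_{(w)})$ inside $\mathcal T^{v,w}$. The only delicate point is the component count. The wired component $\mathcal C_1$ is shared among the different child subtrees, so it must be counted once globally rather than once per child.

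\textbf{Normalization.} I first remove the factor $q$ for $\mathcal C_1$ from every subtree partition function. Concretely, I set $Y^i_w = Z^{\xi_{(w)},i}_{\mathcal T^{v,w}}/q$ for $i\in\{0,1\}$. This is harmless when $\mathcal C_1$ meets $\partial\mathcal T^{v,w}$. When it does not, I adopt the convention that the count excludes the (absent) $\mathcal C_1$. The message depends only on the ratio $Y^1_w/Y^0_w$, and $Z^{\xi_{(u)},1}/Z^{\xi_{(u)},0}$ is likewise a ratio, so any global constant cancels. With this convention, the number of non-$\mathcal C_1$ components in $\mathcal T^{v,u}$ is obtained from the counts in the children's subtrees as follows.
\begin{itemize}
\item Add $1$ for $u$'s own cluster if $u\not\sim\mathcal C_1$.
\item Subtract $1$ for each open edge $uw$ with $w\not\sim\mathcal C_1$, since $w$'s cluster merges into $u$'s cluster, or into $\mathcal C_1$ if $u\sim\mathcal C_1$.
\item Subtract nothing for an open edge to a child $w\sim\mathcal C_1$.
\end{itemize}

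\textbf{Factorization.} Set
\[a_w=(1-p)(Y^0_w+Y^1_w)+\tfrac{p}{q}Y^0_w .\]
Summing over the edge states yields
\[
Z^0_u\propto q\prod_{w}a_w,\qquad Z^0_u+Z^1_u\propto q\prod_w a_w+\Big(\prod_w (a_w+pY^1_w)-\prod_w a_w\Big).
\]
Here the bracketed term collects the configurations with at least one open edge to a connected child. In those configurations $u$'s own factor $q$ is lost, and every other open edge still contributes $p/q$ or $p$ as described above. Hence
\[f(u)=q\,\frac{Z^1_u}{Z^0_u}+1=\prod_w\frac{a_w+pY^1_w}{a_w}.\]

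\textbf{Algebra.} For each child I divide the numerator and denominator by $Y^0_w/q$ and substitute $Y^1_w/Y^0_w=(x-1)/q$ with $x=f(w)$. The numerator becomes $(1-p)(q-1+x)+p+p(x-1)=x+(q-1)(1-p)$. The denominator becomes $(1-p)x+p+(q-1)(1-p)$. This is exactly $\Phi(x)$.

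\textbf{Boundary children.} A leaf child $w\notin\mathcal C_1$ has $x=1$ and $\Phi(1)=1$, which correctly reflects that it contributes no connection. A leaf child $w\in\mathcal C_1$ has $Y^0_w=0$, and the factor is $p/(1-p)+1$ over $1$; more precisely $(a_w+pY^1_w)/a_w=1/(1-p)$, which matches $\Phi(\infty)=\lim_{x\to\infty}\Phi(x)$. I expect the main obstacle to be the careful bookkeeping of $\mathcal C_1$ across children. This is why I remove it from the counts before multiplying the independent child contributions. Once that normalization is in place, the rest is routine algebra.
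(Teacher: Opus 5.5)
Your proof is correct and takes the same route as the paper. Your factors $a_w$ and $a_w+pY^1_w$ are exactly the paper's $\frac{tZ_0(w)+(1-p)Z_1(w)}{q}$ and $\frac{tZ_0(w)+Z_1(w)}{q}$ with $t=p/q+(1-p)$, so your edge-state decomposition is the identity $Z_0(u)=q^2A$, $Z_1(u)=q(B-A)$ that the paper imports from Fact~3.2 of \cite{blanca2023sampling}, after which $f(u)=B/A$ and the per-child algebra coincide. The only differences are that you derive this decomposition directly through your component bookkeeping, and that you verify the leaf cases $\Phi(1)=1$ and $\Phi(\infty)=(1-p)^{-1}$ explicitly, which the paper's algebra (dividing by $Z_0(w)$) does not cover.
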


\subsubsection{Homogeneous tree recurrence and fixed point analysis}

We first analyze the message function recursion for the special case of the complete $d$-ary tree. In this setting, the recursion is one-dimensional, and we shall see that its fixed point structure 
identifies the uniqueness regime. These facts will be used in our proof of Theorem~\ref{thm wsm}.

Let us define a function $g : [1, \infty) \to \R$ as
\begin{equation}\label{eq: function g}
g(y) = \Phi(y)^d = \left(\frac{y + (q-1)(1-p)}{(1-p)y + p + (q-1)(1-p)} \right)^d.
\end{equation}
The fixed points of $g$ correspond to the possible stationary values of the message function for the $d$-ary tree case and are described in the following theorem. 
For $k \ge 1$, let $g^{(k)}(x) = g(g^{(k-1)}(x))$ and let $g^{(0)}(x) = x$. 
While we are only interested on the fixed points when $p > \ps$, we provide a full characterization of fixed points of $g$ which may be of future interest: see also Figure~\ref{fig:graph}.

\begin{theorem} \label{thm:fixed points:appendix}
    Let $\Delta \ge 3$, $q \geq 1$, and $p \in(0, 1)$.
    Then, the function $g$ has a fixed point at $y = 1$. Furthermore, the fixed points of $g$ in $[1, \infty)$ satisfy:
    \begin{enumerate}[(i)]
        \item If $p < \pu$, $g$ has the unique fixed point at $y=1$;
        \item If $q > 2$ and $p \in (\pu, \ps)$, $g$ has three fixed points;
        \item If $p > \ps$, $g$ has two fixed points;
        \item If $q > 2$ and $p = \pu$ or $\ps$, $g$ has two fixed points;
        \item If $q \leq 2$ and $p = \pu=\ps$, $g$ has the unique fixed point at $y = 1$.
    \end{enumerate}
\end{theorem}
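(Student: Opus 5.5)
We will reduce the fixed-point equation $g(y)=y$ on $[1,\infty)$ to the analysis of a single explicit function of one variable. First, $g(1)=1$ by direct substitution: $\Phi(1)=\frac{1+(q-1)(1-p)}{(1-p)+p+(q-1)(1-p)}=1$. Also $g$ is increasing, since $\Phi'(x)=\frac{p+(q-1)(1-p)-(1-p)(q-1)(1-p)}{(\cdot)^2}=\frac{p\,(1+(q-1)(1-p))}{(\cdot)^2}>0$. Moreover $g$ is bounded, with $g(\infty)=(1-p)^{-d}$. Hence all fixed points lie in $[1,(1-p)^{-d}]$, and $g(y)<y$ for large $y$.

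It is convenient to substitute $y=\Phi(y)^d$ and parametrize by $t=\Phi(y)\in[1,(1-p)^{-1})$. Inverting $\Phi$ gives
\[
y=\Phi^{-1}(t)=\frac{t(p+(q-1)(1-p))-(q-1)(1-p)}{1-(1-p)t}.
\]
Fixed points of $g$ therefore correspond bijectively (via the increasing map $\Phi$) to solutions $t\in[1,(1-p)^{-1})$ of
\[
t^d\bigl(1-(1-p)t\bigr)=t\bigl(p+(q-1)(1-p)\bigr)-(q-1)(1-p).
\]
Equivalently, they are zeros of the polynomial
\[
P(t)=t^d-(1-p)t^{d+1}-(p+(q-1)(1-p))t+(q-1)(1-p).
\]
This polynomial satisfies $P(1)=0$. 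The next step is to factor $P(t)=(t-1)Q(t)$ and study $Q$ on $[1,(1-p)^{-1})$. By Descartes' rule of signs, the coefficient sign pattern $(-,+,-,+)$ shows that $P$ has at most three positive roots, and hence $Q$ has at most two positive roots.

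At the endpoint $t=(1-p)^{-1}$ we have $P<0$ (the left side of the equation vanishes while the right side is positive, as is visible from $y\to\infty$). Near $t=1$, the sign of $Q(1)=P'(1)$ decides stability of the fixed point $y=1$. Computing,
\[
P'(1)=d-(d+1)(1-p)-p-(q-1)(1-p)=(d+q-1)p-q+\dots,
\]
which, after simplifying, is positive exactly when $p>\frac{q}{d+q-1}=\ps$. This matches $g'(1)=d\,\Phi'(1)$, with $g'(1)=1$ iff $p=\ps$.

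For case (iii), $p>\ps$: $Q(1)>0$ and $Q((1-p)^{-1})<0$, so $Q$ has an odd number of roots in the interval, hence exactly one by the count above. This gives two fixed points in total. For $p<\ps$: $Q(1)<0$ and $Q$ is negative at the right endpoint, so $Q$ has zero or two roots in the interval. Both roots present corresponds to three fixed points, case (ii); no roots corresponds to case (i); a double root corresponds to (iv) at $p=\pu$. Here $\pu$ is defined as the infimum of those $p$ for which extra roots exist. We show that this set of $p$ is an interval $[\pu,\ps)$ by monotonicity in $p$: $g$ is pointwise increasing in $p$, because $\Phi$ increases in $p$ for $x\ge1$. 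Consequently, once a fixed point $y>1$ exists it persists as $p$ grows, since $g(y_0)\ge y_0$ at larger $p$ and the boundedness of $g$ gives a crossing.

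The boundary cases need separate treatment. At $p=\ps$, $P$ has a double root at $1$. Whether $Q$ has another root in the interval depends on the sign of $Q'(1)$, i.e.\ of $P''(1)$. Computing this sign should show that $P''(1)>0$ iff $q>2$, giving two fixed points for $q>2$ (case (iv)). For $q\le2$, a tangency at $1$ with no other root gives the unique fixed point (case (v)), together with the claim that $\pu=\ps$ there, i.e.\ that no extra roots occur for $p<\ps$ when $q\le2$. We expect this $q\le2$ case, and the clean sign identification of $P''(1)$, to be the main computational obstacle. For it we would try convexity/concavity of $g$ (or of $\log g$ in $\log y$) on $[1,\infty)$ when $q\le2$, which forces at most one crossing beyond the tangency.
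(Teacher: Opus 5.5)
Your reduction to the polynomial $P(t)$ and the Descartes count are correct. This is a genuinely different route from the paper. The paper bounds the number of fixed points by showing that $g''$ changes sign at most once (at $y_0$), so $g(y)-y$ has at most three zeros, and then reads off the cases from the sign of $g'(1)-1$ and from $g(y)-y\to-\infty$. Descartes counts multiplicities, which makes the degenerate parameters cleaner. Your arguments for (i) and (iii) are fine; your $\pu=\inf\{p: g \text{ has a fixed point in } (1,\infty)\}$ agrees, by monotonicity in $p$, with the characterization $\pu=\sup\{p:\sup_{y>1}(g(y)-y)\le 0\}$ that the paper imports.

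The gap is at the edge of the three-fixed-point regime. Write $Q_p$ for your $Q$ at parameter $p$.

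For (ii), your claim ``$g(y_0)\ge y_0$ at larger $p$'' only produces one extra fixed point and does not exclude a double root of $Q_p$. You need the strict inequality $\partial_p\Phi(p,x)=(x-1)(x+q-1)/\bigl((1-p)x+p+(q-1)(1-p)\bigr)^2>0$ for $x>1$. This gives $Q_p(t_0)>0$ for some $t_0$, since $g(y)-y=P(t)/(1-(1-p)t)$ with $t=\Phi(y)$, so the signs agree. Hence $Q_p$ has two sign changes.

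More seriously, ``a double root corresponds to (iv) at $p=\pu$'' is asserted, not proved. You need three facts.
\begin{enumerate}
\item[(a)] $\pu<\ps$ when $q>2$, so that $Q_{\pu}(1)<0$. This follows from your $\ps$ analysis: $Q_{\ps}>0$ just right of $1$, and continuity in $p$ gives fixed points slightly below $\ps$.
\item[(b)] $Q_{\pu}\le 0$ on $(1,(1-\pu)^{-1})$, since otherwise continuity in $p$ produces fixed points for some $p<\pu$.
\item[(c)] $Q_{\pu}$ actually has a root there, i.e.\ the infimum is attained. If $Q_{\pu}<0$ on $[1,(1-\pu)^{-1}]$, joint continuity and compactness give $Q_p<0$ on $[1,(1-p)^{-1}]$ for $p$ slightly above $\pu$, a contradiction.
\end{enumerate}
Point (c) is exactly the limiting argument in the paper: fixed points $y_n\in[1+\delta,R]$ of $g_{p_n}$ with $p_n\downarrow\pu$, and a convergent subsequence. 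Given (a)--(c), every root of $Q_{\pu}$ in the interval has even multiplicity, and Descartes leaves exactly one double root.

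The steps you flag as obstacles are short.
\begin{itemize}
\item From $P''(t)=d(d-1)t^{d-2}-d(d+1)(1-p)t^{d-1}$ you get $P''(1)=d\bigl((d+1)p-2\bigr)$, and $(d+1)\ps-2=(q-2)(d-1)/(d+q-1)$. This matches the paper's criterion $y_0>1\iff p>2/(d+1)$.
\item At $p=\ps$, $t=1$ is a root of $P$ of multiplicity at least two, so Descartes allows at most one further positive root. For $q>2$, $Q'(1)=P''(1)/2>0$ together with $Q<0$ at $(1-\ps)^{-1}$ gives exactly one. For $q<2$, $Q'(1)<0$ and parity give none. For $q=2$, $t=1$ is a triple root and uses up the bound.
\item Since (v) assumes $p=\pu=\ps$, this already proves it; you do not have to show $\pu=\ps$ for $q\le 2$. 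If you want that too, strict monotonicity in $p$ does it without convexity. A fixed point $y_0>1$ at some $p_0<\ps$ would give $g_{\ps}(y_0)>y_0$, contradicting $Q_{\ps}<0$ on $(1,(1-\ps)^{-1})$.
\end{itemize}
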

\begin{figure}[t]
    \centering

    \begin{subfigure}[t]{0.325\linewidth}
        \centering
        \begin{overpic}[width=\linewidth,clip,trim={0cm 0cm 35.8cm 0cm}]{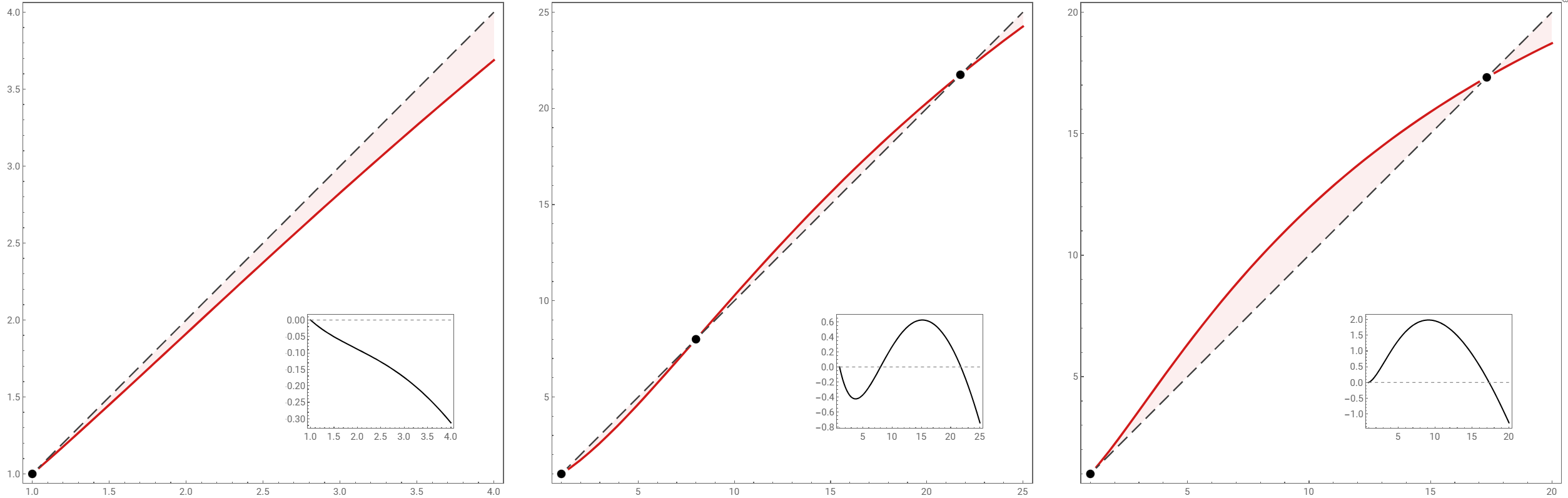}
            \put(78,66){\makebox(0,0)[l]{{\tiny{\color{red}$g(y)$}}}}
            \put(67,39){\makebox(0,0)[l]{{\tiny{$g(y)-y$}}}}
        \end{overpic}
        \caption{$p<\pu$}
    \end{subfigure}
    \hfill
    \begin{subfigure}[t]{0.325\linewidth}
        \centering
        \begin{overpic}[width=\linewidth,clip,trim={17.9cm 0cm 17.9cm 0cm}]{graph.png}
            \put(54,68){\makebox(0,0)[l]{{\tiny{\color{red}$g(y)$}}}}
            \put(67,39){\makebox(0,0)[l]{{\tiny{$g(y)-y$}}}}
        \end{overpic}
        \caption{$\pu<p<\ps$}
    \end{subfigure}
    \hfill
    \begin{subfigure}[t]{0.325\linewidth}
        \centering
        \begin{overpic}[width=\linewidth,clip,trim={35.8cm 0cm 0cm 0cm}]{graph.png}
            \put(40,60){\makebox(0,0)[l]{{\tiny{\color{red}$g(y)$}}}}
            \put(67,39){\makebox(0,0)[l]{{\tiny{$g(y)-y$}}}}
        \end{overpic}
        \caption{$p>\ps$}
    \end{subfigure}

    \caption{Plots of $g(y)$ in the various regimes of $p$ for $q>2$ and $\Delta \ge 3$. This depicts the transition in the number of fixed points as $p$ passes through $\pu$ and $\ps$ as shown in  Theorem~\ref{thm:fixed points:appendix}.}
    \label{fig:graph}
\end{figure}

We will also use the following lemma about the function $g$ when $p > \ps$. Its proof as well as that of Theorem~\ref{thm:fixed points:appendix} are provided in Appendix \ref{appendix:fp}.

\begin{lemma} \label{lemma:fp:main}
    For $\Delta \geq 3, q > 1, p > \ps$, $g$ is continuously differentiable, strictly increasing and bounded above on $[1, \infty)$. The non-trivial fixed point $y^* > 1$ is the only attractive one; that is,
    for any initial point $y_0 > 1$, $\lim_{k \to \infty} g^{(k)}(y_0) = y^*$. Moreover, $g'(y^*) \in (0, 1)$.
\end{lemma}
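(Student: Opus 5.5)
We study $g(y)=\Phi(y)^d$ directly on $[1,\infty)$. Smoothness follows because $\Phi$ is a ratio of affine functions whose denominator $(1-p)y+p+(q-1)(1-p)$ is strictly positive on $[1,\infty)$. A direct computation gives
\[
\Phi'(y)=\frac{p+(q-1)(1-p)-(1-p)^2(q-1)}{\bigl((1-p)y+p+(q-1)(1-p)\bigr)^2}=\frac{p+p(1-p)(q-1)}{\bigl((1-p)y+p+(q-1)(1-p)\bigr)^2}>0 .
\]
Hence $\Phi$ is strictly increasing, and so is $g$. We also have $\Phi(1)=1$, and $\Phi(y)\uparrow(1-p)^{-1}$ as $y\to\infty$. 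Therefore $g$ is bounded by $(1-p)^{-d}$, and $g(1)=1$.

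Next we check instability at $1$. We get $g'(1)=d\,\Phi'(1)=d\,\frac{p(1+(1-p)(q-1))}{q^2}$ after simplifying the denominator at $y=1$, which equals $q$. Since $p(1+(1-p)(q-1))=p(q-(q-1)p)$, the condition $g'(1)>1$ becomes $d\,p(q-(q-1)p)>q^2$. I would rather match this with $\hat p=p/(q(1-p)+p)$: one checks $g'(1)=d\,\hat p\cdot\frac{q(1-p)+p}{q}\cdot\frac{q-(q-1)p}{q}$, and $q(1-p)+p=q-(q-1)p$. This gives $g'(1)=d\hat p\,\bigl(\tfrac{q-(q-1)p}{q}\bigr)^2\cdot\tfrac{1}{\,\cdot\,}$; I will verify this algebra. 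The cleanest route is to note that $p>\ps=q/(d-1+q)$ is equivalent to $d\hat p>1$, and that $g'(1)=d\,p\,q(1-p)\cdots$. In any case, the plan is to reduce $g'(1)>1$ to exactly $p>\ps$. Then $g(y)>y$ just above $1$, while $g(y)<y$ for large $y$ because $g$ is bounded. So there is at least one fixed point $y^*>1$. Theorem~\ref{thm:fixed points:appendix}(iii) gives exactly two fixed points, so $y^*$ is the unique fixed point in $(1,\infty)$. We then have $g(y)>y$ on $(1,y^*)$ and $g(y)<y$ on $(y^*,\infty)$, by continuity and the sign just above $1$.

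Attractivity follows from a monotone iteration argument. Take $y_0\in(1,y^*)$. Since $g$ is increasing and $g(y)>y$ there, the sequence $g^{(k)}(y_0)$ is increasing and bounded by $g(y^*)=y^*$. Its limit is a fixed point in $(1,y^*]$, hence equals $y^*$. The case $y_0>y^*$ is symmetric with a decreasing sequence bounded below by $y^*$. The fixed point $1$ is not attractive, because $g'(1)>1$ pushes every point near $1$ away.

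The main obstacle is the strict bound $g'(y^*)<1$; positivity is already clear from $\Phi'>0$. Crossing the diagonal from above only gives $g'(y^*)\le1$. To get strictness, I would use the structure of $g$: I would show that $h(y)=g(y)-y$ has a simple zero at $y^*$. One approach uses the log-derivative $(\log g)'(y)=d\,\Phi'(y)/\Phi(y)=\frac{d\,C}{(y+a)(by+c)}$, with $a=(q-1)(1-p)$, $b=1-p$, $c=p+(q-1)(1-p)$, and $C=p(1+a)$. At a fixed point, $g'(y^*)=y^*\cdot\frac{dC}{(y^*+a)(by^*+c)}$. If $g'(y^*)=1$, then $y^*$ would be a double root of $\log g(y)-\log y$. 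Counting fixed points in a way that respects multiplicity, through the analysis behind Theorem~\ref{thm:fixed points:appendix}, excludes this. The idea is to argue that $\psi(y)=\log g(y)-\log y$ has at most two zeros counted with multiplicity on $[1,\infty)$ when $p>\ps$. We show this via $\psi'(y)=\frac{dC}{(y+a)(by+c)}-\frac1y$, whose numerator is a quadratic in $y$, so $\psi$ has at most two critical points. Since $\psi(1)=0$ and $\psi'(1)>0$, $\psi$ first increases. Given that $\psi\to-\infty$, the quadratic can change sign at most once after $1$ (its leading coefficient $-b<0$). So $\psi$ is unimodal on $[1,\infty)$, and its zero $y^*$ lies strictly on the decreasing part, where $\psi'(y^*)<0$. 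This gives $g'(y^*)/y^*\cdot y^*/g(y^*)<1$, that is, $g'(y^*)<1$.
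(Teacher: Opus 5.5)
Your proposal is correct in substance, but the computation of $g'(1)$ is wrong as written and needs repair, because both the sign of $g(y)-y$ just above $1$ and your key inequality $\psi'(1)=g'(1)-1>0$ depend on it. At $y=1$ the denominator $(1-p)y+p+(q-1)(1-p)$ equals $1+(q-1)(1-p)=p+q(1-p)$, not $q$. Hence $g'(1)=d\,\Phi'(1)=\frac{dp(1+(q-1)(1-p))}{(1+(q-1)(1-p))^2}=\frac{dp}{p+q(1-p)}=d\hat p$, and $d\hat p>1$ is exactly $p>\ps$. Your intermediate condition $dp(q-(q-1)p)>q^2$ is not equivalent to $p>\ps$; for $d=q=2$ it never holds. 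Replace the hedged computation with this one-line identity.

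With that fixed, your route differs from the paper's mainly in how you prove $g'(y^*)<1$. The paper takes uniqueness of $y^*$ and the sign pattern of $h=g-y$ from Theorem~\ref{thm:fixed points:appendix}. That theorem's proof studies $g''$ (a single inflection point) to show $h'$ has exactly one zero in $(1,\infty)$, and the paper then reads off $h'(y^*)<0$. As you correctly note, the sign pattern alone gives only $h'(y^*)\le 0$. Strictness implicitly uses that the unique zero of $h'$ is the maximizer of $h$, where $h>0$, so it cannot be $y^*$.

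Your log-derivative $\psi=\log g-\log y$ makes this step transparent. The numerator of $\psi'$ is the concave quadratic $-by^2+(dC-c-ab)y-ac$, which is positive at $y=1$, so it has exactly one root $r_2>1$ and is strictly negative beyond it. Thus $\psi>0$ on $(1,r_2]$ and $\psi\to-\infty$, so the unique zero $y^*$ lies in $(r_2,\infty)$ and $\psi'(y^*)=(g'(y^*)-1)/y^*<0$. This argument is self-contained: it also proves that $y^*$ is the only fixed point in $(1,\infty)$, so your appeal to Theorem~\ref{thm:fixed points:appendix}(iii) is not actually needed. The paper's second-derivative analysis, by contrast, is set up to serve the full classification of fixed points for all $p$. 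Your boundedness, monotonicity, and monotone-iteration steps coincide with the paper's.
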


\subsubsection{Weak Spatial Mixing: Proof of Theorem~\ref{thm wsm}}

The proof of Theorem~\ref{thm wsm} proceeds in two main steps. First, we show that under a typical $\theta$-wired boundary condition for any $\theta>0$, the induced messages are uniformly bounded away from $1$ at vertices shortly away from the boundary putting them in the basin of attraction of the $y^*$ fixed point. We then establish that in this regime the message recursion is contracting, so that discrepancies between different boundary conditions decay exponentially as one moves toward the root. Combining these two ingredients yields WSM as in Theorem~\ref{thm wsm}.

\begin{lemma}\label{lem:uniform-positive}
    Let $\Delta \ge 3$, $q > 1$, $p > \ps$, $\theta > 0$ and let $\mathsf M$ be a $\theta$-wired distribution on single-component boundary conditions of the complete $\Delta$-regular tree $\dregv$
    of height $h$ rooted at a vertex $v$.
    Then there exist constants $\varepsilon > 0$, and sufficiently large $M > 0$ (not depending on $\theta$), such that for all  $\ell \ge 1$, all vertices $x \in V(\dregv)$ with $\dist(v, x) = h - \ell$, 
    with probability at least $1-\exp\big( - \frac{\theta d^{\ell}}{8}+ h\log d \big)$, for $\xi\sim \mathsf{M}$, we have $1 + \eps \leq f^{\xi}_{\dregv}(x) \leq M$.
    The same holds for
    $\daryv$, the $d$-ary tree of height $h$ rooted at~$v$.
\end{lemma}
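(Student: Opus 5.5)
We work directly with the message recursion of Lemma~\ref{lem:recursion}, which runs up the tree from the leaves, and first dispose of the upper bound. Since $\Phi$ is increasing on $[1,\infty]$ with $\Phi(\infty)=(1-p)^{-1}$, every internal vertex $x$ satisfies $f^\xi(x)\le (1-p)^{-\Delta}=:M$. This is deterministic and does not depend on $\theta$. It uses at most $\Delta$ children, which covers both the root of $\dregv$ and the $d$-ary case.

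For the lower bound we compare with percolation. Lemma~\ref{lem:recursion} shows that the map $\xi\mapsto f^\xi(x)$ is monotone in the set of leaves wired into $\mathcal C_1(\xi)$: more wired leaves give larger messages, since $\Phi$ is increasing. By the $\theta$-wired stochastic domination we may therefore assume that the wired set $W\subseteq\partial\mathcal T^{v,x}$ contains an independent Bernoulli$(\theta)$ set. It then suffices to lower bound $f$ for this smaller boundary and a fixed realization of $W$.

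The key quantitative step is to start from the wired leaves and run the recursion only along good subtrees. At a leaf in $W$, $f=\infty$, and one step up gives $\Phi(\infty)=(1-p)^{-1}>1$. Define $\varepsilon_0$ by $1+\varepsilon_0:=(1-p)^{-1}$. All factors $\Phi(\cdot)\ge\Phi(1)=1$, so a vertex with at least one child whose message is at least $y$ has message at least $\Phi(y)\ge 1$.

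With a single wired descendant the lower bound decays under the recursion $y\mapsto\Phi(y)$, because $\Phi(y)<y$ near $1$. Instead we use the $d$-ary homogeneous map $g=\Phi^d$. If all $d$ children of a vertex have message at least $y$, the vertex has message at least $g(y)$. By Lemma~\ref{lemma:fp:main}, $g$ is increasing and iterates from any $y_0>1$ converge to $y^*$. In particular $g(y)\ge y$ on $(1,y^*]$, since $1$ is repelling and $y^*$ the only other fixed point. So along full $d$-ary subtrees the bound never drops below its starting value.

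We need more flexibility, though, because not all leaves are wired. The natural tool is a robust version. Since $p>\ps$, we have $d\hat p>1$, equivalently $g'(1)=d\,\Phi'(1)>1$ where $\Phi'(1)=\hat p$. Hence there are $k<d$ with $\Phi(y)^k\ge y$ near $1$? This may fail, since $k\hat p$ might be below $1$.

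Rather than rely on that, we would use a two-scale argument. Choose a depth $\ell_0$ such that, with high probability, every vertex at height $\ell_0$ above the boundary has a subtree with many wired leaves, and argue by induction that good vertices propagate. Concretely, call a vertex \emph{good} if its message is at least $1+\varepsilon$. Because $g(1+\varepsilon)>1+\varepsilon$ strictly, continuity of $\Phi$ gives $\Phi(1+\varepsilon)^{d-1}\,\Phi(1)=\Phi(1+\varepsilon)^{d-1}\ge 1+\varepsilon$ when $\varepsilon$ is small? That again needs $(d-1)\hat p>1$, which can fail.

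So the cleanest route is the one sketched in the introduction. The set of vertices connected to $W$ in independent Bernoulli$(\hat p)$ percolation is supercritical, and the message is an explicit increasing function of the connection probability to $\mathcal C_1$. By the domination of random-cluster by Bernoulli$(\hat p)$, $f^\xi(x)-1\ge q\,\Pr_{\hat p}(x\sim W)/(1-\Pr_{\hat p}(x\sim W))$. It therefore suffices to show $\Pr_{\hat p}(x\sim W)\ge \varepsilon'$ for all $x$ at height $\ell$, on a good event for $W$.

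The remaining probabilistic step runs as follows. For a supercritical Galton--Watson tree (offspring Bin$(d,\hat p)$), the probability of reaching depth $\ell$ with at least $c\,m^{\ell}$ descendants ($m=d\hat p$) is bounded below by some $\varepsilon''>0$ uniformly in $\ell$. Given such a descendant set, the chance that none of its leaves lies in $W$ is at most $(1-\theta)^{cm^\ell}$. We would instead argue on $W$ directly: the number of wired leaves below $x$ is Bin$(d^\ell,\theta)$, which by Chernoff is at least $\theta d^\ell/2$ except with probability $e^{-\theta d^\ell/8}$. A union bound over the at most $d^{h}$ vertices at that level gives the stated $\exp(-\theta d^\ell/8+h\log d)$.

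On this event we need a deterministic lower bound on $\Pr_{\hat p}(x\sim W)$ given that a $\theta/2$ fraction of the leaves is wired. This is the main obstacle. I would handle it by a second-moment argument: take $Y$ to be the number of open paths from $x$ to $W$, so $\E Y=|W|\hat p^\ell\ge(\theta/2)m^\ell$. Using the tree structure, $\E Y^2\le C_\theta(\E Y)^2$ uniformly, since $m>1$ makes the pair sum over branching depths geometric. Paley--Zygmund then gives $\Pr(Y>0)\ge 1/C_\theta$. This bound depends on $\theta$, whereas the statement takes only $M$ independent of $\theta$, so $\varepsilon$ depending on $\theta$ is acceptable.
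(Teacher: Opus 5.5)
Your proof is correct. Its skeleton matches the paper's: a Chernoff bound plus a union bound over the at most $d^h$ vertices at depth $h-\ell$ secures $|W\cap\partial\mathcal T^{v,x}|\ge \theta|\partial\mathcal T^{v,x}|/2$; then you use domination of $\mu^{\xi_{(x)}}_{\mathcal T^{v,x}}$ by Bernoulli$(\hat p)$ percolation, and the identity $f-1=q\pi/(1-\pi)$ with $\pi$ the probability that $x$ connects to the wired component. Your upper bound $(1-p)^{-\Delta}$ is the same deterministic bound, and it is in fact the correct one at the root.

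You differ at the crux, the deterministic lower bound on $\Pr_{\hat p}(x\sim W)$ for a fixed $W$ of density at least $\theta/2$. The paper takes the survival probability $\varphi$ of the supercritical $\mathrm{Bin}(d,\hat p)$ Galton--Watson tree and asserts ``by symmetry'' that $\Pr_{\hat p}(x\sim W)\ge\varphi\theta/2$. Made precise, this picks a uniformly random reached leaf; transitivity of the automorphisms fixing $x$ on the leaves makes that leaf uniform over $\partial\mathcal T^{v,x}$. You instead use Paley--Zygmund on the number $Y$ of open paths from $x$ to $W$, with $m=d\hat p>1$ exactly because $p>\ps$. That computation does close. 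For fixed $w$, at most $d^{\ell-k}$ leaves $w'$ share exactly $k$ edges with the path to $w$, and such a pair is jointly connected with probability $\hat p^{2\ell-k}$. Hence
\[
\E Y^2\le |W|\,\hat p^{\ell}\sum_{j=0}^{\ell} m^{j}\le |W|\,\hat p^{\ell}\,\frac{m^{\ell+1}}{m-1},
\]
and therefore
\[
\Pr(Y>0)\ge \frac{(\E Y)^2}{\E Y^2}\ge \frac{(m-1)|W|}{m\,d^{\ell}}\ge \frac{(m-1)\theta}{2m}.
\]

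The paper's route is one line but relies on exact symmetry of the tree. That is why it must appeal to ``a constant factor'' for the lower-degree vertex in the $d$-ary case. Your route costs a short computation but uses no transitivity, so it is insensitive to a bounded number of lower-degree vertices and gives an explicit $\varepsilon$. In a write-up, delete the abandoned attempts (propagation via $g$, and the $k<d$ children idea), and state the second-moment computation explicitly, since it carries the argument.
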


\begin{proof}
For ease of notation set $\mathcal T^v = \dregv$.
Let $k = h - \ell$ and
let $\mathcal{T}^v_{k} := \{u \in \mathcal{T}^v : \dist(v, u) \leq k\}$ be the truncated tree rooted at $v$.  
For $x \in \partial \mathcal{T}^v_{k}$,
let $A_x$ be the number of vertices wired by $\xi$ in $\partial\mathcal T^{v, x}$, and let $\mathcal{A}_x$ be the event $A_x \geq \frac{\theta |\partial\mathcal T^{v, x}|}{2}$.
Since $A_x$ stochastically dominates a $\mathrm{Binomial}(|\partial\mathcal T^{v, x}|, \theta)$, by Chernoff, 
\begin{equation}
    \label{eq:m:pr1}
\pr\left(\mathcal{A}_x\right) \geq 1 - e^{-\frac{\theta |\partial\mathcal T^{v, x}|}{8}}.
\end{equation}
This bound holds for every $x \in \partial \mathcal{T}^v_{k}$; hence, since $|\partial\mathcal T^{v, x}| \ge d^{\ell}$, we obtain by union bound that

\begin{equation}
\label{eq:prob:bound}
    \pr\Big(\bigcap\nolimits_{x \in \partial \mathcal{T}^v_{k}} \,\mathcal{A}_x\Big) \geq 1 - d^{k}e^{-\frac{\theta d^{\ell}}{8}} = 1 - e^{-\frac{\theta d^{\ell}}{8} + k \log d} \ge 1 - e^{-\frac{\theta d^{\ell}}{8} + h\log d}.
\end{equation}

We consider next the probability that $x \in \partial \mathcal{T}^v_k$ is connected to one of the vertices wired by $\xi$ in $\partial \mathcal T^{v,x}$.
It is a standard fact (see~Theorem 3.21 in~\cite{grimmett2006random}), that
the random–cluster distribution 
stochastically dominates the independent Bernoulli bond percolation with a parameter $\hat p = p/(p+(1-p)q)$.
Since $\mathcal T^v$ is the result of rooting at $v$ the complete $\Delta$-regular tree, 
$\mathcal T^{v,x}$ is a complete $d$-ary tree.
On the $d$–ary tree, $\hat p > 1/d$ when $p>\ps$, and so the percolation is in the supercritical regime.
The probability that $x$ is connected to $\partial \mathcal T^{v, x}$ is uniformly at least the probability of that supercritical branching process surviving forever, which we denote by $\varphi\in (0,1)$. 

Conditioned on $\mathcal A_x$, by symmetry, the probability that $x$ is connected to the wired component in $\partial \mathcal T^{v, x}$ is bounded below by $\frac{\varphi \theta}{2}$.
By the definition of the message $f^\xi_{\mathcal{T}^v}(x)$ at $x$, this probability can be rewritten as:
\begin{equation}
\label{eq:m:pr}
    \frac{Z^{\xi_{(x)},1}_{\mathcal{T}^{v,x}}(x)}{Z^{\xi_{(x)},0}_{\mathcal{T}^{v,x}}(x) + Z^{\xi_{(x)},1}_{\mathcal{T}^{v,x}}(x)} = \frac{f^\xi_{\mathcal{T}^v}(x) - 1}{f^\xi_{\mathcal{T}^v}(x) + q - 1} \geq \frac{\varphi \theta}{2}, 
\end{equation}
where recall that $\xi_{(x)}$ denotes the restriction of $\xi$ to $\partial \mathcal T^{v,x}$. 
This implies that $f^\xi_{\mathcal{T}^v}(x) \geq 1 + \frac{q\varphi \theta}{2-\varphi \theta}$.
Thus, conditioned on $\bigcap\nolimits_{x \in \partial \mathcal{T}^v_k} \,\mathcal{A}_x$, we have $f^\xi_{\mathcal{T}^v}(x) \geq 1 + \eps$ for all $x \in \partial \mathcal{T}^v_{\ell}$ with  $\eps = \frac{q\varphi \theta}{2-\varphi \theta}  > 0$, and the result follows from~\eqref{eq:prob:bound}.
For the upper bound on the message, the function $\Phi$ in \eqref{message recursion} is increasing on $[1, \infty)$ and moreover satisfies $\lim_{y \to \infty}\Phi(y) = \frac{1}{1-p}$.
Thus $\Phi \leq \frac{1}{1-p}$, and $f^{\xi}_{\mathcal T^v}(x) \leq (1-p)^{-d}$ for all $x \notin \partial \mathcal{T}^v$.

The same argument goes through in exactly the same manner for the $d$-ary tree $\daryv$. The only difference is that one vertex in one of the subtrees 
$\mathcal T^{v,x}$ may have branching $d-1$ instead of $d$ but this only affects $\varphi$
and the probability of connecting to the wired boundary
by at most a constant factor.
\end{proof}

The above shows that for any $\theta$-wired boundary conditions, with high probability $\xi$ will be such that a little above the leaves of $\dregv$, all the messages lie in a compact interval uniformly bounded away from the fixed point at $1$.  
The following lemma establishes exponential contraction to the wired fixed point $y^*$ for complete \(d\)-ary trees starting from such incoming messages.

\begin{lemma}\label{lem: uniform contraction}
Let $\Delta \ge 3$, $q > 1$, and $p > \ps$.
Suppose we are given two assignments of leaf messages on $\partial \dary$, denoted by
$f_1,f_2 : \partial \dary \to [1,\infty)$, such that there exist $\eps \in (0,1)$ and $M>0$ for which for all $x\in \partial\dary$ we have
$
1+\eps \le f_1(x),f_2(x)\le M
$.
Let $\hat f_1,\hat f_2 : V(\dary)\to [1,\infty)$ be the corresponding message functions obtained by propagating these boundary values to the interior using the recursion in~\eqref{message recursion}. Then, there exist $C>0$ and $\beta\in(0,1)$ depending on $p$,
$q$, $d$, $\eps$, and $M$ such that for every vertex $u \in V(\dary):$
\[
|\hat f_1(u)-\hat f_2(u)|
\le C\,\beta^{\dist(u,\partial\dary)}.
\]
\end{lemma}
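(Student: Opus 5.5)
We will prove the lemma by a sandwich argument. Monotonicity of $\Phi$ lets us reduce to the homogeneous recursion $g$ started from the constant boundary values $1+\eps$ and $M$. Lemma~\ref{lemma:fp:main} then gives exponential convergence of both iterates to $y^*$.

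\textbf{Step 1: reduction to $g$.} Since $\Phi$ is increasing on $[1,\infty)$, the propagated messages are monotone in the leaf data. Let $u$ be a vertex at distance $k$ from $\partial\dary$; its subtree is a complete $d$-ary tree of height $k$. Replacing all leaf values by $1+\eps$ (resp.\ $M$) yields, by induction on height,
\begin{align*}
g^{(k)}(1+\eps)\le \hat f_i(u)\le g^{(k)}(M), \qquad i=1,2.
\end{align*}
Hence
\begin{align*}
|\hat f_1(u)-\hat f_2(u)|\le g^{(k)}(M)-g^{(k)}(1+\eps)\le |g^{(k)}(M)-y^*|+|g^{(k)}(1+\eps)-y^*|,
\end{align*}
and it suffices to show that $g^{(k)}(y_0)$ converges to $y^*$ exponentially fast for each of the two starting points $y_0\in\{1+\eps,M\}$.

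\textbf{Step 2: local contraction near $y^*$.} By Lemma~\ref{lemma:fp:main}, $g$ is $C^1$ and $g'(y^*)\in(0,1)$. Fix any $\beta\in(g'(y^*),1)$. By continuity of $g'$ there is $\delta>0$ with $g'\le\beta$ on $I=[y^*-\delta,y^*+\delta]\subset(1,\infty)$. Since $g(y^*)=y^*$, the mean value theorem gives $|g(y)-y^*|\le\beta|y-y^*|$ for $y\in I$. In particular $g(I)\subseteq I$.

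\textbf{Step 3: uniform entrance time into $I$.} Again by Lemma~\ref{lemma:fp:main}, $g^{(k)}(y_0)\to y^*$ for each $y_0>1$. Take $K$ such that both $g^{(K)}(1+\eps)$ and $g^{(K)}(M)$ lie in $I$; $K$ depends only on $p,q,d,\eps,M$. By monotonicity of $g$ the whole interval $[1+\eps,M]$ is mapped into $I$ after $K$ steps, though we only need the two endpoints. For $k\ge K$, Step 2 gives $|g^{(k)}(y_0)-y^*|\le \delta\beta^{k-K}$.

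For $k<K$ we use the trivial bound: all messages lie in $[1,\max(M,(1-p)^{-d})]$. So the difference is at most a constant, which can be written as $C\beta^{k}$ after enlarging $C$. Taking $C=\max\{2\delta\beta^{-K},\,(\max(M,(1-p)^{-d})-1)\beta^{-K}\}$ gives the claimed bound.

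\textbf{Main obstacle.} The only delicate point is the uniformity of $K$ over the admissible leaf data; monotonicity and the sandwich by the two constant starting values remove it. A minor check is that $y^*$ lies strictly inside $(1,\infty)$ with a neighborhood on which $g'<1$, which is exactly the content of Lemma~\ref{lemma:fp:main}. The rate $\beta$ can be taken arbitrarily close to $g'(y^*)$, consistent with the sharp rate referenced in Remark~\ref{rem:choice-of-beta}.
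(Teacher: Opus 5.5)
Your proof is correct and follows essentially the same route as the paper. You sandwich the messages between the $g$-iterates of $1+\eps$ and $M$, wait a finite number of steps (uniform over the admissible leaf data) to enter a neighborhood of $y^*$ where $g'\le\beta<1$, and then apply the mean value theorem; the only cosmetic difference is that you contract each iterate's distance to $y^*$ and make the invariance $g(I)\subseteq I$ explicit, whereas the paper contracts the gap $\tilde M_k-\tilde m_k$ directly.
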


\begin{proof}
For ease of notation we set $\mathcal T = \dary$;
let $\rho$ denote the root of $\mathcal T$.
Fix a vertex $u \in V(\mathcal T)$ and let $s = \dist(u, \partial\mathcal T)$. 
Consider two sequences, $\{\tilde m_k\}^s_{k=0}$ and $\{\tilde M_k\}^s_{k=0}$ where 
$\tilde m_s = 1+\eps$, $\tilde M_s = M$, and $\tilde m_k = g(\tilde m_{k+1})$, $\tilde M_k = g(\tilde M_{k+1})$ for $k = 0, \dots, s-1$; $g$ is the function defined in \eqref{eq: function g}.
The role of these sequences is that 
since $g$ is increasing (see Lemma~\ref{lemma:fp:main}), we have that for any $k$ , any vertex $w \in V(\mathcal T^{\rho,u})$ at distance $k$ from $\partial \mathcal T$, and any $i \in \{1, 2\}$
\begin{equation}
\label{eq:message:sandwich}
    \tilde m_{s-k} \le \hat f_i(w) \le \tilde M_{s-k}.
\end{equation}
By Lemma~\ref{lemma:fp:main}, $g$ has a unique attractive fixed point at $y^*>1$ when 
$p > \ps$,
and $\tilde m_0 \to y^*$ and $\tilde M_0 \to y^*$ as $s \rightarrow \infty$. Moreover, since $g$ is differentiable on $[1, \infty)$, and $g'(y^*)<1$ by Lemma \ref{lemma:fp:main}, by continuity of $g'$ there exists $\eta > 0$ such that 
\[
    \beta = \sup_{z \in [y^* - \eta,y^* + \eta]} g'(z) \in (0, 1).
\]

For any $k > 0$, we have $|\tilde m_{k - 1} - \tilde m_k| = |g( \tilde m_k) - \tilde m_k| > 0$ unless $\tilde m_k = y^*$ and the same holds for $\tilde M_k$. 
Therefore, after a finite number $t_0(\eps,M)$ of applications of the function $g$ to 
$\tilde m_{s}$ and $\tilde M_{s}$, we have that $\tilde m_k,\tilde M_k \in [y^*-\eta,\,y^*+\eta]$ for all $k < s-t_0 = : K$. (Note that we may assume that $t_0(\eps,M) < s$, as otherwise the result is vacuously true for a large enough $C$.)
Then for $k<K$ by the mean value theorem:
\[
\tilde M_{k-1} - \tilde m_{k-1} = g(\tilde M_k) - g(\tilde m_k) = g'(\zeta_k)(\tilde M_k-\tilde m_k)
\]
for some $\zeta_k \in [\tilde m_k,\tilde M_k]$. Since $\zeta_k \in [y^*-\eta,y^*+\eta]$, we have $g'(\zeta_k)\le \beta$, hence
\[
\tilde M_{k-1}-\tilde m_{k-1} \leq \beta(\tilde M_k-\tilde m_k).
\]
Iterating this inequality up to the vertex $u$ yields 
\[
    \tilde M_0-\tilde m_0 \le (\tilde M_{K}-\tilde m_{K}) \beta^K = C\beta^s
\]
for some constant $C>0$ depending only on $p$, $q$, $d$, $\eps$ and $M$. Indeed, $\tilde m _K$ and $\tilde M_K$ are bounded since the function $g$ is bounded on a compact set by the continuity of $g$, so $|\tilde M_K - \tilde m _K|$ is at most a constant. 
From~\eqref{eq:message:sandwich},
we conclude that 
$|\hat f_{1}(u)-\hat f_{2}(u)| \leq \tilde M_0-\tilde m_0 \leq C\beta^{\dist(u,\partial\dary)}$. 
\end{proof}

The previous lemma establishes the exponential contraction of messages for the complete $d$-ary tree when all incoming (i.e., leaves) messages lie in a fixed compact interval above $1$. 
We extend this fact now to $\Delta$-regular trees. These only differ from the $d$-ary tree at one vertex, and the message function there has a bounded Lipschitz constant so up to a change of constant, we obtain the following (the full proof is given in Section~\ref{subsec:extensions}). 

\begin{corollary}\label{cor:uc}
The conclusion of Lemma~\ref{lem: uniform contraction} also holds if the complete $d$-ary tree $\dary$ is replaced by the finite rooted $\Delta$-regular tree $\dreg$. 
\end{corollary}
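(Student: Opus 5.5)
The plan is to reduce to Lemma~\ref{lem: uniform contraction} by viewing $\dreg$, rooted at $\rho$, as $\Delta$ disjoint complete $d$-ary trees of height $h-1$ hanging from $\rho$. Removing the root does not change any message away from it. For every vertex $u\neq\rho$, the subtree $\mathcal T^{\rho,u}$ of $\dreg$ is a complete $d$-ary tree of height $\dist(u,\partial\dreg)$. The messages $\hat f_i(u)$ are therefore obtained by propagating the leaf values of that subtree through the recursion~\eqref{message recursion}. So for $u\ne\rho$ we apply Lemma~\ref{lem: uniform contraction} directly to the $d$-ary tree rooted at $u$, with the same $\eps$ and $M$, and obtain $|\hat f_1(u)-\hat f_2(u)|\le C\beta^{\dist(u,\partial\dreg)}$ with the constants of that lemma. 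Note that the proof of Lemma~\ref{lem: uniform contraction} only used the sandwich~\eqref{eq:message:sandwich} within the subtree of $u$, so it applies verbatim to any complete $d$-ary subtree.

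It remains to treat the root, whose $\Delta=d+1$ children $w_1,\dots,w_\Delta$ all satisfy $\dist(w_j,\partial\dreg)=h-1$. We have $\hat f_i(\rho)=\prod_{j=1}^{\Delta}\Phi(\hat f_i(w_j))$. We will use three properties of $\Phi$ on $[1,\infty)$:
\begin{itemize}
\item it is increasing, with $1\le\Phi\le(1-p)^{-1}$;
\item it is smooth;
\item it is Lipschitz, say with constant $L$, since $\Phi'(x)=\frac{p+(q-1)(1-p)-(q-1)(1-p)^2}{((1-p)x+p+(q-1)(1-p))^2}$ is bounded on $[1,\infty)$.
\end{itemize}
A telescoping bound for products of factors in $[1,(1-p)^{-1}]$ then gives
\[
|\hat f_1(\rho)-\hat f_2(\rho)|\le \Delta\,(1-p)^{-(\Delta-1)}\,L\,\max_j|\hat f_1(w_j)-\hat f_2(w_j)|\le \Delta(1-p)^{-(\Delta-1)}L\,C\beta^{h-1}.
\]
This is at most $C'\beta^{h}$ with $C'=\Delta(1-p)^{-(\Delta-1)}LC\beta^{-1}$.

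Taking $C'$ as the new constant, which is at least $C$, we get the bound for all vertices, with the same $\beta$ and a constant depending only on $p,q,d,\eps,M$. There is no real obstacle. The only points needing care are checking that $\Phi$ is globally Lipschitz on $[1,\infty)$, which follows from the denominator being bounded below by $1$, and noting that the subtrees of non-root vertices are genuinely complete $d$-ary trees, so the lemma applies without modification.
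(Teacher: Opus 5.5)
Your proposal is correct and is essentially the paper's argument. The paper views $\dreg$ as a $d$-ary tree with a single exceptional vertex, the root, whose update map $\Phi_\Delta$ is Lipschitz, and obtains the corollary as the special case $\mathcal S=\{\rho\}$ of Lemma~\ref{lem: message contraction purturbed}, which unwinds to exactly your ``contraction on the $d$-ary subtrees plus one Lipschitz step at the root'' reduction. Your global bounds $1\le\Phi\le(1-p)^{-1}$ and $\sup_{[1,\infty)}\Phi'<\infty$ spare you the general lemma's compact-interval hypothesis on the children's messages (and if you want $C'\ge C$ literally, just take $\max\{C,C'\}$).
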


We are now ready to prove Theorem~\ref{thm wsm}, which follows from Lemma~\ref{lem:uniform-positive} and Corollary~\ref{cor:uc}.

\begin{proof}[Proof of Theorem \ref{thm wsm}]
Fix a vertex $v\in V(\dreg)$ and consider the rooted tree $\dregv$. 
For ease of notation let $h_v = \dist(v,\partial \dregv)$.
Let $\mathcal{T}^v_{k} := \{u \in V(\dregv) : \dist(v, u) \leq k\}$ with $k =  h_v - \sqrt{h_v}$. Note that $\mathcal{T}^v_{h_v}$ is a complete $\Delta$-regular tree of height $h_v$.
By Lemma~\ref{lem:uniform-positive}, for all 
$x \in \partial \mathcal{T}^v_{k}$
we have
$1 + \eps \leq f^{\xi}_{\dregv}(x) \leq M$ with probability 
at least $1-\exp\big( - \frac{\theta d^{\sqrt{h_v}}}{8}+h_v\log d\big)$ for a suitable $\varepsilon > 0$ and $M > 0$ sufficiently large, and, by the same argument, the same holds for 
$f^{1}_{\dregv}(x)$.
Applying Corollary \ref{cor:uc} to $\mathcal{T}^v_{k}$ we obtain for  suitable $\beta_0 \in (0, 1)$ and $C > 0$ that
$$
|f^{1}_{\dregv}(v) -  f^{\xi}_{\dregv}(v)| \leq C \beta_0^{k}.
$$

We next translate this bound into an analogous one for the difference in probability between the events $v \sim \mathcal C_1(\xi)$ under $\xi$ and $v\sim \partial\dreg$ under the wired boundary. With a little algebra, one sees that for two single-component boundary conditions $\xi_1,\xi_2$, we have 
\begin{align}\label{eq:message to probability}
    |\mu^{\xi_1}_{\dregv}(v\sim \mathcal{C}_1(\xi_1)) \!- \!\mu^{\xi_2}_{\dregv}(v \sim \mathcal{C}_1(\xi_2))|
= \frac{q| f^{\xi_1}_{\dregv}(v)  \!- \! f^{\xi_2}_{\dregv}(v)|}{ (f^{\xi_1}_{\dregv}(v) + q - 1)(f^{\xi_2}_{\dregv}(v) + q - 1)}   \le \frac{1}{q} |f^{\xi_1}_{\dregv}(v) \!- \!f^{\xi_2}_{\dregv}(v)| 
\end{align}
where the last inequality used that $f_{\dregv}^{\xi_1}, f_{\dregv}^{\xi_2}\ge 1$. 
By~\eqref{eq:message to probability}
\begin{align*} 
|\mu^1_{\dregv}(v\sim \partial\dreg) - \mu^{\xi}_{\dregv}(v \sim \mathcal{C}_1(\xi))| \le \frac{1}{q} \cdot |f^1_{\dregv}(v)-f^{\xi}_{\dregv}(v)|
\leq  \frac{C}{q} \beta_0^{k}.
\end{align*}

For a large enough constant $\gamma$, 
there exists a sufficiently small constant $a > 0$ such that for $h_v \ge \gamma$ we have
$h_v -\sqrt{h_v} \ge a h_v$; thus
for a suitable constant $\beta \in (0, 1)$:
\begin{equation} \label{eq:main:wsm}
|\mu^1_{\dregv}(v\sim \partial\dreg) - \mu^{\xi}_{\dregv}(v \sim \mathcal{C}_1(\xi))| \leq \frac{C\beta^{h_v}}{q}.
\end{equation}
Finally, for $\gamma$ large enough we have for all $h_v \ge \gamma$, 
$
\frac{\theta d^{\sqrt{h_v}}}{8} - h_v\log d \ge \frac{\theta d^{\sqrt{h_v}}}{16},
$
so that \eqref{eq:main:wsm} holds with probability at least $1 -\exp\big( -\frac{\theta d^{\sqrt{h_v}}}{16}\big)$
as claimed.
\end{proof}

\begin{remark}\label{rem:choice-of-beta}
    It is clear going through the above proofs that if we define $\beta_* := g'(y^*)$, then the exponential decay rate of Theorem~\ref{thm wsm} can be identified as exactly $\beta_*$. More precisely, we have the bound 
    \begin{align*}
        \big| \mu_{\dreg}^1 (v\sim \partial\dreg) - \mu_{\dreg}^\xi (v\sim \mathcal C_1(\xi))\big| \le C (\beta_* + o(1)) ^{\dist(v,\partial \dreg) - \sqrt{\dist(v,\partial \dreg)}}\,.
    \end{align*}
\end{remark}

We now state two direct corollaries of Theorem~\ref{thm wsm}. They both establish 
analogous decay bounds; the first
does it for the difference on edge probabilities and the second for the case when the root vertex is wired to the boundary component as part of the boundary condition. 
Both facts will be used in subsequent proofs.

In what follows we use $A$ as the random subgraph sampled from a random-cluster distribution; e.g., $e \in A$ is the event that edge $e$ is part of the random-cluster configuration. 

\begin{corollary}
    \label{cor: edge WSM}
    Under the same assumptions as in Theorem \ref{thm wsm}, 
    with the same probability, 
    there exist constants
    $C > 0$, $\beta \in (0, 1)$, and 
    $\gamma > 0$  depending only on $\Delta$, $p$, $q$, and $\theta$ such that
    for any edge $e \in E(\dreg)$ with $\dist(e, \partial\dreg) \ge \gamma$  we have 
    \[
    |\mu^1_{\dreg}(e \in A) - \mu^\xi_{\dreg}( e \in A)| \leq C \beta ^{\dist(e, \partial\dreg)}.
    \]    
\end{corollary}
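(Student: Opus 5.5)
Fix an edge $e=\{u,w\}$ of $\dreg$, with $w$ the endpoint farther from the root, and root the tree at $u$, giving $\mathcal T^u$. The plan is to write $\mu^\xi(e\in A)$ as an explicit function of the messages that enter $u$ and $w$, and then apply the message bounds from the proof of Theorem~\ref{thm wsm}.

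Removing $e$ splits the tree into two pieces: the subtree $\mathcal T^{u,w}$ hanging from $w$, and the rest, call it $\mathcal T'$, rooted at $u$. Both carry the single-component boundary condition induced by $\xi$. Conditioning on the configuration of all edges other than $e$, the edge $e$ is open with probability $\hat p$ if its endpoints are not otherwise connected and with probability $p$ if they are. In a tree, $u$ and $w$ are otherwise connected exactly when both are joined to $\mathcal C_1(\xi)$. The two sides are independent given $e$ closed, so summing over the four connection states gives
\[
\mu^\xi(e\in A)=\frac{p\,(Z_w^0+Z_w^1)(Z_u^0+Z_u^1)\;-\;p\,(q-1)\,Z_w^1Z_u^1/q\;\cdots}{\cdots},
\]
which is a rational function of $a=f^\xi(w)$ (the message of $\mathcal T^{u,w}$) and $b=f^\xi(u)$ (the message of $\mathcal T'$). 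Concretely, I will write $Z_w^1/Z_w^0=(a-1)/q$ and $Z_u^1/Z_u^0=(b-1)/q$. After dividing numerator and denominator by $Z_w^0Z_u^0$, this yields
\[
\mu^\xi(e\in A)=F(a,b)
\]
for an explicit rational $F$ with positive denominator. The key property is that $F$ is smooth on $[1,M]^2$, hence Lipschitz there with a constant depending only on $p,q,M$. This is the same "little algebra" as in \eqref{eq:message to probability}. The wired case is the same formula with $a^1=f^1(w)$ and $b^1=f^1(u)$.

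It therefore suffices to show $|a-a^1|,|b-b^1|\le C\beta^{\dist(e,\partial\dreg)}$ with the stated probability, together with upper bounds $a,b\le M$. The message $a$ is the message of a complete $d$-ary tree, $\mathcal T^{u,w}$, of height at least $\dist(e,\partial\dreg)$.

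The message $b$ lives on $\mathcal T'$, which is $\dreg$ rooted at $u$ with one branch deleted. It is not quite a complete tree: $u$ has one fewer child, and if $u$ is near the root the far side also has a degree defect at the original root. I would handle this exactly as in Corollary~\ref{cor:uc}. All leaves of $\mathcal T'$ are at distance at least $\dist(e,\partial\dreg)$ from $u$. I will run Lemma~\ref{lem:uniform-positive} at depth $\sqrt{\cdot}$ above the leaves to get messages in $[1+\eps,M]$ with the required probability, after a union bound over at most $d^h$ subtrees as in that lemma. Then I propagate via the sandwich argument of Lemma~\ref{lem: uniform contraction}. The $O(1)$ vertices with non-standard branching contribute only bounded Lipschitz factors of $\Phi$, since $\Phi'$ is bounded on $[1,\infty)$.

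The main obstacle is this non-homogeneous tree. The monotone sandwich $\tilde m_k,\tilde M_k$ needs a uniform recursion, and it breaks at the defect vertices. I would handle this by applying the contraction separately to each complete $d$-ary subtree below the defects. At the defects, I would use that $\prod\Phi$ is Lipschitz in each argument on $[1,\infty)$, which costs at most a constant factor. I would also absorb the $\sqrt{\cdot}$ loss into $\beta$ as in the proof of Theorem~\ref{thm wsm}. Combining this with the Lipschitz bound on $F$ gives the corollary.
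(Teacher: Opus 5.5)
Your proof is correct, but it takes a different route from the paper's. The paper never computes the edge marginal. It takes the monotone coupling of $\mu^1_{\dreg}$ and $\mu^\xi_{\dreg}$, resamples $e$ given everything else, and notes that $e$ can disagree only if the wired status of $u$ or of $w$ disagrees. It then quotes Theorem~\ref{thm wsm} at both endpoints as a black box.

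That is shorter, but it leaves a step implicit. The indicators that govern the conditional law of $e$ are ``joined to $\mathcal C_1$ within its own side of $e$''. Theorem~\ref{thm wsm} instead controls the whole-tree events $\{u\sim\mathcal C_1(\xi)\}$, which may route through $e$. Under the full measure the side events depend on both side messages, so the paper's final equality is an identification it does not justify.

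Your formula $\mu^\xi_{\dreg}(e\in A)=F(a,b)$ in the two side messages avoids that issue. The price is that you re-run the proof of Theorem~\ref{thm wsm} on the two pieces rather than citing it. You can also drop the cap $M$. Writing $s_a=(a-1)/(a+q-1)$ and $s_b=(b-1)/(b+q-1)$, one gets $F=p\,(1+(q-1)s_as_b)/\bigl(p\,(1+(q-1)s_as_b)+q(1-p)\bigr)$, which is globally Lipschitz on $[1,\infty)^2$.

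Three small corrections.

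First, the defect you name as the main obstacle is not there. In $\mathcal T'$ the vertex $u$ has exactly $\Delta-1=d$ children. After re-rooting at $u$, the old root $\rho$ (when $\rho\neq u$) also has $\Delta-1=d$ children. So $\mathcal T'$ truncated at depth $\dist(u,\partial\dreg)$ is a complete $d$-ary tree, and Lemma~\ref{lem: uniform contraction} applies directly. The only irregularity is that leaves of $\mathcal T'$ below the truncation level lie at different depths, and this matters only in the Lemma~\ref{lem:uniform-positive} step.

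Second, the correction term in your numerator has the wrong sign. With $Z=Z^0+Z^1$ on each side, the relative weights are $p\,[Z_wZ_u+(q-1)Z_w^1Z_u^1]$ for $e$ open and $q(1-p)Z_wZ_u$ for $e$ closed. The sign is $+$ because opening $e$ when both endpoints are already joined to $\mathcal C_1(\xi)$ merges nothing.

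Third, the union bound should run over the $O(d^{\dist(u,\partial\dreg)})$ vertices at the truncation depth, not over $d^h$ subtrees. Only then is the failure probability $e^{-cd^{\sqrt{\dist(e,\partial\dreg)}}}$ uniformly in $h$.
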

\begin{proof}
For an edge $e=\{u,v\}$ with $\dist(e,\partial\dreg)\ge \gamma$, reveal all edges except $e$, and consider the monotone coupling $\mathbf{P}$ of $\mu^1_{\dreg}$ and $\mu^\xi_{\dreg}$. Conditional on the revealed configuration, the law of $e$ depends only on the indicators of the events that $u$ and $v$ are connected to the wired component in their respective subtrees, and this dependence is monotone in these indicators. Therefore, by monotonicity, the value of $e$ in the coupling can differ only if the wired status of at least one of $u$ or $v$ differs between the two configurations. Therefore,
\[
|\mu^1_{\dreg}(e\in A) - \mu^\xi_{\dreg}( e\in A)| \leq \mathbf{P}(\mathbf 1_{\{u\sim \partial\dreg\}}\neq \mathbf 1_{\{u\sim \mathcal C_1(\xi)\}}) + \mathbf{P}(\mathbf 1_{\{v\sim \partial\dreg\}}\neq \mathbf 1_{\{v\sim \mathcal C_1(\xi)\}}),
\]
and, by monotonicity, the right-hand side of this inequality is equal to
\[
|\mu^1_{\dreg}(v\sim \partial\dreg) - \mu^{\xi}_{\dreg}(v \sim \mathcal{C}_1(\xi))| +|\mu^1_{\dreg}(u\sim \partial\dreg) - \mu^{\xi}_{\dreg}(u \sim \mathcal{C}_1(\xi))|,
\]
and the result follows.
\end{proof}

Let $\mu^{1,\circlearrowleft}_{\dreg}$ and $\mu^{\xi,\circlearrowleft}_{\dreg}$ denote the random-cluster distribution with boundary condition $1$ and $\xi$, respectively, 
in which the root of $\dreg$ is wired to $\partial\dreg$ and $\mathcal C_1(\xi)$, respectively.

\begin{lemma} \label{lemma: root decay}
    Under the same assumptions as in Theorem \ref{thm wsm}, 
    with the same probability, 
    there exist constants
    $C > 0$, $\beta \in (0, 1)$, and 
    $\gamma > 0$  depending only $\Delta$, $p$, $q$, and $\theta$ such that
    for any $v \in V(\dreg)$ with $\dist(v, \partial\dreg) \ge \gamma$, we have 
\[
\left| \mu^{1,\circlearrowleft}_{\dreg}(v\sim \partial\dreg) 
- \mu^{\xi,\circlearrowleft}_{\dreg}(v \sim \mathcal C_1(\xi)) \right|
\leq C \beta^{\dist(v, \partial\dreg)}.
\]
 Moreover, the same holds for the complete $d$-ary tree $\dary$.
\end{lemma}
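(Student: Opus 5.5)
We will reduce the statement to the message recursion of Lemma~\ref{lem:recursion}, now run from the leaves toward the root $\rho$ but conditioned on the root being wired to the boundary component. The key observation is that wiring $\rho$ to $\partial\dreg$ (resp.\ $\mathcal C_1(\xi)$) is itself a single-component boundary condition for the tree rerooted at $v$. If $v=\rho$ the probability is trivially $1$ in both measures, so assume $v\neq\rho$. Root the tree at $v$, so that $\dregv$ is a tree whose leaves are $\partial\dreg$ together with, possibly, the old root $\rho$ (when $\rho$ is not already internal in the rerooted tree, treat it as an extra boundary vertex). Define $\xi^\circlearrowleft$ as the boundary condition on $\partial\dreg\cup\{\rho\}$ in which $\rho$ is added to $\mathcal C_1(\xi)$; similarly define $1^\circlearrowleft$. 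Both are single-component, and $\mu^{\xi,\circlearrowleft}_{\dreg}(v\sim\mathcal C_1(\xi))$ is the probability that $v$ is connected to $\mathcal C_1(\xi^\circlearrowleft)$. By \eqref{eq:message to probability} it therefore suffices to bound $|f^{1^\circlearrowleft}_{\dregv}(v)-f^{\xi^\circlearrowleft}_{\dregv}(v)|$.

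To propagate messages, I will decompose the rerooted tree at $v$ into the subtrees hanging off the path from $v$ to $\rho$. Every child $w$ of $v$ not on the path to $\rho$ roots a complete $d$-ary subtree whose boundary lies in $\partial\dreg$ and does not contain $\rho$. Its messages under $\xi^\circlearrowleft$ and $1^\circlearrowleft$ coincide with those under $\xi$ and $1$. By Lemma~\ref{lem:uniform-positive} applied at depth $\sqrt{h_v}$ (with the same failure probability as in Theorem~\ref{thm wsm}), followed by Lemma~\ref{lem: uniform contraction}, these incoming messages differ by at most $C\beta^{h_v}$, and they lie in a compact subinterval of $(1,\infty)$ bounded away from $1$. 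For the unique child $w_0$ of $v$ on the path toward $\rho$ (if $v$ is not a leaf), the subtree contains $\rho$ wired to the component, so its message is $+\infty$ at $\rho$ itself. Going down the path, each vertex on the path has children which are either off-path $d$-ary subtrees of height at least $h_v$ (their distance to $\partial\dreg$ is at least $\dist(v,\partial\dreg)$, since the tree is complete and these vertices are at least as far from the leaves as $v$) or the next path vertex. The recursion at path vertices is $\prod\Phi(\cdot)$, and $\Phi$ is increasing, bounded by $(1-p)^{-1}$, and Lipschitz on $[1+\eps,\infty]$ (also at $\infty$ after the change of variable $x\mapsto 1/x$, or simply because $\Phi'$ is bounded on $[1,\infty)$). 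Hence, by induction from $\rho$ down to $w_0$, the discrepancy at each path vertex is at most $L\sum$ of the discrepancies of its off-path children. A cleaner route is to note that all path messages are products of factors $\Phi(\cdot)\le(1-p)^{-1}$ with bounded logarithmic derivatives. Then
\[
|\log f^{1^\circlearrowleft}(v)-\log f^{\xi^\circlearrowleft}(v)|\le \sum_{\text{off-path }w}L\,|f^1(w)-f^\xi(w)|.
\]

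The main obstacle is that this sum runs over off-path children along the entire path from $v$ to $\rho$, up to $d\cdot\dist(v,\rho)\le dh$ terms, each of size $C\beta^{\dist(w,\partial\dreg)}\le C\beta^{h_v}$. This gives $C h\beta^{h_v}$, not $C\beta^{h_v}$. I would fix it by not using a crude Lipschitz bound but contraction. The sensitivity of $f(v)$ to the message at a vertex $j$ steps down the path is a product of $j$ factors $\Phi'(\cdot)/\Phi(\cdot)\cdot f$. Along the path the messages are bounded below by $1+\eps$ and have at least one off-path child that is wired-biased, so each step contracts by a factor $\le\beta'<1$ (the analogue of $g'<1$ near the fixed point, now for the one-variable map $x\mapsto c\,\Phi(x)$ with $c$ in a compact set). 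The resulting geometric series in $j$ yields $C\beta^{h_v}$ up to constants. Intersecting the high-probability events over the $O(h)$ relevant off-path subtrees costs only a factor $h$ in the failure probability, which is absorbed as in the proof of Theorem~\ref{thm wsm}. For the $d$-ary tree $\dary$ the same argument applies verbatim, with $\rho$ having $d$ rather than $\Delta$ children.
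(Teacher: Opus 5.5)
Your argument is essentially sound, but it takes a different route from the paper.

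\textbf{How the paper argues.} The paper does not prove this lemma on its own; it is the ``root wired'' clause of Theorem~\ref{thm wsm theta,Q wired}. There one reroots at $v$ and truncates to the ball $\mathcal T_l$ of radius $l=\dist(v,\partial\dreg)/2$. Lemma~\ref{lem:generalized-positive} places all messages on $\partial\mathcal T_l$ in $[1+\eps,M]$ under both boundary conditions (wiring $\rho$ only raises them). If $\rho$ lies inside the ball, the wiring is regarded as pinning one incoming coordinate of a local update map to $\Phi(\infty)=(1-p)^{-1}$, which stays Lipschitz by Fact~\ref{fact: pinned coordinate lipschitz}. Lemma~\ref{lem: message contraction purturbed} then gives decay at rate $\beta_*+o(1)$ per level inside the ball.

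\textbf{Comparison.} The paper's route needs only the crude compact-interval bound from outside the ball. It treats the wired root, $v^*$, degree defects and the cycle uniformly, at the cost of halving the exponent. Your spine decomposition instead uses WSM on the complete $d$-ary off-path subtrees as a black box, plus a one-dimensional estimate along the path from $v$ to $\rho$. It keeps the full exponent, but needs WSM events for subtrees all the way up the spine.

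\textbf{Three points need repair.}

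(i) Your ``main obstacle'' does not exist. The off-path children of the $j$-th ancestor $a_j$ of $v$ are at distance $h_v+j-1$ from $\partial\dreg$, so their discrepancies form a geometric series and the sum is already $O(\beta^{h_v})$. However, your displayed log-inequality does not follow from ``bounded logarithmic derivatives.'' A per-step constant $K>1$ would compound to $K^{j}$ along a spine of length up to $h$. What is needed, and true, is that $\log\Phi$ contracts in $\log x$:
\[
\frac{x\Phi'(x)}{\Phi(x)}=\frac{x}{x+(q-1)(1-p)}-\frac{(1-p)x}{(1-p)x+p+(q-1)(1-p)}\le\frac{p+(q-1)(1-p)}{1+(q-1)(1-p)}<1\qquad(x\ge1).
\]
This uniform bound is also the correct justification for your factor $\beta'<1$. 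No appeal to wired-biased children or to $g'(y^*)$ is needed.

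(ii) A factor $h$ in the failure probability cannot be absorbed into $e^{-cd^{\sqrt{h_v}}}$ when $h\gg h_v$; the proof of Theorem~\ref{thm wsm} only absorbs $h_v\log d$. Use instead that the subtrees at $a_j$ fail with probability at most $d\,e^{-cd^{\sqrt{h_v+j-1}}}$, which sums over $j$ to $O(e^{-c'd^{\sqrt{h_v}}})$. Alternatively, use the strict contraction: invoke WSM only for $j\le h_v$, and bound each remaining term by $O((\beta')^{j})$.

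(iii) For $v\neq\rho$, the old root is always internal in the rerooted tree. Treating it as a leaf with message $+\infty$ is justified because, lying in the wired class, its rerooted subtree decouples and can be pruned. That is what you actually use, so say it that way.
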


We prove Lemma~\ref{lemma: root decay} in Section~\ref{subsec:extensions}, where several extensions of the WSM property from Theorem~\ref{thm wsm} to allow for $O(1)$ additional wirings to the boundary are proved in a unified manner. 

\subsection{Uniqueness of the wired tree measure for $p > \ps$} \label{sec: uniqueness}

We will now establish our Theorem~\ref{thm:uniqueness}. 
While the random-cluster model on finite graphs with boundary conditions is perfectly well-defined per~\eqref{eq:random-cluster-with-bc}, more care is needed for the infinite-volume measures. We are especially interested in infinite-volume measures on the wired tree $\mathcal T_\infty = (V(\mathcal T_\infty),E(\mathcal T_\infty))$. Let us recall the Dobrushin-Lanford-Ruelle formalism of infinite-volume Gibbs measures (we will do so specifically in the context of random-cluster measure on the wired tree). 

\begin{definition}[DLR Condition]
\label{def:dlr}
A probability measure $\nu$ on $\{0,1\}^{E(\mathcal T_\infty)}$ is called an infinite-volume measure 
for the random-cluster model on the wired tree at parameters $p,q$ if its conditional probabilities satisfy the following. 

For every finite edge set $\Lambda \subset E(\mathcal{T}_\infty)$ and for every 
configuration $\eta \in \{0,1\}^\Lambda$, we have
\[
\nu(A(\Lambda) = \eta \mid A({E(\mathcal T_\infty) \setminus \Lambda})) =
\mu^{1_\infty \sqcup A({E(\mathcal T_\infty)\setminus \Lambda})}_{\Lambda, p, q}(\eta)
\]
with $\nu$-probability $1$ over $A({E(\mathcal T_\infty) \setminus \Lambda})$.
Here the boundary conditions induced by $1_\infty \sqcup A(E(\mathcal T_\infty) \setminus \Lambda)$ is such that every infinite connected component in $A(E(\mathcal T_\infty)\setminus \Lambda)$ is considered connected up to each other ``at infinity". 
\end{definition}

Unlike the traditional spin system setting, the set of wired tree measures does not consist of all limits of finite tree measures with boundary configurations and wirings ``at infinity" because the limit measure of the all-empty boundary condition will always not see the wirings at infinity even at large $p$ and will just be a product measure $\bigotimes \text{Ber}(\hat p)$. This restricted class, however, has been the primary object of study for the random-cluster phase transition on the tree (see~\cite{grimmett2006random}, Section 10) as it is what captures the static and dynamical transitions of the model. Jonasson~\cite{Jonasson} introduced a formalism using a ghost vertex whose connection parameter to every other vertex is some parameter $r\downarrow 0$, so that in that framework subsequential limits of finite-volume random-cluster measures with all boundary conditions yield infinite-volume measures on the wired tree. 

To show Theorem~\ref{thm:uniqueness}, we show that any Gibbs measure induces, from sufficiently far away, a boundary condition that is $\theta$-wired with high probability for some $\theta>0$. The weak spatial mixing result Theorem~\ref{thm wsm} then implies that its local marginals must agree with those of the wired limit.

\begin{proof}[Proof of Theorem~\ref{thm:uniqueness}]
Let $\mu^\mathrm{wired}$ be the infinite-volume measure obtained as the following limit. 
\begin{align*}
    \mu^\text{wired}: = \lim_{h\to\infty} \mu_{\dreg}^1\,.
\end{align*}
The existence of this limit is standard due to monotonicity, and the fact that it satisfies the DLR conditions for the wired tree specifically was shown e.g., in Theorem 2.9 of~\cite{Jonasson} (see also the remark before it on page 342 of~\cite{Jonasson}). By this limit, we mean that for any finite set $\Lambda$, the marginal $\mu^{\text{wired}} \vert_\Lambda:= \mu^{\text{wired}}(A(\Lambda)\in \cdot) $ is given by $\lim_{h\to\infty} \mu_{\dreg}^1 \vert_\Lambda$. 

    Let $\nu$ be an arbitrary infinite-volume Gibbs measure for the random-cluster model. It suffices to show that for any finite edge subset $\Lambda \subseteq E(\mathcal{T}_\infty)$, the marginals coincide, that is, $
    \nu\vert_\Lambda = \mu^\mathrm{wired}\vert_\Lambda$. 

    For a fixed finite edge subset $\Lambda \subseteq E(\mathcal{T}_\infty)$, consider a sufficiently large $h$ such that $\Lambda \subset E(\dreg)$ with $\dist(\Lambda, \partial{\dreg}) > \gamma$ for some constant $\gamma > 0$. By the DLR condition (Definition~\ref{def:dlr}), the restriction of $\nu$ to $\Lambda$ is obtained by averaging over the boundary conditions on $\partial \dreg$  induced by the configuration outside $\dreg$. Let $\sigma$ be a configuration from $\nu$, and let $\xi$ be a boundary condition on $\partial \dreg$ induced by $\sigma$, where all infinite components are considered wired together. Then, we have
    \[
        \nu \vert_\Lambda = \E _{\sigma \sim \nu} [\mu^\xi_{\dreg} \vert_\Lambda].
    \]

    Since $\nu$ is a DLR measure, by~\eqref{eq:Glauber-update-rule}, $\sigma$ must stochastically dominate Bernoulli($\hat p$) bond percolation. Since $p>\ps$ implies $\hat p>1/d$, and all infinite components of $\sigma$ are considered wired together in $\xi$, the random boundary condition $\xi$ follows a $\theta$-wired distribution for some $\theta > 0$.

    Let $r_h = \dist(\Lambda, \partial \dreg)$. By Corollary \ref{cor: edge WSM}, for an edge $e$ in the induced subgraph by $\Lambda$, with probability at least $1 - \exp(-c d^{\sqrt{r_h}})$ for some constant $c > 0$, 
    \[
        \big|\mu^1_{\dreg}(e\in A) - \mu^{\xi}_{\dreg}(e\in A)\big| \leq C \beta^{r_h} 
    \]
    for some constants $C > 0$ and $\beta \in (0, 1)$. Since $\Lambda$ is finite, applying a union bound over the edges in the induced subgraph by $\Lambda$ shows that the above display holds simultaneously over all edges in the induced subgraph, with probability at least 
    $1 - |\Lambda|\exp(-c d^{\sqrt{r_h}})$. Furthermore, this only changes the constant factor in the bound to $C_\Lambda > 0$. Therefore, with this probability, we have 
    \[
        \|\mu^{\xi}_{\dreg}\vert_\Lambda - \mu^{1}_{\dreg}\vert_\Lambda\|_{\textsc{tv}} \leq C_\Lambda \beta^{r_h} 
    \] for some constant $C_\Lambda > 0$ and $\beta < 1$. Taking the expectation over $\sigma \sim \nu$ and using that $\E[X]\le \E[|X|]$, we have
    \[
        \|\nu\vert_\Lambda - \mu^1_{\dreg}\vert_\Lambda\|_{\textsc{tv}} \leq \E_{\sigma \sim \nu} 
        \left[\|\mu^{\xi}_{\dreg}\vert_\Lambda - \mu^{1}_{\dreg}\vert_\Lambda\|_{\textsc{tv}}\right] \leq C_\Lambda \beta^{r_h} + |\Lambda| \exp(-c d^{\sqrt{r_h}}). 
    \] 
    The right-hand side goes to 0 as $h \to \infty$.
    Furthermore, we have
    \[
        \|\mu_{\dreg}^1\vert_\Lambda - \mu^\mathrm{wired}\vert_\Lambda\|_{\textsc{tv}} \to 0
    \] 
    from the convergence in the finite-dimensional distribution sense. 
    By the triangle inequality,
    \[
        \|\nu\vert_\Lambda - \mu^\mathrm{wired}\vert_\Lambda\|_{\textsc{tv}} \leq \lim_{h\to\infty} \left(\|\nu\vert_\Lambda-\mu_{\dreg}^1 \vert_\Lambda\|_{\textsc{tv}} + \|\mu_{\dreg}^1 \vert_\Lambda-\mu^\mathrm{wired}\vert_\Lambda\|_{\textsc{tv}}\right) \to 0,
    \] as $h \to \infty$.     This holds for any finite edge subset $\Lambda$; therefore, $\nu = \mu^\mathrm{wired}$.
\end{proof}

\section{Mixing time when $p>\ps$ for trees}
\label{sec: fast mixing}

Our main aim in this section is to establish Theorem~\ref{thm:intro-wired-tree-mixing-time}. 
We write $\dary$ for the $d$-ary tree of height $h$. For a vertex $v \in \dary$, we denote by $\mathcal{T}_{v,h(v)} \subseteq \dary$ the subtree of height $h(v)$ rooted at $v$ consisting only of the descendants of $v$ in $\dary$.
In particular, $\mathcal{T}_{v,h(v)}$ contains only the portion of $\dary$ lying below $v$. In general, $\mathcal{T}_{v,h}$ is different from the truncated tree $\mathcal{T}_k^v$.

We bound the mixing time of the random-cluster dynamics 
on the $n$-vertex $d$-ary complete tree by
$n \cdot (\log n)^{O(\log\log n)}$ under ``typical'' $\theta$-wired boundary conditions.
To formalize this notion, for fixed constants $C>0$ and $\beta\in(0,1)$, let $\tilde{\mathcal{F}}(h)$ be the class of single-component boundary conditions satisfying the properties that
for any $v\in V(\mathcal{T}_h)$ with $h(v):=\dist(v, \partial\mathcal{T}_h) > h_{\min} :=  (\log_d (h^{1.1}))^2$,
\[
  \max_{\xi\in \tilde{\mathcal{F}}(h)}\left|\mu^1_{\mathcal{T}_{v,h(v)}}(v \sim \mathcal \partial \mathcal{T}_{v,h(v)}) - \mu^{\xi}_{\mathcal{T}_{v,h(v)}}(v \sim \mathcal C_1(\xi)\cap \partial \mathcal{T}_{v,h(v)}) \right|
     \leq C \beta ^{h(v)},
\]
and 
\[
 \max_{\xi\in \tilde{\mathcal{F}}(h)}\left|\mu^{1,\circlearrowleft}_{\mathcal{T}_{v,h(v)}}(v \sim \mathcal \partial \mathcal{T}_{v,h(v)}) - \mu^{\xi, \circlearrowleft}_{\mathcal{T}_{v,h(v)}}(v \sim \mathcal C_1(\xi)\cap \partial \mathcal{T}_{v,h(v)})\right|
     \leq C \beta ^{h(v)}.
\]
Throughout this section, we assume  $C$ and $\beta$ are the constants given by Theorem~\ref{thm wsm} and Lemma~\ref{lemma: root decay}, and we choose $h_{\min}$ to be $(\log_d (h^{1.1}))^2$ so that these properties can apply to every $v$ 
with $h(v)>h_{\min}$ via a union bound. 

The following theorem provides a bound for
the mixing time of the random-cluster dynamics on 
the $d$-ary tree with boundary condition from $\tilde {\mathcal F}(h)$.

\begin{theorem}\label{thm:tree-mixing}
    Fix $d \ge 2$, $q > 1$ and $p>\ps$. The random-cluster dynamics on the $d$-ary tree on $n$ vertices, with any boundary condition $\tilde{\mathcal{F}}(h)$, has mixing time $n\cdot (\log n)^{O(\log \log n)}$. 
\end{theorem}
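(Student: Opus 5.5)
We will prove Theorem~\ref{thm:tree-mixing} by establishing the recursive estimate~\eqref{eq: tree recursion intro} for
\[
\Th(h) := \max_{\xi\in\tilde{\mathcal F}(h)} T_{\mix}(\mathcal T_h^\xi)
\]
and then iterating it. The iteration is straightforward. Unrolling~\eqref{eq: tree recursion intro} from height $h$ down to height $h_{\min}$ takes $O(\log h/\delta)=O(\log\log n)$ steps, since $h=\Theta(\log n)$. On the base trees of height $h_{\min}=O((\log\log n)^2)$, which have size $(\log n)^{o(1)}$, we use the trivial exponential-in-volume bound on the mixing time. In each step, both terms in the maximum increase the bound by at most a factor of $d^{\delta h}$ relative to the ratio of tree sizes. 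The $h^{O(1)}$ factors accumulate to $h^{O(\log\log n)}=(\log n)^{O(\log\log n)}$. Together this gives $n\,(\log n)^{O(\log\log n)}$.

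To prove~\eqref{eq: tree recursion intro}, we work with the two overlapping blocks: $B_0$, the top $(1-\delta)h$ levels, and $B_1$, the bottom $(1-\delta)h$ levels. Instead of a spectral block-dynamics comparison, we use a coupling argument combined with the censoring inequality of~\cite{PW}. By monotonicity, it suffices to couple the chains started from the all-wired and all-free configurations. We run the Glauber dynamics in three phases, censoring every update outside the active block.

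\textbf{Phase one} updates only edges in $B_1$ for time $T_1$. The configuration on $B_1$ is then a union of $d^{\delta h}$ subtrees of height $(1-\delta)h$, each with boundary $\xi$ below it. Their roots sit at the bottom of $B_0\setminus B_1$ and may be wired together through $B_0\setminus B_1$. To decouple them, we condition on the frozen configuration on $B_0\setminus B_1$. In the free and wired chains, the roots are then either separate or wired into at most one extra component. This is exactly the situation handled by the two conditions defining $\tilde{\mathcal F}$: the ordinary case and the rooted-wired case, the latter via Lemma~\ref{lemma: root decay}. Each subtree then mixes within $\Th((1-\delta)h)$ up to an $h^{O(1)}$ factor. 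Since each subtree receives roughly a $d^{-\delta h}$ fraction of the updates, a union bound over the subtrees gives $T_1 \approx d^{\delta h}\Th((1-\delta)h)\,h^{O(1)}$.

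After phase one, the configuration on the overlap layer stochastically dominates Bernoulli$(\hat p)$ percolation, which is supercritical because $\hat p>1/d$. Hence the boundary condition induced on $B_0$ at depth $(1-\delta)h$ is $\theta$-wired. By Theorem~\ref{thm wsm} and Lemma~\ref{lemma: root decay}, together with a union bound over vertices at height above $h_{\min}$, this boundary condition lies in $\tilde{\mathcal F}((1-\delta)h)$ with probability $1-o(1)$.

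\textbf{Phase two} updates only $B_0$ for time $T_0\approx \frac{|\mathcal T_h|}{|\mathcal T_{(1-\delta)h}|}\Th((1-\delta)h)\,h^{O(1)}$. Within this time, each chain's configuration on $B_0$ is close to its stationary law given its induced boundary. By WSM, these two laws restricted to $B_0\setminus B_1$ differ in total variation by at most $|B_0|\,C\beta^{\delta h}$, using Corollary~\ref{cor: edge WSM} and the monotone coupling. Note that the distance from $B_0\setminus B_1$ to the bottom of $B_0$ is $(1-2\delta)h$, so we may need to take $\delta$ somewhat larger relative to $\beta$, or choose the overlap appropriately.

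\textbf{Phase three} updates $B_1$ again. Once the configurations on $B_0\setminus B_1$ agree, the two chains see identical boundary conditions on $B_1$, so phase three couples them after a further time of order $T_1$.

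I expect the main obstacle to be phase two. The WSM bound in Corollary~\ref{cor: edge WSM} decays only in the distance to the boundary, but we need agreement on the whole set $B_0\setminus B_1$ up to TV error $o(1)$. That set has $d^{\delta h}$ edges near its bottom, which lie at distance only about $(1-2\delta)h$ from the boundary of $B_0$. Covering all of them with the bound $C\beta^{(1-2\delta)h}$ requires $d^{\delta h}\beta^{(1-2\delta)h}\to 0$, that is, $\delta$ small relative to $\log(1/\beta)/\log d$. Since the blocks' heights are free parameters, I would first try to choose $\delta=\delta(\beta,d)$ small enough that this holds.

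A second difficulty is the correlated-root issue in phase one. There I would first try to show that, conditionally on $B_0\setminus B_1$, the measure on $B_1$ is a monotone mixture of products over the subtrees with at most one extra wiring, so that a per-subtree coupling plus a union bound suffices.
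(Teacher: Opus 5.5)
Your overall architecture matches the paper's. You use two overlapping blocks. A first update of $B_1$ makes the boundary of $B_0$ $\theta$-wired, WSM makes the two chains agree on $B_0\setminus B_1$, and a final update of $B_1$ couples the rest. Everything is simulated from the extremal starts via censoring; the paper packages this as a systematic-scan block dynamics plus Lemma~\ref{lem: simulate systematic dynamics}. The step you flag as the main obstacle is handled exactly as you suggest: the paper bounds the disagreement by $O(d^{\delta h})\beta^{(1-2\delta)h}$ and takes $\delta$ small. There are, however, two genuine gaps elsewhere.

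\textbf{Phase one.} Given the frozen configuration $\eta$ on $B_0\setminus B_1$, the $d^{\delta h}$ subtree roots are partitioned by $\eta$ into arbitrarily many groups. Each group $\kappa$ consists of $d_\kappa$ trees whose roots are wired to one another but, in general, not to $\mathcal C_1(\xi)$. For example, if $\eta$ is all-open, all $d^{\delta h}$ roots form one such group. The law on a group is not a product of free-root or root-wired single-tree measures. Whether the common root lies in $\mathcal C_1(\xi)$ depends on all the trees at once, so a Glauber update in one tree depends on the states of the others. Your claim that this is ``exactly the situation handled by the two conditions defining $\tilde{\mathcal F}$'' is therefore false, and per-subtree mixing plus a union bound does not apply. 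A structural description of the stationary law, such as a mixture of products, does not by itself control the dynamics either.

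The paper needs a separate argument here (Lemma~\ref{lem: 3rd bound kappa}):
\begin{enumerate}
\item Run an uncensored burn-in, after which, with probability at least $c_*/3$, one tree $\mathcal T_{r_1}$ has a root-to-$\mathcal C_1$ path.
\item Censor $\mathcal T_{r_1}$ so this path stays frozen. The other $d_\kappa-1$ trees then form a genuine product of root-wired ($\circlearrowleft$) trees, which you couple.
\item Release only $\mathcal T_{r_1}$, and boost by repeating the scheme.
\end{enumerate}
This yields the $O(d_\kappa\log d_\kappa)\,O(h^2)\,\Th(h_3)$ bound behind the $h^4 d^{\delta h}$ term. It is also why the recursion must track $\max\{T_\mix(P^{\xi}),T_\mix(P^{\xi,\circlearrowleft})\}$, which your definition of $\Th$ omits.

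\textbf{Base case.} Trees of height $h_{\min}=(\log_d h^{1.1})^2$ have $d^{h_{\min}}=(\log n)^{\Theta(\log\log n)}$ edges, not $(\log n)^{o(1)}$. An exponential-in-volume bound would then give $\exp\bigl((\log n)^{\Theta(\log\log n)}\bigr)$, which is superpolynomial in $n$. You cannot lower the cutoff either: the failure probability $e^{-cd^{\sqrt{h(v)}}}$ in Theorem~\ref{thm wsm} must beat a union bound over up to $d^h$ vertices. The paper instead uses the canonical-paths bound $\Th(h)=\exp(O(h))$ for single-component boundary conditions (Lemma 17 of \cite{BG24-PTRF}). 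This is polynomial in the volume, and at height $\Theta((\log\log n)^2)$ it gives $(\log n)^{O(\log\log n)}$.

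\textbf{Two smaller points.} First, $\tilde{\mathcal F}(h)$ is not inherited by the height-$(1-\delta)h$ subtrees of $B_1$, because its cutoff $h_{\min}$ depends on the height. The paper runs the recursion on $\mathcal F(h)$, with cutoff $r^*$ fixed by the top height $h^*$, so that restrictions of $\xi$ stay in the class. Second, domination by Bernoulli$(\hat p)$ percolation alone gives connection to $\partial\mathcal T$, not to $\mathcal C_1(\xi)$. For $\theta$-wiredness of the boundary of $B_0$ you need the first WSM condition of the class at the height-$\delta h$ subtrees, as in \eqref{eq: use WSM for theta-wired}.
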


Theorem~\ref{thm:intro-wired-tree-mixing-time} follows immediately from Theorem~\ref{thm:tree-mixing} because the all-wired boundary condition $\xi \equiv 1$ is trivially in $\tilde {\mathcal F} (h)$.
In addition, since a boundary condition on $\mathcal{T}_h$ generated by a $\theta$-wired distribution falls inside $\tilde{\mathcal{F}}(h)$ with high probability, the same upper bound extends to the mixing time under these boundary conditions.
\begin{corollary}
    \label{cor:theta-wired-bc}
    Fix $d \ge 2$,  $q > 1$ and $p > \ps$. Let $\mathsf{M}$ be a $\theta$-wired distribution over single-component boundary conditions on $\partial \mathcal{T}_h$, and let $\xi \sim \mathsf{M}$. 
    Then $\xi \in \tilde{\mathcal{F}}(h)$  
    with probability $1-\exp(-\Omega(h^{1.1}))$.
\end{corollary}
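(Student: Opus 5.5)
The plan is a union bound over all vertices $v$ with $h(v) > h_{\min}$, applying Theorem~\ref{thm wsm} and Lemma~\ref{lemma: root decay} to each subtree $\mathcal{T}_{v,h(v)}$ separately. Fix such a $v$. The subtree $\mathcal{T}_{v,h(v)}$ is a complete $d$-ary tree of height $h(v)$, and its leaves are exactly the leaves of $\mathcal T_h$ below $v$. The restriction $\xi_{(v)}$ of $\xi$ to $\partial\mathcal{T}_{v,h(v)}$ is again single-component: its wired part is $\mathcal C_1(\xi)\cap\partial\mathcal{T}_{v,h(v)}$. Stochastic domination of a product Bernoulli$(\theta)$ measure is preserved under restriction to a subset of coordinates, so the law of $\xi_{(v)}$ is $\theta$-wired on $\partial\mathcal{T}_{v,h(v)}$.

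Next I apply the $d$-ary versions of Theorem~\ref{thm wsm} and Lemma~\ref{lemma: root decay} to $\mathcal{T}_{v,h(v)}$, taking its root $v$ as the vertex in question, so that the distance to the boundary is $h(v)$. We need $h(v)\ge\gamma$, which holds once $h$ is large since $h_{\min}\to\infty$. For each $v$, the two inequalities defining $\tilde{\mathcal F}(h)$ then hold with the constants $C,\beta$, outside an event of probability at most $2e^{-c d^{\sqrt{h(v)}}}$. (The ``same probability'' clause of Lemma~\ref{lemma: root decay} gives the second estimate.)

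Since $h(v) > h_{\min} = (\log_d h^{1.1})^2$, we have $\sqrt{h(v)} > \log_d h^{1.1}$, and hence $d^{\sqrt{h(v)}} > h^{1.1}$. So each failure probability is at most $2e^{-c h^{1.1}}$. The number of vertices is $|\mathcal T_h| \le d^{h+1}$. The union bound therefore gives total failure probability at most
\[
2d^{h+1}e^{-ch^{1.1}} = \exp\bigl(-ch^{1.1} + O(h)\bigr) = \exp(-\Omega(h^{1.1})).
\]
This is exactly why $h_{\min}$ was chosen as it was, and it proves Corollary~\ref{cor:theta-wired-bc}.

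The step I expect to need the most care is the bookkeeping in the constants, not the probabilistic estimate. The constants $C,\beta,\gamma,c$ produced by Theorem~\ref{thm wsm} and Lemma~\ref{lemma: root decay} must be uniform over the subtree height $h(v)$, which they are because they depend only on $\Delta,p,q,\theta$. The definition of $\tilde{\mathcal F}(h)$ must use these same constants, or larger ones (taking the maximum of the two pairs if they differ). One also checks that applying the theorem at the root of the subtree is legitimate. Lemma~\ref{lem:uniform-positive} only constrains vertices at depth $k=h(v)-\sqrt{h(v)}$ of the rooted tree, so the rooted-at-$v$ formulation applies verbatim with $v$ as root of $\mathcal T_{v,h(v)}$.
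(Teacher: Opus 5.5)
Your proposal is correct and follows essentially the same route as the paper: apply Theorem~\ref{thm wsm} and Lemma~\ref{lemma: root decay} to each subtree $\mathcal{T}_{v,h(v)}$ with $h(v)>h_{\min}\ge\gamma$, use $d^{\sqrt{h(v)}}\ge h^{1.1}$, and take a union bound. The only differences are cosmetic: you bound the vertex count crudely by $d^{h+1}$ where the paper sums level by level over the $d^{h-x}$ vertices at height $x$ (both yield $\exp(-\Omega(h^{1.1}))$), and you explicitly check that $\xi_{(v)}$ stays single-component and $\theta$-wired, a step the paper leaves implicit.
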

\begin{proof}
Fix $h$ sufficiently large. 
Let $C,\gamma>0$, $\beta\in(0,1)$, and $c>0$ be the constants from Theorem~\ref{thm wsm}.
For any vertex $v$ with $h(v)=\dist(v,\partial\mathcal{T}_h)\ge \gamma $, Theorem~\ref{thm wsm} implies that, with probability at least $1-\exp(-cd^{\sqrt{h(v)}})$ over $\xi\sim \mathsf{M}$,
\[
\left|\mu^1_{\mathcal{T}_{v,h(v)}}(v \sim \mathcal \partial \mathcal{T}_{v,h(v)}) - \mu^{\xi}_{\mathcal{T}_{v,h(v)}}(v \sim \mathcal C_1(\xi)\cap \partial \mathcal{T}_{v,h(v)})\right|
     \leq C \beta ^{h(v)}.
\]
The same bound holds for the measures $\mu^{1,\circlearrowleft}_{\mathcal{T}_{v,h(v)}}$ and $\mu^{\xi,\circlearrowleft}_{\mathcal{T}_{v,h(v)}}$ by Lemma~\ref{lemma: root decay}, with the same probability.
There are $d^{h- h(v)}$ such vertices in the same level.
Note that if $h(v)\ge h_{\min}$, then $cd^{\sqrt{h(v)}} \ge ch^{1.1} $ which is at least $2h\log d $ when $h$ is sufficiently large.
Taking a union bound over all vertices of height $h(v) \ge h_{\min} \ge \gamma$ and both estimates gives
\begin{align*}
\pr\left(\xi\in\tilde{\mathcal{F}}(h)\right) &\ge 1 - \sum_{x=\lceil h_{\min} \rceil}^h  2d^{h-x}\exp(-cd^{\sqrt{x}})
= 1 - \sum_{x=\lceil h_{\min} \rceil}^h 2\exp\left(\log d \cdot (h-x) - cd^{\sqrt{x}}\right) \\
&\ge 1 -  \sum_{x=\lceil h_{\min} \rceil}^h 2\exp \left( - \frac{c}{2}d^{\sqrt{x}}\right) 
\ge  1 -  2h \exp\left( -\frac{c}{2} h^{1.1}\right)
\end{align*}
which is at least $1-\exp(-\Omega(h^{1.1}))$ for all $h$ sufficiently large.
\end{proof}

\subsection{Mixing Preliminaries}
Let \(P\) be a finite discrete-time ergodic reversible Markov chain on a state space \(\Omega\) with stationary distribution \(\pi\). For \(\varepsilon \in (0,1)\), we write \(T_{\mix}(P,\varepsilon)\) for the mixing time defined in \eqref{eq:tmix}, with \(1/4\) replaced by \(\varepsilon\).
It is a standard result that for any \(\varepsilon \le 1/4\),
\begin{equation}
\label{eq: tmix extend}
T_{\mix}(P,\varepsilon)\le \left\lceil \log_2 \varepsilon^{-1}\right\rceil T_{\mix}(P).
\end{equation}

In this section, we present a set of general comparison lemmas relating the block random-cluster dynamics to the random-cluster dynamics on a graph $G = (V, E)$.  
To define a general block dynamics, suppose $\mathcal{B} := \{B_i\}_{i=0}^{k-1}$ is a collection of $k$ subsets of edges, which together cover $E$.  
Let $P_{\mathcal{B}}$ denote the block dynamics that, at each step, selects a block uniformly at random from $B\sim \mathcal{B}$ and resamples the entire configuration on the block $B$, conditionally on the configuration on $E\setminus B$. 
Sometimes it is convenient to consider a related time-inhomogeneous Markov chain $\hat{P}_{\mathcal{B}}$ called the \emph{systematic scan block dynamics} in which the blocks are updated in a sequential and deterministic order, specifically, at step $t\ge 1$, the chain updates $B_{t \mod k}$ according to the Gibbs measure.

When analyzing a specific block $B_i$, it is useful to distinguish the boundary conditions localized to $B_i$ from the global configuration outside of $E$.  The global boundary condition on $G$, denoted by $\xi$, is assumed to be given and remain fixed throughout this section.
We let $\Omega({B_i^-})$ denote the set of configurations on $E \setminus B_i$ (each when appended to $\xi$ induces a boundary condition on $B_i$); these local boundary conditions vary during the execution of the block dynamics.  
For each $B_i$, we consider a set $\Gamma_i\subseteq \Omega({B_i^-})$ of ``good'' induced boundary conditions.
For given small $\varepsilon\in (0,1/4]$ and big constant $N>0$, define
\begin{equation}
    \label{eq: Bi mixing time}
    T^*(\varepsilon, N):=\max_{i=0,\dots, k-1}
\max_{\eta \in \Gamma_i} \frac{|E|}{|B_i|}
\max \left\{ 3T_{\mix}(P_{B_i}^{\xi\sqcup  \eta}, \varepsilon), N \right\},
\end{equation}
and 
\[
\delta(N) := \exp\left(-2N/9\right).
\]
(The constant $N$ is just in place of proving a lower bound on the mixing time, which should hold generically anyways.) 
We first present a comparison lemma  between the (single-edge) random-cluster dynamics and the systematic scan block dynamics. 
Our comparison framework based on the collection of sets $\{\Gamma_i\}$ is largely inspired by Proposition 4.10 of \cite{GL19}, but in discrete time instead of continuous-time. Consequently, the estimate in \eqref{eq: Bi mixing time} incurs an additional proportionality factor to account for discrepancies in block sizes and to overcome for the fluctuations in the number of updates experienced by each block, we adopt a slightly stronger assumption than that in \cite{GL19}.
\begin{lemma}
    \label{lem: simulate systematic dynamics}
    Suppose $\{X_t\}_{t\ge 0}$ is a systematic random-cluster dynamics with blocks $\{B_i\}_{i=0}^{k-1}$ starting at an initial state $X_0= \omega_0$.
    Let $\Gamma_0 \subseteq \Omega(B_0^-), \dots, \Gamma_{k-1} \subseteq \Omega(B_{k-1}^-)$ be $k$ sets of good boundary conditions, and we define
    \[
    \rho:=\max_{\omega_0 } \max_{t\ge 0} \max_{i=0,\dots, k-1}
    \pr(X_t(E\setminus B_i)  \sqcup \xi \notin \Gamma_i \mid X_0=\omega_0).
    \]
    For any $\varepsilon \in (0,1/4]$, if there exist $\varepsilon'\in(0,1/4]$ and $N>0$ such that 
    \begin{equation} 
        \label{eq: 3 in 1 condition}
        3 T_{\mix}(\hat{P}_{\mathcal{B}}^\xi, \varepsilon') \cdot  (\varepsilon' + \delta(N)+ \rho) \le \frac{\varepsilon}{|E|}\,,
    \end{equation}
    then  we have $T_{\mix}(P^{\xi}, \varepsilon) \le 2 T^*(\varepsilon', N) T_{\mix}(\hat{P}_{\mathcal{B}}^\xi, \varepsilon')$.
\end{lemma}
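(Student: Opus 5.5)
We simulate the systematic scan block dynamics $\hat P^\xi_{\mathcal B}$ by the single-edge dynamics $P^\xi$, in the spirit of Proposition 4.10 of \cite{GL19}. Set $T^* = T^*(\varepsilon',N)$ and $S = T_{\mix}(\hat P^\xi_{\mathcal B},\varepsilon')$. The total time $2T^*S$ is split into $S$ consecutive \emph{epochs} of length $2T^*$. During the $j$-th epoch, with $i = j \bmod k$, we censor (in the sense of \cite{PW}) every update outside $B_i$. Starting from the maximal or minimal configuration, the censored chain is at least as close to $\mu^\xi$ as the uncensored chain, by monotonicity and the Peres--Winkler censoring inequality. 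It therefore suffices to show that the censored chain $Y$ at time $2T^*S$ is $\varepsilon$-close in total variation to $\mu^\xi$. Closeness from the two extremal starts then transfers to arbitrary starts through the grand monotone coupling: we union bound over edges where the top and bottom chains disagree, which explains the $1/|E|$ in \eqref{eq: 3 in 1 condition}.

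Within epoch $j$, the censored chain restricted to $B_i$ is a lazy version of $P^{\xi\sqcup\eta}_{B_i}$, where $\eta$ is the frozen configuration on $E\setminus B_i$ at the start of the epoch. Each step selects an edge of $B_i$ with probability $|B_i|/|E|$. Over $2T^*$ steps, the number of effective updates is $\mathrm{Bin}(2T^*,|B_i|/|E|)$, whose mean is at least $2\max\{3T_{\mix}(P_{B_i}^{\xi\sqcup\eta},\varepsilon'),N\}$ by \eqref{eq: Bi mixing time}, provided $\eta\in\Gamma_i$. A Chernoff bound shows the count falls below half its mean with probability at most $\exp(-2N/9)=\delta(N)$. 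On the complementary event there are at least $3T_{\mix}(\cdot,\varepsilon')$ updates, so the block is $\varepsilon'$-close to the conditional Gibbs measure $\mu^{\xi\sqcup\eta}_{B_i}$. This is why the factor $3$ appears and why $2T^*$ is used. Consequently, one epoch of $Y$ can be coupled with one step of $\hat P^\xi_{\mathcal B}$ from the same state, and the coupling fails with probability at most $\varepsilon'+\delta(N)+\mathbf 1\{\eta\notin\Gamma_i\}$.

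Next we couple $Y$ (observed at epoch boundaries) with the systematic scan chain $X$ from the same start, epoch by epoch. While the two agree, the boundary $\eta$ equals $X_t(E\setminus B_i)$, which fails to lie in $\Gamma_i$ with probability at most $\rho$. A union bound over the $S$ epochs gives disagreement probability at most $S(\varepsilon'+\delta(N)+\rho)$. Since $X_S$ is $\varepsilon'$-close to $\mu^\xi$, condition \eqref{eq: 3 in 1 condition} gives, for each edge $e$, that the top and bottom censored chains disagree on $e$ with probability at most about $\varepsilon/|E|$. (Here $\varepsilon'$ is absorbed into the $3S\varepsilon'$ term: the discrepancy of $X_S$ from stationarity is split between the two chains and dominated by the first term.) Summing over edges, the monotone coupling of the uncensored single-edge chains from the top and bottom states agrees everywhere except with probability at most $\varepsilon$. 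Hence $T_{\mix}(P^\xi,\varepsilon)\le 2T^*S$.

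The main obstacle is the censoring step. The Peres--Winkler inequality needs the starting states to be extremal, so that the initial distribution has a monotone density, and the censoring schedule must be fixed in advance. Both conditions hold here because the epochs are deterministic. A second subtlety is that $\rho$ is defined through the systematic scan chain $X$ rather than through $Y$. This is handled by the coupling above: we only need $\eta\in\Gamma_i$ on the event that $Y$ and $X$ have agreed so far, and at that point $\eta$ is exactly $X$'s configuration.
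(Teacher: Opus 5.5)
Your proposal is correct and is essentially the paper's argument. It has the same ingredients: a censored single-edge chain that devotes each epoch to one block, a Chernoff bound on the number of in-block updates, a per-epoch error of $\varepsilon'+\delta(N)+\rho$ summed over epochs (with the bad-boundary event evaluated along the systematic scan chain), and finally the censoring inequality with its $1/|E|$ loss. The differences are only bookkeeping: you use $S=T_{\mix}(\hat P^\xi_{\mathcal B},\varepsilon')$ epochs of length $2T^*$ and a pathwise coupling, where the paper uses roughly $2S$ epochs of length $T^*$ and a total-variation induction.

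One slip to correct: you state the Peres--Winkler inequality backwards. Censoring can only slow convergence from extremal starts, so it is the \emph{uncensored} chain that is at least as close to $\mu^\xi$. That is exactly the direction your ``it therefore suffices'' step and your final edge-by-edge comparison need.
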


The condition~\eqref{eq: 3 in 1 condition} is to ensure that bad boundary conditions are never encountered throughout the entirety of the systematic random-cluster dynamics process (except with probability at most $\eps$). 

The proof of Lemma~\ref{lem: simulate systematic dynamics} is deferred to the Appendix~\ref{sec: simulating BD}.
In the special case where the blocks \(\{B_i\}_{i=1}^k\) are disjoint and independent, the following standard result provides a sharper upper bound on the mixing time of the random-cluster dynamics than that obtained from the general block-dynamics comparison.
\begin{lemma}
    \label{lem: simulating BD 3}
    If $\{B_i\}_{i=1}^k$ are disjoint and independent blocks (i.e., the configuration on one does not change the distribution on the other), then we have
      \[
    T_{\mix}(P, \varepsilon) \le \max_{i=0,\dots, k-1} 
     \max_{\eta\in \Omega({B_i^-})} T_\mix \big(P^{\xi\sqcup \eta}_{B_i}, \frac{\varepsilon}{3k} \big) \cdot \frac{3|E|}{|B_i|}.
    \]
\end{lemma}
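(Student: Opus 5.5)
The plan is to exploit the product structure: since the blocks are disjoint and independent, the stationary measure $\mu^\xi$ factorizes as $\bigotimes_i \mu_{B_i}^{\xi\sqcup\eta}$. Here the induced measure on $B_i$ does not depend on $\eta$, so the inner maximum is really a single chain. The single-edge dynamics $P$ then decomposes as a random interleaving of independent chains. At each step a block $B_i$ is chosen with probability $|B_i|/|E|$, and then a uniform edge of $B_i$ is updated according to the local Glauber rule. Because of independence, the heat-bath probability for an edge in $B_i$ depends only on the configuration in $B_i$ (cut-edge status included). So, conditioned on the sequence of chosen blocks, the restrictions $X_t(B_i)$ evolve as independent copies of $P_{B_i}^{\xi\sqcup\eta}$, each run for $N_i(t)$ steps, where $N_i(t)\sim \mathrm{Bin}(t,|B_i|/|E|)$.

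Set $T_i := T_\mix(P_{B_i}^{\xi\sqcup\eta},\varepsilon/(3k))$ and $t := \max_i 3T_i|E|/|B_i|$. I would bound the distance to stationarity in two steps.

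\emph{Reduction to per-block mixing.} For any product measure $\pi=\bigotimes\pi_i$ and any law that is a mixture of product laws $\bigotimes\nu_i$, the total variation distance satisfies $\|\bigotimes\nu_i-\bigotimes\pi_i\|_{\textsc{tv}}\le\sum_i\|\nu_i-\pi_i\|_{\textsc{tv}}$. Applying this conditionally on $(N_i(t))_i$ and then using convexity gives
\[
d(t)\le \sum_i \Big(\Pr\big(N_i(t)<T_i\big)+\frac{\varepsilon}{3k}\Big).
\]
This uses that TV distance to stationarity is nonincreasing in the number of steps, so $N_i\ge T_i$ yields error at most $\varepsilon/(3k)$.

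\emph{Concentration of the update counts.} Since $\mathbb E N_i(t)=t|B_i|/|E|\ge 3T_i$, a Chernoff bound gives $\Pr(N_i<T_i)\le \exp(-c\,T_i)$. Summing over $i$, the total error is at most $\varepsilon/3+\sum_i e^{-cT_i}$.

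The main obstacle is the Chernoff term, which must also be at most $2\varepsilon/3$. This requires $T_i\gtrsim \log(k/\varepsilon)$, which is not automatic in general. I would handle it by using the standard lower bound $T_\mix(\cdot,\varepsilon/(3k))\ge c'\log(3k/\varepsilon)$: for example, the non-resampled edges persist, or one can submultiplicatively boost $T_i$. If that fails, I would absorb the failure into the statement's factor of $3$, perhaps by switching to a continuous-time or Poissonized comparison, where the per-block clocks are exactly independent Poisson processes and the tail bound is cleaner.
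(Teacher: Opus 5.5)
Your decomposition is the same as the paper's one-line argument: product structure, block chains that are conditionally independent given the selection sequence, subadditivity of total variation over product measures, and a Chernoff bound on $N_i(t)$. The obstacle you isolate at the end is genuine, and the paper's sketch passes over it as well. None of your three remedies closes it, because the statement is false without an additional hypothesis. Let $G$ be $k$ vertex-disjoint edges with free boundary, and let $B_i$ be the $i$-th edge. The measure is $\bigotimes\mathrm{Ber}(\hat p)$, and the one-edge heat-bath chain resamples its edge exactly from $\mathrm{Ber}(\hat p)$. So $T_\mix(P_{B_i},\varepsilon/(3k))=1$ and the right-hand side is $3k$. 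Yet after $3k$ steps of $P$, roughly $e^{-3}k$ edges have never been updated. From the all-open start, the number of open edges therefore exceeds its stationary mean by order $k$, while both laws fluctuate on scale $\sqrt k$, so $T_\mix(P,\varepsilon)\ge(\frac12-o(1))\,k\log k$. This rules out each of your fixes:
\begin{itemize}
\item The ``standard lower bound'' $T_\mix(\cdot,\varepsilon/(3k))\ge c'\log(3k/\varepsilon)$ is false. The untouched-edge argument only gives $d(t)\ge(1-p)(1-1/|B_i|)^t$, which is vacuous when $|B_i|=1$.
\item Submultiplicativity only produces upper bounds on $T_\mix(\varepsilon')$. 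It cannot enlarge the $T_i$ that is fixed on the right-hand side.
\item Poissonization leaves $\Pr(\mathrm{Poi}(3)=0)=e^{-3}$, which is a constant rather than $O(\varepsilon/k)$.
\end{itemize}

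What is missing is a hypothesis forcing $T_i\ge\frac32\log\frac{3k}{2\varepsilon}$ for every $i$. This is the analogue of the $\max\{3T_\mix(\cdot),N\}$ term in \eqref{eq: Bi mixing time}, which Lemma~\ref{lem: simulate systematic dynamics} includes for exactly this reason and which is dropped here. The hypothesis holds, for example, when every block has at least three edges:
\begin{itemize}
\item The heat-bath chain on $B_i$ is an average of conditional-expectation projections. Testing against $f=\mathbf 1\{e\in A\}$ gives Dirichlet form $\frac{1}{|B_i|}\E[\mathrm{Var}(f\mid A(B_i\setminus e))]\le\mathrm{Var}(f)/|B_i|$, hence $t_{\mathrm{rel}}\ge|B_i|$.
\item The standard bound $T_\mix(\varepsilon')\ge(t_{\mathrm{rel}}-1)\log\frac{1}{2\varepsilon'}$ then gives $T_i\ge 2\log\frac{3k}{2\varepsilon}$.
\item Consequently $\Pr(N_i<T_i)\le e^{-2\E N_i/9}\le e^{-2T_i/3}\le\frac{2\varepsilon}{3k}$, and your sum closes at $\varepsilon/3+2\varepsilon/3$.
\end{itemize}
In the paper's application the blocks are components $\kappa\in\mathcal C(\eta)$ with at least $d^{h_3}$ edges, so \eqref{eq: component disjointness} is unaffected. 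The lemma in the stated generality, and your proof of it, still needs this extra assumption.
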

The proof of Lemma~\ref{lem: simulating BD 3} is straightforward: since the distribution is a product measure, we apply a Chernoff bound to control the number of updates inside each block and then take a union bound over the total-variation distances of the $k$ blocks.

\subsection{Fast mixing on sufficiently wired trees}

This section will develop a recursive approach to bound the mixing time of the random-cluster dynamics on trees with boundary conditions in $\tilde {\mathcal F}(h)$ that will enable us prove Theorem~\ref{thm:tree-mixing}.
For the rest of this section, we fix $h^*>0$ sufficiently large and we set $r^*:=(1.1\log_d (h^*))^2 $.
For any $h\in (r^*, h^*]$, let $\mathcal{F}(h)$ 
denote the collection of boundary conditions such that $\xi\in \mathcal{F}(h)$ if for all $v\in V(\mathcal{T}_h)$ with $h(v):= \dist(v,\partial\mathcal{T}_h)>r^*$,
\[
    \left|\mu^1_{\mathcal{T}_{v,h(v)}}(v \sim \mathcal \partial \mathcal{T}_{v,h(v)}) - \mu^{\xi}_{\mathcal{T}_{v,h(v)}}(v \sim \mathcal C_1(\xi)\cap \partial \mathcal{T}_{v,h(v)})\right|
     \leq C \beta ^{h(v)},
\]
and 
\[
    \left|\mu^{1,\circlearrowleft}_{\mathcal{T}_{v,h(v)}}(v \sim \mathcal \partial \mathcal{T}_{v,h(v)}) - \mu^{\xi, \circlearrowleft}_{\mathcal{T}_{v,h(v)}}(v \sim \mathcal C_1(\xi)\cap \partial \mathcal{T}_{v,h(v)})\right|
     \leq C \beta ^{h(v)}.
\]
We first note that $\mathcal{F}(h^*)=\tilde{\mathcal{F}}(h^*)$ and for $h\in(r^*, h^*]$, $\tilde{\mathcal{F}}(h)\subseteq \mathcal{F}(h)$.
Now fix $h\in (r^*,h^*]$ and $\xi\in \mathcal{F}(h)$. For each vertex $v$ at depth $h-h(v)$ in $\mathcal{T}_h$, with $r^*<h(v)\le h^*$, let $\xi_{(v)}$ be the boundary condition on $\partial \mathcal{T}_{v,h(v)}$ induced by $\xi$.
Because $\xi$ is single-component, the induced boundary condition $\xi_{(v)}$ is also single-component. In particular, $\xi_{(v)}\in \mathcal{F}(h(v))$.

Since our Markov chain operates on the set of edges, we use $|\mathcal{T}_h|$ to denote the number of edges in $\mathcal{T}_h$. 
(As $\mathcal{T}_h$ is a tree, this is exactly one less than its number of vertices.)
Also, for any $U\subseteq V(\mathcal{T}_h)$, we use $|U|$ to denote the number of edges in $U$. 
Moreover, given a graph $H$ and a boundary condition $\xi$ on $\Lambda\subseteq V(H)$, we write $P_H^\xi$ for the random-cluster dynamics on $E(H)$ with its boundary condition on  $\Lambda$ fixed to be $\xi$, 
and $T_\mix(P_H^\xi)$ for the mixing time of the random-cluster dynamics on $U$ under $\xi$.
Let
\[
\tau_\mix(h) =
\max_{\xi  \in \mathcal{F}(h)} \max\{T_\mix(P^\xi_{\mathcal{T}_{h}}),  T_\mix(P^{\xi, \circlearrowleft}_{\mathcal{T}_{h}}
)\};
\]
recall that $\circlearrowleft$ correspond to the case where the root of $\mathcal{T}_{h}$ is wired to $\mathcal C_1(\xi)$.
When $h$ is clear from context, we use $P^\xi$ for $P_{\mathcal T_h}^\xi$. 

We will prove Theorem~\ref{thm:tree-mixing} via a recursion of $\Th(h)$ on the height $h$. To formulate the recursion, we decompose a tree of height $h$ as follows. 
For a fixed  $\delta \in (0,1/4)$ that we later choose to be sufficiently small, and for any sufficiently large integer $h$, one could find three integers $h_0, h_1$ and $h_2$  such that
$h_0 + h_1 + h_2 = h$ and  $ h_0 = h_2 = \delta h$. 
In the decomposition above, to avoid unnecessary notational burden, we are suppressing rounding issues for non-integer heights, as the proof is unaffected by the choice of floors or ceiling implicit made.

For an integer $h$ with its decomposition $(h_0, h_1, h_2)$, we show the following recursion. 

\begin{lemma}
    \label{lem: mixing recursion}
    If $p>\ps$,
    then for any $h\in (2r^*, h^*]$ sufficiently large with its decomposition $(h_0, h_1, h_2)$, there exists a constant $M>0$ such that
    the random-cluster dynamics satisfies 
    \begin{equation}
        \label{eq: mixing recursion}
        \Th(h)\le M\log |\mathcal{T}_h|\cdot \max\left\{\Th({h_0+h_1}) \cdot \frac{|\mathcal{T}_h|}{|\mathcal{T}_{h_0+h_1}|}, \quad d^{h_0} \cdot h^4\cdot \Th({h_1+h_2})\right\}.
    \end{equation}
\end{lemma}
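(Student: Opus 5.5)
We plan to apply Lemma~\ref{lem: simulate systematic dynamics} with two overlapping blocks. Let $B_0$ be the edges in the top $h_0+h_1$ levels of $\mathcal T_h$, so that $B_0$ is a copy of $\mathcal T_{h_0+h_1}$. Let $B_1$ be the edges in the bottom $h_1+h_2$ levels, which is a disjoint union of $d^{h_0}$ copies of $\mathcal T_{h_1+h_2}$ rooted at the depth-$h_0$ vertices. The systematic scan alternates $B_1, B_0, B_1$.

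\textbf{Mixing of the block scan.} Start two copies from arbitrary states, under the monotone (grand) coupling. After the first $B_1$ update, each root $v$ at depth $h_0$ has its subtree resampled given $\xi$ and the current top configuration. Since the measure dominates Bernoulli($\hat p$) percolation with $\hat p>1/d$, each vertex at depth $h_0+h_1$ is connected to $\mathcal C_1(\xi)$ with probability at least some $\theta>0$, conditionally independently given the top. Hence the boundary condition induced on $B_0$ is $\theta$-wired. By Corollary~\ref{cor:theta-wired-bc}, it lies in $\tilde{\mathcal F}(h_0+h_1)$ with probability $1-e^{-\Omega(h^{1.1})}$.

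After the $B_0$ update, Theorem~\ref{thm wsm} and Lemma~\ref{lemma: root decay} apply, through the edge version Corollary~\ref{cor: edge WSM}. The configuration on $B_0\setminus B_1$ (the top $h_0$ levels, at distance $\ge h_1$ from $\partial B_0$) is then within TV $d^{h_0}C\beta^{h_1}$ of its law under the wired boundary, for both copies. Choosing $\delta$ small makes this $e^{-\Omega(h)}$. Using a monotone coupling, the two copies agree on the top $h_0$ levels except with that probability. The final $B_1$ update then couples exactly, since $B_1$'s boundary comes only from $B_0\setminus B_1$ and $\xi$. This gives $T_{\mix}(\hat P_{\mathcal B}^\xi,\varepsilon')=O(1)$ with $\varepsilon'=e^{-\Omega(h)}$, which is polynomially small in $|\mathcal T_h|$. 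Condition~\eqref{eq: 3 in 1 condition} then holds provided $\rho$ and $\delta(N)$ are $o(1/|\mathcal T_h|)$.

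\textbf{Good sets and $T^*$.} Let $\Gamma_0$ be the top-block boundary conditions lying in $\mathcal F(h_0+h_1)$. By the argument above and a union bound over time, $\rho\le e^{-\Omega(h^{1.1})}$, which is negligible. For $\Gamma_1$ the issue is subtler. The induced boundary on each subtree $\mathcal T_{v,h_1+h_2}$ is $\xi_{(v)}$, which lies in $\mathcal F(h_1+h_2)$ since $\xi\in\mathcal F(h)$. However, the roots of different subtrees may be wired together, or to $\mathcal C_1(\xi)$, through the top configuration. So $\Gamma_1$ must be handled for all top configurations.

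For this step I would decompose according to which roots are connected to $\mathcal C_1(\xi)$ through the top. Roots joined to the wired component give trees with boundary $\xi_{(v)},\circlearrowleft$, which are covered by the $\circlearrowleft$ term in $\Th$. Roots in other top clusters interact only through the count of clusters. The plan is to show that the dynamics on $B_1$ can be compared, with $\mathrm{poly}(h)$ loss, to independent per-tree dynamics, and then apply Lemma~\ref{lem: simulating BD 3}. For the comparison, I would use the fact that a root cluster not touching $\mathcal C_1$ contains at most $O(h)$ roots with high probability, since the top is dominated by a wired measure. Alternatively, one could censor and condition on the root-to-boundary connection indicators, exploiting that only $O(\log)$ roots share a cluster.

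I expect this decoupling to be the main obstacle, and it is where the $h^4$ factor comes from. Lemma~\ref{lem: simulating BD 3} with $k=d^{h_0}$ blocks and accuracy $\varepsilon/(3k)$ gives, via~\eqref{eq: tmix extend}, a bound of $O(h)\cdot\Th(h_1+h_2)\cdot|\mathcal T_h|/|\mathcal T_{h_1+h_2}|$, and the ratio $|\mathcal T_h|/|\mathcal T_{h_1+h_2}|$ is about $d^{h_0}$. Similarly, the $B_0$ term is $O(\log|\mathcal T_h|)\,\Th(h_0+h_1)|\mathcal T_h|/|\mathcal T_{h_0+h_1}|$. Plugging both into Lemma~\ref{lem: simulate systematic dynamics} yields~\eqref{eq: mixing recursion}.
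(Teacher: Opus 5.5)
The block setup, the coupling of the systematic scan (first $B_1$ update makes the boundary of $B_0$ $\theta$-wired, edge-WSM then couples the top $h_0$ levels, and the final $B_1$ update couples exactly), and the use of Lemma~\ref{lem: simulate systematic dynamics} all match the paper (Lemmas~\ref{lem: coupling time 2 block dynamics} and~\ref{lem: block recursion}). The gap is in the step you flag yourself: bounding $T_{\mix}(P_{B_1}^{\xi\sqcup\eta})$ when the top configuration $\eta$ wires many subtree roots together.

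\textbf{Why your decoupling plan fails.} Your justification that a root cluster contains only $O(h)$ (or $O(\log)$) roots with high probability is false. The top configuration is close to its law under the wired measure. That measure dominates Bernoulli($\hat p$) percolation with $\hat p d>1$, so with probability bounded below a single cluster of the top $h_0$ levels reaches $(\hat p d)^{\Omega(h_0)}=d^{\Omega(h)}$ depth-$h_0$ vertices. Domination \emph{by} a wired measure gives no upper bound on cluster sizes. There are two further problems:
\begin{itemize}
\item No leaf of $\mathcal T_h$ lies in the top $h_0$ levels. So $\eta$ alone never connects roots to $\mathcal C_1(\xi)$, except via the global root in the $\circlearrowleft$ case. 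Your split into ``roots joined to the wired component'' versus the rest therefore does not produce $\circlearrowleft$ trees: connection to $\mathcal C_1(\xi)$ happens only through the subtrees being resampled.
\item Even if clusters had $k=O(h)$ roots, a direct comparison of the root-wired measure on a component with the product of per-tree measures loses up to $q^{O(k)}$ in the gap. For $k$ of order $h$ this is polynomial in $n$, not $\mathrm{poly}(h)$.
\end{itemize}

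\textbf{The missing idea.} This is the paper's Lemma~\ref{lem: 3rd bound kappa}. It bounds the mixing time on a component $\kappa$ of $d_\kappa$ root-wired trees for \emph{every} $\eta$, so one can take $\Gamma_1=\Omega_{B_0\setminus B_1}$ and cluster geometry never matters. The argument runs as follows.
\begin{itemize}
\item After an uncensored burn-in, one fixed tree $\mathcal T_{r_1}$ has a root-to-$\mathcal C_1$ connection with constant probability $c_*$. This uses WSM for $\xi_{(r_1)}\in\mathcal F(h_3)$.
\item Updates to $\mathcal T_{r_1}$ are then censored. The other $d_\kappa-1$ trees see the $\circlearrowleft$ boundary, become independent, and are coupled via Lemma~\ref{lem: simulating BD 3}.
\item Finally only $\mathcal T_{r_1}$ is updated, and the constant success probability is boosted by repetition.
\end{itemize}
This gives $O(d_\kappa\log d_\kappa)\,O(h_3^2)\,\Th(h_3)$. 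The factor $d_\kappa$ cancels against $|\kappa|=d_\kappa|\mathcal T_{h_3}|$ when components are combined through Lemma~\ref{lem: simulating BD 3}, and this yields the $d^{h_0}h^4\,\Th(h_1+h_2)$ term. Your remark about censoring and conditioning on root-to-boundary indicators points in this direction. Without this argument, however, the $B_1$ bound, and hence~\eqref{eq: mixing recursion}, is not established.
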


Before proving Lemma~\ref{lem: mixing recursion}, 
we first show how  
Theorem~\ref{thm:tree-mixing} follows from 
Lemma~\ref{lem: mixing recursion}. 
\begin{proof}[Proof of Theorem~\ref{thm:tree-mixing}]
    A $d$-ary tree with $n$ vertices has height $h = O(\log_{d}n)$.  
By a standard canonical paths argument (see Lemma 17 in~\cite{BG24-PTRF} that applies to arbitrary single-component boundary condition), one obtains the bound
$\Th(h) = \exp(O(h))$.
We assume that $h^*$ is sufficiently large such that Lemma~\ref{lem: mixing recursion} can apply to any $h$ that is at least $3r^* = \Omega((\log h^*)^2)$; for smaller values of $h^*$, the theorem follows from the standard bound aforementioned by uniformly adjusting the constant hidden in the big-$O$ notation. 

Define a sequence $(h^{(i)})_{i\ge 1}$ by 
$h^{(1)}=\delta h $ and 
$h^{(k+1)}= \delta (h- \sum_{i=1}^k h^{(i)})$ for all $k>1$.
Note that $h^{(k+1)} =\delta (1-\delta)^k h$, which converges to $0$ and moreover $\sum_{k\ge 1}h^{(k)}\le h$ follows from the construction.
In iteration $k$, we choose $h_0= h_2= h^{(k+1)}$ and $h_1 = (1-2\delta)(h- \sum_{i=1}^k h^{(i)})$.
By iterating \eqref{eq: mixing recursion} from Lemma~\ref{lem: mixing recursion} for $s\ge1$ steps, we obtain that
\begin{align}
    \Thh &\le \Th\Big({h- \sum_{i=1}^s h^{(i)}}\Big) \cdot (M \log d)^{s}\cdot 
    \prod_{j=1}^s\Big[ d^{h^{(j)}}\cdot \big(h- \sum_{i=1}^{j-1} h^{(i)}\big)^{4} \Big] \nonumber\\
    &=\Th\Big({h- \sum_{i=1}^s h^{(i)}}\Big)\cdot (M \log d)^{s}\cdot d^{\sum_{j=1}^s h^{(j)}} \cdot \prod_{j=1}^s \Big[  \big(h- \sum_{i=1}^{j-1} h^{(i)}\big)^{4} \Big] \nonumber \\
    &\le \Th\Big({h- \sum_{i=1}^s h^{(i)}}\Big)\cdot d^{\sum_{j=1}^s h^{(j)}}
    \cdot (h^{4} M \log d)^s.\label{eq: s recursion mixing}
\end{align}
Since $(1-\delta) (h-\sum_{i=1}^k h^{(i)}) =  h- \sum_{i=1}^{(k+1)} h^{(i)} $,
there exists $s=M_2 \log  h$ with a suitably chosen $M_2=M_2(\delta, d)$ such that
\[
3r^* \le  h- \sum_{i=1}^s h^{(i)} \le 24r^*.
\]
Then for each term in \eqref{eq: s recursion mixing} we have an upper bound: 
\begin{enumerate}
    \item $ \Th\bigl({h- \sum_{i=1}^s h^{(i)}}\bigr) \le \max_{h'\in [3r^*,24r^*]} \Th(h') 
    =\exp\bigl(O((\log \log n)^2)\bigr) = (\log n)^{O(\log \log n)}$;
    \item $d^{\sum_{j=1}^s h^{(j)}} \le   d^{h} = O(n)$;
    \item $(h^{4} M \log d)^s = (h^{4} M \log d)^{M_2 \log \log n} \le  (M \log  n)^{4 M_2 \log \log n}$.
\end{enumerate}
The theorem therefore follows from \eqref{eq: s recursion mixing} and these bounds.
\end{proof}

\subsection{Mixing time recursion on trees}

We proceed with the proof of Lemma~\ref{lem: mixing recursion}.  
For this, we 
consider the random-cluster dynamics restricted to two overlapping blocks. 
Let $B_0 = \mathcal{T}_{r, h_0+h_1}$ from the root $r$, and $$B_1 = (V(\mathcal{T}_h \setminus \mathcal{T}_{r, h_0-1}), E(\mathcal{T}_h)  \setminus E(\mathcal{T}_{r, h_0})).$$  
With a slight abuse of notation we shall use $B_0$ and $B_1$
for the edge sets $E(B_0)$ and $E(B_1)$.
Define $\Omega_{B_0\setminus B_1}$ as the set of all configurations on $E(B_0) \setminus E(B_1)$; a configuration $\eta\in \Omega_{B_0\setminus B_1}$,  together with the boundary condition $\xi$ induce a boundary condition denoted $\xi \sqcup \eta$ on $B_1$.
We derive the following recursion for the mixing time 
of the random-cluster dynamics on $\mathcal T_h$
in terms of the mixing times of the same Markov chain on $B_0$ and $B_1$. This will be proved in Section~\ref{sec: mixing time BD}. 
\begin{lemma}
    \label{lem: block recursion}
    Let $p>\ps$.
    There exists a constant $M_1>0$ such that
      for any $h\in (2r^*, h^*]$ sufficiently large with its decomposition $(h_0, h_1, h_2)$, 
    the random-cluster dynamics satisfies 
    \begin{equation}
        \label{eq: block recursion}
        \Thh \le 
        M_1 \log |\mathcal{T}_h|  
         \max \Bigg\{ \max_{\xi \in \mathcal{F}( h_0+h_1)}  \max\{T_\mix(P^{\xi}_{B_0}),  T_\mix(P^{\xi, \circlearrowleft}_{B_0})\}  \frac{|\mathcal{T}_h|}{|B_0|},  \max_{\substack{\xi\in \mathcal{F}(h) \\\eta\in \Omega_{B_0\setminus B_1}}} T_\mix(P_{B_1}^{\xi\sqcup  \eta})  \frac{|\mathcal{T}_h|}{|B_1|}\Bigg\}.
    \end{equation}
\end{lemma}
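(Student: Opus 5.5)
The plan is to apply Lemma~\ref{lem: simulate systematic dynamics} to the two-block systematic scan $\hat P_{\mathcal B}$ with $\mathcal B=\{B_0,B_1\}$. For the good boundary conditions we take $\Gamma_1=\Omega_{B_0\setminus B_1}$, since every induced boundary condition on $B_1$ is allowed in the right-hand side of \eqref{eq: block recursion}. For $\Gamma_0$ we take those configurations on $E(B_1)$ which, appended to $\xi$, induce on $\partial B_0$ a single-component boundary condition lying in $\mathcal F(h_0+h_1)$. If the root of $\mathcal T_h$ happens to be wired to $\mathcal C_1(\xi)$, the same boundary condition is instead read in the $\circlearrowleft$ version; this is why $T_\mix(P^{\xi,\circlearrowleft}_{B_0})$ appears in the bound. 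With these choices, $T^*(\varepsilon',N)$ in \eqref{eq: Bi mixing time} is bounded by the maximum in \eqref{eq: block recursion}, up to the factor $3\lceil\log_2 1/\varepsilon'\rceil$ coming from \eqref{eq: tmix extend}. It therefore remains to do three things: bound $T_\mix(\hat P_{\mathcal B},\varepsilon')$ by $O(\log|\mathcal T_h|)$, bound $\rho$, and pick $\varepsilon'=|\mathcal T_h|^{-O(1)}$ and $N=\Theta(\log|\mathcal T_h|)$ so that \eqref{eq: 3 in 1 condition} holds with $\varepsilon=1/4$.

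\textbf{Bounding $\rho$.} Once $B_1$ has been updated at least once, its configuration is a random-cluster sample given some boundary. Each vertex of $\partial B_0$ is the root of a $d$-ary subtree of height $h_2=\delta h$ inside $B_1$. By domination of Bernoulli$(\hat p)$ percolation with $\hat p>1/d$, and because the boundary $\xi\in\mathcal F(h)$ wires a positive fraction of leaves deep down (Lemma~\ref{lem:uniform-positive}), each such vertex connects to $\mathcal C_1(\xi)$ with probability at least $\theta>0$. This must hold conditionally on the other subtrees, so that the induced law on $\partial B_0$ is $\theta$-wired. Corollary~\ref{cor:theta-wired-bc} then puts the induced condition in $\mathcal F(h_0+h_1)$ with probability $1-\exp(-\Omega(h^{1.1}))$, which is superpolynomially small in $|\mathcal T_h|$. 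The initial configuration $X_0$ is arbitrary, but we may start the scan with $B_1$, so every later $B_0$-update sees a freshly resampled $B_1$. Union bounding over the $O(\log|\mathcal T_h|)$ scan steps gives $\rho\le |\mathcal T_h|^{-\omega(1)}$.

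\textbf{Bounding $T_\mix(\hat P_{\mathcal B},\varepsilon')$.} We use a monotone grand coupling of the runs started from all-wired and all-free configurations, with update order $B_1,B_0,B_1$. After the first $B_1$ update, both induced boundaries on $B_0$ lie in $\mathcal F(h_0+h_1)$ with high probability. We then resample $B_0$ and must couple the two copies on $B_0\setminus B_1$, i.e.\ on the top $h_0$ levels. The edges there lie at distance at least $h_1$ from $\partial B_0$. Corollary~\ref{cor: edge WSM}, applied to both boundaries against the wired one, together with monotonicity gives edge-by-edge disagreement probability $C\beta^{h_1}$. A union bound over the $d^{h_0}=d^{\delta h}$ edges yields disagreement at most $d^{\delta h}C\beta^{(1-2\delta)h}$, which is $|\mathcal T_h|^{-c}$ once $\delta$ is small relative to $\log(1/\beta)/\log d$. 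After the copies agree on $B_0\setminus B_1$, the next $B_1$ update couples them exactly. Repeating the three-step cycle $O(\log|\mathcal T_h|)$ times boosts the success probability to $1-\varepsilon'$. This step is also where the $\log|\mathcal T_h|$ factor in \eqref{eq: block recursion} comes from.

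\textbf{Main obstacle.} The hard part is the $\theta$-wiredness claim: showing that the boundary induced on $\partial B_0$ by a $B_1$ resample is genuinely $\theta$-wired with independence across vertices. The difficulty is that the subtrees of $B_1$ hanging below $\partial B_0$ are coupled through the fixed configuration on $B_0\setminus B_1$, which may wire their roots together. My first attempt would be to condition on the worst case, namely that all roots are wired together. By FKG, the connection events of distinct roots to $\mathcal C_1(\xi)$ then stochastically dominate independent Bernoulli$(\theta)$ variables, where $\theta$ is a lower bound on a single root's connection probability under the free-top boundary. That lower bound comes from the $\hat p$-percolation survival argument in Lemma~\ref{lem:uniform-positive}.
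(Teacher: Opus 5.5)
Your architecture is the paper's. You apply Lemma~\ref{lem: simulate systematic dynamics} with $\Gamma_1=\Omega_{B_0\setminus B_1}$ and $\Gamma_0$ defined through $\mathcal F(h_0+h_1)$, and you run a monotone coupling of the scan in the order $B_1,B_0,B_1$ using Corollaries~\ref{cor:theta-wired-bc} and~\ref{cor: edge WSM}. Dropping the edge-WSM clause from $\Gamma_0$ is harmless, since it is only needed inside the coupling. There, however, Corollary~\ref{cor: edge WSM} holds only with high probability over a $\theta$-wired boundary, so its failure event must go into your good event, as the paper does with $E_t$.

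The genuine gap is in the step you call the main obstacle: the FKG argument does not give $\theta$-wiredness. Positive association plus marginals $\ge\theta$ does not imply domination of i.i.d.\ Bernoulli$(\theta)$. For example, $n$ copies of one Bernoulli$(\theta)$ variable are positively associated with marginals $\theta$. Yet the increasing event that some copy equals $1$ has probability $\theta<1-(1-\theta)^n$. Moreover, wiring all roots together is the most favourable case for these increasing connection events, so it cannot serve as a worst case for a lower bound.

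What you need, and what the paper's one-line claim implicitly uses, is a bound conditional on everything else. Fix any $\eta$ and any $u\in\partial B_0$. By the domain Markov property, given $\xi$, $\eta$ and the configuration of $B_1$ off $\mathcal T_{u,h_2}$, the law on $\mathcal T_{u,h_2}$ is either $\mu^{\xi_{(u)}}_{\mathcal T_{u,h_2}}$ or $\mu^{\xi_{(u)},\circlearrowleft}_{\mathcal T_{u,h_2}}$. Which one depends on whether $u$ is joined to $\mathcal C_1(\xi)$ from outside. Since $\{u\sim\mathcal C_1(\xi)\cap\partial\mathcal T_{u,h_2}\}$ is increasing, in both cases its conditional probability is at least $\mu^{\xi_{(u)}}_{\mathcal T_{u,h_2}}(u\sim\mathcal C_1(\xi))$. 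Revealing the subtrees $\mathcal T_{u,h_2}$ one at a time then gives domination of independent Bernoulli$(\theta)$, uniformly in $\eta$. That uniformity is exactly what bounds $\rho$ from arbitrary initial states.

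Two further points. First, for a deterministic $\xi\in\mathcal F(h)$, the constant $\theta$ does not come from Lemma~\ref{lem:uniform-positive}, which concerns random $\theta$-wired boundaries; a fixed $\xi$ has no prescribed density of wired leaves. It comes from the first WSM inequality defining $\mathcal F(h)$, applied at $h(u)=h_2>r^*$: $\mu^{\xi_{(u)}}_{\mathcal T_{u,h_2}}(u\sim\mathcal C_1(\xi))\ge\mu^1_{\mathcal T_{u,h_2}}(u\sim\partial\mathcal T_{u,h_2})-C\beta^{h_2}$. The first term is bounded below uniformly in $h_2$ because $\hat p>1/d$.

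Second, your log accounting yields $\log^2|\mathcal T_h|$: one factor from $\lceil\log_2 1/\varepsilon'\rceil$ inside $T^*$, and another from the $O(\log|\mathcal T_h|)$ boosting cycles. To get the single logarithm in \eqref{eq: block recursion}, note that one cycle already fails with probability at most $|\mathcal T_h|^{-c}$ for a fixed $c>0$ when $\delta$ is small. So $O(1)$ cycles reach $\varepsilon'=|\mathcal T_h|^{-2}$, and the only logarithm is the one in $T^*$.
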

Note that $B_0$ is itself a tree of smaller height with single-component boundary conditions, so the first $\max$ in the braces on the right-hand side of \eqref{eq: block recursion} is simply $\Th({h_0+h_1})$.  
The second term, however, requires a more delicate treatment,
as it correspond to a forest with some of its tree roots 
connected by the configuration  on $E(B_0) \setminus E(B_1)$.
We make this more precise with the following notation.

\begin{definition}
    For fixed $\eta \in \Omega_{B_0 \setminus B_1}$, the block $B_1$ decomposes into at most $|\partial \mathcal{T}_{h_0}|$ 
disjoint connected components where consider two subtrees as being part of the same component if their roots are connected by $\eta$.  
Let $\mathcal{C}(\eta)$ denote the components $B_1$ induced by $\eta$.   
Each $\kappa \in \mathcal{C}(\eta)$ consists of $d_\kappa$ trees of height $h_1 + h_2$ rooted at $r_1, \dots, r_{d_\kappa}$ with their roots wired together and with their leaves wired according to the induced boundary conditions $\xi_{(r_1)},\dots,\xi_{r_{d_\kappa}} \in \mathcal{F}(h_1+h_2)$.  \end{definition}

Instead of explicitly writing all boundary conditions $\xi_{(r_j)}$, we will simply use 
$P_{\kappa}^{\xi}$ to denote the random-cluster dynamics on the component $\kappa$ under the combined boundary condition 
$\xi_{(r_1)} \sqcup  \dots \sqcup  \xi_{(r_{d_\kappa})}$.  
Among the components in $\mathcal{C}(\eta)$, there may be at most one component whose roots are additionally wired to $\mathcal{C}_1(\xi)$; for such a component we write $P_{\kappa}^{\xi,\circlearrowleft}$.
In Section~\ref{sec: wired components}, 
we prove the following bound for the mixing time of the random-cluster dynamics on any such component.

\begin{lemma}
\label{lem: 3rd bound kappa}
Let $p>\ps$.
For every $\eta \in \Omega_{B_0\setminus B_1}$,  for every component $\kappa \in \mathcal{C}(\eta)$ and every boundary condition $\xi \in \mathcal{F}(h)$,
\[
\max\{T_{\mix}(P_{\kappa}^{\xi}), T_{\mix}(P_{\kappa}^{\xi, \circlearrowleft})\}
    \;\le\;
    O\!\left(d_\kappa \log d_\kappa\right) \cdot O\!\left((h_1 + h_2)^{2}\right)
    \cdot \Th({h_1 +h_2}).
\]
\end{lemma}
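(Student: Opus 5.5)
The plan is to treat the component $\kappa$ as $d_\kappa$ trees $\mathcal T_{r_1,h_1+h_2},\dots,\mathcal T_{r_{d_\kappa},h_1+h_2}$ whose roots are glued into a single wired vertex. We then reduce its mixing time to that of a single tree by a decoupling argument that conditions on which roots are connected to the boundary component.

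The key observation is this. Fix any configuration. The only interaction between the trees goes through whether each root $r_j$ is connected to $\mathcal C_1(\xi_{(r_j)})$, i.e., to the boundary component of its own subtree. Suppose some root $r_j$ is connected to its boundary component. Then every other tree in $\kappa$ sees its root wired to the boundary component, so it evolves exactly as $P^{\xi_{(r_i)},\circlearrowleft}$. Otherwise, no root is connected, and each tree has root-connection weight $q$ counted only once through the shared root. In that case the trees interact only through a single global factor, and conditionally they look like $P^{\xi_{(r_i)}}$.

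The argument proceeds in the following steps.

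\textbf{Step 1.} Use the block dynamics with blocks equal to the $d_\kappa$ individual trees. This is a product-like decomposition. Each block update, given the rest, is the random-cluster measure on one tree with boundary condition either $\xi_{(r_i)}$ or $\xi_{(r_i)},\circlearrowleft$. Both lie in the class controlling $\Th(h_1+h_2)$, since $\xi_{(r_i)}\in\mathcal F(h_1+h_2)$ as noted after the definition of $\mathcal F$.

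\textbf{Step 2.} Bound the relaxation or mixing time of this block dynamics by $O(d_\kappa\log d_\kappa)$ block steps, up to polynomial factors in $h_1+h_2$. The natural approach is a coupling. Start from two monotone extremal configurations and run the grand coupling. The dependence between blocks is only through the binary root-status, and by Theorem~\ref{thm wsm} and Lemma~\ref{lemma: root decay} (the $\mathcal F$ property at the root) the marginal probability that a given root is wired is close to the wired value, bounded away from $0$ and $1$ uniformly. Once any single tree has a root connected in both coupled copies, which happens with constant probability per update by stochastic domination of supercritical $\hat p$-percolation, all remaining trees see identical boundary conditions $\circlearrowleft$. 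From then on a coupon-collector argument over the $d_\kappa$ trees gives coalescence after $O(d_\kappa\log d_\kappa)$ block updates. The case where no root is connected is exponentially unlikely in $d_\kappa$ and can be absorbed.

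\textbf{Step 3.} Convert back to single-edge dynamics using Lemma~\ref{lem: simulate systematic dynamics} (or a variant of Lemma~\ref{lem: simulating BD 3}). Each block has size $|\kappa|/d_\kappa$, so the $|E|/|B_i|$ factor is $d_\kappa$ and is already accounted for in the coupon count. The error-amplification factor $\lceil\log_2\varepsilon^{-1}\rceil$ from \eqref{eq: tmix extend} with $\varepsilon\approx 1/\mathrm{poly}(h)$ contributes, together with the union bound over blocks, the $O((h_1+h_2)^2)$ factor.

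The main obstacle is Step 2. One must ensure that disagreement of root status does not propagate indefinitely between trees, namely that the coupling of root statuses succeeds at a uniform rate despite the trees not being independent. I would first try to prove this via a monotone coupling in which the top copy always has some root wired. Concretely, I would compare to the $\circlearrowleft$ measure on every tree and bound the probability that the bottom copy has all $d_\kappa$ roots disconnected by $(1-\varphi)^{d_\kappa}$.
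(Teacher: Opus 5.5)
Your structural observation is right: conditionally on the rest, each tree sees either $\xi_{(r_i)}$ or $\xi_{(r_i)},\circlearrowleft$, and a single root connected to the boundary decouples all the other trees. The gap is that routing this through a tree-by-tree block dynamics cannot deliver the claimed $O(d_\kappa\log d_\kappa)$ dependence.

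\begin{itemize}
\item \textbf{Cost per block update.} In Lemma~\ref{lem: simulate systematic dynamics}, each block update is simulated by an epoch of $T^*\asymp \frac{|E(\kappa)|}{|B_i|}\,T_{\mix}(P_{B_i})\asymp d_\kappa\,\Th(h_1+h_2)$ single-edge steps, with every other block censored during the epoch.
\item \textbf{Number of block updates.} Any block dynamics that resamples one of $d_\kappa$ trees per step needs at least order $d_\kappa$ block steps (systematic scan) or $\Theta(d_\kappa\log d_\kappa)$ block steps (random choice).
\item \textbf{These multiply.} The total is $O(d_\kappa^2\log d_\kappa)\cdot\Th(h_1+h_2)$ up to polylogarithmic factors. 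The $|E|/|B_i|$ factor is not ``already accounted for in the coupon count.''
\item \textbf{No escape via Lemma~\ref{lem: simulating BD 3}.} That lemma requires independent blocks, and the trees of $\kappa$ are not independent when the shared root is not wired to $\mathcal C_1(\xi)$.
\item \textbf{The loss matters.} In the proof of Lemma~\ref{lem: mixing recursion}, this bound is multiplied by $|B_1|/|\kappa|=|\partial\mathcal T_{h_0}|/d_\kappa$, so only a linear dependence on $d_\kappa$ cancels. Since $d_\kappa$ can be as large as $d^{h_0}$, a $d_\kappa^2$ dependence puts $d^{2h_0}$ into the recursion and degrades the final bound to $n^{2+o(1)}$.
\end{itemize}

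The missing idea is to parallelize by freezing one tree, which is what the paper does.
\begin{enumerate}
\item Couple the single-edge chains started from the all-wired and all-free states, and run an uncensored burn-in of $O(d_\kappa\,\Th(h_1+h_2))$ steps.
\item After the burn-in, with probability bounded below by a constant, the lower chain has $r_1$ connected to $\mathcal C_1(\xi_{(r_1)})$ inside $\mathcal T_{r_1}$. To see this, compare with the chain in which $r_1$ is not wired at all, and use the $\mathcal F(h_1+h_2)$ property to lower-bound the connection probability.
\item Now censor every update in $\mathcal T_{r_1}$. In both chains the other $d_\kappa-1$ trees then have the same product stationary law $\bigotimes_{j\ge 2}\mu^{\xi_{(r_j)},\circlearrowleft}_{\mathcal T_{r_j}}$. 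They therefore mix simultaneously, and couple, within $O(d_\kappa\log d_\kappa\,\Th(h_1+h_2))$ single-edge steps by Lemma~\ref{lem: simulating BD 3}.
\item Finally, update only $\mathcal T_{r_1}$, which now sees identical boundary conditions in both chains, so it couples too.
\end{enumerate}
This gives a constant coupling probability per epoch of length $O(d_\kappa\log d_\kappa\,\Th(h_1+h_2)\log|E(\kappa)|)$, and $O(h_1+h_2)$ epochs suffice.

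Freezing $\mathcal T_{r_1}$ also removes the persistence problem you flag in Step~2. In your random block dynamics, the connected tree can be resampled and lose its connection. Your remedy of union-bounding the event that no root is connected by $(1-\varphi)^{d_\kappa}$ per step gives nothing when $d_\kappa=O(1)$.
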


With all these ingredients in place, we now proceed to prove Lemma~\ref{lem: mixing recursion}. For brevity, in what follows we let $h_3 := h_1 + h_2$. 

\begin{proof}[Proof of Lemma~\ref{lem: mixing recursion}]
By applying Lemma~\ref{lem: block recursion}, we obtain that
\begin{equation}
      \Thh \le M_1 \log |\mathcal{T}_h| \cdot \max \Bigg\{
      \Th({h_0 + h_1})\cdot \frac{|\mathcal{T}_h|}{|B_0|},
     \max_{\substack{ \xi\in \mathcal{F}(h) \\ 
     \eta\in \Omega_{B_0\setminus B_1}} 
     }
     T_\mix(P_{B_1}^{\xi\sqcup  \eta})\cdot  \frac{|\mathcal{T}_h|}{|B_1|}
    \Bigg\}  \label{eq: two block decomposition}
\end{equation}
 Since the first terms in \eqref{eq: two block decomposition} and \eqref{eq: mixing recursion} coincide, it remains to bound 
$T_\mix(P_{B_1}^{\xi\sqcup  \eta})$ for any $\xi \in \mathcal{F}(h) $ and $ \eta\in \Omega_{B_0\setminus B_1}$.
 Moreover, since dynamics on disjoint components of $\mathcal C(\eta)$ are independent, by Lemma~\ref{lem: simulating BD 3} and \eqref{eq: tmix extend}, we have 
    \begin{equation}
        \label{eq: component disjointness}
         T_\mix(P_{B_1}^{\xi\sqcup  \eta}) = \max_{\kappa \in \mathcal{C}(\eta)}  
         \frac{|B_1|}{|\kappa|}\max\{T_{\mix}(P_{\kappa}^{\xi}), T_{\mix}(P_{\kappa}^{\xi, \circlearrowleft})\} \cdot O(\log{|\mathcal{C}(\eta)|})\,.
    \end{equation}
Then, by Lemma~\ref{lem: 3rd bound kappa} 
\begin{align*}
     T_\mix(P_{B_1}^{\xi\sqcup  \eta}) &= \max_{\kappa \in \mathcal{C}(\eta)}  
         \frac{O( d_\kappa \log d_\kappa) \cdot O(h^2) \cdot \Th({h_3})}{|\kappa|/|B_1|} \cdot O(\log{|\mathcal{C}(\xi)|}) \\
         &= \max_{\kappa \in \mathcal{C}(\eta)}  
         \left\{
         O( d_\kappa \log d_\kappa) \cdot O(h^2) \cdot \Th({h_3}) \cdot  \frac{|\partial \mathcal{T}_{h_0}| \cdot |\mathcal{T}_{h_3}| }{d_\kappa \cdot |\mathcal{T}_{h_3}|} \cdot O(\log{|\partial \mathcal{T}_{h_0}|})
         \right\} \\
         &= \max_{\kappa \in \mathcal{C}(\eta)}  
         O\left( h^2 \log (d_\kappa) \right)
      \cdot \Th({h_3}) \cdot O(|\partial \mathcal{T}_{h_0}|\log{|\partial \mathcal{T}_{h_0}|})
\end{align*}
    Then, since $|\partial \mathcal{T}_{h_0}| \le O(d^{h\delta})$ we have
\begin{align*}
    T_\mix(P_{B_1}^{\xi\sqcup  \eta}) 
      &= h^2 \max_{\kappa \in \mathcal{C}(\eta)}  
         O\left( \log (d_\kappa) \right)
      \cdot \Th({h_3}) \cdot O(d^{h\delta} \cdot h\delta \log d) \\
      &=O(h^{4})\cdot d^{\delta h} \cdot  \Th({h_3}).
\end{align*}
Since $|\mathcal{T}_h|/|B_1|=O(1)$,
combining this estimate with~\eqref{eq: two block decomposition} completes the proof of the lemma.
\end{proof}

\subsection{Reduction via block dynamics: Proof of Lemma~\ref{lem: block recursion}}
\label{sec: mixing time BD}
In this subsection, we prove Lemma~\ref{lem: block recursion} by analyzing an auxiliary block dynamics.  
To set up this dynamics, we recall that each  \(h > 0\) is associated with three parameters \(h_0, h_1,\) and \(h_2\) satisfying 
$
h_0 + h_1 + h_2 = h$ and $h_0 = h_2 =  \delta h$.
We define \(P_{\textsc{bd}}\) to be the \emph{systematic scan block dynamics} that at time $t$ performs a heat-bath update (i.e., resamples the configuration conditional on the configuration in the complement) on the block $U_t:=B_{t\!\!\mod \!\!~2}$, where recall $B_0 = \mathcal{T}_{h_0 + h_1}$ and
\[
B_1 
    = \bigcup_{v \in \partial \mathcal{T}_{h_0}} \mathcal{T}_{v,\,h_1 + h_2}\,.
\]

\begin{lemma}
    If $p>\ps$,
    for any $h\in (2r^*, h^*]$ and any $\xi \in \mathcal{F}(h)$,
    there exists an $M_0=M_0(d,p,q)$ that is independent of $h$ such that $T_{\mix}(P_{\textsc{bd}}^{\xi})\le M_0$ and $T_{\mix}(P_{\textsc{bd}}^{\xi, \circlearrowleft})\le M_0$.
    \label{lem: coupling time 2 block dynamics}
\end{lemma}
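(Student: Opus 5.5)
We bound the mixing time of $P_{\textsc{bd}}^{\xi}$ by constructing, for any pair of initial configurations $X_0, Y_0$, a coupling of two copies of the systematic scan that coalesces within $O(1)$ steps with probability bounded below uniformly in $h$. By monotonicity it suffices to take $X_0 = E(\mathcal{T}_h)$ and $Y_0 = \emptyset$ and use the monotone (grand) coupling of heat-bath block updates. The chains then stay ordered, and the TV distance at time $t$ is at most $\Pr(X_t\neq Y_t)$.

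The coupling runs through three consecutive updates: first $B_1$, then $B_0$, then $B_1$ again.

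\textbf{Step 1 (update $B_1$).} After the first update of $B_1$, both chains dominate Bernoulli$(\hat p)$ percolation on $B_1$, since heat-bath updates dominate $\ber(\hat p)$. Each subtree $\mathcal{T}_{v,h_1+h_2}$ with $v\in\partial\mathcal{T}_{h_0}$ is then wired to $\mathcal{C}_1(\xi)$. The vertices we care about are those $u\in\partial B_0$, i.e.\ at depth $h_0+h_1$. Such a $u$ roots a tree of height $h_2=\delta h$ inside $B_1$, whose leaves are $\xi$-leaves. Since $\xi\in\mathcal{F}(h)$, the connection probability of $u$ to $\mathcal{C}_1(\xi)$ under the measure restricted to its subtree is within $C\beta^{h_2}$ of the wired value $z^*>0$. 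Stochastic domination then gives a lower bound $\theta>0$ on the probability that $u$ is wired, uniform over the configuration outside and the status of $u$'s own subtree.

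I would argue that, conditionally on everything outside the disjoint subtrees below depth $h_0+h_1$, the subtrees below distinct $u$ are resampled independently. The induced boundary condition on $B_0$ (which is single-component, as all wired vertices join $\mathcal{C}_1(\xi)$) is therefore $\theta$-wired in the lower chain. The upper chain is more wired.

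\textbf{Step 2 (update $B_0$).} By Corollary~\ref{cor:theta-wired-bc}, the induced boundary conditions on $B_0$ lie in $\tilde{\mathcal{F}}(h_0+h_1)$ with probability $1-e^{-\Omega(h^{1.1})}$. We need to couple the configurations on $B_0\setminus B_1=\mathcal{T}_{h_0}$ (the top levels) under the two boundary conditions $\zeta^X\succeq\zeta^Y$. These lie at distance $h_1\ge (1-2\delta)h$ from $\partial B_0$.

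Using Corollary~\ref{cor: edge WSM}, together with the vertex-connection versions (Theorem~\ref{thm wsm}, Lemma~\ref{lemma: root decay}), I compare each of $\mu^{\zeta^X}_{B_0}$ and $\mu^{\zeta^Y}_{B_0}$ with $\mu^1_{B_0}$. In the monotone coupling the discrepancy on the top $h_0$ levels is at most a sum over the $O(d^{h_0})$ edges of edge-marginal differences. This bound is
\[
O\bigl(d^{\delta h}\beta^{(1-2\delta)h}\bigr),
\]
which is $o(1)$ once $\delta$ is small enough that $d^{\delta}\beta^{1-2\delta}<1$. This is where the freedom to choose $\delta$ is used.

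One subtlety: monotone coupling of whole configurations given ordered boundaries gives $\Pr(\text{disagree on } \mathcal{T}_{h_0})\le\sum_e(\mu^{\zeta^X}(e)-\mu^{\zeta^Y}(e))$. This is exactly the sum of edge-marginal gaps.

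\textbf{Step 3 (update $B_1$ again).} After Step 2 the two chains agree on $B_0\setminus B_1$ with probability $1-o(1)$. On that event, the boundary conditions induced on $B_1$ by $\eta$ and $\xi$ coincide, so the identity coupling makes $X_3=Y_3$.

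Thus $\Pr(X_3\ne Y_3)\le 1/4$ for $h$ large, and $M_0=3$ works; small $h$ is absorbed into the constant. The $\circlearrowleft$ version is identical. The root wiring only adds wirings inside $B_0$, and Lemma~\ref{lemma: root decay} supplies the corresponding WSM estimate.

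\textbf{Main obstacle.} The hardest step is Step 1: justifying that the boundary condition induced on $B_0$ after resampling $B_1$ is genuinely $\theta$-wired (independence across $u$), despite the subtree roots in $B_1$ being wired together through $\eta$ on $\mathcal{T}_{h_0}$. I would handle it by conditioning on the edge configuration in $B_1$ above depth $h_0+h_1$. Given that, the heights-$h_2$ subtrees hanging below are conditionally independent up to the single wired status of their roots' cluster. Domination by $\ber(\hat p)$ percolation within each subtree then yields a uniform lower bound $\theta\ge\hat p$-survival probability, independent across $u$, which suffices for Definition~\ref{def:theta-wired-bc}.
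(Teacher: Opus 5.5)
Your proposal is correct and follows essentially the same route as the paper: monotone coupling from the extremal states over the three updates $B_1, B_0, B_1$; $\theta$-wiredness of the boundary induced on $B_0$, deduced from $\xi\in\mathcal{F}(h)$; Corollary~\ref{cor:theta-wired-bc} and Corollary~\ref{cor: edge WSM} with a union bound over the $O(d^{\delta h})$ top edges; and the identity coupling on the final $B_1$ update. One correction to your ``main obstacle'' paragraph: the subtrees below depth $h_0+h_1$ are not conditionally independent (roots in a common upper cluster interact through the $q^{c}$ weight), and $\ber(\hat p)$-domination alone does not give connection to $\mathcal{C}_1(\xi)$, since the leaves reached may be singletons---but neither fact is needed, because the uniform conditional lower bound from your Step 1 (wiring the root only helps, and $\xi_{(u)}\in\mathcal{F}(h_2)$ supplies the WSM estimate) already yields domination of a product $\ber(\theta)$ measure.
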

\begin{proof}
We analyze the mixing time of the systematic scan block dynamics via coupling. 
Formally, for a fixed boundary condition $\xi$, we couple two instances $\{X_t^+\}_{t \ge 0}$ and $\{X_t^-\}_{t \ge 0}$ of $P_{\textsc{bd}}^{\xi}$ where the former starts from the all-in configuration and the latter from the all-out configuration.  
Since the random-cluster model is a monotone system, 
under the coupling in which both chains update the same block $U_t$ with the optimal (monotone) coupling, we have 
\[
T_{\mix}(P_{\textsc{bd}}^{\xi}) \le \tau_{\textsc{cp}}(P_{\textsc{bd}}^{\xi}) := \min\big\{t>0 : \pr(X_t^+ = X_t^-) \ge 3/4 \big\}.
\]
Thus, we will establish an upper bound on $\tau_{\textsc{cp}}(P_{\textsc{bd}}^{\xi})$.
The proof would be general enough to extend to obtain an upper bound of $\tau_{\textsc{cp}}(P_{\textsc{bd}}^{\xi, \circlearrowleft})$ as well.
Moreover, we may assume that $h$ is larger than some sufficiently large constant $H>0$, since for every fixed $h \le H$ the coupling time admits a uniform upper bound. 
We now consider the case $h > H$.

Consider the following event $E_t$ at any time $t$: 
\begin{align}
    \label{eq: et}
    E_t&:=\{X_t^+(\mathcal{T}_{r, h} \setminus B_0) \sqcup  \xi \in \mathcal{F}( h_0+h_1), X_t^-(\mathcal{T}_{r, h} \setminus  B_0 )\sqcup  \xi \in \mathcal{F}( h_0+h_1)\}  \\
   & \cap \left\{\eta = X_t^{\pm}(\mathcal{T}_{r, h} \setminus B_0) \sqcup  \xi: \mu^1_{B_0}(e\in A) - \mu^\eta_{B_0}( e\in A) \leq C_2 \beta_2 ^{\dist(e, \partial B_0)} ,\forall  e\in E(\mathcal{T}_{r,h_0}) \right\} \label{eq: et part2},
\end{align}
where $C_2$ and $\beta_2$ are the constants given in Corollary~\ref{cor: edge WSM}.
Observe that $E_t$ is a monotone property in the full configuration, and thus it is well-defined as a property about configurations only on $E(\mathcal{T}_{r, h} \setminus B_0)$ regardless of the configurations  on $E(B_0) \setminus E(B_1)$.

We claim $E_1$ happens with $1-\exp(-h^{1+\Omega(1)})$ probability. 
To show this, we first  prove that after we update $B_1$ at time $t=1$, 
the distributions over boundary conditions on $\partial \mathcal{T}_{r, h_0+h_1}$ induced by configurations $X_1^+(\mathcal{T}_{r, h} \setminus B_0) \sqcup  \xi$ and $X_1^-(\mathcal{T}_{r, h} \setminus B_0) \sqcup  \xi$ are $\theta$-wired for a fixed $\theta>0$.
Indeed, when we update $B_1$,  each node $v\in \partial \mathcal{T}_{r, h_0+h_1}$ connects to $\mathcal{C}_1(\xi)$ with probability at least $\mu_{\mathcal{T}_{v,h_2}}^{\xi_{(v)}}(v\sim \partial\mathcal{T}_{v,h_2})$.
Since $h_2>r^*$ and  $\xi_{(v)}\in \mathcal{F}(h_2)$, 
\begin{equation}
    \label{eq: use WSM for theta-wired}
    \left| \mu_{\mathcal{T}_{v,h_2}}^{\xi_{(v)}}(v\sim \mathcal{C}_1(\xi_{(v)}) ) - \mu_{\mathcal{T}_{v,h_2}}^{1}(v\sim \partial\mathcal{T}_{v,h_2})\right| \le C\beta^{h_2}.
\end{equation}
Hence, each  $v\in \partial \mathcal{T}_{r, h_0+h_1}$ is connected to $\mathcal{C}_1(\xi_{(v)})$ with probability at least
\[
 \theta_v:=
 \mu_{\mathcal{T}_{v,h_2}}^{1}(v\sim \partial\mathcal{T}_{v,h_2})  -  C\beta^{h_2}.
\]
Since the trees $\{\mathcal{T}_{v,h_2}\}_{v\in \partial \mathcal{T}_{r,h_0+h_1}}$ are all $d$-ary trees of height $h_2$, the quantity $\theta_v$ is the same for every $v\in \partial \mathcal{T}_{r,h_0+h_1}$. Moreover, the probability $\mu_{\mathcal{T}_{v,h_2}}^{1}(v\sim \partial \mathcal{T}_{v,h_2})$ is decreasing in $h_2$, and hence converges as $h_2\to\infty$. For $p>p_s$, this probability is bounded below by the corresponding connection probability in Bernoulli $\hat{p}$-percolation on the infinite $d$-ary tree with parameter $\hat p>1/d$, which is strictly positive. Therefore, for all sufficiently large $h_2$, there exists a constant $\theta>0$, independent of $h_2$, such that $\theta_v\ge \theta$ for every $v\in \partial \mathcal{T}_{r,h_0+h_1}$.
Moreover, since $\mathcal{T}_{r,h}\setminus B_0$ is a forest whose leaves inherit a single wired boundary class, 
an induced boundary condition on $\partial \mathcal{T}_{r,h_0+h_1}$ must also be a single-component boundary condition.
Hence, the induced boundary conditions on $\partial \mathcal{T}_{r,h_0+h_1}$ are $\theta$-wired.

As a consequence, by invoking Corollary~\ref{cor:theta-wired-bc} for $\theta$-wired distributions,
and the fact that $\tilde{\mathcal{F}}(h_0+h_1)\subseteq \mathcal{F}(h_0+h_1)$,
we will obtain that,  regardless of the configurations on $E(B_0) \setminus E(B_1)$, 
the event in \eqref{eq: et} holds with probability $ 1 - 2\exp(-(h_0+h_1)^{1+\Omega(1)})$.
Furthermore, as long as $h_1 >\gamma$, Corollary~\ref{cor: edge WSM} with a union bound over the edges in $E(\mathcal{T}_{r,h_0})$  implies that
the event in \eqref{eq: et part2} holds with probability at least
\[
1-4d^{h_0}\cdot e^{ - c d^{ \sqrt{h_1}}}
\ge 1-4\exp\bigl(-\frac{c}{2} d^{\sqrt{h_1}} \bigr).
\]
Therefore, with a union bound over these two events, we obtain
\begin{align}\label{eq:good-bc-high-probability}
    \pr( E_1) \ge 1 - \exp(-(h_0+h_1)^{1+\Omega(1)})\,.
\end{align}

Assume that $E_{1}$ holds.  
We now show that 
\begin{equation}
    \label{eq: match top}
    \pr\left(X^+_{2}(B_0)= X^-_{2}(B_0) \mid E_{1} \right) \ge 1-e^{-\Omega(h)}.
\end{equation}
Let $\eta_+= X^+_{1}(\mathcal{T}_{r, h} \setminus B_0)\sqcup  \xi$ and $\eta_-= X^-_{1}(\mathcal{T}_{r, h} \setminus B_0)\sqcup  \xi$ be the two boundary conditions of $B_0$ induced by $X^+_{1}(\mathcal{T}_{r, h} \setminus B_0)$ and $X^-_{1}(\mathcal{T}_{r, h} \setminus B_0)$ respectively.
By definition of that set of boundary conditions and a triangle inequality, for each $e\in E(\mathcal{T}_{r,h_0})$ we have 
\begin{align*}
    \pr(X^+_{2}(e)\neq X^-_{2}(e))&=\left | \mu_{B_0}^{\eta_+}(e\in A) -\mu_{B_0}^{\eta_-}(e\in A)  \right | \\
    &\le \left | \mu_{B_0}^{\eta_+}(e\in A) -\mu_{B_0}^{1}(e\in A)  \right| + \left| \mu_{B_0}^{1}(e\in A) -\mu_{B_0}^{\eta_-}(e\in A)  \right| \\
   & \le 2C_2 \beta_2^{h_1}.
\end{align*}
Then by a union bound, 
\[
\pr(X^+_{2}(B_0)\neq X^-_{2}(B_0)) \le
\sum_{e\in E(B_0)} \pr(X^+_{2}(e)\neq X^-_{2}(e)) \le
O(d^{\delta h}) \cdot \beta^{(1-2\delta)h}.
\]
This establishes \eqref{eq: match top} for sufficiently large $h$ and sufficiently small $\delta$.

Finally, if 
\(
X^+_{2}(B_0) = X^-_{2}(B_0),
\)
then with probability $1$ we have 
\[
\tau_{\textsc{cp}}(P_{\textsc{bd}}^{\xi}) \le 3,
\]
since once the configurations on $B_0$ agree, an identical coupling updates $B_1$ to be the same configuration in $X_3^+$ and $X_3^-$.
Therefore, we conclude that for $h>H$,
\[
\tau_{\textsc{cp}}(P_{\textsc{bd}}^{\xi}) \le 3
\quad\text{with probability at least } 1 - e^{-\Omega(h)},
\]
and the proof is complete. 
\end{proof}

We now turn to the proof of Lemma~\ref{lem: block recursion}. 
We simulate the previously defined block dynamics using the random-cluster dynamics. 

\begin{proof}[Proof of Lemma~\ref{lem: block recursion}]
Fix $\xi\in \mathcal{F}(h)$.
We prove an upper bound on $T_{\mix}(P^\xi)$ by comparing $T_{\mix}(P^\xi)$ and $T_{\mix}(P_{\textsc{bd}}^\xi)$ the simulation technique; the same bound applies to $T_{\mix}(P^{\xi,\circlearrowleft})$. 
Let $\{X_k\}_{k\ge 0}$ be the systematic scan block dynamics $P_{\textsc{bd}}^\xi$ started from an initial configuration $\omega_0$. Let $\Gamma_1 = \Omega_{B_0\setminus B_1}$ and let $\Gamma_0$ be the following set
\[
\left\{ \omega\in \Omega_{B_1 \setminus B_0}: \omega\sqcup  \xi\in \mathcal{F}(h_0+h_1),  \mu^1_{B_0}(e\in A) - \mu^\eta_{B_0}( e\in A) \leq C_2 \beta_2 ^{\dist(e, \partial B_0)} ,\forall  e\in E(\mathcal{T}_{r,h_0})\right\}.
\]  
By definition, $X_k(\mathcal{T}_{r,h}\setminus B_1) \in \Gamma_1$ holds with probability $1$ for all $k$.  
Moreover, by~\eqref{eq:good-bc-high-probability}, we know that when $\omega_0$ is an extreme configuration, for $k=1$, 
\[
\pr\bigl(X_k(\mathcal{T}_{r,h}\setminus B_0)\notin\Gamma_0 \mid X_0 = \omega_0 \bigr)= |\mathcal{T}_h|^{-1-\Omega(1)},
\]
as $\delta$ is sufficiently small and $h$ is sufficiently large;  the same argument extends to all $k\ge 1$ and to arbitrary initial configurations. 
We choose $N = \Theta(\log |\mathcal{T}_h|)$ sufficiently large so that 
$
\delta(N)=\exp(-2N/9)<(60M_0 |\mathcal{T}_h|)^{-2}
$,
and set $\varepsilon'=(60M_0 |\mathcal{T}_h|)^{-2}$, where $M_0$ is the constant in  Lemma~\ref{lem: coupling time 2 block dynamics}. Define
\[
T(\varepsilon') := \max\left\{
    \max_{\eta\in \Gamma_1} 
    T_{\mix}(P_{B_1}^{\xi\sqcup \eta},\varepsilon')\frac{|\mathcal{T}_h|}{|B_1|},
    \;
    \max_{\xi' \in \Gamma_0} 
    \max\!\left\{
        T_{\mix}(P_{B_0}^{\xi'},\varepsilon'),\;
        T_{\mix}(P_{B_0}^{\xi',\circlearrowleft},\varepsilon')
    \right\}\frac{|\mathcal{T}_h|}{|B_0|}
\right\}
\] and let $T^*(\varepsilon' ,N) = \max \{ T(\varepsilon'), N\}$.
By our choice of $\varepsilon'$ and $N$, we have
$T^*(\varepsilon' ,N) =O(T(\varepsilon'))$.
By Lemma~\ref{lem: coupling time 2 block dynamics} and \eqref{eq: tmix extend}, 
\[
 3 T_{\mix}(P_{\textsc{bd}}^\xi, \varepsilon') \cdot  (\varepsilon' + \delta(N)+ \rho) 
 \le 
4M_0 \cdot O(\ln |\mathcal{T}_h|) \cdot \left(\frac{2}{60 M_0 |\mathcal{T}_h|^2} +  \frac{1}{|\mathcal{T}_h|^{1+\Omega(1)}}  \right) < \frac{1}{4|\mathcal{T}_h|}.
\]
Applying \eqref{eq: tmix extend}, Lemmas~\ref{lem: simulate systematic dynamics} and \ref{lem: coupling time 2 block dynamics}  then yields
\[T_{\mix}(P^\xi) \le 2 T_{\mix}(P_{\textsc{bd}}^\xi, \varepsilon') \cdot T^*(\varepsilon' ,N)
=  O(\ln |\mathcal{T}_h|) \cdot O(T(\varepsilon')).
\]
The desired upper bound for $\Thh$ follows by choosing the constant $M_1$ sufficiently large.
\end{proof}

\subsection{Mixing time of the random-cluster dynamics on connected sets of trees}
\label{sec: wired components}
We now provide the proof of Lemma~\ref{lem: 3rd bound kappa}. 
This lemma concern the mixing time of random-cluster dynamics on a \emph{wired component} $\kappa$ that consists of 
$d_\kappa$ disjoint $d$-ary trees of height $h_3$, whose leaves are wired by boundary conditions from $\mathcal{F}(h_3)$ and whose roots are also wired together by the boundary condition. 

\begin{proof}[Proof of Lemma~\ref{lem: 3rd bound kappa}]
        Fix $\kappa \in \mathcal{C}(\eta)$ and fix a sufficiently large integer $h_3$. Suppose that $d_\kappa$ $d$-ary trees of height $h_3$ are wired at their roots, with respective leaf boundary conditions drawn from the family $\mathcal{F}( h_3)$.  Recalling that $h_3 = h_1 + h_2$, we will show that 
\begin{equation}
    \label{eq: star recursion 2}
    T_{\mix}(P_{\kappa}^{\xi}) \le O( d_\kappa \log d_\kappa) \cdot O(h_3^2) \cdot \Th({h_3}),
\end{equation}
and, in fact, the same upper bound for $T_{\mix}(P_{\kappa}^{\xi, \circlearrowleft})$ follows by an identical (or even simpler) argument.

To bound this, note that under the grand monotone coupling of random-cluster dynamics, for every component $\kappa \in \mathcal C(\eta)$ and every boundary condition $\xi \in \mathcal F(h)$, we have 
\begin{align*}
    \max_{x_0} \| \mathbb P (X_t \in \cdot \mid X_0 = x_0) - \pi_\kappa^\xi\|_{\textsc{tv}} & \le \sum_{e} \big(\mathbb P (e\in X_t \mid X_0 = 1) - \mathbb P(e\in X_t \mid X_0 = 0)\big)\\
    & \le 2|E(\kappa)| \max_{\eta \in \{0,1\}}\|\mathbb P (X_t  \in \cdot \mid X_0 = \eta) - \pi_\kappa^{\xi}\|_{\textsc{tv}}
\end{align*}

Let $\{Y_t^1\}$ denote the random-cluster dynamics $P_{\kappa}^{\xi}$ starting from the all-wired configuration under a censoring scheme that we will shortly specify, and let $\{Y_t^0\}$ denote the same dynamics initialized from the all-free configuration under the same censoring scheme.
We  run an uncensored ``burn-in'' phase of  $\tau_{1}=10\log (4/c_*) d_\kappa \Th({h_3})$ steps.  
Then we censor the updates to $\mathcal{T}_{r_1} := \mathcal T_{r_1, h_3}$ for the next $\tau_2:=200 \log |E(\kappa)|  d_\kappa \log d_\kappa \cdot \tau_\mix(h_3)$ steps. Finally 
we censor the updates to $\kappa \setminus\mathcal{T}_{r_1}$ for $\tau_3=200  \tau_{\mix}(h_3)$ steps. For every time chunk $\tau:= \tau_1 + \tau_2 + \tau_3$ thenceforth, the same censoring scheme is repeated.  By the censoring inequality of~\cite{PW} (see Lemma~\ref{lem: censoring} for a precise statement), we have
\begin{align*}
    \max_{x_0} \| \mathbb P(X_t \in \cdot \mid X_0 = x_0) - \pi_\kappa^\xi\|_{\textsc{tv}} \le 2d^{h_3} \max_{\eta \in \{0,1\}}  \| \mathbb P (Y_t^\eta  \in \cdot) - \pi_\kappa^\xi\|_{\textsc{tv}} \le 2 d^{h_3}  \mathbb P( Y_t^1   \ne Y_t^0)\,.
\end{align*}
where $Y_t^1, Y_t^0$ are coupled via the grand monotone coupling of random-cluster dynamics. 

We will show that in the grand monotone coupling $(Y_t^1, Y_t^0)$ of these two Markov chains, at time $\tau$,  
$$
\pr(Y_\tau^1 \ne  Y_\tau^0)  \le 1- c_*/5
$$
as then, by standard boosting under the grand monotone coupling, we get $\pr(Y_{k\tau}^1 \ne Y_{k\tau}^0) \le (1-c_*/5)^k$, which will be $o(d^{-h})$ after $\tau \cdot O(h_3)$ steps. 

We first show that after time $\tau_1$, there is a constant probability that in $Y_{\tau_1}^0$ (and therefore by monotonicity also $Y_{\tau_1}^1$), $\mathcal T_{r_1}$ has a root-to-boundary connection. This will decouple all the trees $\kappa \setminus \mathcal T_{r_1} = (\mathcal T_{r_i})_{i=2}^{d_\kappa}$ for the time period $[\tau_1,\tau_2]$ where $\mathcal T_{r_1}$ updates are censored.  
By the monotonicity of the random-cluster measure in each tree $\mathcal{T}_{r_j}:=\mathcal{T}_{r_j,h_3}$ the probability that $r_j$ connects to $\mathcal{C}_1(\xi_{(r_j)})$ is lower bounded by the same probability in the tree assuming that the root has not been wired to the boundary component. 
For $p>\ps$, the latter probability is guaranteed to be at least $c_* = c_*(p, d)>0$ provided the boundary condition $\xi_{(r_j)}$ on $\partial \mathcal{T}_{r_j}$ is from $\mathcal{F}(h_3)$; this follows from the argument surrounding \eqref{eq: use WSM for theta-wired}.

By Chernoff's inequality, by the time $\tau_1$,  there are at least $\log(4/c_*)\Th({h_3})$ single-edge updates inside $\mathcal{T}_{r_1}$ with probability $1-\delta_3$, where $\delta_3< c_* /3$. 
Let $\{Y_t'\}$ denote the same random-cluster dynamics starting at $Y_0$, but we explicitly require that in the underlying graph, the root $r_1$ is not wired to anything. 
Under the monotone coupling $(Y_t, Y_t')$ such that  $Y_t\succeq Y_{t}'$ for any $t$.
If there are at least $\log(4/c_*)\Th({h_3})$ single-edge updates inside $\mathcal{T}_{r_1}$ by $\tau_1$,  then by \eqref{eq: tmix extend} the distribution of $Y_{\tau_1}'$ has  equilibrated $\mathcal{T}_{r_1}$ except with a total-variation error of $\delta_2<c_*/3$.
Thus there is a random-cluster induced path between  root $r_1$ and $\mathcal{C}_1(\xi_{(r_1)})$  in $Y_{\tau_1}'$ with probability at least 
\[
    c_* - \delta_3 - \delta_2 > \frac{c_*}{3}.
\]
If we denote by $\mathcal{W}_1$ this event that in $Y_{\tau_1}^0$, then we have with probability at least $c_*/3$ that $Y_{\tau_1}^0,Y_{\tau_1}^1 \in \mathcal W_1$. 

On the event $\mathcal{W}_1$,
since updates to $\mathcal{T}_{r_1}$ are censored in the period $[\tau_1, \tau_1+\tau_2]$ and $r_1$ is connected to $\xi_{(r_1)}$ in $\mathcal T_{r_1}$,
the stationary distribution of $Y_t^0,Y_t^1$ given $Y_{\tau_1}^0(\mathcal T_{r_1}),Y_{\tau_1}^1(\mathcal T_{r_1})$ are the same product distribution over the remaining $d_\kappa-1$ trees with each of their roots wired to the boundary (because the roots are wired to $r_1$, which is wired to the boundary through $\mathcal T_{r_1}$), i.e.,  $\bigotimes_{j=2}^{d_\kappa}\mu_{\mathcal{T}_{r_j}}^{\xi_{(r_j)}, \circlearrowleft}$.

By definition, the mixing time of the random-cluster dynamics on each of these trees is at most $\tau_{\mix}(h_3)$; thus by Chernoff's inequality, Lemma~\ref{lem: simulating BD 3}, and \eqref{eq: tmix extend} with a union bound to move from mixing time to coupling time under a monotone coupling, one has that 
\[
\Pp\left[Y_{\tau_1+\tau_2}^0(\mathcal{T}_{r_j}) = Y_{\tau_1+\tau_2}^1(\mathcal{T}_{r_j}), ~\forall 2 \le j \le d_\kappa  \mid \mathcal{W}_1\right] \ge  \bigl(1-\frac{1}{6d_\kappa}\bigr)^{d_\kappa-1} \ge 
\frac{5}{6}\,.
\]
Let $\mathcal{W}_2$ be the event that the coupling succeeds in all these trees $\{\mathcal{T}_{r_{j}}\}_{j=2}^{d_\kappa}$.

Finally, on $\mathcal{W}_2$, since only updates to $\mathcal{T}_{r_1}$ in the period $[\tau_1+\tau_2, \tau_1+\tau_2+\tau_3]$ are uncensored, the random-cluster dynamics $Y_t^0$ and $Y_t^1$ are  either both converging to  $\mu_{\mathcal{T}_{r_1}}^{\xi_{(r_1)}}$ or both to   $\mu_{\mathcal{T}_{r_1}}^{\xi_{(r_1)}, \circlearrowleft}$. 
In either case, the mixing time is upper bounded by $\tau_{\mix}(h_3)$, so we have 
\[
\Pp\left[ Y_{\tau}^0(\mathcal{T}_{r_1}) =Y_{\tau}^1(\mathcal{T}_{r_1}) \mid \mathcal{W}_2 \right] \ge 
\frac{5}{6}.
\]
Therefore, by a union bound we have 
\[
\Pp\left[ 
    Y_\tau^0 = Y_\tau^1
\right]
\ge 
\Pp\left[ 
    Y_\tau^0 = Y_\tau^1 \mid \mathcal{W}_1
\right] \cdot \Pp\left[ \mathcal{W}_1 \right]
\ge \frac{c_*}{3} \cdot \bigl(1 -\frac{5}{6} - \frac{5}{6} \bigr)
= \frac{2c_*}{9},
\]
which establishes the desired claim.
\end{proof}

\section{Mixing on random regular graphs for $p > \ps$}\label{sec: rrg mixing}

The main aim of this section is to prove the mixing time bound on random regular graphs throughout the low-temperature regime of $p>\ps$ so long as $q = \Omega(\log d)$. This is done by combining a burn-in, the weak spatial mixing estimates on trees from Section~\ref{sec:trees-uniqueness-WSM} with $\theta$-wired boundary conditions for $\theta>0$, and the mixing time estimates of Section~\ref{sec: fast mixing} on trees with $\theta$-wired boundary conditions. 

This proof will be completed modulo extensions of those WSM and mixing time bounds from trees with single-component boundary conditions to treelike graphs with $O(1)$ tree excess, and boundary conditions that may additionally have $O(1)$ many wirings disjoint from the single giant component. 
Those extensions are themselves non-trivial since at large $p$, small graph modifications can have large effects on connection probabilities, but are less central to the main results of this paper and are therefore deferred to Section~\ref{sec:mixing on treelike graphs}. 

\subsection{Properties implying fast mixing on random regular graphs}
For some fixed $\alpha<\frac{1}{4}$, let
$
R = \alpha \log_{d} n.
$
We set $a=6/\alpha$ and let $C_0=C_0(a)$ be given by Theorem~\ref{thm: sharper bound}.
For any edge $e\in E(G)$ and $r\ge 1$, we consider the ball of radius $r$ centered at the lexicographically smaller vertex $o\in e$ and denote this ball by $B_r(e)$. 
The following two lemmas summarize the structural properties of typical random regular graphs and the behavior of i.i.d. edge percolation on them in the low-temperature regime needed in our analysis.

\begin{lemma}\label{lem:G1}
    Let $\Delta \ge 3$ and let $G$ be a random $\Delta$-regular graph on $n$ vertices. With probability $1-o(1)$ over the choice of $G$, the following holds. 
    \begin{enumerate}
        \item[\textup{(G1)}]
        For every $e\in E(G)$, there is at most one edge $(u,w) \in E(B_R(e))$ such that $B_R(e)\setminus \{(u,w)\}$ is a tree $\mathcal{T}$.
    \end{enumerate}
\end{lemma}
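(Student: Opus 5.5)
We would prove (G1) by the standard first-moment argument for short cycles in the configuration model. The plan is to work in the configuration model with $n$ vertices of degree $\Delta$ and transfer to the uniform random $\Delta$-regular graph at the end. The transfer uses the fact that the configuration model is simple with probability bounded away from zero, at least $e^{-(\Delta^2-1)/4}-o(1)$. So any event of probability $o(1)$ in the configuration model also has probability $o(1)$ for the random regular graph.

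The condition that $B_R(e)$ has at most one excess edge fails only in one way. There must then be a vertex $o$ such that $B_R(o)$ contains two distinct cycles, or more generally has tree excess at least $2$. Since every cycle in $B_R(o)$ passes within distance $R$ of $o$, such a configuration yields a connected subgraph $H$ with $v_H$ vertices and at least $v_H+1$ edges. One can take $H$ to be the union of two cycles of length at most $2R+1$ joined by a path, or sharing part of a path, all inside $B_R(o)$. It therefore has $v_H\le 4(2R+1)+2R=O(R)$ vertices and excess exactly $1$.

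We enumerate the possible shapes of $H$: two cycles plus a connecting path, or a theta graph. Each shape is described by $O(1)$ length parameters, each at most $O(R)$, so there are $\mathrm{poly}(R)$ shapes. For a fixed shape with $v$ vertices and $v+1$ edges, the expected number of copies in the configuration model is at most
\[
n^{v}\,\Delta^{2(v+1)}\Big(\frac{1}{\Delta n - O(v)}\Big)^{v+1} \le \frac{C (\Delta-1)^{O(v)}}{n}.
\]
This count uses $n^v$ choices of vertices, at most $\Delta^2$ half-edge choices per edge, and probability about $1/(\Delta n)$ that each required pairing occurs. A sharper count replaces $\Delta^{2}$ per edge by $\Delta(\Delta-1)$, giving the factor $d^{v}$. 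With $v\le 10R$ and $R=\alpha\log_d n$, the factor $d^{O(v)}$ is at most $n^{c\alpha}$ for an explicit constant $c$. Summing over the $\mathrm{poly}(\log n)$ shapes, the total expectation is $n^{-1+c\alpha}\mathrm{polylog}(n)=o(1)$. Markov's inequality then completes the argument.

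The main obstacle is the bookkeeping of the exponent, which must stay below $1$. A careless bound gives $d^{2v}$ with $v\approx 10R$, and this would require roughly $\alpha<1/20$ rather than the paper's $\alpha<1/4$. To get a good exponent, we will count more economically: grow the ball from $o$ and record only the two extra pairings. Reveal $B_R(o)$ by breadth-first exploration. The ball has at most $\Delta d^{R}\le \Delta n^{\alpha}$ half-edges. At each step, the chance that a newly paired half-edge lands back inside the explored set is at most $O(n^{\alpha}/n)$. With $O(n^\alpha)$ steps, the probability of at least two such collisions is $O\big((n^{\alpha}\cdot n^{\alpha}/n)^2\big)=O(n^{4\alpha-2})$. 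A union bound over the $n$ choices of $o$ gives $O(n^{4\alpha-1})=o(1)$ exactly when $\alpha<1/4$, which matches the hypothesis. We will carry out this exploration version as the actual proof, and use it to fix the constants.
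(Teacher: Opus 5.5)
Your exploration argument is essentially the paper's proof. You reveal $B_R(o)$ breadth-first in the configuration model, bound each collision probability uniformly by $O(n^{\alpha}/n)$ over $O(n^{\alpha})$ pairing steps to get $O(n^{4\alpha-2})$ for two collisions, and union bound over the $n$ centers to get $O(n^{4\alpha-1})=o(1)$ when $\alpha<1/4$; your transfer via the probability of simplicity is the same step the paper invokes as contiguity, and the shape-counting detour can simply be dropped.
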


On the high probability event of Lemma~\ref{lem:G1}, for each $e\in E(G)$, define the ball $B(e):=B_R(e)$. 

\begin{lemma}\label{lem:P1-P2}
    Let $\Delta \ge 3$, $q\ge C_0 \log d$, $p>\ps$, and let $G$ be a random $\Delta$-regular graph on $n$ vertices. With probability $1-o(1)$ over the choice of $G$, the following holds.  If $A$ is an i.i.d.\ edge percolation on $G$ with parameter $\hat p$ from~\eqref{eq:Glauber-update-rule}, then with probability $1-o(1)$, for every $e\in E(G)$, the boundary conditions $\xi$ induced by $A(B(e)^c)$ on $B(e)$ satisfy 
        \begin{enumerate}
        \item[\textup{(P1)}]
        There exists a constant $C_3>0$ such that
        $
        \bigl| \mu^1_{B(e)}(A(e) = 1) - \mu^\xi_{B(e)}(A(e) = 1) \bigr|
        \le C_3 d^{-2R/\alpha};
        $ 
        \item[\textup{(P2)}]
        The random-cluster dynamics on $B(e)$ with boundary condition $\xi$ or all-wired has mixing time
        \[
        |B(e)| \cdot (\log |B(e)|)^{O(\log \log |B(e)|)}\,.
        \]
    \end{enumerate}
\end{lemma}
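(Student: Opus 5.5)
We need to prove Lemma~\ref{lem:P1-P2}. The plan has three stages: show that the induced boundary conditions on each ball are wired in the required sense, then feed this into the treelike extensions of Section~\ref{sec:mixing on treelike graphs} for the spatial-mixing bound, and finally for the mixing-time bound.

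\textbf{Stage 1: percolation structure.} Work on the high-probability event of Lemma~\ref{lem:G1}, so each $B(e)$ is a tree plus at most one extra edge. Since $\hat p>1/d$, Bernoulli($\hat p$) percolation on $G$ is supercritical. With probability $1-o(1)$ it has a unique giant component, and all other clusters have size $O(\log n)$.

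Since $R=\alpha\log_d n$ with $\alpha<1/4$, the ball $B(e)$ has at most $n^{\alpha+o(1)}$ vertices. Moreover $G\setminus B(e)$, viewed from each leaf $x$ of $B(e)$, looks locally like a $d$-ary tree out to depth $\Theta(\log n)$ except for $O(1)$ defects. So each leaf has probability at least some $\theta>0$ of reaching the giant component inside $G\setminus B(e)$. These events are nearly independent across leaves, because the explorations of depth $\epsilon\log n$ from distinct leaves overlap only $O(1)$ times.

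Rather than prove a true $\theta$-wired domination, I would use the relaxed class defined in Section~\ref{sec:mixing on treelike graphs}. Its members are single-component boundary conditions except for $O(1)$ extra wirings among leaves, arising from short paths outside $B(e)$ or small non-giant clusters. They stochastically dominate a product $\theta$-wired law off $O(1)$ exceptional leaves. I would verify membership via an exploration and Chernoff argument, together with a union bound over all $n$ edges. The failure probability per edge must be $o(1/n)$, which holds because $d^{R}=n^{\alpha}$ makes the Chernoff bound $e^{-\Omega(n^{\alpha})}$-type.

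\textbf{Stage 2: (P1).} Apply the treelike extension of Theorem~\ref{thm wsm} and Corollary~\ref{cor: edge WSM} with the sharp rate of Remark~\ref{rem:choice-of-beta}. For (P1) we need $(\beta_*+o(1))^{R-\sqrt R}\le C d^{-2R/\alpha}$, i.e.\ $\beta_*\le d^{-a/2}$ with $a=6/\alpha$ (the extra slack absorbs the defects).

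Here one computes $\beta_*=g'(y^*)$. As $q\to\infty$ with $p>\ps$, $y^*\approx(1-p)^{-d}$ is large and $\Phi'(y^*)\approx q(1-p)/{y^*}^2$-ish. This gives $g'(y^*)=d\Phi(y^*)^{d-1}\Phi'(y^*)\lesssim d\,q^{-c}$ or $e^{-cq}$. Then $q\ge C_0\log d$ with $C_0=C_0(a)$ yields $\beta_*\le d^{-a}$; this is what Theorem~\ref{thm: sharper bound} supplies.

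The cycle is handled by bounding the message at its topmost vertex directly via the cycle partition function with wired vertices, as described in the introduction. Boundary wirings disjoint from the giant component only alter $O(1)$ messages, which stay in $[1+\eps,M]$, so contraction still applies above them.

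\textbf{Stage 3: (P2) and the main obstacle.} For (P2), invoke the treelike extension of Theorem~\ref{thm:tree-mixing}, since $\xi$ lies in the extended analogue of $\tilde{\mathcal F}(R)$ with high probability by Corollary~\ref{cor:theta-wired-bc} and its extension. The same statement applied to the all-wired boundary condition gives mixing time $|B(e)|(\log|B(e)|)^{O(\log\log|B(e)|)}$.

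I expect the main obstacle to be Stage 1's near-independence and union bound over $n$ edges. The exploration from different leaves can share vertices through cycles in $G\setminus B(e)$, so I would resolve it by exposing the $(\epsilon\log n)$-neighborhoods in the configuration model and using that $O(1)$ collisions occur with probability $1-o(n^{-1})$.
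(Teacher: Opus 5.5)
Your overall architecture matches the paper's. The percolation structure of Lemma~\ref{lem:rrg-percolation-properties} gives $(\theta,Q)$-wired boundary conditions (Corollary~\ref{cor:burn-in-induced-bc-are-good}). Then Theorem~\ref{thm wsm theta,Q wired} with Theorem~\ref{thm: sharper bound} yields (P1), and Theorem~\ref{thm:mixing unicyclic} with Corollary~\ref{cor:theta-Q-wired-bc} yields (P2).

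The gap is in Stage~1. You never show that, for every $e$ simultaneously, only $O(1)$ leaves of $B(e)$ lie in non-giant clusters of $A\cap B(e)^c$ that also contain other leaves, and neither tool you name gives this. A Chernoff bound controls a count's deviation from its mean; it cannot certify that a count is $O(1)$ with failure probability $o(1/n)$. Your collision count in the $(\epsilon\log n)$-neighborhoods only detects leaf-to-leaf connections through the explored region. A non-giant cluster (size up to $K\log n$) can join two leaves by a path of length $\Theta(\log n)$ that leaves that region. Since each leaf's depth-$\epsilon\log n$ subtree contains a long open path with constant probability, nothing local detects this. Your collision argument is item two of Lemma~\ref{lem:rrg-percolation-properties} (the exceptional set $H$ and the product domination), not item three.

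The missing idea is a global first-moment bound over $L$-tuples of leaves.
\begin{itemize}
\item Fix $w_1,\dots,w_L$; there are at most $n^{\alpha L}$ choices.
\item Reveal their $A$-clusters one at a time in the configuration model, stopping after $\Delta K\log n$ matchings. This is legitimate because non-giant clusters have size at most $K\log n$.
\item Each matching hits a half-edge of some $w_j$ with probability $O(1/n)$. So an exploration that must find a new partner succeeds with conditional probability $O(\log n/n)$, and at least $L/2$ explorations must succeed.
\item This gives $n^{\alpha L}(\log n/n)^{L/2}=o(n^{-1})$ for $L=L(\alpha)$ large, using $\alpha<1/2$.
\end{itemize}

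This matters specifically for (P2). For (P1), extra non-giant wirings are harmless: by monotonicity $\mu^{\xi_{\mathrm g}}(e\in A)\le\mu^{\xi}(e\in A)\le\mu^{1}(e\in A)$, where $\xi_{\mathrm g}$ wires only the giant-connected leaves. That sandwich, not ``altering $O(1)$ messages'', is how such wirings are handled; long-range identifications cannot be encoded in the single-component message recursion. Mixing times, however, are not monotone in the boundary condition. Condition~(a) of $\tilde{\mathcal H}(h,Q)$ needs $Q=O(1)$, and for example the comparison in Lemma~\ref{lem:treelike ball gap} loses a factor $q^{5(2L+Q)}$.

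A minor point: membership in the good class fails with probability $\exp(-\Omega(R^{1.1}))$, driven by vertices near the leaves, not $e^{-\Omega(n^{\alpha})}$. This is still $n^{-\omega(1)}$, so the union bound survives.
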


\begin{proof}[Proof of Theorem~\ref{thm:intro-rrg-mixing}]
    Fix a graph $G$ for which Lemma~\ref{lem:G1} and Lemma~\ref{lem:P1-P2} hold.
    We analyze the mixing time of the random-cluster dynamics on this graph $G$. 
    For each $e\in E(G)$, let $B(e)$ be the ball defined in \textup{(G1)} in Lemma~\ref{lem:G1}.
        Let the constant $C_3$ be as given by \textup{(P1)} in Lemma~\ref{lem:P1-P2}. 
    By the existence of a monotone grand coupling, it suffices to upper bound the coupling time $\min\Bigl\{ t : \pr \bigl(X_t^1 \neq X_t^0 \bigr) < \tfrac{1}{4} \Bigr\}$, 
    where $\{X_t^1\}_{t\ge 0}$ and $\{X_t^0\}_{t\ge 0}$ denote the random-cluster dynamics started from the all-wired and all-free configurations, respectively.
 Let $C_2>0$ be a large constant and let $T_{\textsc{Burn}}:=C_2 n \log n$. 
     Define two auxiliary chains $\{Y_t^1\}_{t\ge 0}$ and  $\{Y_t^0\}_{t\ge 0}$ 
jointly evolving with $\{X_t^1\}_{t\ge 0}$ and $\{X_t^0\}_{t\ge 0}$ 
on $[0, T_{\textsc{Burn}}]$ as follows: 
    \begin{itemize}
        \item let $Y_0^0 = X_0^0$, and at any time  $t\in [0, T_{\textsc{Burn}}]$, let $Y^0_t$ select an edge $e_t$ and update it according to an independent Bernoulli random variable $\mathcal{O}_t$ with parameter $\hat{p}$, that is, $Y^0_{t+1}(e_t) = \mathcal{O}_t$, and for every edge $g\neq e_t$, $Y^0_{t+1}(g)= Y^0_{t}(g)$;
        \item for all $t\in [0, T_{\textsc{Burn}}]$, we set $Y_t^1 = X_0^1$ (equivalently it updates with $\text{Ber}(1)$ random variables);
        \item at step $t>0$, the four chains $\{X_t^1\}_{t\ge 0}$, $\{X_t^0\}_{t\ge 0}$, $\{Y_t^1\}_{t\ge 0}$ and  $\{Y_t^0\}_{t\ge 0}$  select the same edge $e_t$, and on the resampling step of $e_t$, conduct their updates jointly via a monotone coupling.
    \end{itemize}
    By monotonicity of the random-cluster dynamics, and update rule~\eqref{eq:Glauber-update-rule}, for all $t\in [0,T_{\textsc{Burn}}]$,  
    \begin{equation}
        \label{4 chains comp}
               Y_t^0 \preceq X_t^0 \preceq X_t^1 \preceq Y_t^1\, .
    \end{equation}
    Suppose $C_2>0$ is large enough so that with probability at least $99/100$, every edge of $G$ is selected to update at least once during $[0, T_{\textsc{Burn}}]$.
This follows from a standard coupon collector's argument. 
If this happens, the distribution of $Y^0_{T_{\textsc{Burn}}}$ is the i.i.d.\ bond-percolation measure on $E(G)$ with parameter $\hat p$.
    Let $T := T_{\textsc{Burn}}$, and 
    let $\mathcal{A}_T$ be the event that $A=Y^0_T$ satisfies the properties stated in Lemma~\ref{lem:P1-P2}, and we know that $\pr(\mathcal{A}_T^c) \le 1/100$.
    Assume that $\mathcal{A}_T$ happens.
    Also, we claim that $A' = Y^1_T$ also satisfies these properties even though it does not have parameter $\hat p$. 
    Indeed, with high probability $G$ is such that the complement of every $B(e)$ is connected, in which case since $A'$ is the all-wired configuration, the all-wired boundary is induced on $B(e)$. This all-wired boundary condition then satisfies the property \textup{(P1)} trivially, and the property \textup{(P2)} by assumption.
    
    Fix an edge $e \in E(G)$.
       Suppose that after time $T$, $\{Y_t^1\}_{t\ge T}$ and  $\{Y_t^0\}_{t\ge T}$ are the censored random-cluster dynamics in which they still update edges via the monotone coupling but ignore any attempted updates to edges in $E(G)\setminus E(B(e))$.
By the censoring inequality of \cite{PW}, \eqref{4 chains comp} can be extended to any $t\ge 0$, and thus for any $t \ge 0$, it holds that 
    \[
        \pr\bigl(X_t^1(e) \neq X_t^0(e)\bigr)
        \le
        \pr\bigl(Y_t^1(e) \neq Y_t^0 (e)\bigr).
    \]
    We define
    $\eta^0 := Y^0_T(E(G) \setminus E(B(e)))$ and $\eta^1 := Y^1_T(E(G) \setminus E(B(e)))$.
    For any $s > 0$, by the triangle inequality, if $\mathcal F_T$ is the sigma-algebra generated by the randomness up to time $T$, 
    \begin{align}
        \label{eq: triangle for Yt}
        \pr\bigl( Y_{T+s}^1(e) \neq Y_{T+s}^0(e) \mid \mathcal F_T \bigr)
        &\le
        \bigl| \mu_{B(e)}^{\eta^0}(e\in A) - \pr(Y^0_{T+s}(e)=1) \bigr| \nonumber  \\
        &\quad
        + \bigl| \mu_{B(e)}^{\eta^0}(e\in A) - \mu_{B(e)}^{\eta^1}(e\in A) \bigr| \nonumber \\ 
        & \quad + \bigl| \mu_{B(e)}^{\eta^1}(e\in A) - \pr(Y^1_{T+s}(e)=1) \bigr|. 
    \end{align}
    On the event $\mathcal{A}_T$, the second term in~\eqref{eq: triangle for Yt} is controlled by \textup{(P1)}:
    \begin{align*}
        \bigl| \mu_{B(e)}^{\eta^0}(e\in A) - \mu_{B(e)}^{\eta^1}(e\in A) \bigr|
        &\le
        \bigl| \mu_{B(e)}^{1}(e\in A) - \mu_{B(e)}^{\eta^1}(e\in A) \bigr|
        + \bigl| \mu_{B(e)}^{1}(e\in A) - \mu_{B(e)}^{\eta^0}(e\in A) \bigr| \\
        &\le
         C_3 d^{-2R/\alpha}
        =  C_3 d^{-2\log_{d} n}
         = o(1/n)\,.
    \end{align*}
    Now set $s=\max_e \{ \tau_{B(e)}\cdot \frac{10\Delta n}{|B(e)|}\}$, where
      \[
        \tau_{B(e)} := |B(e)| (\log |B(e)|)^{O(\log \log |B(e)|)}.
    \]
    By a Chernoff bound, except with probability at most $O(n^{-10})$,  by time $T+s$ the number of updates within $B(e)$ is at least
    $T_{\mix}(P^{\eta^0}_{B(e)}, n^{-10})$. Therefore, we have 
    \[
        \bigl\| \mu_{B(e)}^{\eta^0} - \pr(Y^0_{T+s} \in\cdot  ) \bigr\|_{\textsc{TV}}
        \le O(n^{-10})\,,
    \]
    which yields the desired bound on the first term in~\eqref{eq: triangle for Yt}.
    Similarly, after
    $T_{\mix}(P^{\eta^1}_{B(e)}, n^{-10})$ updates within $B(e)$, the third term in~\eqref{eq: triangle for Yt} is also upper bounded by $O(n^{-10})$.

    Hence, for
    $s \ge \tau_{B(e)} \cdot \frac{10dn}{|B(e)|}$,
    we have
    \[
    \pr\bigl(X_{T+s}^1(e) \neq X_{T+s}^0(e) \mid \mathcal{A}_T \bigr) \le 
        \pr\bigl( Y_{T+s}^1(e) \neq Y_{T+s}^0(e) \mid \mathcal{A}_T \bigr)\le o\big(n^{-1}\big) \,.
    \]
    Taking a union bound over the at most $\Delta n$ edges of $G$ yields
    \[
        \pr\bigl(X_{T+s}^1 \neq X_{T+s}^0 \bigr)  \le 
       \pr\bigl(X_{T+s}^1 \neq X_{T+s}^0 \mid \mathcal{A}_T \bigr)  + \pr\bigl(\mathcal{A}_T^c\bigr) \le \frac{1}{100} + o(1)\,.
    \]
    Therefore, the coupling time of the  random-cluster dynamics on $G$ is at most
    $$
   T + \max_e \bigl\{ \tau_{B(e)}\cdot \frac{10dn}{|B(e)|}\bigr\}
    = C_2 n \log n+
    10\Delta n \max_e \bigl\{(\log |B(e)|)^{O(\log \log |B(e)|)} \bigr\}\,,
    $$ 
    which is at most $n(\log n)^{O(\log \log n)}$, 
    and it upper bounds the mixing time.
\end{proof}

\subsection{Treelike nature of random regular graphs: establishing (G1)}

We include here a lemma that random regular graphs are locally treelike in the sense that all their $(1/4 - o(1))\log_{d} n$ balls are at most unicyclic. This is standard (see e.g.,~\cite[Fact 2.17]{gheissari2025rapid}), but we include a proof for self-containedness.

\begin{proof}[Proof of Lemma~\ref{lem:G1}]
    By contiguity of the configuration model to $\mathcal G_d(n)$ it suffices to work with the configuration model where every vertex has $d$ half-edges, and the (multi-)graph is given by a uniform at random matching of those half-edges. By a union bound, it suffices to fix a vertex $v$ and show that the probability that $B_R(v)$ contains at least two edges that close cycles is $o(1/n)$. This probability can be obtained by revealing $B_R(v)$ in a breadth-first manner, starting from matching the $d$ half-edges of $v$, then those of the vertices it connected to, and so on.  

    At each matching step of this revealing process, the probability that a pair of half-edges get matched between vertices already revealed to belong to $B_R(v)$, conditional on any previous history of the revealing procedure up to that step, will evidently be bounded by $\frac{\Delta (\Delta-1)^{R}}{n}$. 
    In order to have tree excess greater than one, in the $\Delta(\Delta-1)^R$ many attempts at such an event, it must occur at least twice. Therefore, the probability of $B_R(v)$ having tree excess greater than one is at most 
    \begin{align*}
        \mathbb P\Big( \text{Bin}(\Delta(\Delta-1)^R, \tfrac{\Delta(\Delta-1)^R}{n})\ge 2\Big)\,.
    \end{align*}
    In turn, since $\Delta(\Delta-1)^R =  \Delta n^{\alpha}$, this is bounded by $\Delta^4 n^{2\alpha}  n^{2\alpha -2}$. If $\alpha<1/4$ the exponent on $n$ is strictly less than $-1$ as desired. 
\end{proof}

\subsection{Percolation on random regular graphs: establishing (P1)--(P2)}

Recall that after its burn-in, the random-cluster dynamics from any initialization is above a parameter-$\hat p$ Bernoulli percolation on the random regular graph. Moreover, when $p>\ps$, this Bernoulli percolation is supercritical. The two properties, (P1)--(P2) in Lemma~\ref{lem:P1-P2} were about local spatial and temporal mixing of balls with the boundary conditions induced by the supercritical $\hat p$ percolation on $E(G)$. Towards that, we have the following. 

\begin{lemma}\label{lem:rrg-percolation-properties}
    Fix $\Delta\ge 3$ and $\hat p> 1/d$ and $\alpha<1/2$. Let $G$ be a random $\Delta$-regular graph on $n$ vertices, and let $A$ be obtained by i.i.d.\ edge percolation with parameter $\hat p$. There exist constants $L,K > 0$ such that with probability $1-o(1)$ , $G$ is such that all of the following hold: 
    \begin{itemize}
        \item With probability $1-o(1)$, $A$ has a giant component $|\mathcal C_1(A)| \ge K^{-1} n$, and all other components of $A$ have size at most $K\log n$.
        \end{itemize}
     Furthermore, for all $v\in V(G)$, and $r = \alpha\log_{d} n$, 
        \begin{itemize}
        \item Let $(\eta_w)_{w\in \partial B_r(v)}$ be indicator random variables of $w$ belonging to a component of size greater than $K \log n$ in $B_r(v)^c$. There exists a set $H$ of size at most $L$ such that $(\eta_w)_{w\in \partial B_r(v)\setminus H}$ stochastically dominates a product measure of independent $\text{Ber}(1/K)$. 
        \item With probability $1-o(1)$, $A$ is such that for all $v\in V(G)$, the number of vertices in $\partial B_r(v)$ that are in non-singleton, non-giant components of $B_r(v)^c\cap A$ is at most $L$. 
    \end{itemize}
\end{lemma}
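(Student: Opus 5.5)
Throughout, $d=\Delta-1$. I would prove all three items with one exploration scheme in the configuration model, which is enough by contiguity as in the proof of Lemma~\ref{lem:G1}. The matching of $G$ and the percolation $A$ are revealed together: a half-edge is matched only when the exploration needs it, and its open/closed status is sampled at that moment.

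\textbf{First bullet.} This is the standard supercritical percolation result on random regular graphs, applied with $\hat p>1/d$. There is a giant of linear size, and all other components have size $O(\log n)$ (Alon--Benjamini--Stacey, or Krivelevich--Lubetzky--Sudakov for the configuration model). I would cite it directly. I would fix $K$ large enough that it serves as both the giant constant and the small-component constant.

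\textbf{Second bullet.}
\begin{itemize}
\item Fix $v$. Reveal $B_r(v)$ first. By the argument of Lemma~\ref{lem:G1} with $\alpha<1/2$ in place of $1/4$, with probability $1-o(1/n)$ it has tree excess at most $L_0$. Let $H$ be the $O(L_0)$ boundary vertices incident to or descending from excess edges near the boundary.
\item Order the remaining $w\in\partial B_r(v)\setminus H$. Each such $w$ has at least $d-1$ unmatched half-edges pointing outside the ball.
\item For each $w$ in turn, explore its open cluster in $B_r(v)^c$ breadth first, stopping once it exceeds $K\log n$ vertices.
\item The total number of matched half-edges over all explorations is at most $N:=Kd\,n^{\alpha}\log n=o(n)$. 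So at every step, conditionally on the past, a new half-edge lands on a fresh vertex with probability at least $1-N/n$.
\item Hence, conditionally on all earlier explorations, the exploration from $w$ dominates a Galton--Watson process with $\mathrm{Bin}(d-1,\hat p)$ children (first generation) and $\mathrm{Bin}(d,\hat p)$ children afterwards, thinned by $N/n=o(1)$. This process is supercritical.
\item Its survival probability is therefore bounded below by a constant. I take $1/K$ below that constant, with $K$ enlarged if needed.
\item If $w$'s exploration instead hits a previously explored large cluster, $w$ is already in a large component, which only helps.
\end{itemize}
These sequential conditional lower bounds give stochastic domination of $(\eta_w)_{w\notin H}$ by i.i.d.\ $\mathrm{Ber}(1/K)$.

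\textbf{Third bullet.} I read ``non-singleton'' in the sense of the boundary partition induced on $B_r(v)$: the vertex $w\in\partial B_r(v)$ is counted if its component in $B_r(v)^c\cap A$ is not the giant and contains at least one other vertex of $\partial B_r(v)$. The plan is to bound the number of such vertices by the excess of a slightly enlarged exploration.
\begin{itemize}
\item By the first bullet, every non-giant component has at most $K\log n$ vertices.
\item Let $\mathcal S$ be the union of $B_r(v)$ with all non-giant open components of $B_r(v)^c$ meeting $\partial B_r(v)$. A non-giant component containing $m\ge2$ boundary vertices is a connected subgraph attached to the tree $B_r(v)$ at $m$ points. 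It therefore contributes at least $m-1\ge m/2$ to the tree excess of $\mathcal S$.
\item Consequently, the number of counted vertices is at most $2\,\mathrm{exc}(\mathcal S)$.
\item $\mathcal S$ is revealed by the capped exploration above, using at most $N=O(n^{\alpha}\log n)$ half-edges. Each matching step closes a cycle with probability at most $N/n$, even conditionally on the past.
\item So $\mathbb P(\mathrm{exc}(\mathcal S)\ge L')\le \mathbb P(\mathrm{Bin}(N,N/n)\ge L')\le (N^2/n)^{L'}=n^{-(1-2\alpha)L'+o(1)}$.
\item Since $\alpha<1/2$, this is $o(1/n)$ once $L'>1/(1-2\alpha)$. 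A union bound over $v$ gives the claim with $L=2L'$.
\end{itemize}

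\textbf{Main obstacle.} The delicate step is the second bullet. The sequential domination must hold uniformly over the conditioning, which includes previously revealed small clusters that may re-enter near $w$. I expect to handle this by the $N/n$ thinning bound, with the $O(1)$ exceptional boundary vertices created by short cycles absorbed into $H$. Those short cycles are also controlled by the excess bound from the third bullet.
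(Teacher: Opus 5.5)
Your first bullet (a citation, as in the paper) and your third bullet are fine. Bounding the excess of $\mathcal S$ by $\mathrm{Bin}(N,O(N/n))$ is a clean annealed alternative to the paper's choose-$L$-vertices union bound. Annealed is enough there, because a joint failure probability of $o(1)$ over $(G,A)$ transfers to most $G$ by Markov's inequality.

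\textbf{The gap is in the second bullet.} That bullet is a \emph{quenched} statement: with probability $1-o(1)$ over $G$, for every $v$, the law of $(\eta_w)_{w\in\partial B_r(v)\setminus H}$ under the percolation $A$ alone, on that fixed $G$, dominates a product of $\mathrm{Ber}(1/K)$. This is how it is used: Corollary~\ref{cor:burn-in-induced-bc-are-good} and the proof of Theorem~\ref{thm:intro-rrg-mixing} fix $G$ first and need $(\theta,Q)$-wiredness of the boundary conditions induced by Bernoulli percolation on that graph.

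Your exploration reveals the half-edge matching together with $A$. The key step, ``a new half-edge lands on a fresh vertex with probability at least $1-N/n$'', spends the randomness of $G$. So your sequential conditional bounds hold under the joint law of $(G\setminus B_r(v),A)$ given $B_r(v)$, which gives only annealed domination. Domination by a product measure is a property of a law, not an event. You therefore cannot pass from it to ``for most $G$'' by Markov's inequality, nor union bound it over $v$: an average of quenched laws can dominate $\mathrm{Ber}(1/K)^{\otimes m}$ while a non-vanishing fraction of them do not. On a fixed graph there is no fresh matching randomness to thin by $N/n$. Explorations from different $w$ through all of $B_r(v)^c$ also genuinely interact, since earlier explorations reveal closed edges near later $w$'s. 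So the argument does not carry over.

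\textbf{The missing idea} is to localize each $\eta_w$ to a deterministic, pairwise disjoint piece of the fixed graph, as the paper does.
\begin{itemize}
\item Take $\delta>0$ with $\alpha+\delta<1/2$. With high probability every ball $B_{(\alpha+\delta)\log_d n}(v)$ has excess at most $L$.
\item Hence all but $O(L)$ vertices $w\in\partial B_r(v)$ have a complete $d$-ary tree $T_w$ of depth $\delta\log_d n$ hanging below them, and these trees are disjoint. Put the exceptional vertices in $H$. Note that $H$ must be defined using $G$ out to radius $r+\delta\log_d n$, not just from $B_r(v)$.
\item Consider the event that the open cluster of $w$ inside $T_w$ has at least $\big(\tfrac{1+\hat p d}{2}\big)^{\delta\log_d n}=n^{\Omega(1)}\gg K\log n$ vertices. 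This event implies $\eta_w=1$.
\item It has probability bounded below by a constant, by supercriticality of the $\mathrm{Bin}(d,\hat p)$ Galton--Watson tree.
\item It depends only on $A(E(T_w))$, so independence across $w$ comes from the product structure of $A$ on the fixed $G$.
\end{itemize}
This gives the quenched domination, and your concern about earlier clusters re-entering near $w$ never arises.
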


\begin{proof}
    The first claim is well-known about the supercritical phase of percolation on the random regular graphs, see e.g.,~\cite{PittelRGPercolation,nachmias2010critical} for in fact much more refined estimates. 

    For the second item, fix a $\delta$ small such that $\alpha + \delta <1/2$. By the same proof as in Lemma~\ref{lem:G1} (see e.g., Fact 2.3 of~\cite{BG21}), with probability $1-o(1)$ the graph $G$ is such that for all $v$, the ball $B_{(\alpha + \delta) \log_{d} n}(v)$ has tree excess at most $L$ for a fixed constant $L = L(\alpha + \delta)$. Fix such a graph $G$. For each $v$, let $H(v)$ be the set of $w\in \partial B_r(v)$ such that the part of the breadth-first-search (BFS) representation of $B_{(\alpha + \delta) \log_{d} n}(v)$ below $w$ is \emph{not} a complete $d$-ary tree of depth $\delta \log_{d} n$ (disjoint from those of all other vertices on $\partial B_r(v)$).  
    
     For every $w\in \partial B_r(v)\setminus H$, let $T_{w,h}$ be that tree of depth $h= \delta \log_{d} n$. Consider the event, measurable with respect to the random-cluster configuration on $E(T_{w,h})$ that the connected component of $w$ within $T_{w,h}$ consists of at least $(1+\eta/2)^{h}$ many vertices, where $(1+\eta) = \hat p d$ so $\eta>0$. This has $\theta = \theta(\eta)>0$ probability by supercriticality of the resulting $\text{Bin}(d,\hat p)$ Galton--Watson tree and a Hoeffding bound: see for example the proof of~\cite[Fact 16]{BG24-PTRF}. Moreover, this event is independent of the same event for all the other vertices because of disjointness of $T_{w,h}$ from $T_{w',h}$ for $w,w'\in \partial B_r(v) \setminus H$. Taking $K$ a large constant depending on $\hat p,d$, it will satisfy $\theta> 1/K$ yielding the proof. 

    We finally turn to the third item. For this item, it is helpful to compute the probability jointly under $G$ and $A$. By contiguity of the configuration model and the random $d$-regular graph, we use the configuration model to generate $G$. 
    Fix a vertex $v\in V$ and consider the probability that more than $L$ vertices on $\partial B_r(v)$ are connected through $A \cap B_r(v)^c$ but not connected to $\mathcal C_1(A)$ in $B_r(v)^c$, then union bound over the $n$ possible choices of $v$. To bound this probability, first condition on the ball $B_r(v)$ in $G$, then pick $L$ many vertices $w_1,...,w_L$ that are set to be the vertices in $\partial B_r(v)$ in non-trivial but non-giant components of $A \cap B_r(v)^c$: there are at most $\binom{n^{\alpha}}{L}\le n^{\alpha L}$ many such vertices. We now compute the probability that these $L$ many vertices are in non-trivial non-giant components of $A \cap B_r(v)^c$.
    By item (1) of the lemma, with probability $1-o(1)$, for all vertices that event can be replaced with the event that $w_1,...,w_L$ are paired through components each of which are of size at most $K \log n$. It hence suffices to bound the probability of 
    \begin{align*}
        \mathbb P( w_1,...,w_L \text{ paired through components of size at most $K\log n$ in $A \cap B_r(v)^c$} \mid B_r(v))
    \end{align*}
    where here $w_1,...,w_L$ connected means that for each $i\in [L]$, there exists $j\in [L]$ such that $w_i$ and $w_j$ are connected in $A \cap B_r(v)^c$. 

    For each $w_i$, let $\mathcal A_i$ be the event that it is connected to some $(w_j)_{j\ne i}$ through a component of $A \cap B_r(v)^c$ of size at most $K \log n$. We reveal certain edges of the graph, and the value of $A$ on those edges simultaneously to bound the probabilities of $\mathcal A_i$. To describe the iterative process, suppose that we have revealed that $\mathcal A_1,...,\mathcal A_l$ have all happened, and in the meantime, revealed the edges with their percolation statuses, $(F_l,A(F_l))$. Then, we bound the probability of 
    \begin{align}\label{eq:conditional-probability-of-A-l+1}
        \mathbb P(\mathcal A_{l+1} \mid \mathcal A_1,...,\mathcal A_{l}, (F_l,A(F_l)), B_r(v))
    \end{align}
    as follows. Consider vertex $w_{l+1}$; if it has already been shown to belong to an edge in $A(F_l)$, then $(F_l, A(F_l))$ implies $\mathcal A_{l+1}$ and this probability is $1$. Else, initialize $E_{l+1,0}$ as the un-matched (in $B_r(v)$ together with $F_l$) half-edges of $w_{l+1}$, and while $j \le \Delta K \log n$, 
    \begin{enumerate}
        \item Match the next half-edge (in an arbitrary ordering) $\hat e\in E_{l+1,j}$ with a uniform-at-random un-matched half-edge to get an edge $e= \{w_{l+1},u_{l+1,j}\}$; 
        \item  Flip an independent $\text{Ber}(\hat p)$ coin for the value $A$ will take on that edge $A(e)$; 
        \item If $A(e) = 1$ and $u_{l+1,j}\in \{w_1,...,w_L\}$, terminate. 
        \item Else, form $E_{l+1,j+1}$ by removing $\hat e$, and if $A(e) =1$, adding all un-matched half-edges of the other endpoint of vertex $u_{l+1,j}$. Then increment $j\gets j+1$. 
    \end{enumerate}
    Some observations are in order. Firstly, if the process does not terminate in step (3) for any $j\le \Delta K \log n$, then necessarily its component either does not intersect any $(w_k)_{k\in [L]}$ or its component is of size at least $K \log n$, and in both cases $\mathcal A_{l+1}$ does not happen. Therefore, if $w_{l+1}\notin A(F_l)$, then the probability in~\eqref{eq:conditional-probability-of-A-l+1} is at most the probability that the steps above do terminate in step (3) for some $j$. That probability is in turn upper bounded by a union bound, by  
    \begin{align*}
        \Delta K \log n \cdot \frac{\Delta L}{\Delta n - \Delta\sqrt{n} - \Delta K L\log n} = O\Big(\frac{\log n}{n}\Big)\,.
    \end{align*}
    Secondly, for at least $L/2$ many of the $l\in [L]$, we actually incur this probability for~\eqref{eq:conditional-probability-of-A-l+1}, because the revealing procedure for $w_i$ can cover at most one new vertex in $(w_j)_{j>i}$ (as soon as it shows an edge in $A(F_l)$ hitting some $w_j$ it terminates). Putting these two properties together and iterating~\eqref{eq:conditional-probability-of-A-l+1}, we arrive at 
    \begin{align*}
        \mathbb P(\mathcal A_1,...,\mathcal A_L \mid B_r(v)) \le \Big(\frac{\log n}{n}\Big)^{L/2}\,.
    \end{align*}
    This gets multiplied by $n^{\alpha L}$ for the choices of $w_1,...,w_L$ and thus the probability of the bad event for the boundary vertices of $\partial B_r(v)$ is at most $\tilde O(n^{\alpha L - L/2})$. Since $\alpha<1/2$, there exists $L = L(\alpha)$ large but $O(1)$ such that this probability is at most $o(n^{-1})$, and a union bound over the $n$ many vertices concludes the proof. 
\end{proof}

Observe that on the event of item (1), there is a giant component and all other components are $O(\log n)$-sized, the indicators $(\eta_w)_{w\in \partial B_r(v)}$ are equivalent to belonging to a single boundary component in the boundary conditions induced by $A\cap B_r(v)^c$. As such we obtain that up to $O(1)$ modifications, the boundary conditions induced by the percolation on $B(e)$ for all $e\in E(G)$ are $\theta$-wired for $\theta>0$.  

\begin{definition}\label{def:theta-Q-wired}
    A distribution $\mathsf{M}$ over boundary conditions $\xi$ on $\partial B$ is \emph{$(\theta, Q)$-wired} if there exists a set $U \subseteq \partial B$ of $Q$ vertices that may be wired arbitrarily such that, upon restricting to $\partial B \setminus U$, the boundary partition of $\xi$ has at most one non-singleton component, and the distribution of the wired component of $\xi$ stochastically dominates the distribution of a random subset $A \subseteq \partial B \setminus U$ in which each vertex of $\partial B \setminus U$ is included in $A$ independently with probability $\theta$.
\end{definition}

\begin{corollary}\label{cor:burn-in-induced-bc-are-good}
    There exists $Q>0$ such that with probability $1-o(1)$, $G$ is such that if $A$ is Bernoulli $\hat p$-percolation on $G$, then the law of $A$ is within $o(1)$ of a distribution that for every $e\in E(G)$, it induces a $(\theta,Q)$-wired boundary condition on $B(e)$. 
\end{corollary}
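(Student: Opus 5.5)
The plan is to derive Corollary~\ref{cor:burn-in-induced-bc-are-good} directly from Lemma~\ref{lem:rrg-percolation-properties}, used with $r=R=\alpha\log_d n$ (note $\alpha<1/4<1/2$). We restrict to the $1-o(1)$ probability set of graphs $G$ on which all three items of that lemma hold simultaneously, and we take $Q := 2L$, with $L$ the constant from that lemma. Fix such a $G$ and let $\mathcal E$ be the event, over $A\sim$ Bernoulli($\hat p$), on which two things hold. First, $A$ has a giant component and every other component has size at most $K\log n$ (item~1). Second, for every vertex $v$, at most $L$ vertices of $\partial B_R(v)$ lie in non-singleton, non-giant components of $A\cap B_R(v)^c$ (item~3). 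By the lemma, $\Pr(\mathcal E)=1-o(1)$.

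On $\mathcal E$ we read off the structure of the induced boundary condition $\xi$ on $B(e)=B_R(o)$. The leaves of $B(e)$ fall into three groups.
\begin{itemize}
\item Leaves whose $A\cap B(e)^c$-component has size greater than $K\log n$. Any such component can only be (a piece of) the giant, so these leaves lie in components belonging to $\mathcal C_1(A)$. We regard them as wired into the single component, treating the giant as connected through infinity and the boundary, consistent with the DLR convention.
\item Singleton leaves. These form singleton parts of $\xi$.
\item At most $L$ leaves in small non-singleton components. These are put into $U$.
\end{itemize}
We also add the exceptional set $H(o)$ of item~2 to $U$, which keeps $|U|\le 2L=Q$. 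After removing $U$, the partition of $\xi$ on $\partial B\setminus U$ has at most one non-singleton part, namely the part attached to the giant.

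The main obstacle is reconciling this with the wired-component law. Item~2 gives stochastic domination of the indicators $(\eta_w)_{w\in\partial B\setminus H}$ by i.i.d.\ $\mathrm{Ber}(1/K)$ under the unconditioned law of $A$. The $(\theta,Q)$-wired property, however, is a statement about the law restricted to $\mathcal E$. This is exactly why the corollary only claims the law of $A$ is within $o(1)$ of a good distribution. We define the comparison distribution $\tilde{\mathsf M}$ as the law of $A$ modified on $\mathcal E^c$ by replacing $A$ with the all-wired configuration. This changes the law by at most $\Pr(\mathcal E^c)=o(1)$ in total variation. It is also monotone: the modified configuration dominates $A$ pointwise. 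Hence under $\tilde{\mathsf M}$ the indicators $\eta_w$ still dominate i.i.d.\ $\mathrm{Ber}(1/K)$ off $H$. On $\mathcal E$ the indicators $\eta_w$ coincide with membership in the wired component. On $\mathcal E^c$ the all-wired boundary is trivially $(\theta,Q)$-wired.

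Therefore, for every $e$ simultaneously, $\tilde{\mathsf M}$ induces a $(\theta,Q)$-wired boundary condition with $\theta=1/K$. One point still needs checking: the vertices of $U$ taken from item~3 depend on $A$. Dropping an $A$-dependent set of at most $L$ coordinates could in principle break the domination. We avoid this by noting that those vertices are not in giant-attached components anyway, so they contribute $\eta_w=0$ in either description. Alternatively, we can allow $U$ to depend on the realization, since Definition~\ref{def:theta-Q-wired} only requires the existence of such a set, and couple the remaining coordinates through the domination from item~2.
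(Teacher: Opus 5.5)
Your proposal is correct and is essentially the paper's argument: the same good event (items 1 and 3 of Lemma~\ref{lem:rrg-percolation-properties}), the same exceptional set $U$ ($H$ together with the at most $L$ leaves from item 3), $\theta=1/K$, and a total-variation error of $\mathbb P(\mathcal E^c)=o(1)$. The only cosmetic difference is in how the comparison law is built: the paper defines $\mathsf M$ directly as the law that wires exactly the leaves lying in components of size $>K\log n$ of $A\cap B(e)^c$, so domination comes straight from item 2 and agreement with $A$ holds on the good event; you instead patch $A$ to be all-open on $\mathcal E^c$, so agreement is by construction and domination is inherited by monotonicity.

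One caution, which applies equally to the paper's proof: there is no ``wired through infinity'' convention on a finite graph. Putting all leaves in large components of $A\cap B(e)^c$ into a single part therefore requires that $A\cap B(e)^c$ has a unique component of size $>K\log n$. Item 1 does not give this, since it concerns components of $A$, and the giant could split into several large pieces once the ball is deleted.
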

\begin{proof}
    Let $\mathsf{M}$ be the distribution on boundary conditions induced on each $B(e)$ that wires all vertices of $\partial B(e)$ belonging to a component of size greater than $K \log n$ in $B(e)^c$ for $K$ from Lemma~\ref{lem:rrg-percolation-properties}, and sets the set $U \subset \partial B(e)$ to be the set $H$ from the second item in Lemma~\ref{lem:rrg-percolation-properties} together with the set of at most $L$ vertices from the third item in that lemma.  

    On the probability $1-o(1)$ event of the first item in Lemma~\ref{lem:rrg-percolation-properties}, all the vertices getting wired in $\mathsf{M}$ must also get wired in the induced boundary condition by $A$, and by the third item, no others are in non-singleton non-giant components of $A$. Therefore the joint distribution induced by $\mathsf{M}$ and $A$ on all the boundary conditions for $(B(e))_{e\in E(G)}$ are within $o(1)$ of one another. 
\end{proof}

With Corollary~\ref{cor:burn-in-induced-bc-are-good} in hand, the missing ingredients are the following two lemmas, which are extensions of Theorem~\ref{thm wsm} and Theorem~\ref{thm:tree-mixing} respectively from $\theta$-wired boundary conditions on trees to $(\theta,Q)$-wired boundary conditions to treelike graphs with at most one cycle.  

\begin{lemma}\label{lem:thetaQ-wired-implies-P1}
    Assume (G1) holds and a law $\mathsf{M}$ over the boundary conditions on $B(e)$ is $(\theta,Q)$-wired. If $p>\ps$ and $q\ge C_0 \log d$, then with probability $ 1- O(n^{-10})$, (P1) holds.  
\end{lemma}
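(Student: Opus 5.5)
The plan is to run the message recursion of Lemma~\ref{lem:recursion} on $B(e)$ viewed from the root $o$, treating separately the bulk of the ball, the $Q$ exceptional boundary vertices, and the possible cycle. Under (G1), $B(e)$ is a tree $\mathcal T$ of depth $R$ plus at most one extra edge $(u,w)$. The goal is to bound $|\mu^1_{B(e)}(A(e)=1)-\mu^\xi_{B(e)}(A(e)=1)|$ by $C_3 d^{-2R/\alpha}=C_3 n^{-2}$, i.e.\ decay rate $d^{-6}$ per level when $R=\alpha\log_d n$ with $a=6/\alpha$. This is exactly what Theorem~\ref{thm: sharper bound} is invoked for: taking $q\ge C_0(a)\log d$ makes the contraction rate $\beta_*=g'(y^*)$ of Remark~\ref{rem:choice-of-beta} at most $d^{-a}$. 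The intuition is that for $q$ large, $y^*$ is near $(1-p)^{-d}$ and $\Phi$ is nearly flat there, so $g'(y^*)$ is exponentially small in $q$.

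First, the boundary. I would declare bad every vertex of $\mathcal T$ whose subtree contains a vertex of $U$ or an endpoint of the cycle edge. There are $O(Q)$ root-to-leaf paths of such vertices. Off these paths, every subtree is a complete $d$-ary tree with a $\theta$-wired single-component boundary. Lemma~\ref{lem:uniform-positive}, applied with a union bound, then places all messages at height $\sqrt R$ in $[1+\eps,M]$ with probability $1-e^{-\Omega(d^{\sqrt R})}=1-O(n^{-10})$. Lemma~\ref{lem: uniform contraction}, with the sharp rate, then gives a discrepancy at most $(\beta_*+o(1))^{h-\sqrt h}$ against the wired messages at any good vertex of height $h$.

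Second, propagating along the bad paths. At a bad vertex $x$, most children are good and carry messages within $\beta^{h}$ of wired, while a bounded number of children are bad. The messages at the vertices of $U$ are arbitrary under $\xi$. The key point is that they are also arbitrary under the wired condition, in the sense that each message lies in $[1,\infty]$ and $\Phi$ maps this interval into $[1,(1-p)^{-1}]$, so a bad child influences its parent only through one bounded factor. I would compare $\xi$ not with $1$ directly but with the hybrid condition that agrees with $\xi$ on $U$ and is wired elsewhere, then compare the hybrid with $1$.

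For the first comparison, along each bad path the good siblings contribute a product of $d-1$ factors. Since $\Phi'$ is small near $y^*$ for large $q$, the recursion $x\mapsto \prod\Phi(\cdot)$ remains contracting along the path with rate $\le d^{-a}$, once the path message is bounded away from $1$. That lower bound holds because the good siblings already force $f(x)\ge \Phi(1+\eps)^{d-1}>1$. Summing the $O(Q)$ bad contributions and the good ones gives error $C\,d^{-aR(1-o(1))}$, which is $\le C_3 d^{-2R/\alpha}$ when $a=6/\alpha$ is large enough.

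For the second comparison, the hybrid and wired conditions differ only at $U$, which sits at depth $R$. The same path-contraction argument applies, since it is a Lipschitz perturbation of $O(Q)$ leaf messages propagated $R$ levels. Finally, \eqref{eq:message to probability} and the monotone-coupling argument of Corollary~\ref{cor: edge WSM} convert message closeness at both endpoints of $e$ into closeness of $\mu(A(e)=1)$.

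The main obstacle is the cycle. There the recursion breaks, and the message at the topmost cycle vertex $z$ depends jointly on the two subtrees hanging below the cycle. My plan is to cut out the cycle together with its $O(1)$-neighborhood and compute $f(z)$ exactly through the cycle partition function, with the vertices of the cycle carrying their incoming subtree messages as effective external fields. This writes $f(z)$ as an explicit smooth function of finitely many incoming messages, in the spirit of the strategy sketched in the introduction. I would then show that this function has a bounded Lipschitz constant on $[1+\eps,M]$, which only affects constants. I would also show that it keeps the output away from $1$ in both the $\xi$ and wired cases, since the cycle changes both in the same way, and this keeps the contraction along the remaining path to $o$ intact. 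If a uniform Lipschitz bound on the cycle map is hard to get, I would fall back to bounding its derivative using FKG monotonicity of $f(z)$ in each incoming message.
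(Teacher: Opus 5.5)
Your overall route is the paper's, but one step fails as written. The paper proves this lemma in a few lines. Theorem~\ref{thm wsm theta,Q wired} gives $|\mu^1_{B(e)}(e\in A)-\mu^\xi_{B(e)}(e\in A)|\le C(\beta_*+o(1))^{R/2-1}$ with probability $1-e^{-cd^{\sqrt R}}=1-o(n^{-10})$, and Theorem~\ref{thm: sharper bound} with $a=6/\alpha$ turns this into $O(d^{-2R/\alpha})$. Your sketch is essentially a re-derivation of Theorem~\ref{thm wsm theta,Q wired}: Chernoff positivity, contraction near $y^*$, and a cycle gadget.

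The gap is your treatment of $U$. Definition~\ref{def:theta-Q-wired} lets the vertices of $U$ be wired arbitrarily, including into non-singleton classes disjoint from $\mathcal C_1(\xi)$. This is exactly what $U$ absorbs in Corollary~\ref{cor:burn-in-induced-bc-are-good}, namely the vertices lying in non-giant components. For such $\xi$, and equally for your hybrid, which copies $\xi$ on $U$, the two-state message of Definition~\ref{def:message recursion} is undefined and Lemma~\ref{lem:recursion} does not apply. Two vertices of $U$ in different subtrees that are wired together act like an extra cycle closed through the boundary, so a bad child does not influence its parent through one bounded factor.

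The missing idea is the monotone reduction the paper uses. Let $\xi'$ be $\xi$ with every vertex of $U$ made a singleton. Then $\mu^{\xi'}\preceq\mu^{\xi}\preceq\mu^1$, and since $\{e\in A\}$ is increasing, $0\le\mu^1(e\in A)-\mu^{\xi}(e\in A)\le\mu^1(e\in A)-\mu^{\xi'}(e\in A)$. Now $\xi'$ is single-component, and $U$ only removes $Q$ trials from the binomial count in Lemma~\ref{lem:generalized-positive}. So every message at height $\sqrt R$, whether on your bad paths or not, lies in $[1+\eps,M]$, and neither the hybrid nor a separate bad-path analysis is needed.

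Two further steps need repair. First, messages merely bounded away from $1$ do not give contraction at rate $d^{-a}$. Along a path the local Lipschitz constant is $\Phi'(x)\prod_z\Phi(f(z))$, which equals $\beta_*/d$ only at $x=y^*$ and can exceed $1$ near $x=1+\eps$; for $d=2$ and $p\downarrow\ps$ it is about $(q-1)/2$. The conclusion is rescued by the monotone sandwich of Lemma~\ref{lem: uniform contraction}: starting from $[1+\eps,M]$, the messages reach a neighborhood of $y^*$ within $O(1)$ levels at constant cost, as in Lemma~\ref{lem: message contraction purturbed}.

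Second, under (G1) the cycle in $B(e)$ can have length up to $2R+1=\Theta(\log n)$. So it has no $O(1)$ neighborhood, the gadget depends on $\Theta(R)$ incoming messages, and FKG monotonicity only controls the sign of its derivatives, not their size. Lemma~\ref{lem: cycle gadget lipschitz} handles this as follows:
\begin{itemize}
\item it writes the gadget as $L(\mathbf R)\Phi_{m-2}(\mathbf T)$ with $L=1+\Psi_1/\Psi_0$;
\item it bounds $\Psi_1/\Psi_0$ by a constant;
\item it uses $\partial\log\Psi_s/\partial\log R_i\in[0,1]$ (Fact~\ref{fact: derivativelog}) to get a Lipschitz constant linear in the cycle length, which the exponential decay absorbs.
\end{itemize}
The paper also splits on whether the cycle lies in the top half of $\mathcal T$. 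If it does not, the contraction is run only over the top $R/2$ levels, which are acyclic. This costs half the exponent, which is why $a=6/\alpha$ is chosen with slack. Note also that the required per-level rate is $d^{-2/\alpha}$, not the $d^{-6}$ you wrote.
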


\begin{lemma}\label{lem:thetaQ-wired-implies-P2}
    Assume (G1) holds and a law $\mathsf{M}$ over the boundary conditions on $B(e)$ is $(\theta,Q)$-wired. If $p>\ps$ and $q\ge C_0 \log d$, then with probability $1- O(n^{-10})$, (P2) holds.  
\end{lemma}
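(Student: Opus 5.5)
The plan is to reduce (P2) on the ball $B(e)$ to the tree mixing machinery of Section~\ref{sec: fast mixing}, specifically the recursion of Lemma~\ref{lem: mixing recursion} iterated as in the proof of Theorem~\ref{thm:tree-mixing}. By (G1), $B(e)$ is a tree $\mathcal T$ plus at most one extra edge $(u,w)$. Rooted at $o$, it is a $\Delta$-regular tree of height $R$ in which at most $O(1)$ vertices have altered branching. The boundary condition is $(\theta,Q)$-wired: outside a set $U$ of $Q$ vertices it is $\theta$-wired single-component, while the vertices of $U$ may be wired arbitrarily.

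First I will isolate the defects. Let $D$ be the set of vertices on the paths from $o$ to $u$, to $w$, and to each vertex of $U$. This is a union of at most $Q+2$ root-to-leaf paths, so $|D| = O(R)$. Every subtree hanging off $D$ is a complete $d$-ary tree. Its boundary condition is the restriction of a $\theta$-wired law, so by Corollary~\ref{cor:theta-wired-bc} and a union bound over the $O(R\,n^{\alpha})$ such subtrees, it lies in $\tilde{\mathcal F}$ with probability $1-\exp(-\Omega(R^{1.1}))$, which is at least $1-O(n^{-10})$ once $R^{1.1}\gg \log n$. The cost of the union bound is absorbed because $R=\alpha\log_d n$ and $\exp(-\Omega(R^{1.1}))$ decays faster than any polynomial.

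I will then run a two-block dynamics analogous to $P_{\textsc{bd}}$. The top block $B_0$ consists of the first $(1-\delta)R$ levels, and the bottom block $B_1$ consists of the last $(1-\delta)R$ levels. The coupling argument of Lemma~\ref{lem: coupling time 2 block dynamics} goes through with the following changes.
\begin{itemize}
\item After $B_1$ updates, the leaves of $B_0$ off $D$ receive a $\theta$-wired boundary. This is the step that uses \eqref{eq: use WSM for theta-wired}.
\item The edges of $B_0$ then couple by the $(\theta,Q)$-version of weak spatial mixing for a unicyclic graph. This is the content of Lemma~\ref{lem:thetaQ-wired-implies-P1}, applied to the smaller ball $B_0$. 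Failures near $D$ are handled either by treating the $O(1)$ defect paths as part of a boundary that is equally perturbed under both chains, or by union bounding over the $O(R)$ edges of $D$ using only the exponential decay.
\end{itemize}
Lemma~\ref{lem: simulate systematic dynamics}, with good sets $\Gamma_i$ defined as before, converts this into a bound on the single-edge mixing time. The bound is $O(\log|B(e)|)$ times the maximum of two quantities: the rescaled mixing time of $B_0$, which is a ball of the same type and smaller radius, and that of $B_1$.

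The $B_1$ block decomposes into wired components $\kappa$ as in Lemma~\ref{lem: 3rd bound kappa}. All but $O(1)$ of them are unions of $d$-ary trees with boundary in $\mathcal F$, to which Lemma~\ref{lem: 3rd bound kappa} applies directly. The $O(1)$ exceptional components contain the cycle or the set $U$. These are again balls of the same defect type, of height $(1-\delta)R$, possibly with root wirings.

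I will therefore define $\tau_{\mix}(h)$ as a maximum over this enlarged class: $\Delta$-regular-like trees with at most one extra edge, at most $Q$ arbitrary boundary vertices, the remaining boundary in $\mathcal F$, and possibly wired roots. The recursion \eqref{eq: mixing recursion} then holds for this class with the same form, and iterating as in Theorem~\ref{thm:tree-mixing} yields $|B(e)|(\log|B(e)|)^{O(\log\log|B(e)|)}$. The base case comes from the canonical-paths bound $\exp(O(h))$, which is valid for arbitrary boundary conditions.

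The main obstacle is the coupling step on $B_0$ in the presence of the cycle and the arbitrary $Q$ vertices. The cycle destroys the product recursion of Lemma~\ref{lem:recursion}, and at large $p$ these defects macroscopically change connection probabilities, so they cannot simply be discarded. My first attempt will be to compute the message at the topmost vertex of the cycle exactly, via the cycle partition function with arbitrary boundary wirings. I will then show that this message, and the messages at the $U$-paths, are Lipschitz functions of the incoming messages from the $d$-ary subtrees. Contraction then propagates, as in Corollary~\ref{cor:uc}, with only a constant-factor loss.
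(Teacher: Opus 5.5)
Your plan has the same architecture as the paper. The paper derives this lemma from Theorem~\ref{thm:mixing unicyclic} (the two-block recursion over the enlarged classes $\mathcal G(h)$, $\mathcal H(h,Q)$) and Corollary~\ref{cor:theta-Q-wired-bc}, with the $B_0$ coupling supplied by Theorem~\ref{thm wsm theta,Q wired}, whose key input is exactly your cycle computation (Lemma~\ref{lem: cycle gadget lipschitz}).

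The step that fails as written is your claim that the exceptional components of $B_1$ are ``again balls of the same defect type''. Let $z$ be the least common ancestor of the endpoints of the extra edge $\{u,w\}$. Suppose $z$ has depth less than $h_0$ while $\{u,w\}$ lies in $B_1$. This cannot be avoided in general (e.g.\ $z=o$ with $\{u,w\}$ near $\partial B(e)$), and it recurs at every scale. Then the two arcs of the cycle run through two different trees $\mathcal T_{(u)},\mathcal T_{(w)}$ of $B_1$, and $\{u,w\}$ joins non-root vertices of these trees. If $\eta$ does not wire their roots, it even merges two components of $\mathcal C(\eta)$.

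This twin component is not in your class, so $\tau_{\mix}(h_3)$ does not bound it. The censoring argument of Lemma~\ref{lem: 3rd bound kappa} also breaks. After freezing $\mathcal T_{r_1}$ the rest is not a product, because $\{u,w\}$ ties $\mathcal T_{(w)}$ to the frozen configuration on $\mathcal T_{(u)}$, whose connectivity at $u$ need not agree between the two chains.

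The missing ingredient is the paper's Lemma~\ref{lem: 3rd bound kappa unicyclic}. Add to the burn-in event that $\{u,w\}$ is closed (in the top chain, hence in both under the monotone coupling). Also require root-to-$\mathcal C_1(\xi)$ connections in $\mathcal T_{(u)}$, and in $\mathcal T_{(w)}$ too when $\eta$ does not wire the two roots. This event still has probability bounded below. Keep $\{u,w\}$ censored; what remains then splits into graphs of your class, and the phases of Lemma~\ref{lem: 3rd bound kappa} go through. In the unwired-roots case you need an extra final phase updating $\mathcal T_{(w)}\cup\{u,w\}$ after $\mathcal T_{(u)}$ has coupled.

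There are also three smaller corrections.

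\begin{itemize}
\item The base case cannot rest on a canonical-paths bound ``valid for arbitrary boundary conditions''. The bound of \cite{BG24-PTRF} is for single-component boundary conditions, and the paper notes that worst-case multi-component boundary conditions on trees can make the dynamics exponentially slow. What you need, and what holds, is $\exp(O(h))$ for graphs with $O(1)$ extra edges and $O(1)$ non-giant boundary wirings (Lemma~\ref{lem:treelike ball gap}). This follows from a spectral-gap comparison that loses only a factor $q^{O(Q+1)}$.
\item Lemma~\ref{lem:thetaQ-wired-implies-P1} controls only the central edge. The $B_0$ coupling needs weak spatial mixing for all $O(d^{h_0})$ edges in the top $h_0$ levels, i.e.\ Theorem~\ref{thm wsm theta,Q wired}; any rate $\beta_*<1$ suffices there since $\delta$ is small.
\item Along the $U$-paths you need no multi-component messages, which Lemma~\ref{lem:recursion} does not provide anyway. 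Unwiring $U$ gives a single-component $\xi'$ with $\mu^{\xi'}\preceq\mu^{\xi}\preceq\mu^1$. So for increasing events such as $\{e\in A\}$, the discrepancy from the wired measure under $\xi$ is at most that under $\xi'$.
\end{itemize}
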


 Lemma~\ref{lem:P1-P2} follows directly from Corollary~\ref{cor:burn-in-induced-bc-are-good} and Lemmas~\ref{lem:thetaQ-wired-implies-P1}--\ref{lem:thetaQ-wired-implies-P2}. The next section will establish these two remaining lemmas by adjustments of the arguments of previous sections.  

\section{WSM and mixing time extensions for treelike graphs}\label{sec: treelike extensions}

In this section, we prove the WSM and mixing time extensions on treelike graphs, deferred from the previous section. These will be technical generalizations of the arguments of Sections~\ref{sec:trees-uniqueness-WSM} and~\ref{sec: fast mixing} respectively.  

\subsection{WSM for treelike graphs}
\label{subsec:extensions}

In our proofs we encounter small, local perturbations of the tree structure. These include, for instance, vertices with $d\pm 1$ children, the presence of a single cycle in the BFS tree, or an 
additional non-boundary vertex being wired by the boundary condition.
We give all of these modifications a unified treatment next, showing that the conclusion of
Theorem~\ref{thm wsm}
is stable and continues to hold even under $O(1)$ many such perturbations.

We focus on a family of treelike graphs consisting of trees and unicyclic graphs.
The trees of interest are what we call \emph{almost-$\Delta$-regular trees} and include the following families of trees:
\begin{enumerate}[(i)]
    \item The root may have degree either $\Delta$ or $\Delta-1$, and exactly one non-root vertex has degree $\Delta-1$, with all other internal vertices having degree $\Delta$, and all the leaves are at the same distance from the root.
    \item The root has degree $\Delta-2$,  with all other internal vertices having degree $\Delta$, and all the leaves are at the same distance from the root.
\end{enumerate} 
For the unicyclic case, we call our family 
\emph{almost-$\Delta$-regular unicyclic},
and it includes the graphs $G=(V,E)$ satisfying all of the following properties:
\begin{enumerate}
    \item There exists a distinguished vertex $\rho\in V$ (the \emph{root}) such that the BFS tree $\mathcal{T}$ rooted at $\rho$ has all leaves at the same distance from the root.
    \item There is at most one edge $\{u,w\}\in E$ that does not belong to $\mathcal{T}$.
      \item 
      The root may have degree either $\Delta$ or $\Delta-1$
      while all internal vertices have degree $\Delta$. 
\end{enumerate}
We denote by $\mathcal{G}(h)$ the class of graphs that results from the union of
    complete $\Delta$-regular and $d$-ary trees of height $h$, the class of almost-$\Delta$-regular trees of height $h$, and
    the class of almost-$\Delta$-regular unicyclic graphs of height $h$.
Note that every graph in $\mathcal{G}(h)$ is treelike with at most a single cycle.
If $G$ is an almost-$\Delta$-regular tree, then let $v^*$ denote the non-root vertex with degree $\Delta-1$ if it exists, or let $v^*=\rho$ if the root has degree $\Delta-2$.
We will prove the following WSM result for any graph in $\mathcal{G}(h)$.
For a $(\theta, Q)$-wired boundary condition $\xi$
we extended the notation $\mathcal C_1(\xi)$ to denote the largest boundary component in $\xi$.

\begin{theorem}\label{thm wsm theta,Q wired}
Let $\Delta \ge 3$, $q > 1$, $Q\ge 0$ and $p > \ps$. 
Let $G \in \mathcal{G}(h)$ be a graph with its BFS tree $\mathcal{T}$. 
Let $\mathsf M$ be a $(\theta, Q)$-wired distribution on  boundary conditions on $\partial\mathcal{T}$. 
Then there exist constants $c > 0$, $C>0$, and $\gamma > 0$ depending only on $\Delta$, $p$, $q$ and $\theta$, such that
for any vertex $v\in V(G)$ with $\dist(v,\partial \mathcal T) \ge \gamma$, with probability at least $1-e^{ - c d^{ \sqrt{\dist(v,\partial \mathcal T)}}}$, $\xi\sim \mathsf{M}$  is such that 
\begin{align}
\label{eq:wsm:general}
    |\mu_{ G}^1 (v\sim \mathcal C_1(1)) - \mu_{G}^\xi(v\sim \mathcal C_1(\xi))|\le C (\beta_* + o(1))^{\dist(v,\partial \mathcal T)/2}, 
\end{align}
where $\beta_* = g'(y^*)$.
Moreover,~\eqref{eq:wsm:general} also holds for the case when the root of $\mathcal T$ and/or $v^*$ are wired to $\mathcal C_1(\cdot)$ and for the event $\{e\in A\}$
for any edge $e \in E(G)$ with $\dist(e, \partial\mathcal{T}) \ge \gamma$.
\end{theorem}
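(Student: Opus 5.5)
The plan is to reduce Theorem~\ref{thm wsm theta,Q wired} to the homogeneous message analysis of Lemmas~\ref{lem:uniform-positive} and~\ref{lem: uniform contraction}, treating every defect as a bounded-Lipschitz perturbation confined to the region near the root/top, and every boundary defect (the $Q$ exceptional vertices in $U$) as a perturbation near the leaves.

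\textbf{Step 1: the bottom half.} Root $G$ (or its BFS tree $\mathcal T$) at $v$ and set $s=\dist(v,\partial\mathcal T)$. Since $G\in\mathcal G(h)$ has at most one non-tree edge $\{u,w\}$ and at most one degree defect, I first locate the depth window where the defect lives. Then I choose a level $k$ at distance roughly $s/2$ from $v$ (this is where the exponent $\dist/2$ comes from) so that one of the following holds.
\begin{itemize}
\item[(a)] All defects lie strictly above level $k$. Below level $k$ every subtree is then a complete $d$-ary tree, except those containing one of the $Q$ vertices of $U$.
\item[(b)] The defect is within distance $s/2$ of the boundary. In this case I instead work with a window of length $s/2$ above the defect, which is defect-free.
\end{itemize}
For the vertices of $U$, I note that wiring a boundary vertex arbitrarily (to $\mathcal C_1$ or to another part) affects only the messages on its ancestral path. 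At an interior vertex $x$ at height $\ell$, such an ancestor has $d-1$ siblings whose subtrees are $\theta$-wired. By Lemma~\ref{lem:uniform-positive}, those siblings already force $\Phi(f)\ge \Phi(1+\eps)>1$. Separately, $\Phi\le (1-p)^{-1}$ gives the upper bound $M$ regardless of the $U$-wirings. So messages at level $\approx \sqrt{s}$ above the leaves still lie in $[1+\eps,M]$, with the same failure probability $e^{-cd^{\sqrt s}}$ after a union bound. Extra non-singleton parts not containing $\mathcal C_1$ can be handled by monotonicity: they are sandwiched between treating those vertices as free and treating them as in $\mathcal C_1$, and both extremes give messages in $[1+\eps,M]$.

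\textbf{Step 2: contraction through the homogeneous window.} Apply the sandwich argument of Lemma~\ref{lem: uniform contraction} to the defect-free complete $d$-ary portion of length about $s/2-\sqrt s$. This brings both the $\xi$ and the wired messages within $C(\beta_*+o(1))^{s/2-\sqrt s}$ of each other at the top of the window, using $\beta=\sup g'$ near $y^*$ as in Remark~\ref{rem:choice-of-beta}.

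\textbf{Step 3: propagating through the defect to $v$.} For tree defects (a vertex with $d\pm1$ children, or $v^*$ or the root wired to $\mathcal C_1$), I use that each recursion step $x\mapsto\prod\Phi(x_i)$ is Lipschitz on $[1,M]^{d+1}$ with a constant $L$ depending only on $p,q,d$. A wired $v^*$ simply replaces its message by $+\infty$ in both measures, so it contributes no discrepancy at all. Between the defect and $v$, the recursion is again homogeneous and contracts. Overall the discrepancy changes only by a constant factor.

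The unicyclic case is the main obstacle, since the tree recursion breaks at the cycle. My plan is to:
\begin{itemize}
\item let $t$ be the topmost vertex of the cycle, and treat the cycle together with its hanging subtrees as a single gadget;
\item condition on the connectivity of the hanging subtrees to $\mathcal C_1$;
\item express the message at $t$ as an explicit rational function of the incoming messages, using the partition function of the random-cluster model on a cycle with some vertices wired together;
\item show this function is smooth with bounded gradient on $[1+\eps,M]^{O(\mathrm{len})}$.
\end{itemize}
The cycle length is at most $2R$, but only vertices near $t$ matter. More precisely, the influence of incoming messages is still mediated by factors $\Phi$ along the cycle's two arcs, so I would bound the gradient by a constant (possibly as large as $L^{2}$) uniformly in length. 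This is the step I expect to require the most care. Combining with \eqref{eq:message to probability} converts the message bound into the connection-probability bound \eqref{eq:wsm:general}. The edge statement then follows exactly as in Corollary~\ref{cor: edge WSM}, by the monotone coupling on the endpoints' connection indicators.
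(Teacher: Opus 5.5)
Outside the unicyclic case your plan is the paper's. You reduce the $Q$ exceptional boundary vertices by monotonicity to a single-component condition, obtain messages in $[1+\eps,M]$ about $\sqrt{s}$ levels above the leaves, contract through the homogeneous part, and carry $O(1)$ exceptional vertices through bounded Lipschitz maps, including the pinned input $\Phi(\infty)=(1-p)^{-1}$ for a wired root or $v^*$. (Your ``$d-1$ unaffected siblings'' step is not right as stated, since other $U$-vertices may sit below those siblings. Just discard the $Q$ leaves from the binomial count, as in Lemma~\ref{lem:generalized-positive}.)

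The genuine gap is in the unicyclic case. Your dichotomy does not cover a cycle whose top vertex $t$ is within distance $s/2$ of $v$ while the non-tree edge $\{u,w\}$ lies below level $s/2$. This is in fact the typical configuration for the balls $B(e)$ of Section~\ref{sec: rrg mixing}: $u,w$ sit near depth $R$ and their common ancestor sits near the root. Here (a) fails because the cycle reaches below level $s/2$, and (b) fails because there is no defect-free window above $t$. Moreover, no argument using only an $\ell^\infty$-Lipschitz bound for the gadget at $t$, even one uniform in the cycle length, can give the exponent $s/2$. The cycle vertices near $u$ and $w$ have hanging subtrees of height only about $s-\mathrm{depth}(u)$, so their incoming messages may differ by order one between $\xi$ and the wired boundary. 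An $\ell^\infty$ bound passes this discrepancy to $t$, after which only $\mathrm{depth}(t)$ levels of contraction remain. (The paper's Case~2 files this configuration under ``top half acyclic'' and runs the tree recursion from level $s/2$. That has the same defect, since the subgraphs below the level-$s/2$ ancestors of $u$ and $w$ are joined by $\{u,w\}$.)

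What is needed is one of two things.
\begin{itemize}
\item Decay of influence that depends on a cycle vertex's position along the cycle. Your remark that only vertices near $t$ matter would have to become this.
\item More simply, a monotone sandwich. Since $\mathcal T$ is $G$ with $\{u,w\}$ removed, conditioning on $\{u,w\}$ being closed and using FKG gives $\mu^\xi_{\mathcal T}(v\sim\mathcal C_1(\xi))\le\mu^\xi_G(v\sim\mathcal C_1(\xi))$. In the other direction, additionally wiring $u$ and $w$ to the boundary component makes the edge $\{u,w\}$ irrelevant. So $\mu^1_G(v\sim\mathcal C_1(1))$ is at most the corresponding wired probability on $\mathcal T$ with $u,w$ wired in.
\end{itemize}
With the sandwich, the discrepancy splits into two parts. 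The first is WSM on the tree $\mathcal T$, which has $O(1)$ degree defects. The second is the effect, under the wired boundary, of pinning two messages at depth greater than $s/2$ to $+\infty$; this contracts at rate $\beta_*$ per level.

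When the cycle does lie within distance $s/2$ of $v$, your length-uniform gradient bound is unsupported, because the cycle's contribution is not a composition of $\Phi$'s along the two arcs. It is also unnecessary. The paper writes $f(t)=(1+\Psi_1/\Psi_0)\prod_{w'\notin H}\Phi(f(w'))$, with $\Psi_0,\Psi_1$ positive multilinear in the ratios $R(w_i)$. By Fact~\ref{fact: derivativelog}, each partial derivative of $\log(\Psi_1/\Psi_0)$ with respect to $\log R(w_i)$ lies in $[-1,1]$. This gives an $\ell^\infty$-Lipschitz constant linear in the cycle length, which is $O(s)$ in that case and is absorbed by the exponential decay.
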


The proof of this theorem will be based on the following technical lemmas.

\begin{lemma}\label{lem:generalized-positive}
      Let $\Delta \ge 3$, $q > 1$, and $p > \ps$.
Let $G \in \mathcal{G}(h)$ be a graph with its BFS tree $\mathcal{T}$. 
Let $\mathsf M$ be a $\theta$-wired distribution on  boundary conditions on $\partial\mathcal{T}$
and let $\xi$ be the single-component boundary condition that results from sampling a boundary condition from $\mathsf M$
and unwiring all the vertices in a fixed set $U$ of size $Q$.
    Then for any sufficiently large integer $\ell \ge 1$, there exist constants $\varepsilon > 0$ and sufficiently large $M > 0$ such that for all vertices $x \in V(\mathcal T)$ with $\dist(\rho, x) = \dist(\rho, \partial\mathcal T) - \ell$, 
    with probability at least $1-\exp\big( - \frac{\theta d^{\ell}}{16}+\log d \cdot \dist(\rho,\partial \mathcal T)\big)$ over $\xi$, we have $1 + \eps \leq f^{\xi}_{\mathcal T}(x) \leq M$.   
\end{lemma}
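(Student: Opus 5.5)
We follow the proof of Lemma~\ref{lem:uniform-positive} and adapt it to the perturbations allowed in $\mathcal G(h)$ and to the unwired set $U$. Let $k=\dist(\rho,\partial\mathcal T)-\ell$ and fix $x$ at depth $k$. The subtree $\mathcal T^{\rho,x}$ of the BFS tree below $x$ has height $\ell$. Since $G$ differs from a $d$-ary structure only at $O(1)$ places (the root, the vertex $v^*$, and the endpoints $u,w$ of the extra edge), $\mathcal T^{\rho,x}$ is a $d$-ary tree of height $\ell$ except possibly for one vertex with $d-1$ children. Hence $|\partial\mathcal T^{\rho,x}|\ge (d-1)d^{\ell-1}\ge d^{\ell}/2$.

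\textbf{Step 1 (many wired leaves).} Let $A_x$ be the number of leaves of $\mathcal T^{\rho,x}$ that $\xi$ wires to $\mathcal C_1(\xi)$. Before unwiring, $A_x$ dominates $\mathrm{Bin}(|\partial\mathcal T^{\rho,x}|,\theta)$, so Chernoff gives $A_x\ge \theta|\partial\mathcal T^{\rho,x}|/2$ except with probability $e^{-\theta d^{\ell}/16}$. Unwiring $U$ removes at most $Q$ leaves. Taking $\ell$ large enough that $Q\le \theta d^{\ell}/8$, we still have at least $\theta|\partial\mathcal T^{\rho,x}|/4$ wired leaves. A union bound over the at most $d^{k+1}$ vertices at depth $k$ gives the stated probability $1-\exp(-\theta d^{\ell}/16+\log d\cdot\dist(\rho,\partial\mathcal T))$, after absorbing constants.

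\textbf{Step 2 (lower bound on the message).} The message $f^\xi_{\mathcal T}(x)$ depends only on the subgraph hanging below $x$. Two cases arise.

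If this subgraph is a tree, the random-cluster measure on it dominates Bernoulli($\hat p$) percolation with $\hat p>1/d$. The branching factor $d$ (with at most one exception) makes this percolation supercritical, so $x$ reaches a uniformly random leaf with probability at least a constant $\varphi$. We would argue this exactly as in Lemma~\ref{lem:uniform-positive}: a constant fraction $\varphi$ of leaves is reached, and these are exchangeable up to the single defect vertex, which changes constants by a bounded factor. This yields $\mu(x\sim\mathcal C_1(\xi))\ge\varphi\theta/8$, and via \eqref{eq:m:pr} $f^\xi_{\mathcal T}(x)\ge 1+\eps$.

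If the cycle edge $\{u,w\}$ lies below $x$, the message must be defined for the unicyclic subgraph through the same partition-function ratio $qZ^{1}/Z^{0}+1$. The domination argument still applies, since deleting the extra edge decreases the connection probability only by monotonicity: we compare to the measure with $\{u,w\}$ forced closed, which is again a tree measure (with free edge weight absorbed), and this gives the same lower bound.

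\textbf{Step 3 (upper bound).} For the upper bound $M$: if $x$ is not a leaf and the subgraph below it is a tree, then \eqref{message recursion} with $\Phi\le(1-p)^{-1}$ gives $f\le(1-p)^{-d}$. Only one vertex has a cycle below it, namely the topmost vertex of the cycle. For that vertex we bound the ratio $Z^{1}/Z^{0}$ directly. Conditioning on the two descending branches reduces it to a random-cluster computation on a single cycle with some vertices wired, whose weights are bounded by constants depending on $p,q,d$ and the cycle length. This is the step I expect to be the main obstacle: the cycle may be long, so a naive bound depends on its length. I would first try to use that each tree hanging off the cycle contributes a factor at most $(1-p)^{-1}$ through $\Phi$. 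The ratio is then controlled by the cycle partition function with boundary messages in $[1,(1-p)^{-1}]$. Closing the one extra edge changes the weight by at most a factor $\max(q,1/(1-p))$, giving a uniform constant $M$.
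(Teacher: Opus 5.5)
Your route is the paper's: a Chernoff bound and a union bound over the depth-$k$ vertices, domination of $\mu^\xi$ by Bernoulli($\hat p$) percolation with $d\hat p>1$, and conversion to a message bound via \eqref{eq:m:pr}. Your ``unwiring removes at most $Q$ leaves'' step is equivalent to the paper's domination by $\mathrm{Bin}(|\partial\mathcal T^{v,x}|-Q,\theta)$.

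\textbf{The gap: from wired leaves to a connection probability.} The step that fails as written is the passage from ``at least a $\theta/4$ fraction of the leaves below $x$ are wired'' to $\mu^\xi(x\sim\mathcal C_1(\xi))\ge c\,\theta$. You justify it by saying that $x$ reaches a uniformly random leaf with probability at least $\varphi$, i.e.\ that a constant fraction of the leaves is reached. That is false. A fixed leaf at depth $\ell$ below $x$ lies in the cluster of $x$ with probability $\hat p^{\ell}$ under the percolation, and at most $p^{\ell}$ under $\mu^\xi$. So the cluster contains a vanishing fraction of the leaves; only the event that \emph{some} leaf is reached has probability bounded below.

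The paper covers this step with ``by symmetry'', which is equally terse. What is actually needed is a deterministic bound valid for every leaf set $W$ of density at least $\theta/4$, with a constant uniform in $\ell$. That uniformity is what Theorem~\ref{thm wsm theta,Q wired} uses, so the trivial bound $\hat p^{\ell}$ is not enough.

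One clean proof: let $a_z$ be the percolation probability that $z$ is joined to $W$ inside its subtree, and let $\bar w_z$ be the fraction of leaves below $z$ that lie in $W$. At a vertex with $d$ children, if $a_c\ge\kappa\bar w_c$ for every child $c$, then
\[
a_z=1-\prod_c(1-\hat p a_c)\ \ge\ 1-\prod_c(1-\hat p\kappa\bar w_c)\ \ge\ S-\tfrac{S^2}{2},\qquad S:=d\hat p\,\kappa\,\bar w_z .
\]
This is at least $\kappa\bar w_z$ whenever $\kappa\le 2(d\hat p-1)/(d\hat p)^2$. Induction from the leaves then gives $a_x\ge\kappa\bar w_x$ with $\kappa=\min\{1,2(d\hat p-1)/(d\hat p)^2\}$. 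The $O(1)$ vertices with fewer children cost only constant factors.

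\textbf{The cycle discussion.} Your treatment of the cycle addresses a different object, and the ``main obstacle'' you flag does not arise. The lemma concerns $f^{\xi}_{\mathcal T}$, the message on the BFS tree $\mathcal T$ (Definition~\ref{def:message recursion}), so the extra edge plays no role. Since $\ell\ge1$, $x$ is internal and $f^{\xi}_{\mathcal T}(x)=\prod_{w\in N(x)}\Phi(f^{\xi}_{\mathcal T}(w))\le(1-p)^{-\Delta}$.

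Even for a message defined on $G$, you need neither a cycle computation nor any dependence on the cycle length. Send each configuration with $x\sim\mathcal C_1(\xi)$ to the one with the at most $\Delta$ edges at $x$ closed. This bounds $Z^1/Z^0$ by a constant depending only on $p,q,\Delta$, exactly as the paper bounds $K$ in \eqref{eq: bound on K}.

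Two smaller points. It is not true that only the top vertex of the cycle has the cycle below it; its ancestors do too, though they are harmless because $\Phi\le(1-p)^{-1}$ for every input. Your FKG comparison with $\{u,w\}$ forced closed is correct, but it is likewise unnecessary for the statement as posed.
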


\begin{lemma}\label{lem: message contraction purturbed}
Let $\Delta \ge 3$, $q > 1$, and $p > \ps$.
Let $\mathcal T^v$ be a finite tree rooted at $v$. Let $I_0 = [a_0,b_0] \subset (1, \infty)$ be a compact interval. 
Suppose we are given two assignments of leaf messages on $\partial \mathcal T^v$, denoted by $f_1,f_2 : \partial \mathcal T^v \to [1,\infty)$, such that for all $x\in \partial\mathcal T^v$ we have $ f_1(x),f_2(x) \in I_0$.

Let $\hat f_1,\hat f_2 : V(\mathcal T^v)\to [1,\infty)$ be the corresponding message functions obtained by propagating these boundary values to the interior using the recursion in~\eqref{eq: function g} with $d$ children at all vertices except at those in a set $\mathcal S$ where $|\mathcal S| = O(1)$.
For each vertex $u \in \mathcal S$, we assume that the messages at the children of $u$ under $\hat f_1$ and $\hat f_2$ are in a compact interval $I_u \subset (1, \infty)$. Suppose now that there exists a compact interval $J_u \subset (1, \infty)$ such that the message is propagated instead using a map $F_u: I_u^{|N(u)|}\rightarrow J_u$ whose Lipschitz constant with respect to the $\ell^\infty$ norm is bounded.
Then, for every vertex $w \in V(\mathcal T^v)$, there exists a constant $C>0$ depending on $p$, $q$, $d$, $I_0$, $\{I_u, J_u\}_{u \in S}$ and 
the Lipschitz constants of the $F_u$'s
such that
\[
|\hat f_1(w)-\hat f_2(w)|
\le C\,(\beta_* + o(1))^{\dist(w, \partial \mathcal T^v)}.
\]
\end{lemma}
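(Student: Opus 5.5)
We run the sandwich argument of Lemma~\ref{lem: uniform contraction}, but with one tweak: the scalar upper/lower sequences are allowed to pass through the $O(1)$ exceptional vertices of $\mathcal S$. The key observation is that every $F_u$, like $\Phi$ and $g$, is coordinatewise nondecreasing in its arguments. This holds because each message is a monotone transform of a connection probability, and the local maps are monotone by FKG. If one only wants the Lipschitz hypothesis and not monotonicity, the same argument works with $\ell^\infty$ differences in place of sandwiches.

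\textbf{Step 1: two-sided propagation.} Set $D_k := \max_{w:\dist(w,\partial\mathcal T^v)=k} |\hat f_1(w)-\hat f_2(w)|$, and also track the envelope $[m_k, M_k]$ containing all messages at level $k$. Since $I_0$, the $I_u$ and the $J_u$ are compact subsets of $(1,\infty)$, every message stays in a fixed compact $K\subset(1,\infty)$. At a regular vertex $w$ with $d$ children, write $\prod\Phi(x_i)-\prod\Phi(y_i)$ as a telescoping sum. This gives
\[
|\hat f_1(w)-\hat f_2(w)|\le \sum_i \Phi'(\zeta_i)\,\Phi(\cdot)^{d-1}\,|x_i-y_i| .
\]
When all arguments lie within $\eta$ of $y^*$, the right-hand side is at most $(g'(y^*)+o_\eta(1))\,D_{k-1}=(\beta_*+o(1))D_{k-1}$, because $d\,\Phi'(y)\Phi(y)^{d-1}=g'(y)$. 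At a vertex $u\in\mathcal S$ we instead use $D_k\le L_u D_{k-1}$, where $L_u$ is the Lipschitz constant of $F_u$.

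\textbf{Step 2: entering the contraction window.} For general levels we need the messages to be near $y^*$. We repeat the argument of Lemma~\ref{lem: uniform contraction}. Between exceptional vertices, the envelope obeys $m_k\ge g(m_{k-1})$ and $M_k\le g(M_{k-1})$, using monotonicity of $g$. By Lemma~\ref{lemma:fp:main}, starting from any point of $K$, after $t_0=t_0(K,\eta)$ iterations of $g$ we are within $\eta$ of $y^*$.

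An exceptional vertex can throw the envelope back out to $J_u\subset K$. This costs at most $t_0$ further levels before contraction resumes, and the bad levels on any root-to-leaf path are at most $|\mathcal S|$ in number. Note that $\mathcal S$ may lie on different branches, but on a single path there are still at most $|\mathcal S|$ such vertices. So along the path from $w$ down to the leaves, all but at most $(|\mathcal S|+1)t_0=O(1)$ levels are contracting with factor $\beta_*+o(1)$. Each of the remaining levels costs a factor $\max(L_u, \sup_K g')$, which is a constant.

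\textbf{Step 3: conclusion.} Multiply the per-level factors to get
\[
D_k\le \mathrm{diam}(K)\cdot C'^{O(1)}(\beta_*+o(1))^{k-O(1)}\le C(\beta_*+o(1))^{k}.
\]
The one subtlety, which I expect to be the main obstacle, is that $D_k$ is a maximum over a level, while exceptional vertices sit in particular subtrees. A global level-by-level bound is therefore too pessimistic. It is also not obviously valid to mix ``contracting'' and ``Lipschitz'' levels in a single max, since a level can contain both regular and exceptional vertices. I would handle this by inducting on subtrees: define $D(w)$ for each vertex, prove $D(w)\le C\,(\beta_*+o(1))^{\dist(w,\partial)}$ by induction on height, and allow the constant to grow by a factor at each exceptional vertex. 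Only $O(1)$ such factors appear along any descent, which keeps $C$ bounded.
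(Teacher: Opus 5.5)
Your argument is correct, but it is organized differently from the paper's. The paper peels off the exceptional vertices one at a time, from the leaves upward. It starts from the tree in which every $F_u$ is replaced by a padded $d$-ary map, handled by Lemma~\ref{lem: uniform contraction}. It then restores $F_{x_{i+1}}$ and notes that only $x_{i+1}$ and its ancestors change. It bounds the new discrepancy at $x_{i+1}$ by the Lipschitz constant of $F_{x_{i+1}}$. At an ancestor $z$ it uses the product of local Lipschitz constants of $g$ over the iterated intervals $A_r$ of the truncated $d$-ary tree $\mathcal T^z_l$. Each step therefore multiplies the constant by $\hat C\max(1,\tilde K)$.

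You instead make a single bottom-up pass, $D_k\le \Lambda_k D_{k-1}$, and use the envelope only to decide which levels contract. For fixed $\epsilon>0$ and the matching window, at most $(|\mathcal S|+1)(t_0+1)$ levels fail to contract. This gives
\[
D_k\le \mathrm{diam}(K)\,\bigl(C'/(\beta_*+\epsilon)\bigr)^{(|\mathcal S|+1)(t_0+1)}(\beta_*+\epsilon)^k .
\]
Your route treats exceptional vertices of any arity uniformly, needs no padded auxiliary maps, and makes the constant explicit. The paper's route is more modular, since it reuses the $d$-ary lemma as a black box.

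Three remarks on your write-up.

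\emph{The obstacle you flag in Step 3 is not real.} The recursion $D_k\le\Lambda_k D_{k-1}$ stays valid when a level contains both regular and exceptional vertices. Just take $\Lambda_k$ to be the largest local factor on that level, which is at most $\max\bigl(\max_u L_u,\mathrm{Lip}(\Phi_d|_{K^d})\bigr)$. Such levels are among the $O(1)$ bad ones, so Steps 1--2 already complete the proof.

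If you do use the per-vertex induction instead, the count along a descent must include more than the exceptional vertices on the path. It must also include regular vertices up to $t_0+1$ levels above an exceptional vertex sitting in a side branch. Each exceptional vertex creates at most $t_0+1$ such ancestors, so the count is still $O(1)$.

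\emph{You do not need monotonicity of the $F_u$.} It is not a hypothesis of the lemma, and FKG does not supply it for an arbitrary Lipschitz map. The envelope only uses $F_u(\cdot)\in J_u$ and the monotonicity of $\Phi$. Note also that a pure sandwich, bounding the discrepancy by the envelope width as in Lemma~\ref{lem: uniform contraction}, would fail here even for monotone $F_u$. A level mixing $F_u$-outputs with $g$-outputs has envelope width of order one. So it matters that you bound differences through Lipschitz constants, as you do.

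\emph{Two facts you assert deserve one line each.} Take $K=[\min(a,y^*),\max(b,y^*)]$, where $[a,b]$ is the hull of $I_0\cup\bigcup_u J_u$. This $K$ is $g$-invariant because $g(y)>y$ on $(1,y^*)$ and $g(y)<y$ on $(y^*,\infty)$. The time $t_0$ is uniform over $K$ because every orbit is squeezed between the orbits of the two endpoints of $K$.
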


\begin{lemma}\label{lem: cycle gadget lipschitz}
Let $G$ be an almost-$\Delta$-regular unicyclic graph of height $h$ and let $e = \{u,v\}$ be the edge that does not belong to the BFS tree $\mathcal T$ of $G$.
Let $w$ be the least common ancestor of $u$ and $v$ and let $m = |N(w)|$. Let $l + 1$ be the cycle length and $\mathbf{R}= (r_1, \dots, r_l)$ denote the incoming messages from the rooted subtrees attached to the vertices on the cycle other than $w$, after removing the two cycle edges incident to it. Let $\mathbf{T} = (t_1, \dots, t_{m-2})$ denote the messages from the remaining $m-2$ children of $w$.

Assume that all coordinates of $\mathbf{R}$ and $\mathbf{T}$ are in a fixed compact interval $[a, b] \subset (1, \infty)$. Then the message at $w$ can be written as $f(w)=F(\mathbf{R},\mathbf{T}) = F(r_1,\dots,r_l,t_1,\dots,t_{m-2})$
for a function $F:[a, b]^{l + m - 2} \to [a', b']$ where $[a', b'] \subset (1,  \infty)$ is a compact interval depending only on $p, q, \Delta, a, b$. Moreover, $F$ has Lipschitz constant with respect to the $\ell^\infty$ norm on $[a, b]^{l + m - 2}$ bounded by a quantity depending on $p, q, \Delta$ and linearly on the cycle length $l$.
\end{lemma}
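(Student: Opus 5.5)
We need to prove Lemma \ref{lem: cycle gadget lipschitz}: the message at $w$ is a Lipschitz function of the incoming messages $\mathbf R,\mathbf T$, with values in a compact subinterval of $(1,\infty)$.

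The plan is to write $f(w)$ explicitly as a ratio of partition functions on a finite gadget. Let $w=c_0,c_1,\dots,c_l,c_0$ be the cycle. Every vertex $c_i$ with $i\ge1$ carries a pendant forest, and every other child of $w$ carries its own subtree. Summing out each pendant subtree with the single-component boundary condition gives a factor depending only on whether its root is connected to $\mathcal C_1$ through that subtree. Up to a common multiplicative constant, the relative weights are $1$ versus $(f-1)/q$, where $f$ is the subtree's message (Definition~\ref{def:message recursion}). For the $m-2$ non-cycle children $t_j$, the contribution factorizes exactly as in Lemma~\ref{lem:recursion}, giving $\prod_j \Phi(t_j)$.

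For the cycle part, I define $\Psi(\mathbf R)$ as the ratio $q Z^1/Z^0 + 1$ computed on the cycle with edge weights $p,1-p$. Each vertex $c_i$ receives an effective external field: it is wired to $\mathcal C_1$ with weight $(r_i-1)/q$ and is unwired with weight $1$. Then $f(w)=\Psi(\mathbf R)\prod_{j}\Phi(t_j)$.

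Writing $Z^1$ and $Z^0$ for the cycle gadget as sums over edge subsets of the $(l+1)$-cycle would give $2^{l+1}$ terms. It is better to use a transfer-matrix representation. Open cycle edges cut the cycle into arcs, and an arc is connected to $\mathcal C_1$ iff some vertex in it is wired. This lets me write $Z^{1}$ and $Z^0$ as sums over which of the two edges at $w$ are open, and which arcs reach $\mathcal C_1$. In every such expression $Z^0,Z^1$ are polynomials in the variables $s_i=(r_i-1)/q\in[(a-1)/q,(b-1)/q]$ with positive coefficients.

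\textbf{Compactness.} Both $Z^0$ and $Z^1$ are bounded below by the contribution of a single explicit configuration. For $Z^0$, take all edges at $w$ closed. For $Z^1$, take the edge $wc_1$ open and $c_1$ wired, which has weight $\ge p\,s_1$ times the rest. Hence $\Psi\in[1+\epsilon',M']$. Since $\Phi$ maps $[a,b]$ into a compact subinterval of $(1,(1-p)^{-1}]$, the product lies in a compact $[a',b']\subset(1,\infty)$ depending only on $p,q,\Delta,a,b$. Here I would note that I need the bounds uniform in $l$. A lower bound on $\Psi-1$ uniform in $l$ comes from the one-edge configuration above, after comparing ratios termwise via the factor $q$ gained when $c_1$'s arc reaches $\mathcal C_1$ — the same monotone comparison used in Lemma~\ref{lem:uniform-positive}. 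The upper bound follows since $w$ has only two cycle edges and positive association bounds the connection ratio by $(1-p)^{-2}$ times the tree-case bound.

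\textbf{Lipschitz bound (main obstacle).} The $\mathbf T$-dependence is easy, since $\Phi$ is smooth with bounded derivative on $[a,b]$. For $\mathbf R$, I compute $\partial_{r_i}\log\Psi$. For any polynomial $Z$ in the $s$'s with positive coefficients, $s_i\partial_{s_i}\log Z$ equals the probability, in the gadget measure, that $c_i$ is wired through its subtree. That probability lies in $[0,1]$, and $s_i\ge (a-1)/q>0$, so $|\partial_{r_i}\log Z^{0}|,|\partial_{r_i}\log Z^1|\le q/(a-1)$. Hence $|\partial_{r_i}\log(\Psi-1)|\le 2q/(a-1)$. Summing over the $l$ coordinates gives an $\ell^\infty$-Lipschitz constant $O(l)$ for $\log(\Psi-1)$. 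Combined with the compact range, this gives the same for $\Psi$ and hence for $F$, linear in $l$ as claimed.

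I expect the delicate point to be making the lower bound $\Psi-1\ge\epsilon'$ genuinely uniform in the cycle length $l$. I would handle it as described: isolate the configurations where $wc_1$ is open and $wc_l$ is closed, and compare them with the configurations where both are closed via a local edge flip, which changes weights by bounded factors.
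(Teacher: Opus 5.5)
Your proposal is correct and follows essentially the paper's route: factor $f(w)$ as (cycle factor)$\times\prod_j\Phi(t_j)$, write the cycle factor as a ratio of nonnegative-coefficient multilinear polynomials in the pendant ratios, bound each log-derivative by a probability in $[0,1]$, sum over the $l$ cycle coordinates to get a constant linear in $l$, and bound the cycle factor uniformly by opening or closing the two edges at $w$. You are slightly more careful than the paper in two places: you track the substitution $s_i=(r_i-1)/q$, and you make the lower bound on the cycle factor uniform in $l$ via a local flip, whereas the paper's compactness argument on $[a,b]^l$ is a priori $l$-dependent; note, however, that your opening claim that single-configuration lower bounds on $Z^0$ and $Z^1$ give $\Psi\in[1+\epsilon',M']$ does not hold on its own, and it is the flip comparison that actually bounds the ratio.
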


\begin{proof}[Proof of Theorem~\ref{thm wsm theta,Q wired}]

Fix a vertex $v \in V(G)$ 
with $\dist(v,\partial \mathcal T) \ge \gamma$
and view $\mathcal T$ as rooted at $v$. 
For a $(\theta, Q)$-wired boundary condition $\hat\xi$ on $\partial \mathcal T$,
note that $|\mathcal C_1(\hat \xi)|= \Omega(d^h)$ with
probability $1-e^{-O(d^h)}$ by a Chernoff bound.
Let $\xi$ be the single-component boundary condition that results from unwiring all the vertices in the set $U$ where $\hat\xi$ has arbitrary wirings (see Definition~\ref{def:theta-Q-wired}).
Then,
by monotonicity, it suffices to prove the claim for $\xi$.

Let $r = \dist(v,\partial \mathcal T)$ and $\mathcal T_l := \{u \in V(\mathcal T) : \dist(v, u) \leq l\}$ for some $r/2 \leq l \leq r - \sqrt{r}$ we choose later. By Lemma \ref{lem:generalized-positive}, there exists $\eps > 0$  such that with probability at least 
$$1 - \exp\left(-\frac{\theta d^{r - l}}{16} + r\log d \right),$$ we have
$
f^{\xi}_{\mathcal T}(x),f^{1}_{\mathcal T}(x) \in [1 + \eps, M]$, for all $x \in \partial \mathcal T_l$. 
(Note that we apply Lemma \ref{lem:generalized-positive} to the wired boundary condition with $\theta=1$ and $U=\emptyset$.)
Write $\mathcal{E}$ for this event. For a sufficiently large constant $\gamma$ the above probability is at least $1 - \exp(-c d^{r - l})$ for some constant $c > 0$ depending only on $\Delta, p, q, \theta$. 

For an integer $m \geq 1$, define the $m$-dimensional update map 
\begin{equation}\label{eq: phi_d}
\Phi_m(t_1, t_2, \dots, t_m) = \prod_{i=1}^m \Phi(t_i)    
\end{equation}
where $\Phi$ is defined in $\eqref{message recursion}$.
Note that in our tree recursion, $\Phi_d$ represents the update map with $d$ children by $\Phi$. Moreover, the one-step update at a vertex with $m \neq d$ children is given by the map $\Phi_m$ in \eqref{eq: phi_d}. For any compact domain $[a, b]^m \subset (1, \infty)^m$, $\Phi_m$ has a finite Lipschitz constant. Moreover, since $\Phi(t)>1$ for every $t>1$, the image of $\Phi_m$ on $[a,b]^m$ is contained in a compact interval $[a_m, b_m] \subset (1,\infty)$ depending only on $p,q,m,a,b$. 

We now distinguish two cases according to the graph $G$.

\medskip
\noindent \textbf{Case 1}: $G$ is unicyclic, and the cycle is fully contained in the top half of $\mathcal T$. 
In this case, we fix $l = r - \sqrt{r}$. The subgraph $G_l$ of $G$ above the level $\partial \mathcal T_l$ contains the unique cycle. They share the same vertex set, but $E(G_l) = E(\mathcal T_l) \cup \{u, w\}$ for some edge $\{u, w\} \notin \mathcal T_l$.

Let $L + 1$ be the length of the cycle, $z$ be the least common ancestor of $u, w$ and $m = |N(z)|$. Denote the cycle by $(z, z_1, \dots, z_L)$ where $z_1, \dots, z_L$ are vertices on the cycle. For each $i \in \{1, \dots, L\}$, let $\mathcal{T}_i$ be the rooted subtree hanging from $z_i$ after removing the two cycle edges incident to $z_i$. Let $\mathbf{R} \in [1, \infty)^L$ be the $L$-dimensional vector whose coordinates represent the incoming messages at each $z_i$ on $\mathcal T_i$ for $i \in \{1, \dots, L\}$. Let $\mathbf{T} \in [1, \infty)^{m-2}$ be the $(m-2)$-dimensional vector whose coordinates represent the incoming messages at each children of $z$ that is disjoint from the cycle.

Conditioned on the event $\mathcal E$, let $\mathbf{R}_1, \mathbf{R_\xi} \in [1 + \eps, M]^L$ be the corresponding $L$-dimensional vectors under the two boundary conditions. 
Furthermore, there exists $C_R > 0$ depending on $p, q, d, \eps, M$ such that
\begin{equation}\label{eq:bound on R}
    \|\mathbf R_1 - \mathbf R_\xi\|_\infty \leq C_R (\beta_* + o(1))^{\min\limits_{1\leq i\leq L}\dist(z_i, \partial\mathcal T_i)} \leq C_R (\beta_* + o(1)) ^{r/2 - \sqrt r}
\end{equation}
by Lemma \ref{lem: message contraction purturbed}. The second inequality holds because $\dist(z_i, \partial \mathcal T_i) \geq r/2 - \sqrt r$ for every $i$ as the cycle lies in the top half of $\mathcal T$.

Conditioned on the event $\mathcal E$, let $\mathbf{T}_1, \mathbf{T}_{\xi} \in [1 + \eps, M]^{m-2}$ be the corresponding $(m-2)$-dimensional vectors under the two boundary conditions. We also have that they are in $[1 + \eps, M]^{m-2}$ with the same argument as $\mathbf{R}_1$ and $\mathbf{R}_\xi$.
Moreover, since $t_j^1$ and $t_j^\xi$ are the messages at the roots of $m-2$ subtrees of $z$ with height at least $r / 2$, then we have
\begin{equation}\label{eq:bound on T}
    \|\mathbf T_1 - \mathbf T_\xi\|_\infty \leq C_T(\beta_* + o(1))^{r/2 - \sqrt r}
\end{equation} by Lemma \ref{lem: message contraction purturbed} for some $C_T > 0$ depending on $p, q, d, \eps, M$.

Combining \eqref{eq:bound on R} and \eqref{eq:bound on T} gives
\begin{equation}\label{eq:bound on Linf}
    \|(\mathbf R_1 , \mathbf T_1) - (\mathbf R_\xi , \mathbf T_\xi)\|_\infty \leq \Gamma (\beta_* + o(1))^{r/2 - \sqrt r}
\end{equation}
for some constant $\Gamma > 0$ depending on $p, q, d, \eps, M$.

By Lemma \ref{lem: cycle gadget lipschitz}, we have the update map $F: [1 + \eps, M]^{L +m-2} \to [a', b'] \subset (1,\infty)$ at $z$ such that $F$ has Lipschitz constant bounded by $C_F \cdot L$ for some constant $C_F > 0$ depending on $p, q, d, \eps, M$.
Therefore, we have
\[
|F(\mathbf R_1 , \mathbf T_1) - F(\mathbf R_\xi , \mathbf T_\xi) | \leq C_FL \|(\mathbf R_1 , \mathbf T_1) - (\mathbf R_\xi , \mathbf T_\xi)\|_\infty \leq C_F L \Gamma (\beta_* + o(1))^{r/2 - \sqrt r}.
\]
Now since $L \leq r$, and the bound on $(\beta_* + o(1))^{r/2 - \sqrt r}$ decays exponentially in $r$, the factor $O(L)$ can be absorbed. Therefore, under the given assumption, the update at $z$ may in fact be regarded as having a  Lipschitz constant bounded by a constant depending only on $p, q, d, \eps, M$. 

Apart from the cycle gadget, the recursion is the usual tree recursion $\Phi_d$ in \eqref{eq: phi_d}, except possibly at $O(1)$ exceptional vertices where the number of children differs from $d$. For these exceptional vertices, the update maps are of the form $\Phi_m$ for some $m$, thus the corresponding update maps have Lipschitz constants bounded by a constant depending only on $p, q, d$. Hence, every exceptional update map within $\mathcal T_l$ has a compact domain contained in $(1, \infty)^m$ where $m$ is the number of children, a compact image interval contained in $(1,\infty)$, and a bounded Lipschitz constant. Therefore the hypotheses of Lemma~\ref{lem: message contraction purturbed} are satisfied.

Therefore, conditioned on the event $\mathcal E$ above, all messages on $\partial \mathcal T_l$ lie in $[1 + \eps,M]$ under both boundary conditions, Lemma \ref{lem: message contraction purturbed} applies directly and yields for some $C>0$ that 
\[
|f_G^1(v)-f_G^\xi(v)| \leq C(\beta_*+o(1))^l.
\]

\medskip
\noindent \textbf{Case 2}: The top half of $G$ is acyclic. Fix $l = r / 2$.
This includes both the case that $G$ has no cycle and the case that $G$ is unicyclic but the unique cycle is not contained in the top half. In either case, the subgraph of $G$ induced by the vertices of distance at most $l$ is identical to $\mathcal T_l$, which is either a $\Delta$-regular or an almost-$\Delta$-regular tree.

Hence, from the level $\partial \mathcal T_l$ up to $v$, the recursion is the usual tree recursion $\Phi_d$ in \eqref{eq: phi_d}, except possibly at $O(1)$ exceptional vertices where the number of children differs from $d$. For these exceptional vertices, the update maps are also $\Phi_m$ for some $m$, and the corresponding update maps have Lipschitz constants bounded by a constant depending only on $p, q, d$. Hence, every exceptional update map within $\mathcal T_l$ has a compact domain contained in $(1, \infty)^m$ where $m$ is the number of children, a compact image interval contained in $(1,\infty)$, and a bounded Lipschitz constant. Therefore the hypotheses of Lemma~\ref{lem: message contraction purturbed} are satisfied.

Conditioned on the event $\mathcal E$ above, all messages on $\partial \mathcal T_l$ lie in $[1 + \eps,M]$ under both boundary conditions, Lemma \ref{lem: message contraction purturbed} applies directly and yields
\[
|f_G^1(v)-f_G^\xi(v)| \leq C(\beta_*+o(1))^l
\]
for some constant $C > 0$.

Thus, for both cases, with probability at least $1 - \exp(-cd^{\sqrt{\dist(v, \partial \mathcal T)}})$, we have
   $ |f_G^1(v)-f_G^\xi(v)| \leq C(\beta_*+o(1))^l \leq C(\beta_*+o(1))^{r/2}$
and hence as in~\eqref{eq:message to probability}, 
\[
    \mu_{ G}^1 (v\sim \mathcal C_1(1)) - \mu_{G}^\xi(v\sim \mathcal C_1(\xi)) \leq \frac{1}{q}|f_G^1(v)-f_G^\xi(v)| = C(\beta_*+o(1))^{r/2}.
\]

The same argument also applies when the root or the special vertex $v^*$ is wired under both boundary conditions. Indeed, wiring these vertices only pins one incoming coordinate in their local update maps on $\mathcal T$ to a fixed value $\Phi(\infty) = (1-p)^{-1}$. The following fact about Lipschitz functions implies that the pinned update maps have the Lipschitz constant.
\begin{fact}\label{fact: pinned coordinate lipschitz}
    Let $M > 0$ be a sufficiently large constant and $F: [1,M]^{m(F)} \to [1,M]$ be an  
    $L(F)$-Lipschitz function with respect to the $\ell^\infty$ norm. For $I \subseteq \{1, \dots, m(F)\}$, and $\gamma = (\gamma_i)_{i \in I} \in [1, M]^I$, define 
    $
        F_{I,\gamma} : [1,M]^{m-|I|} \to [1, M]
    $
    by fixing the coordinates indexed by $I$ to be equal to $\gamma$.
    Then the function $F_{I, \gamma}$ is also $L(F)$-Lipschitz. 
\end{fact}

Hence,
\[
|\mu_G^{1,\circlearrowleft}(v\sim \mathcal C_1(1))
-\mu_G^{\xi,\circlearrowleft}(v\sim \mathcal C_1(\xi))|
\le C(\beta_*+o(1))^{r/2}.
\]
and the same holds with the same probability if $v^*$ is wired to $\mathcal{C}_1(\xi)$.

The argument for the event $\{e \in A\}$ is the same as in the proof of Corollary \ref{cor: edge WSM}.
\end{proof}

We note that the variants of WSM in Corollary \ref{cor:uc} and
Lemmas~\ref{lemma: root decay} are simply special cases and thus follow immediately.

\subsubsection{Proofs of supporting Lemmas \ref{lem:generalized-positive}, \ref{lem: message contraction purturbed}, and~\ref{lem: cycle gadget lipschitz}}

\begin{proof}[Proof of Lemma~\ref{lem:generalized-positive}]
The proof follows the argument in the proof of Lemma~\ref{lem:uniform-positive} with minor modifications. Consider the truncated tree $\mathcal{T}^v_{k} := \{u \in \mathcal{T}^v : \dist(v, u) \leq k\}$ rooted at $v$ with $k = \dist(v, \partial\mathcal T^v) - \ell$.
As in~\eqref{eq:m:pr1} and~\eqref{eq:prob:bound}, by Chernoff and union bounds,
for all $x \in \partial \mathcal{T}^v_{k}$, the number of vertices wired by $\xi$ in $\partial\mathcal T^{v, x}$ is at least $\frac{\theta |\partial\mathcal T^{v, x}|}{2}$
with probability at least
\begin{equation}
     1 - \exp\Big(-\frac{\theta (d^{\ell}-Q)}{8} + \log d \cdot \dist(v,\partial \dregv)\Big) \ge  1 - \exp\Big(-\frac{\theta d^{\ell}}{16} + \log d \cdot \dist(v,\partial \dregv)\Big)
\end{equation}
for $\ell$ large enough. Here we used the fact that the number of vertices wired by $\xi$ in $\partial\mathcal T^{v, x}$
stochastically dominates a $\mathrm{Binomial}(|\partial\mathcal T^{v, x}|-Q, \theta)$ random variable where $Q$ is a constant.

For the probability that $x \in \partial \mathcal{T}^v_k$ is connected to one of the vertices wired by $\xi$ in $\partial \mathcal T^{v,x}$, we note that $\mathcal T^{v,x}$ is almost a $d$-ary tree, with all but one vertex $u$ having branching $d$, and $u$ having branching $d-1$ or $d-2$.
For the $d$-ary tree the probability that $x$ is connected to $\partial \mathcal T^{v, x}$ is bounded below by some constant $\varphi_x>0$ since $\hat p > 1/d$ when $p>\ps$~ \cite{MR3616205}, and the different branching factor at a single vertex $u$ only affects this 
probability by a constant factor. 
Then, the probability that $x$ is connected to the wired component in $\partial \mathcal T^{v, x}$ is bounded below by $\frac{\varphi_x \theta}{2}$ by symmetry.
This probability can be rewritten in terms of the message functions $f^\xi_{\mathcal{T}^v}(x)$ at $x$ as in~\eqref{eq:m:pr} and the result follows. 
\end{proof}

\begin{proof}[Proof of Lemma \ref{lem: message contraction purturbed}]
We show the claim from the $d$-ary tree by replacing the update at vertices in $\mathcal S$, and each such replacement affects the message contraction only by a multiplicative constant.
Enumerate the vertices of $\mathcal S$ as $\mathcal S=\{x_1,\dots,x_k\}$ and $\dist(v, x_i) \ge \dist(v, x_j)$ if $i \leq j$. Thus, the replacements are performed from the leaves upward.

For $0 \leq i \leq k$, let $\hat f^{(i)}_1, \hat f^{(i)}_2$ denote the two message functions obtained as follows.
For vertices $x \in \{x_1,\dots,x_i\}$, we use the exceptional update maps $F_{x}$, and at every other vertex we use the usual update map $\Phi_d$ in \eqref{eq: phi_d} with $d$ children. 
For a vertex $x \in \{x_{i  +1}, \dots, x_k\}$ with $m$ children $\{y_1, \dots, y_m\}$, we use the auxiliary $d$-dimensional update map
\[
\hat\Phi_x(f(y_1),\dots,f(y_m)) := 
\begin{cases}
\Phi_d\bigl(f(y_1),\dots,f(y_m),1,\dots,1\bigr), & m<d,\\
\Phi_d\bigl(f(y_1),\dots,f(y_d)\bigr), & m\ge d.
\end{cases}
\]

For a vertex $u \in V(\mathcal T^v)$, we write
\[
    \delta_i(u) := \left| \hat f^{(i)}_1(u) - \hat f^{(i)}_2(u) \right| .
\]
Let
\[
\tilde K := \max_{F_{x_i}: 1\leq i\leq k}K(F_{x_i}),
\]
where $K(F)$ is the Lipschitz constant of $F$ with respect to the $\ell^\infty$ norm. We prove by induction on $i$ that there exists a constant $C_i > 0$ such that
\begin{equation}\label{eq: induction}
\delta_i(u) \leq C_i (\beta_* + o(1))^{\dist(u, \partial \mathcal T^v)}
\end{equation}
for every $u \in V(\mathcal T^v).$ 

For $i = 0$, the update at every vertex $u \in V(\mathcal T^v)$ is given by $\Phi_d$, so Lemma \ref{lem: uniform contraction} and Remark \ref{rem:choice-of-beta} gives
\[
    \delta_0(u) \leq C_0 (\beta_* + o(1))^{\dist(u, \partial \mathcal T^v)}
\]
for all $u$.

Now assume that \eqref{eq: induction} holds for $i < k$, and consider the next replacement at the vertex $x_{i + 1}$. Since the replacement at the $(i + 1)$-th step is performed at the vertex $x_{i+1}$, the only vertices whose messages may change are $x_{i+1}$ and its ancestors. In particular, if a vertex $y$ is not an ancestor of $x_{i+1}$, then
\[
    \hat f^{(i + 1)}_j(y) = \hat f^{i}_j(y) \implies \delta_{i + 1}(y) = \delta_i(y)
\]
for every non-ancestor $y$ of $x_{i + 1}$ and $j = 1, 2$. Moreover, every strict ancestor of $x_{i+1}$ still uses the usual update map $\Phi_d$.

Suppose that the vertex $x_{i + 1}$ has $m_{i + 1}$ children $\{x_{i+1, 1}, x_{i+1, 2}, ..., x_{i+1, m_{i + 1}}\}$ and the update at $x_{i+1}$ is performed by $F_{x_{i+1}}$. We first consider the vertex $x_{i + 1}$ itself. Then, we have
\begin{align*}
\delta_{i + 1} (x_{i + 1}) &= |F_{x_{i+1}}\bigl(\hat f^{(i+ 1)}_1(x_{i+1,1}),\dots, \hat f^{(i+ 1)}_1(x_{i+1,m_{i + 1}})\bigr) \\
& \qquad - F_{x_{i+1}}\bigl(\hat f^{(i+ 1)}_2(x_{i+1,1}),\dots,\hat f^{(i+ 1)}_2(x_{i+1,m_{i + 1}})\bigr)|.    
\end{align*}
Since $F_{x_{i+1}}$ has Lipschitz constant at most $\tilde K$, the induction hypothesis gives 
\[
\delta_{i + 1}(x_{i + 1}) \leq K(F_{x_{i + 1}}) \max_{1 \leq j \leq m_{i + 1}} \delta_{i + 1}(x_{i + 1, j}) \leq  \tilde K \max_{1 \leq j \leq m_{i + 1}} \delta_i(x_{i + 1, j}) \leq \tilde K C_i (\beta_* + o(1))^{\dist(x_{i+1}, \partial \mathcal T^v) - 1}.
\]
Therefore, we have
\[
\delta_{i + 1}(x_{i + 1}) \leq \tilde K C_i (\beta_* + o(1))^{\dist(x_{i+1}, \partial \mathcal T^v)}.
\]

Now let $z$ be an ancestor of $x_{i + 1}$, and $l := \dist(z, x_{i + 1})$. Let $\mathcal T^z_l$ be the subtree of $\mathcal T^v$ rooted at $z$ and truncated at depth $l$. Thus, $x_{i + 1}$ is a leaf of $\mathcal T^z_l$, and every internal vertex of $\mathcal T^z_l$ uses the usual update map $\Phi_d$.

For each leaf $y \in \partial \mathcal T^z_l$, $y$ is not an ancestor of $x_{i + 1}$, and hence
\[
\delta_{i+1}(y) = \delta_i(y) \leq C_i (\beta_* + o(1))^{\dist(y,\partial \mathcal T^v)}
\]
by the induction hypothesis. If $y = x_{i + 1}$, then we have 
\[
\delta_{i + 1}(x_{i + 1}) \leq \tilde K C_i (\beta_* + o(1))^{\dist(x_{i+1}, \partial \mathcal T^v)}
\]
from the argument above. Therefore, for every $y \in \partial \mathcal T^z_l$, 
\[
\delta_{i+1}(y) \leq C_i' (\beta_* + o(1))^{\dist(z,\partial \mathcal T^v)-l},
\] where $C_i' = \max\left(C_i, \tilde K C_i\right)$.

Consider now $\mathcal T^z_l$ as a finite $d$-ary tree with leaf messages given by $\hat f^{(i+ 1)}_1$ and $\hat f^{(i+ 1)}_2$ on $\partial \mathcal T_l^z$. For $y \in \partial \mathcal T_l^z$, let $a_y := \hat f^{(i+1)}_1(y)$ and $b_y = \hat f^{(i+1)}_2(y)$ and write 
\[
\mathbf a := (a_y)_{y \in \partial \mathcal T_l^z}, \quad\mathbf b := (b_y)_{y \in \partial \mathcal T_l^z}.
\]
Note that all coordinate of $\mathbf a$ and $\mathbf b$ lie in a compact interval $I_{i+1}^z \subset (1, \infty)$ and 
\begin{equation}\label{eq: bound of a-b}
    \|\mathbf a -\mathbf b\|_\infty \leq C_i' (\beta_* + o(1))^{\dist(z,\partial \mathcal T^v)-l}.
\end{equation}
    
Let $G_l : (I_{i+1}^z)^{|\partial \mathcal T_l^z|} \to (1, \infty)$ be the message map at $z$ on $d$-ary tree $\mathcal T_l^z$ obtained by the standard recursion $\Phi_d$. Set $A_0 := I_{i+1}^z$, and define compact intervals $A_{r+1} = [g(\inf A_r), g(\sup A_r)]$ for $r = 0, \dots, l - 1$. For each $r \geq 0$, let $L_r = \sup_{t \in A_r} |g'(t)|.$ By the mean value theorem, the map $\Phi_d$ from level $r$ to level $r + 1$ is  $L_r$-Lipschitz with respect to $\ell^\infty$ norm. Therefore, we have
\[
|G_l(\mathbf a)-G_l(\mathbf b)| \leq \left(\prod_{r=0}^{l-1} L_r\right)\|\mathbf a-\mathbf b\|_\infty.
\]

By the same argument as in the proof of Lemma \ref{lem: uniform contraction}, the intervals $A_r$ converge to the fixed point $y^*$, and hence $L_r \to g'(y^*)$. Therefore, there exists a constant $\hat C > 0$ depending only $p, q, d$ and the interval $I_{i+1}^z$ such that
\[
|G_l(\mathbf a)-G_l(\mathbf b)| \leq \hat C (\beta_* + o(1))^l \|\mathbf a-\mathbf b\|_\infty.
\]
Note that $\delta_{i+1}(z) = |\hat f_1^{(i+1)}(z) - \hat f_2^{(i+1)}(z)| = |G_l(\mathbf a) - G_l(\mathbf b)|$. Therefore, with \eqref{eq: bound of a-b}, we have
\[ 
\delta_{i+1}(z) \leq \hat C (\beta_* + o(1))^l C_i' (\beta_* + o(1))^{\dist(z,\partial\mathcal T^v)-l} = C_{i + 1}(\beta_* + o(1))^{\dist(z,\partial\mathcal T^v)}.
\]

Combining this with the fact that $\delta_{i+1}(u)=\delta_i(u)$ for every non-ancestor vertex $u$ of $x_{i+1}$, we conclude that
\[
\delta_{i+1}(u) \leq C_{i+1}(\beta_* + o(1))^{\dist(u,\partial \mathcal T^v)}
\]
for every $u \in V(\mathcal T^v)$. This proves \eqref{eq: induction} for $i+1$. 
Note that $C_{i + 1} = \hat C C_i'$ depends on $C_i' = \max\left(C_i, \tilde K C_i\right)$. Therefore, $C_i'$ is a constant depending on $\prod_{j = 1}^i \tilde K_j$.
\end{proof}

\begin{fact} \label{fact: derivativelog}
Let $\gamma_{\mathbf{x}} \geq 0$ for all $\mathbf{x} \in \{0, 1\}^{l}$, and $H(\mathbf{R}) := \sum_\mathbf{x} \gamma_\mathbf{x} \prod_{i=1}^{l} R(w_i)^{x_i}$ where $\mathbf{R} \in (0, \infty)^{l}$. Then,
\[
\frac{\partial}{\partial y_i} \log H(e^{\mathbf{y}}) \in [0,1], \quad \forall i
\] where $e^{\mathbf{y}}$ is applied entrywise and $y_i := \log R(w_i)$.
\end{fact}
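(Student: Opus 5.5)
The plan is a direct computation in logarithmic coordinates. Substituting $R(w_i) = e^{y_i}$ gives
\[
H(e^{\mathbf y}) = \sum_{\mathbf x} \gamma_{\mathbf x}\, e^{\langle \mathbf x, \mathbf y\rangle},
\]
a nonnegative combination of exponentials of linear forms with coefficient vectors $\mathbf x\in\{0,1\}^l$. Assume $H>0$ (some $\gamma_{\mathbf x}>0$) so that the logarithm is defined. Differentiating in $y_i$ and dividing by $H$ gives
\[
\frac{\partial}{\partial y_i}\log H(e^{\mathbf y}) = \frac{\sum_{\mathbf x} x_i\,\gamma_{\mathbf x} e^{\langle \mathbf x,\mathbf y\rangle}}{\sum_{\mathbf x}\gamma_{\mathbf x} e^{\langle \mathbf x,\mathbf y\rangle}}.
\]
This is the expectation of $x_i$ under the probability measure on $\{0,1\}^l$ with weights proportional to $\gamma_{\mathbf x}e^{\langle \mathbf x,\mathbf y\rangle}$. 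These weights are nonnegative and not all zero.

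Since $x_i\in\{0,1\}$, the numerator is a subsum of the denominator and both are nonnegative. The ratio therefore lies in $[0,1]$, which is the claim.

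There is no real obstacle. The only care needed is the positivity assumption on $H$, which holds in the intended application because $H$ is a partition-function-type polynomial in the cycle-gadget messages, and the observation that multilinearity in the $R(w_i)$ is exactly what keeps each exponent $x_i$ in $\{0,1\}$. This is what makes the bound $1$ (not, say, the degree) appear.

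In Lemma~\ref{lem: cycle gadget lipschitz}, the fact will be applied to numerator and denominator expressions of $F$ separately. It bounds the log-derivatives of $F$ by $O(1)$ per coordinate, and converting back from $\log R$ to $R$ on the compact interval $[a,b]$ costs only a constant.
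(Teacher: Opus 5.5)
Your proposal is correct and is essentially the paper's proof: both substitute $\mathbf y=\log\mathbf R$ and identify $\partial_{y_i}\log H(e^{\mathbf y})$ with the mean of $x_i\in\{0,1\}$ under the tilted weights $\gamma_{\mathbf x}e^{\langle\mathbf x,\mathbf y\rangle}$. You explicitly require some $\gamma_{\mathbf x}>0$ so that $\log H$ is defined, which the paper leaves implicit; also, in Lemma~\ref{lem: cycle gadget lipschitz} the fact is applied to $\Psi_1$ and $\Psi_0$, the numerator and denominator of $K=\Psi_1/\Psi_0$, rather than to numerator and denominator expressions of $F$ itself.
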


\begin{proof}
    Define a probability distribution $\pi_{\gamma, \mathbf{y}}$ such that $\pi_{\gamma, \mathbf{y}} \propto \gamma_\mathbf{x}e^{\langle\mathbf{x}, \mathbf{y}\rangle}$. Since $H(e^{\mathbf{y}}) = \sum_\mathbf{x} \gamma_\mathbf{x} e^{\langle \mathbf{x}, \mathbf{y}\rangle}$, 
    \[
    \frac{\partial}{\partial y_i} \log H(e^{\mathbf{y}}) = \frac{\sum_\mathbf{x}\gamma_\mathbf{x}x_ie^{\langle \mathbf{x}, \mathbf{y}\rangle}}{\sum_\mathbf{x}\gamma_\mathbf{x}e^{\langle \mathbf{x}, \mathbf{y}\rangle}} = \mathbb{E}_{\pi_{\gamma, \mathbf{y}}}[x_i] \in [0, 1]. \qedhere
    \]
\end{proof}

\begin{proof} [Proof of Lemma \ref{lem: cycle gadget lipschitz}]
    Let $w$ be the least common ancestor of $u$ and $v$ in $\mathcal T ^\rho$. Let $P(w, u)$ and $P(w, v)$ be the two tree paths from $w$ to $u$ and $v$, respectively. Define the cycle gadget
    \[
        H = P(w, u) \cup P(w, v) \cup \{(u, v)\}.
    \]
    
    $H$ is the unique cycle in $G$. Let $l + 1$ be the length of the cycle and denote the cycle by $(w, w_1, w_2, ..., w_l)$ where $w_1, \dots, w_l$ are the vertices on the cycle $H$.

    Consider the descendant subgraph $G_w$ of $G$ rooted at $w$. Since we can obtain a tree by removing the edge $e$ from $G_w$, write $G_w = \mathcal T_w \cup \{e\}$. Exactly two of the children of $w$ share the branches with $H$, and the remaining children have descendant subtrees disjoint from $H$. Therefore, from the perspective of $w$, we show that the message function in \eqref{message recursion} is defined by the product of the usual tree contributions from the children not meeting the cycle, together with one additional contribution from the gadget $H$.
    
    For each $i \in \{1, \dots, l\}$, let $\mathcal T_i$ be the rooted subtree hanging from $w_i$ after removing the two cycle edges incident to $w_i$, and let $\tilde Z_0(w_i), \tilde Z_1(w_i)$ be the corresponding partition functions according to whether $w_i$ is not connected to, or is connected to the wired component on $\partial \mathcal T_i$ through $\mathcal T_i$.
    
    Define a graph $\widetilde{G}$ as the subgraph of $G_w$ obtained by removing the subtrees belonging to children of $w$ not in $H$.
    We will split the contribution to the random-cluster partition function
    on $\widetilde{G}$ 
    among configurations outside of the cycle $H$ that induce the same connectivity profile on the vertices of $H$. For this, for each vector $\mathbf x=(x_1,\dots,x_l)\in\{0,1\}^l$, we interpret $x_i=1$ as configurations where $w_i$ is connected to the wired component through $\mathcal T_i$, and $x_i=0$ otherwise.
    
    For $s \in \{0, 1\}$, let $\Psi_s$ denote the restricted partition function on $\widetilde{G}$ from configurations on $\widetilde{G}$ at $w$ corresponding to the event that $w$ is either disconnected ($s=0$) or connected ($s=1$) to the wired component. Then, we have
    \[
    \Psi_s = \sum_{\mathbf{x} \in \{0, 1\}^l} \alpha_{\mathbf{x},s}(p, q) \prod_{i=1}^l \tilde{Z}_{x_i}(w_i)
    \]
    where $\alpha_{\mathbf{x},s}(p, q) \geq 0$ depend only on the internal configurations on the gadget $H$ and on the state $s$.

    With this notation, we can define the partition functions at $w$.     Recall that the contribution from the subtrees of $w$ that are disjoint from $\widetilde{G}$ factorizes exactly as in the tree recursion.
    Let $t := p/q + (1-p)$ and $w'$ range over the children of $w$ that are not in the cycle gadget $H$.
    Then, exactly as in the proof of Lemma \ref{lem:recursion}, we have
    \[
    Z_0(w) = q^2 \Psi_{0}\prod_{w'\notin H}\frac{t Z_0(w')+(1-p)Z_1(w')}{q},
    \]
    and
    \[
    Z_1(w) = q\left[    (\Psi_{0}+\Psi_{1})\prod_{w'\notin H}\frac{t Z_0(w')+Z_1(w')}{q} - \Psi_{0}\prod_{w'\notin H}\frac{t Z_0(w')+(1-p)Z_1(w')}{q}\right]
    \]
    where $Z_0(w)$ and $Z_1(w)$ are the partition functions at $w$ on $G_w$ for configurations disconnected from, and connected, to the wired component in $\mathcal T_w$, respectively.
    
    Consequently, the message at $w$ on $G_w$ can be defined as
    \begin{equation}\label{eq: cycle update map}
    f(w) = q\frac{Z_1(w)}{Z_0(w)}+1 = L \prod_{w'\notin H}\Phi(f(w')), \qquad \text{where} \qquad  L := \frac{\Psi_{0}+\Psi_{1}}{\Psi_{0}}
    = 1+\frac{\Psi_{1}}{\Psi_{0}},
    \end{equation}
    and where $\Phi$ is the usual update function defined in \eqref{message recursion}. 
    
    Therefore, the effect of the cycle gadget $H$ on the message at $w$ is entirely captured by the scalar factor $L$, while the remaining subtrees contribute multiplicatively as in the tree case. In particular, the ratio $\Psi_{1}/\Psi_{0}$ depends only on the ratios $\tilde{Z}_1(w_i)/\tilde{Z}_0(w_i)$ at the vertices on the cycle $H$.

    Let $R(w_i) = \tilde{Z}_1(w_i)/\tilde{Z}_0(w_i)$ for $i = 1,\dots,l$ and $\mathbf{R} = (R(w_1), R(w_2), ... , R(w_l)) \in (0, \infty)^l$ be an $l$-dimensional vector. Factoring out $\prod_{i=1}^l \tilde Z_0(w_i)$ from both $\Psi_0$ and $\Psi_1$, we may write
    \[
    K(\mathbf{R}) := \frac{\Psi_1(\mathbf{R})}{\Psi_0(\mathbf{R})}
    =
    \frac{\sum\limits_{\mathbf x\in\{0,1\}^{l}} \alpha_{\mathbf x,1}(p,q)\prod_{i=1}^{l} R(w_i)^{x_i}}
    {\sum\limits_{\mathbf x\in\{0,1\}^{l}} \alpha_{\mathbf x,0}(p,q)\prod_{i=1}^{l} R(w_i)^{x_i}},
    \]
    so that $L(\mathbf{R}) = 1 + K(\mathbf{R})$.

    We first show that the function $K$, equivalently $L$, is bounded above by some constant depending on $p, q$. Let $e_1, e_2$ be the two edges incident to $w$ in $G_w$. For any configuration $\sigma$ on $G_w$ contributing to $\Psi_1$, let $\sigma'$ be the configuration $\sigma \setminus \{e_1, e_2\}$. Note that $\sigma'$ contributes to $\Psi_0$. The weight ratio between $\sigma$ and $\sigma'$ is bounded by some constant $\alpha > 0$ depending only on $p, q$. Since we have at most three possible configurations, determined by the status of $e_1, e_2$, contributing to $\Psi_1$ being mapped to the same $\sigma'$, 
    \begin{equation} \label{eq: bound on K}
        K(\mathbf{R}) \leq 3 \alpha \qquad \text{for all }\mathbf{R} \in (0, \infty)^l
    \end{equation}

    We now show that the map $\mathbf{R} \mapsto K(\mathbf{R})$ has a bounded Lipschitz constant on the relevant compact domain. 
    Let $e^{\mathbf{y}} = \mathbf{R}$ entrywise. This implies $\log K(e^{\mathbf{y}}) = \log \Psi_1(e^{\mathbf{y}}) - \log \Psi_0(e^{\mathbf{y}})$ and by Fact \ref{fact: derivativelog} above,  
    \[\frac{\partial}{\partial y_i} \log K(e^{\mathbf{y}}) = \frac{\partial}{\partial y_i} \log \Psi_1(e^{\mathbf{y}}) - \frac{\partial}{\partial y_i} \log \Psi_0(e^{\mathbf{y}}) \in [-1, 1].\]

    For $\mathbf{y_1},\mathbf{y_2} \in \R^l$, parametrize $\mathbf{y}(t) = \mathbf{y_1} + t (\mathbf{y_2} - \mathbf{y_1})$ where $t \in [0, 1]$, then define $H(\mathbf{y}) := \log K(e^\mathbf{y})$. 
    Therefore, by fundamental theorem of calculus and the triangle inequality, we obtain
    \[|H(\mathbf{y_2}) - H(\mathbf{y_1})|\leq \int_0^1\sum_{i=1}^{l} \left|\frac{\partial H}{\partial y_i}(\mathbf{y}(t))\right| |y_{2, i} -y_{1, i}| dt \\ \leq \int_0^1 \sum_{i=1}^l |y_{2,i} - y_{1,i}| dt = \sum_{i=1}^l|y_{2,i} - y_{1,i}|.\] Substituting $\mathbf{y} = \log \mathbf{R}$ gives
    \[
    |\log K(\mathbf R_1) - \log K(\mathbf R_2)| \leq \sum_{i=1}^l |\log R_1(w_i) - \log R_2 (w_i)|.
    \]

    Since each coordinate of $\mathbf R_1,\mathbf R_2$ lies in the compact interval $[a,b]\subset (1,\infty)$, the function $x\mapsto \log x$ has Lipschitz constant at most $1/a$ on $[a,b]$. Hence, we have $|\log R_1(w_i) - \log R_2(w_i)| \leq  \frac{1}{a} |R_1(w_i) - R_2(w_i)| \leq |R_1(w_i) - R_2(w_i)|$ which implies 
    \[|\log K(\mathbf{R}_1) - \log K(\mathbf{R}_2)| \leq \|\mathbf{R}_1 - \mathbf{R}_2\|_1 \leq l\|\mathbf{R}_1 - \mathbf{R}_2\|_\infty. \]
    
    Furthermore, by the inequality
        $|e^x - e^y| \leq e^{\max(x, y)} |x- y|$
    with $x = \log K(\mathbf{R}_1)$ and $y = \log K(\mathbf{R}_2)$, 
    \[|K(\mathbf{R}_1) - K(\mathbf{R}_2)| \leq C_K |\log K(\mathbf{R}_1) -\log K(\mathbf{R}_2)| \leq C_K l\|\mathbf{R}_1 - \mathbf{R}_2\|_\infty. \] for some constant $C_K  = \max(K(\mathbf{R}_1),K(\mathbf{R}_2)) < 3\alpha$ from \eqref{eq: bound on K}. Together, this implies the function $K$, and equivalently the function $L$, has Lipschitz constant bounded by $ C l$ for some constant $C > 0$. 

    Now, returning to \eqref{eq: cycle update map}, we can write the update at $w$ as $f(w) = F(\mathbf{R},\mathbf{T}) = L(\mathbf{R})\Phi_{m-2}(\mathbf{T})$ 
    where $\Phi_{m-2}$ is defined in \eqref{eq: phi_d}. Since the functions $L$ and $\Phi_{m-2}$ are continuous, and the sets $[a,b]^l$ and $[a,b]^{m-2}$ are compact, their images are compact. 
    
    Moreover, since $[a, b] \subset (1, \infty)$, we have $\Phi(t) > 1$ for every $t \in [a, b]$. Thus, $\Phi_{m-2}([a, b]^{m-2}) \subset [a_\Phi, b_\Phi]$ for some compact interval $[a_\Phi, b_\Phi] \subset (1, \infty)$. Similarly, by the definition of $L$ above, we have $L(\mathbf R) > 1$ for every $\mathbf R \in [a, b]^l$. Hence, $L([a,  b]^l) \subset [a_L, b_L]$ for some compact interval $[a_L, b_L] \subset (1, \infty)$. This implies that $F$ maps $[a, b]^{l + m - 2}$ to $[a', b'] \subset (1, \infty)$ for some $a', b' > 1$.
    
    Then, for any $(\mathbf R_1, \mathbf T_1), (\mathbf R_2, \mathbf T_2) \in [a, b]^{l + m -2}$, we have
    \begin{align*}
    |F(\mathbf R_1,\mathbf T_1)-F(\mathbf R_2,\mathbf T_2)|
    &= |L(\mathbf R_1)\Phi_{m-2}(\mathbf T_1) -L(\mathbf R_2)\Phi_{m-2}(\mathbf T_2)| \\
    &\leq |L(\mathbf R_1)-L(\mathbf R_2)||\Phi_{m-2}(\mathbf T_1)| + |L(\mathbf R_2)||\Phi_{m-2}(\mathbf T_1)-\Phi_{m-2}(\mathbf T_2)|.
    \end{align*}

    Since $\Phi_{m-2}$ has bounded Lipschitz constant on $[a,b]^{m-2}$, and it is bounded, we have
    \begin{align*}
    |F(\mathbf R_1,\mathbf T_1)-F(\mathbf R_2,\mathbf T_2)|
    &\leq M_\Phi |L(\mathbf R_1) - L(\mathbf R_2)| + M_L|\Phi_{m-2}(\mathbf T_1)-\Phi_{m-2}(\mathbf T_2)| \\
    &\leq M_\Phi C l \||\mathbf R_1 - \mathbf R_2 \|_\infty + M_L C_\Phi \|\mathbf T_1-\mathbf T_2\|_\infty \\
    &\leq (M_\Phi C l + M_L C_\Phi) \|(\mathbf R_1,\mathbf T_1) - (\mathbf R_2,\mathbf T_2)\|_\infty
    \end{align*}
    where $M_L = \sup_{\mathbf R\in[a,b]^l}|L(\mathbf R)|, M_\Phi = \sup_{\mathbf T\in[a,b]^{m-2}}|\Phi_{m-2}(\mathbf T)|$ and $C_\Phi$ is a Lipschitz constant of $\Phi_{m-2}$. Hence the Lipschitz constant of $F$ on $[a,b]^{l+m-2}$ is bounded by a quantity depending on $p,q,\Delta,a,b$ and linearly on $l$.
\end{proof}

\subsubsection{Stronger spatial mixing rate for large $q$}

In this subsection, we improve the exponential decay rate 
of the WSM property we established in Theorem~\ref{thm wsm}
provided $q$ is logarithmically large in $d$. Specifically, for $p > \ps$, we show that the derivative of the recursion function $g$ from~\eqref{eq: function g} at the fixed point $y^* > 1$ satisfies $g'(y^*) < 1/d^a$ for any constant $a \geq 1$ provided $q \ge C_0(a) \log d$. The same thus holds for $\beta_*$ in the exponential decay of correlations statements. 

\begin{theorem} \label{thm: sharper bound}
    For $p > \ps$, let $y^* > 1$ be the non-trivial fixed point of $g$. For all $a \geq 1$, there exists a constant $C_0(a) > 0$ such that for all $d \ge 2$ and all $q \geq C_0 \log d > 2$, we have $
        g'(y^*) < \frac{1}{d^a}.$
\end{theorem}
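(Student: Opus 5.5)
The plan is to compute $g'(y^*)$ explicitly, rewrite it using the fixed-point equation, and then show that $y^*$ is exponentially large in $q$ throughout $p>\ps$.

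Write $s:=(q-1)(1-p)$, so that $\Phi(y)=\frac{y+s}{(1-p)y+p+s}$. A direct computation gives
$$\Phi'(y)=\frac{p(1+s)}{\big((1-p)y+p+s\big)^2}.$$
Using $g=\Phi^d$ and the fixed-point relation $\Phi(y^*)^d=y^*$, we get
$$g'(y^*)=d\,y^*\,\frac{\Phi'(y^*)}{\Phi(y^*)}=\frac{d\,p\,(1+s)\,y^*}{(y^*+s)\big((1-p)y^*+p+s\big)}.$$
We then bound the two denominator factors from below by $y^*$ and by $(1-p)y^*$. Since $s/(1-p)=q-1$, this yields
$$g'(y^*)\le \frac{d\,p}{y^*}\Big(\frac{1}{1-p}+q-1\Big).$$
It therefore suffices to show that $y^*\ge d^{a+2}\,(q+(1-p)^{-1})\cdot(\text{const})$. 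Equivalently, $y^*$ must beat both $d^{a+2}q$ and $d^{a+1}/(1-p)$.

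The main step is a lower bound on $y^*$. By Lemma~\ref{lemma:fp:main}, $g$ is increasing and bounded, and $y^*$ is the unique fixed point in $(1,\infty)$. Hence it suffices to exhibit some $Y>1$ with $g(Y)>Y$, which forces $y^*\ge Y$. For $y\ge Y$ we have
$$\Phi(y)\ge \frac{1}{1-p}\Big(1-\frac{p+s}{(1-p)y}\Big)\ge \frac{1}{1-p}\Big(1-\frac{q}{(1-p)Y}\Big),$$
using $p+s\le q(1-p)+p$ together with a crude bound. Consequently
$$g(Y)\ge (1-p)^{-d}\Big(1-\frac{q}{(1-p)Y}\Big)^d.$$
We will take $Y$ to be roughly $\tfrac12(1-p)^{-d}$ and check that $\frac{dq}{(1-p)Y}$ is small. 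This reduces to $(1-p)^{-(d-1)}\gg dq$.

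The key input is $p>\ps=\frac{q}{d+q-1}$. This gives $(1-p)^{-(d-1)}\ge \big(1+\tfrac{q}{d-1}\big)^{d-1}$. For $q\le d$ this quantity is at least $e^{q/2}$, and for $q>d$ it is at least $2^{d-1}(q/d)^{\,d-1}$; in both cases it is superpolynomially large in $q$. Once $q\ge C_0\log d$ with $C_0=C_0(a)$ large, this exceeds $d^{a+3}q^2$. We then obtain $y^*\gtrsim (1-p)^{-d}$, and plugging into the bound on $g'(y^*)$ gives
$$g'(y^*)\lesssim d\,p\,(1-p)^{d-1}+d\,p\,q\,(1-p)^{d}<d^{-a}.$$

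The obstacle is making these estimates uniform over the whole range $p\in(\ps,1)$. For $p$ near $\ps$ with $q\ll d$, we have $1-p\approx 1$, and the mechanism is $(1-p)^{-d}\approx e^{pd}\ge e^{q}$. For $p$ near $1$ the relevant quantity is instead $(1-p)^{-(d-1)}$, which is even larger. We will split into the cases $p\le 1/2$ and $p>1/2$ and handle the two regimes of $q$ versus $d$ with separate elementary inequalities. The condition $C_0\log d>2$ ensures $q>2$, so that the crude constants close and the case analysis is consistent with Theorem~\ref{thm:fixed points:appendix}(iii).
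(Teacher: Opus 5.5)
Your route differs from the paper's. The paper first shows that $g'(y^*(p))$ is maximized at $p=\ps$, then compares the roots of the auxiliary functions $P,Q$ when $d\ge 3$, and uses the exact value $y^*=(q-1)^2$ when $d=2$. The first half of your route is sound: the formula $g'(y^*)=\frac{dp(1+s)y^*}{(y^*+s)((1-p)y^*+p+s)}$, the bound $g'(y^*)\le \frac{dp}{y^*}(\frac{1}{1-p}+q-1)$, and the reduction of a lower bound on $y^*$ to finding $Y>1$ with $g(Y)>Y$ are all correct.

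The gap is in that lower bound. Replacing $p+s$ by $q$ means you need $(1-p)^{-(d-1)}\gg dq$. Your justification, that $(1+\frac{q}{d-1})^{d-1}$ is superpolynomially large in $q$, is false for $q>d$: it is a polynomial of degree $d-1$ in $q$, and for $d=2$ it is just $1+q$. Concretely, take $d=2$ with $p$ slightly above $\ps=\frac{q}{q+1}$, so $(1-p)^{-1}\approx q+1$. With $u=(1-p)Y$, your inequality $(1-p)^{-2}(1-\frac{q}{u})^2>Y$ becomes roughly $(q+1)(u-q)^2>u^3$. This has no solution with $u>q$, and for $u\le q$ your lower bound on $\Phi$ is vacuous. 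So no choice of $Y$ works, and the case $d=2$, which the statement explicitly includes, is not proved.

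For $d\ge 3$ there are also misstatements, but they are harmless. The inequality $(1+\frac{q}{d-1})^{d-1}\ge 2^{d-1}(q/d)^{d-1}$ fails at $d=3$, $q=100$. The target $d^{a+3}q^2$ is never reached at $d=3$, since $(1+q/2)^2<3^{a+3}q^2$. What your scheme actually needs is $(1-p)^{-(d-1)}\gtrsim dq+d^{a+1}$ and $(1-p)^{-d}\gtrsim d^{a+1}q$. These do follow from $(1-p)^{-1}>1+\frac{q}{d-1}$ when $d\ge 3$ and $C_0=C_0(a)$ is large.

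The missing observation is that $p+s$ is never of order $q$ when $q\gg d$. For $p>\ps$, $s=(q-1)(1-p)<\frac{(q-1)(d-1)}{d+q-1}<d-1$, so $p+s<d$. Use this instead of $p+s\le q$ and take $Y=\frac12(1-p)^{-d}$. Bernoulli's inequality then gives $g(Y)\ge (1-p)^{-d}(1-2d^2(1-p)^{d-1})$, hence $y^*>\frac12(1-p)^{-d}$ as soon as $(1-p)^{-(d-1)}>4d^2$.
\begin{itemize}
\item For $d=2$ this holds because $(1-p)^{-1}>q+1$ and $q\ge 15$.
\item For $d\ge 3$ it follows from $(1-p)^{-1}>1+\frac{q}{d-1}$ and $q\ge C_0\log d$.
\end{itemize}
Plugging in gives $g'(y^*)<2dp(1-p)^{d-1}+2dpq(1-p)^d$. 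This is below $8/q$ when $d=2$, and below $d^{-a}$ in general once $C_0(a)$ is large.

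With this repair your argument is a genuinely more direct proof than the paper's. It is uniform in $p>\ps$ and needs neither the monotonicity-in-$p$ lemma nor the root comparison of $P$ and $Q$, at the cost of less explicit constants.
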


With this theorem in hand we can now provide the proof of Lemma~\ref{lem:thetaQ-wired-implies-P1}.

\begin{proof}[Proof of Lemma~\ref{lem:thetaQ-wired-implies-P1}]
By item $\textup{(G1)}$, and by the definition of $B(e)$, we have  $B(e) \in \mathcal G(R)$. 
Consider the  BFS tree $\mathcal T$ corresponding to $B(e)$. 
Let $\xi \sim \mathsf M$ be a $(\theta, Q)$-wired boundary condition on $\partial \mathcal T$. By Theorem~\ref{thm wsm theta,Q wired}, we have with at least probability $1 - \exp(-cd^{\sqrt{\dist(e, \partial \mathcal T)}})$,
\[
\big| \mu^1_{B(e)}(e\in A) - \mu^\xi_{B(e)}(e\in A)\big| \leq C(\beta_*+o(1))^{\dist(e, \partial \mathcal T)/2} \leq C(\beta_*+o(1))^{R/2-1}
\]
for some constant $C> 0$.
Note that $R = \alpha \log _d n$ where $\alpha < 1/4$ and $a = 6/\alpha$. By Theorem \ref{thm: sharper bound}, 
\[
\beta_* = g'(y^*) < d^{-6/\alpha} \implies \beta_* + o(1) \leq d^{-5/\alpha}.
\]
Therefore, we have
\[
\big| \mu^1_{B(e)}(e\in A) - \mu^\xi_{B(e)}(e\in A)\big| \leq  (C/\beta_*) d^{-\frac{5R}{2\alpha}} = O(d^{-2 R/\alpha})
\]
with probability at least $1 - \exp(-cd^{\sqrt{\dist(e, \partial \mathcal T)}}) = 1 - \exp(-cd^{\sqrt{R}}) = 1 - o(n^{-10})$ as $n \to \infty$.
\end{proof}

Going back to the proof of Theorem~\ref{thm: sharper bound}, to better capture the dependence of $g$ on $p$, and with a slight abuse of notation, let us introduce the auxiliary function 
\[
h(p, y) = \Phi(p, y)^d = \left(\frac{y + (q-1)(1-p)}{(1-p)y+p+(q-1)(1-p)}\right)^d
\] where 
\[
\Phi(p, y) =  \left(\frac{y + (q-1)(1-p)}{(1-p)y+p+(q-1)(1-p)}\right);
\]
note $\Phi(p, y)$ corresponds to  $\Phi(y)$ from~\eqref{message recursion} but
here we are emphasizing the dependence on $p$. 
For fixed $p$, we have $g(y) = h(p,y)$. For $p > \ps$, there exists a unique non-trivial fixed point $y^* > 1$ such that $h(p,y^*) = y^*$ (by Theorem~\ref{thm:fixed points:appendix}). We also write $y^* = y^*(p)$ to emphasize the dependence on the parameter $p$.

The following lemma shows that $g'(y^*(p))$ attains its maximum at $p = \ps$ on $p \in [\ps, 1)$ so that we can reduce the proof of Theorem~\ref{thm: sharper bound} to this case. The proof of Lemma \ref{lemma: worst case g'} is deferred to Section \ref{appendix:stronger-decay}.

\begin{lemma}\label{lemma: worst case g'}
    Fix $d \ge 2, q > 1$ and $p \geq \ps$. Then, $g'(y^*(p))$ attains its maximum at $p = \ps$.
\end{lemma}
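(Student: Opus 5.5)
The plan is to show that the map $p\mapsto g'(y^*(p))$ is non-increasing on $[\ps,1)$. The maximum is then attained at the left endpoint $p=\ps$, with $y^*(\ps)$ understood as the limit of $y^*(p)$ as $p\downarrow\ps$.

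\textbf{Step 1: rewrite $g'(y^*)$.} Since $g=\Phi^d$ and $g(y^*)=y^*$, we have
\[
g'(y^*)=d\,\frac{\Phi'(y^*)}{\Phi(y^*)}\,y^*=d\,\frac{\partial\log\Phi(p,y)}{\partial\log y}\Big|_{y=y^*}.
\]
Put $a=(q-1)(1-p)$ and $b=p+(q-1)(1-p)$. Differentiating the M\"obius form of $\Phi$ gives
\[
\psi(p,y):=d\,\frac{y\,\big(b-(1-p)a\big)}{(y+a)\big((1-p)y+b\big)},\qquad b-(1-p)a=p\big(q(1-p)+p\big),
\]
and $g'(y^*(p))=\psi(p,y^*(p))$. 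As a sanity check, at $y=1$ this gives $\psi(p,1)=d\hat p$, which equals $1$ exactly at $p=\ps$. This matches the loss of stability of the trivial fixed point there.

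\textbf{Step 2: regularity of the curve $p\mapsto y^*(p)$.} By Lemma~\ref{lemma:fp:main}, $h_y(p,y^*)=g'(y^*)<1$ for $p>\ps$. The implicit function theorem then makes $y^*(p)$ a $C^1$ curve, with
\[
y^{*\prime}(p)=\frac{h_p(p,y^*)}{1-h_y(p,y^*)}.
\]
I will check directly from the formula that $\partial_p\Phi(p,y)>0$ for $y>1$: more edge weight means more connection. Hence $y^*$ is increasing in $p$, and it tends to a limit as $p\downarrow\ps$. That limit is $1$ when $q\le2$, and is the nontrivial tangency-free root when $q>2$.

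\textbf{Step 3: reparametrize by $y$ (the main obstacle).} The difficulty is the sign of the total derivative $\psi_p+\psi_y\,y^{*\prime}$, because $\psi_p$ and $\psi_y$ need not have the same sign. To avoid this, I would first try to reparametrize the curve by $y$ rather than by $p$. The fixed-point relation $\Phi(p,y)=y^{1/d}$ is, for fixed $y$, a M\"obius equation in $y^{1/d}$ that is quadratic in $p$. Solving it gives $p=p(y)$ explicitly, and by Step 2 this function is increasing. Substituting $p=p(y)$ into $\psi$ reduces the claim to showing that a single explicit function of $y\ge y^*(\ps)$ is non-increasing.

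For this one-variable check, I would use the fixed-point relation to eliminate $b$ in favour of $y^{1/d}$ and $a$. The expression then becomes of the form $d\,(1-\text{const}\cdot y^{-1/d})\times(\text{monotone factor})$, and I would verify the sign by comparing logarithmic derivatives, using the convexity of $y\mapsto y^{1/d}$ in $\log y$. If this elimination turns out to be messy, the fallback is to show directly that $\psi_p\le0$ and $\psi_y\le0$ at points $y\ge y^*(\ps)$ on the curve. Since $y^{*\prime}\ge0$, those two signs together suffice. The sign of $\psi_y$ follows because $y\mapsto y/\big((y+a)((1-p)y+b)\big)$ is decreasing once $y^2\ge ab/(1-p)$. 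So the real work in the fallback is establishing this inequality along the fixed-point curve, together with the sign of $\psi_p$.
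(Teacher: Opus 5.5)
Steps 1 and 2 are correct. One small correction: for fixed $y$ the relation $\Phi(p,y)=y^{1/d}$ is linear in $p$, not quadratic. You also correctly identify the real difficulty, namely that $\psi_p$ and $\psi_y$ have opposite signs along the fixed-point curve. However, Step 3 does not resolve it, and Step 3 is the entire content of the lemma.

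\textbf{The primary route is only described, not carried out.} The natural elimination $(1-p)y^*+b=(y^*+a)/z$, with $z=(y^*)^{1/d}$, gives $g'(y^*)=d\,(1-(1-p)z)\,\frac{y^*}{y^*+a}$. In this form the second factor \emph{increases} along the curve, since $y^*$ increases and $a=(q-1)(1-p)$ decreases in $p$. So this form gives no monotonicity without substantial further work.

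\textbf{The fallback is false.} Take $d=2$, $q>2$, $p=\ps=q/(q+1)$, so that $y^*=(q-1)^2$. Differentiating $\log\psi(p,y)$ in $p$ at fixed $y$ and evaluating at this point gives $\partial_p\log\psi=(q+1)^2/(2q)>0$. So $\psi_p\le 0$ fails on the curve, and you cannot conclude from the signs of the two partial derivatives separately.

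\textbf{The missing idea} is an identity that separates the $p$-dependence from the $y^*$-dependence. Start from
\[
\Phi(p,y)-1=\frac{p(y-1)}{(1-p)y+p+(q-1)(1-p)}.
\]
At the fixed point this gives
\[
(1-p)y^*+p+(q-1)(1-p)=p\,\frac{z^d-1}{z-1},\qquad z=\Phi(p,y^*)=(y^*)^{1/d}.
\]
Substituting into $g'(y^*)=d\,\Phi(y^*)^{d-1}\Phi'(y^*)$ yields the factorization
\[
g'(y^*(p))=\frac{q(1-p)+p}{p}\cdot\frac{d\,z^{d-1}(z-1)^2}{(z^d-1)^2}.
\]
Each factor is then easy to handle:
\begin{itemize}
\item The first factor equals $q/p-(q-1)$, which is decreasing in $p$.
\item The second factor is $d\,z^{d-1}/S(z)^2$ with $S(z)=\sum_{k=0}^{d-1}z^k$. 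It is decreasing in $z>1$, because $(d-1)S(z)-2zS'(z)=\sum_{k}(d-1-2k)z^k<0$ after pairing the $k$-th and $(d-1-k)$-th terms.
\item $z$ increases with $p$, by your Step 2.
\end{itemize}
Hence $g'(y^*(p))$ is a product of two positive non-increasing functions of $p$, which is the paper's argument. Without this factorization, or an equivalent explicit one-variable computation along the curve, the monotonicity of $p\mapsto g'(y^*(p))$ remains unproved.
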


We are ready to prove Theorem~\ref{thm: sharper bound}.

\begin{proof}[Proof of Theorem~\ref{thm: sharper bound}]
    For the regime $p > \ps$, Lemma \ref{lemma: worst case g'} implies that $g'(y^*)$ is decreasing in $p$. Thus, by the continuity of $g'$, it is enough to show that $g'(y^*) < \frac{1}{d^a}$ at $p = \ps$.
    At the fixed point $y^*$, we have
    \[
    g'(y^*) = d y^*{^{\frac{d-1}{d}}}\Phi'(y^*).
    \] 
    Setting $p = \ps = \frac{q}{d+q-1}$, a direct computation gives
    $
        \Phi'(y) = \frac{d q^2}{[(1-y) + d(y+q-1)]^2}
    $
    and thus it suffices to show:
    \begin{equation}
        g'(y^*) = d y^{*{\frac{d-1}{d}}}\frac{d q^2}{((1-y^*) + d(y^*+q-1))^2}  < \frac{1}{d^a}
        \label{ineq:y^*}
    \end{equation}

\begin{figure}[t]
    \centering

    \begin{subfigure}[t]{0.48\linewidth}
        \centering
        \begin{overpic}[width=\linewidth]{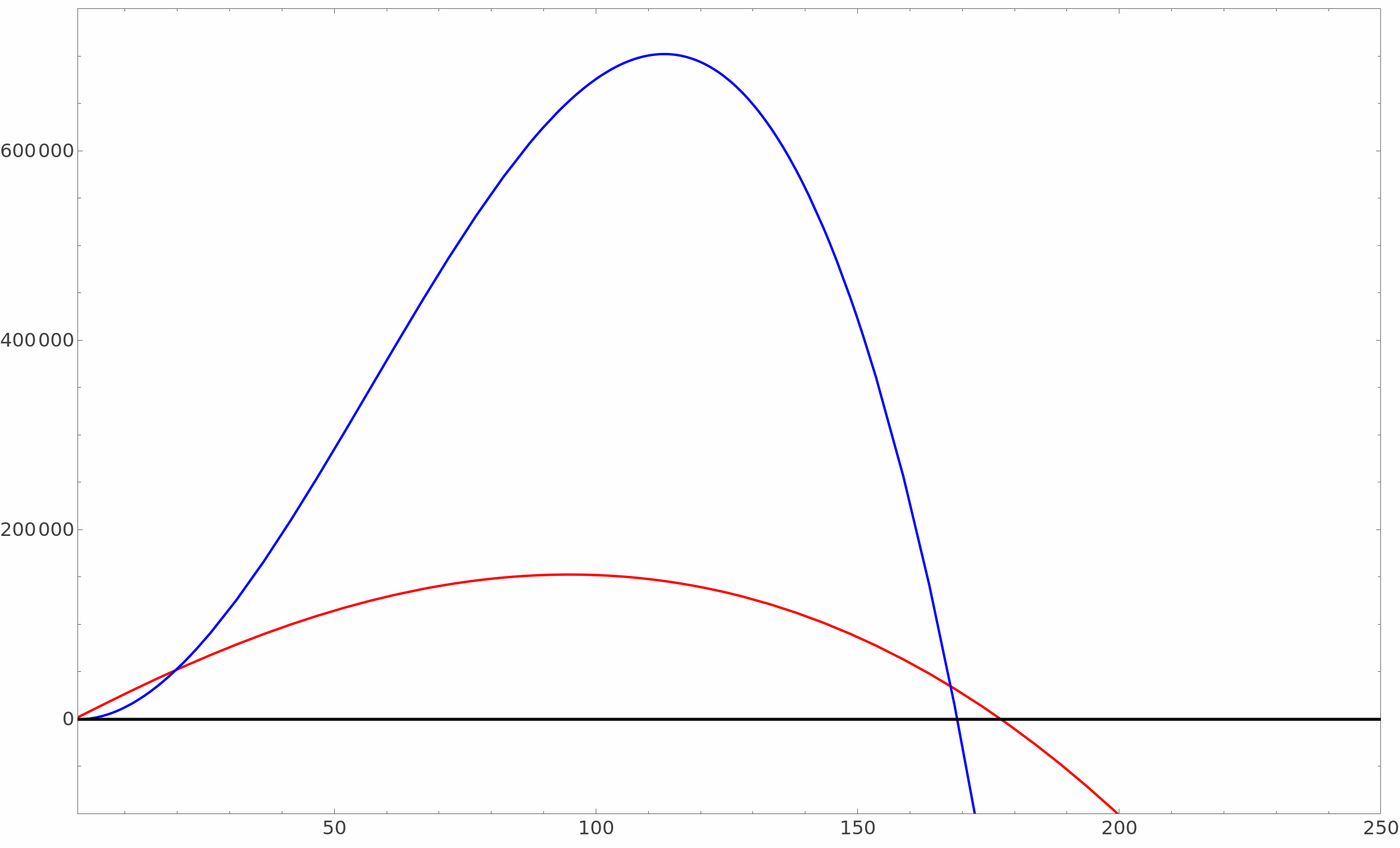}
            \put(50,20){\makebox(0,0)[l]{{\tiny{\color{red}$P(x)$}}}}
            \put(52,42){\makebox(0,0)[l]{{\tiny{\color{blue}$Q(x)$}}}}
        \end{overpic}
        \caption{$q < C_0(a) \log d$}
    \end{subfigure}
    \hfill
    \begin{subfigure}[t]{0.48\linewidth}
        \centering
        \begin{overpic}[width=\linewidth]{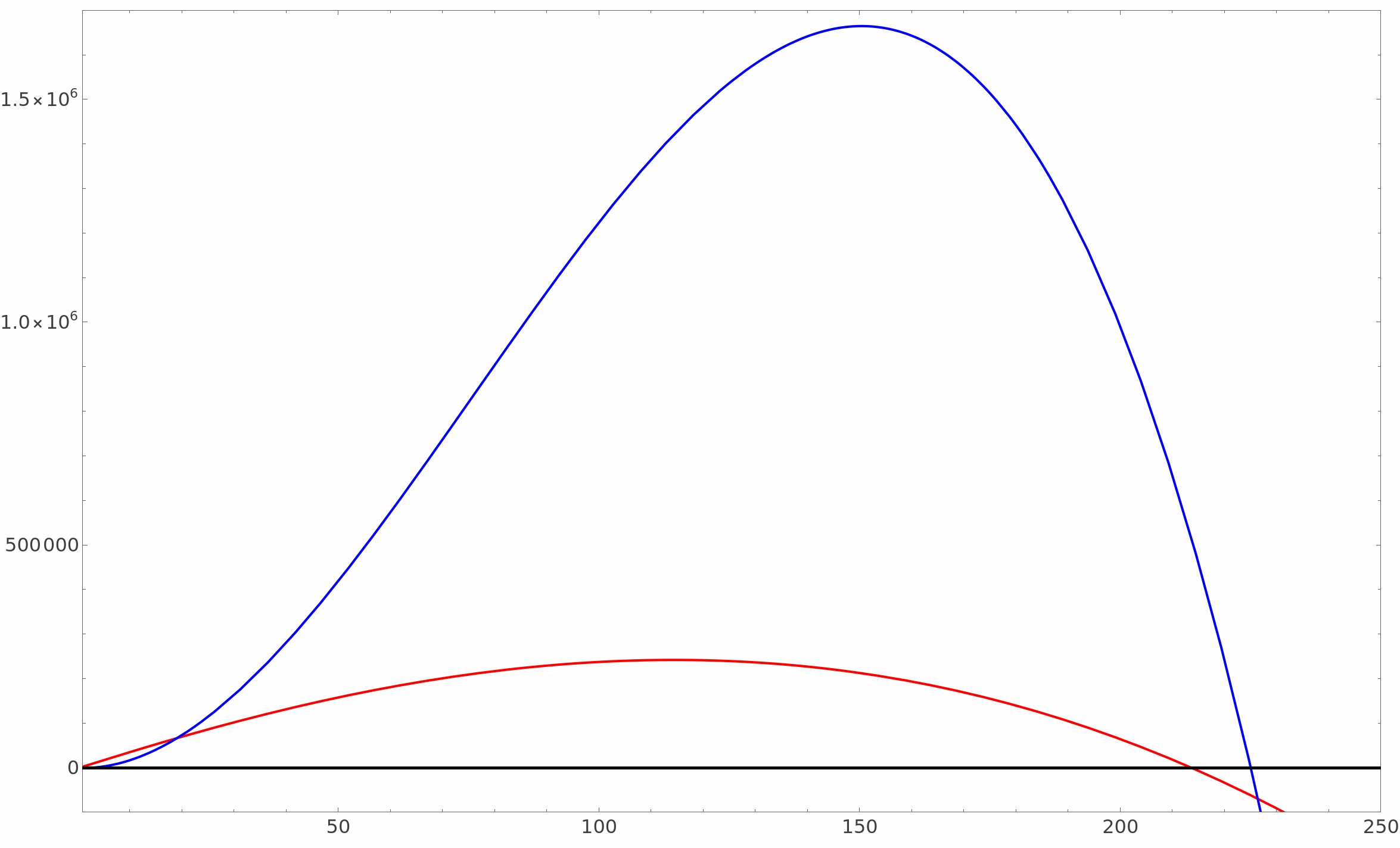}
            \put(54,15){\makebox(0,0)[l]{{\tiny{\color{red}$P(x)$}}}}
            \put(71,39){\makebox(0,0)[l]{{\tiny{\color{blue}$Q(x)$}}}}
        \end{overpic}
        \caption{$ q \geq C_0(a) \log d$}
    \end{subfigure}
    
    \caption{Plots of the function $P(x)$ and $Q(x)$ with the locations of their roots when $q < C_0(a) \log d$ vs. when $q \geq C_0(a) \log d$.}
    \label{fig:graph PQ}
\end{figure}

    We first prove the claim for $d \ge 3$. 
    We define the auxiliary functions:
    $A(y) = (1-y) + d(y+q-1)$, $B(y) = (d-1)(q-1) + (d+q-1)y$, $P(y) = d^{a+2}q^2y - y^{1/d}A(y)^2$ and $Q(y) = B(y)^d - yA(y)^d$.
    At $p = \ps$, we have
    \begin{equation}
        g(y^*) = \left(\frac{y^* + (q-1)(1-\ps)}{(1-\ps)y^* + \ps + (q-1)(1-\ps)} \right)^d = \left(\frac{B(y^*)}{A(y^*)}\right)^d = y^*.
        \label{eq:fixed-point y^*}
    \end{equation}
    Observe that $Q(y^*) = 0$ from \eqref{eq:fixed-point y^*}, and the inequality \eqref{ineq:y^*} is equivalent to $P(y^*) < 0$.
    To establish this we show that both $P$ and $Q$ have unique roots $(1,+\infty)$, respectively, and that 
    they are both positive in the interval between $1$ and their unique root and negative after that.
    The result follows by noting that the root if $Q$ is larger than that of $P$ when $q \geq 3^{\frac{a+2}{2}} d$; see Figure~\ref{fig:graph PQ}.
    These facts are formally stated as follows.
    
    \begin{fact}\label{fact: one sol P and Q}
    For $d \geq 3$ and $q > 2$, the functions $P$ and $Q$ each have a unique root $r_P, r_Q \in (1,\infty)$, respectively, and $r_Q = y^*$. Moreover,
    $
    P(x)>0$ for  $x\in(1,r_P)$, $P(x)<0$ for $x\in(r_P,\infty)$, and
    $Q(x)>0$ for $x\in(1,r_Q)$, $Q(x)<0$ for $x\in(r_Q,\infty)$.
    \end{fact}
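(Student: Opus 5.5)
The plan is to treat $P$ and $Q$ separately. For $P$ I would use a concavity argument. For $Q$ I would reduce its sign to the sign of $g(x)-x$ and then invoke Theorem~\ref{thm:fixed points:appendix}.

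\emph{The function $P$.} Since $y^{1/d}>0$ on $(1,\infty)$, I would factor
\[
P(y)=y^{1/d}\,R(y), \qquad R(y):=d^{a+2}q^2\,y^{(d-1)/d}-A(y)^2 .
\]
Here $A(y)=(d-1)y+d(q-1)+1$ is affine with positive slope. The map $y\mapsto y^{(d-1)/d}$ is concave and $-A(y)^2$ is concave, so $R$ is concave on $[1,\infty)$. At $y=1$ we have $A(1)=dq$, hence $R(1)=d^{a+2}q^2-d^2q^2>0$ because $a\ge 1$ and $d\ge 3$. As $y\to\infty$, $A(y)^2$ grows quadratically while $y^{(d-1)/d}$ grows sublinearly, so $R(y)\to-\infty$. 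A concave function that is positive at $1$ and tends to $-\infty$ has exactly one root $r_P\in(1,\infty)$. It is positive on $(1,r_P)$, since it lies above the chord from $(1,R(1))$ to $(r_P,0)$. It is negative on $(r_P,\infty)$, since a concave function cannot return to zero after crossing it downward. The sign pattern transfers to $P$.

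\emph{The function $Q$.} First, $A(1)=B(1)=dq$, so $Q(1)=0$; this root is excluded because we work on the open interval. For $x>1$ we have $A(x)>0$, so
\[
Q(x)>0 \iff \Big(\tfrac{B(x)}{A(x)}\Big)^d>x .
\]
By \eqref{eq:fixed-point y^*}, $(B/A)^d$ is exactly $g$ at $p=\ps$. Hence the sign of $Q$ on $(1,\infty)$ is the sign of $g(x)-x$. By Theorem~\ref{thm:fixed points:appendix}(iv), for $q>2$ and $p=\ps$ the map $g$ has exactly two fixed points in $[1,\infty)$, namely $1$ and $y^*$. Therefore $y^*$ is the unique root $r_Q$ of $Q$ in $(1,\infty)$, and $g(x)-x$ has constant sign on each of $(1,y^*)$ and $(y^*,\infty)$. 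Since $g$ is bounded above (its value is at most $(1-p)^{-d}$), we get $g(x)-x<0$ for large $x$, and so $Q<0$ on $(y^*,\infty)$.

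\emph{Main obstacle: positivity of $Q$ on $(1,y^*)$.} At $p=\ps$ one computes $\Phi'(1)=1/d$, so $g'(1)=1$ and the fixed point at $1$ is tangent; a first-order argument cannot decide the sign. I would expand to second order: $g(1+t)=1+t+\tfrac12 g''(1)t^2+O(t^3)$, and show that $g''(1)>0$ exactly when $q>2$. Here
\[
g''(1)=d(d-1)\Phi'(1)^2+d\,\Phi''(1), \qquad \Phi''(1)=-\frac{2(d-1)}{d^2q},
\]
the latter obtained by differentiating $\Phi'(y)=dq^2/A(y)^2$. This gives $g''(1)=\tfrac{d-1}{d}\big(1-\tfrac{2}{q}\big)>0$ for $q>2$. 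Then $g(x)>x$ just to the right of $1$, and by constancy of sign $Q>0$ on all of $(1,y^*)$.

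If this second-derivative computation turned out to be messy, my fallback would be a monotonicity-in-$p$ argument. For $p$ slightly above $\ps$, Lemma~\ref{lemma:fp:main} gives that $g>x$ on $(1,y^*(p))$. One would then pass to the limit $p\downarrow\ps$, using that the roots are simple so that the limiting $y^*$ does not collapse to $1$.
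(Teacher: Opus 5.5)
Your proposal is correct and follows essentially the same route as the paper. For $P$, concavity of $R(y)=d^{a+2}q^2y^{(d-1)/d}-A(y)^2$ is the paper's concave-versus-convex comparison of $x^{1-1/d}$ and $A(x)^2/(d^{a+2}q^2)$, and you make $P(1)>0$ explicit. For $Q$, you likewise identify its roots with the fixed points of $g$ at $p=\ps$ and invoke Theorem~\ref{thm:fixed points:appendix}. The only difference is that you derive the sign of $Q$ on $(1,y^*)$ directly from $g''(1)=\frac{d-1}{d}\bigl(1-\frac{2}{q}\bigr)>0$. The paper instead quotes that sign pattern from its proof of part (iv), and that proof rests on the same fact, namely that $h''(1)>0$ exactly when $q>2$.
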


    \begin{fact} \label{fact: q >= 9d}
    For any integers $d \geq 3$ and any real number $a \geq 1$, 
    $r_P < r_Q$ when $q \geq 3^{\frac{a+2}{2}} d$.
    \end{fact}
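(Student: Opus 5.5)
The plan is to reduce the comparison of roots to evaluating the two functions at a single test point. By Fact~\ref{fact: one sol P and Q}, $P$ is positive on $(1,r_P)$ and negative on $(r_P,\infty)$, and $Q$ has the same sign pattern around $r_Q$. So it suffices to exhibit one $x_0>1$ with $P(x_0)\le 0$ and $Q(x_0)>0$: the first gives $r_P\le x_0$ and the second gives $x_0<r_Q$. I will choose $x_0$ to be essentially the smallest point where a crude lower bound on $A$ already forces $P\le 0$, and then verify $Q(x_0)>0$ using $q\ge 3^{\frac{a+2}{2}}d$.

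\emph{Step 1 (locating $r_P$).} Since $A(x)=(d-1)x+d(q-1)>(d-1)x$, we have $x^{1/d}A(x)^2>(d-1)^2x^{2+1/d}$. Hence $P(x)<0$ as soon as $x^{1+1/d}\ge d^{a+2}q^2/(d-1)^2$. Set
\[
x_0:=\Big(\frac{d^{a+2}q^2}{(d-1)^2}\Big)^{\frac{d}{d+1}},
\]
which is clearly larger than $1$. This gives $P(x_0)<0$, so $r_P<x_0$. I deliberately keep the exponent $d/(d+1)$ rather than the cruder $x_0=d^{a+2}q^2/(d-1)^2$. The crude choice fails already at $d=3$, $a=1$, because the constant $3^{\frac{a+2}{2}}$ leaves little room.

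\emph{Step 2 ($Q(x_0)>0$).} The condition $Q(x_0)>0$ is equivalent to $(B(x_0)/A(x_0))^d>x_0$. I lower bound the ratio by
\[
\frac{B(x_0)}{A(x_0)}\ \ge\ \frac{(d+q-1)x_0}{(d-1)x_0+d(q-1)}\ =\ \frac{d+q-1}{d-1}\cdot\frac{1}{1+\epsilon},\qquad \epsilon:=\frac{d(q-1)}{(d-1)x_0}.
\]
Here $\epsilon$ is small: $x_0\gtrsim (d^{a}q^2)^{3/4}$ while $d(q-1)/(d-1)=O(q)$, so $\epsilon=O(q^{-1/2})$, which is small because $q\ge 3^{3/2}\cdot 3$. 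It then suffices to prove $\big(\frac{q}{d-1}\big)^d(1+\epsilon)^{-d}>x_0$. Taking logarithms and multiplying by $(d+1)/d$, this reduces to
\[
q^{d-1}\ >\ d^{a+2}(d-1)^{d-1}\,(1+\epsilon)^{d+1},
\]
where I will also retain the neglected $+(d-1)$ in $d+q-1$ as extra slack. Substituting $q\ge 3^{\frac{a+2}{2}}d$, the main terms compare as
\[
3^{\frac{(a+2)(d-1)}{2}}\,d^{d-1}\quad\text{versus}\quad d^{a+2}(d-1)^{d-1}.
\]
The factor $(d/(d-1))^{d-1}$ on the left, together with $3^{(d-1)/2}\ge d$ for $d\ge3$, gives $3^{\frac{(a+2)(d-1)}{2}}\ge d^{a+2}$. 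So the left side wins, with a margin of at least $(d/(d-1))^{d-1}\ge 9/4$.

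\emph{Main obstacle.} The hard part is the bookkeeping in Step 2 at the extremal case $d=3$. There $3^{(d-1)/2}=d$ holds with equality, so the strict inequality must come entirely from the slack factor $(d/(d-1))^{d-1}\ge 9/4$ and from the dropped $+(d-1)$ term in $B$. I must check that this slack beats the loss $(1+\epsilon)^{d+1}$ uniformly in $a\ge1$. For this I would bound $\epsilon$ explicitly using $q\ge 3^{3/2}d$, which gives $(1+\epsilon)^{d+1}$ well below $9/4$ for $d=3$. For larger $d$, the exponential gap $3^{(d-1)/2}/d$ grows and absorbs everything, though I would also verify that $(1+\epsilon)^{d+1}$ stays bounded as $d$ grows. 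If the margin turns out too thin, the fallback is to choose $x_0$ slightly larger, so that $P(x_0)<0$ still holds with room to spare while $\epsilon$ shrinks.
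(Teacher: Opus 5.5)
Your proposal is correct, and it takes a slightly different route from the paper in Step 2.

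\textbf{Step 1.} Your test point $x_0$ is exactly the paper's $U$, and your argument for $r_P<x_0$ is the paper's verbatim.

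\textbf{Step 2.} Both you and the paper use $B(x)>(d+q-1)x$ to reduce $Q(U)>0$ to
\[
(d+q-1)\,U^{(d-1)/d}>A(U).
\]
You then split differently.
\begin{itemize}
\item \emph{The paper} keeps the $(d-1)$ inside $d+q-1$ and splits termwise. The inequality $qU^{(d-1)/d}>(d-1)U$ is equivalent to $q>(d-1)d^{(a+2)/(d-1)}$. This rests on the same fact $d\le 3^{(d-1)/2}$ that you use. The inequality $(d-1)U^{(d-1)/d}>d(q-1)+1$ holds because $(d-1)U^{(d-1)/d}\ge d^{\frac a2+1}q>dq$. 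So the constant part of $A$ is paid for exactly, with no error term and no numerical margin to check at $d=3$.
\item \emph{You} discard the $(d-1)$ and pay for the constant part of $A$ multiplicatively. The cost is the factor $(1+\epsilon)^{d+1}$, which must fit inside the slack $(d/(d-1))^{d-1}\ge 9/4$.
\end{itemize}
Your version packs everything into the single inequality $q^{d-1}>d^{a+2}(d-1)^{d-1}(1+\epsilon)^{d+1}$. This shows clearly how much room the threshold $3^{\frac{a+2}{2}}d$ leaves. The price is a quantitative bound on $\epsilon$, which you deferred.

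\textbf{Closing the deferred check.} It does close, but note that $\epsilon=O(q^{-1/2})$ alone is not uniform in $d$: under $q\ge 3^{3/2}d$ it only gives $(d+1)\epsilon=O(\sqrt d)$. Keep the $d$-dependence instead. Use
\begin{itemize}
\item $A(x)=(d-1)x+d(q-1)+1$ (you dropped the $+1$; this is harmless, but $\epsilon$ should include it), together with $d(q-1)+1\le dq$;
\item $x_0\ge \bigl(d^{a+2}q^2/(d-1)^2\bigr)^{3/4}$, valid since $d/(d+1)\ge 3/4$;
\item $a\ge 1$ and $q\ge 3^{3/2}d$.
\end{itemize}
These give
\[
\epsilon\le\frac{dq}{(d-1)x_0}\le\frac{d(d-1)^{1/2}}{d^{3(a+2)/4}q^{1/2}}\le\frac{1}{d^{3/4}q^{1/2}}\le\frac{1}{3^{3/4}d^{5/4}} .
\]
Hence $(d+1)\epsilon\le 4/9$ for all $d\ge 3$, with equality in the bound at $d=3$. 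Therefore $(1+\epsilon)^{d+1}\le e^{4/9}<9/4\le (d/(d-1))^{d-1}$, uniformly in $d\ge3$ and $a\ge1$. This completes your Step 2.
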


    \begin{fact}\label{fact: q >= Clogd}
       For any integers $d \geq 3$ and any real number $a \geq 1$,  there exists a constant $C_0(a) > 0$ such that
    $r_P < r_Q$ when $C_0 \log d \leq q \leq 3^{\frac{a+2}{2}} d$.
    \end{fact}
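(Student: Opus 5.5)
Since the two previous facts already give the sign pattern of $P$ and $Q$ on $(1,\infty)$ (Fact~\ref{fact: one sol P and Q}), I will not compare $r_P$ and $r_Q$ directly. Instead I will exhibit one explicit test point $x_0>1$ with $P(x_0)<0$ and $Q(x_0)>0$. The sign pattern then forces $x_0>r_P$ and $x_0<r_Q$, so $r_P<x_0<r_Q$. I will take $x_0 := K d^{a} q^2$, where $K$ is a numerical constant (say $K=9$) fixed at the end. Since $q>2$, we have $x_0\ge dq$, so I may assume $y\ge dq$ throughout.

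\emph{Step 1: $P(x_0)<0$.} Write $A(y)=(d-1)y+d(q-1)$. Then $A(y)\ge (d-1)y$, and
\[
y^{1/d}A(y)^2\ge (d-1)^2y^{2}\ge \tfrac{4}{9}d^2y^2 \qquad\text{for } d\ge 3.
\]
Hence $P(y)<0$ as soon as $\tfrac49 d^2 y > d^{a+2}q^2$, that is, as soon as $y>\tfrac94 d^a q^2$. This holds at $x_0$ with $K=9$. This step is routine and uses no lower bound on $q$.

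\emph{Step 2: $Q(x_0)>0$.} This amounts to $(B/A)^d>x_0$. First, $B(y)-A(y)=qy-(q-1)\ge (q-1)y$. Second, for $y\ge dq$ we have $A(y)\le (d-1)y+dq\le dy$. Therefore
\[
\frac{B}{A}\ \ge\ 1+\frac{q-1}{d}\ \ge\ 1+\frac{q}{2d}.
\]
Using $\log(1+x)\ge x/(1+x)$ and the upper bound $q\le 3^{\frac{a+2}{2}}d$, the quantity $x=q/(2d)$ is at most a constant depending only on $a$. This gives
\[
(B/A)^d\ \ge\ \exp\bigl(c_a q\bigr),\qquad c_a:=\tfrac12\bigl(1+\tfrac12 3^{\frac{a+2}{2}}\bigr)^{-1}.
\]
It therefore suffices to have $c_a q>\log K + a\log d+2\log q$.

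This last inequality is the only point where $q\ge C_0\log d$ enters. It is also where the upper restriction $q\le 3^{(a+2)/2}d$ is essential: without it, $B/A$ is not comparable to $1+q/d$ with an $a$-dependent constant. I expect this step to be the main obstacle, in the sense that all constants must be tracked so that $C_0$ depends on $a$ alone.

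To conclude, I use $2\log q\le \tfrac{c_a}{2}q + O_a(1)$. Then choosing $C_0(a)$ large enough that $\tfrac{c_a}{2}C_0\log d\ge a\log d+\log K+O_a(1)$ for all $d\ge3$ gives the inequality for every $q\ge C_0\log d$. Combining Steps 1 and 2 with Fact~\ref{fact: one sol P and Q} yields $r_P<x_0<r_Q$.
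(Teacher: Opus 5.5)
Your proof is correct. Its skeleton matches the paper's: show $r_P\lesssim d^aq^2$, show $Q$ stays positive up to a point exponential in $q$ by linearizing $\log(1+u)$ on the bounded range permitted by $q\le 3^{\frac{a+2}{2}}d$, and use $q\ge C_0\log d$ so the exponential wins. Your Step 1 is essentially the paper's bound $r_P<U<\tfrac94 d^aq^2$. The difference is in the estimate on $Q$, and it makes your route shorter.

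The paper evaluates $Q$ at $e^{\alpha q}$, using $B(x)>(d+q-1)x$ and $A(x)<(d-1)x(1+t)$ with $t=\tfrac{dq}{(d-1)e^{\alpha q}}$. This forces it to control the correction factor $e^{dt}$, which needs $q\ge C'\log d$. It then turns the $r_P$ bound into the $q$-free bound $\tfrac94 C_1^2 d^{a+2}$, using $q\le C_1 d$ a second time. Only then can it compare with $e^{\alpha q}$, which needs $q\ge C''\log d$.

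You instead test both functions at the single point $x_0=9d^aq^2$. You absorb the additive term in $A$ through $A(y)\le dy$ for $y\ge dq$, which gives the cruder but uniform bound $B/A\ge 1+\tfrac{q-1}{d}$ with no correction term. The only price is a $2\log q$ term, which you absorb, so the lower bound on $q$ enters exactly once. The paper's route keeps the sharper base $1+\tfrac{q}{d-1}$ and states $r_Q>e^{\alpha q}$ on its own. Your uniform bound gives the same kind of statement anyway: it shows $Q>0$ on $[dq,e^{c_aq})$.

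Two small corrections, neither affecting validity:
\begin{itemize}
\item Actually $A(y)=(d-1)y+d(q-1)+1$, so $B-A=q(y-1)$ rather than $qy-(q-1)$. Your inequalities $B-A\ge (q-1)y$ and $A\le dy$ still hold because $y\ge dq\ge q$.
\item The upper restriction on $q$ is not what makes $B/A\ge 1+\tfrac{q-1}{d}$ hold; that bound holds regardless. It is needed only for $d\log\bigl(1+\tfrac{q}{2d}\bigr)\ge c_a q$.
\end{itemize}
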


    By Fact \ref{fact: one sol P and Q}, both $P$ and $Q$ have unique roots $r_P$ and $r_Q = y^*$ on $(1, \infty)$, respectively. Furthermore, $r_P < y^* = r_Q$ implies $P(y^*) < 0$. 
    Facts \ref{fact: q >= 9d} and \ref{fact: q >= Clogd} imply that for all $d \ge 3$ and $a \ge 1$, there exists $C_0(a) > 0$ such that $r_P < r_Q$ when $ q \geq C_0(a) \log d > 2.$ This proves \eqref{ineq:y^*}, when $d \ge 3$.

    We now assume that $d = 2$. In this case, $A(y)=2q+y-1, B(y)=(q-1)+(q+1)y.$ At $p=\ps$, the fixed point $y^*$ 
    \[
        \left(\frac{B(y^*)}{A(y^*)}\right)^2 = \left(\frac{(q-1)+(q+1)y^*}{2q + y^* - 1}\right)^2=y^*.
    \]
    Let $t=(y^*)^{1/2}>1$. Then we have
    $
        \frac{(q-1)+(q+1)t^2}{2q+t^2-1}=t,
    $
    which is equivalent to
    \[
        t^3-(q+1)t^2+(2q-1)t-(q-1) = (t-1)^2(t-(q-1))=0.
    \]
    Hence, $y^*=(q-1)^2$, 
    and the inequality $g'(y^*) < 2^{-a}$ in \eqref{ineq:y^*} is equivalent to
    \[
    \frac{2^{a+2}q^2(q-1)}{(2q+(q-1)^2-1)^2} = \frac{2^{a+2}q^2(q-1)}{q^4} = \frac{2^{a+2}(q-1)}{q^2} < 1,
    \]
    which holds whenever $q > 2^{a+2}.$
\end{proof}

The proofs of Facts~\ref{fact: one sol P and Q},~\ref{fact: q >= 9d}, and~\ref{fact: q >= Clogd} are deferred to Appendix~\ref{appendix:stronger-decay}, where they are established by elementary calculus arguments.

\subsection{Mixing time on treelike graphs with $(\theta,Q)$-wired boundary}
\label{sec:mixing on treelike graphs}

Our goal in this section is to establish Lemma~\ref{lem:thetaQ-wired-implies-P2}. For this, let us define a class of boundary conditions associated with the graphs in the class $\mathcal{G}(h)$. Fix an integer $Q\ge 0$, and let $G\in \mathcal{G}(h)$. If $G$ is a tree, set $\mathcal{T}=G$; otherwise, let $\mathcal{T}$ denote the BFS tree of $G$. In both cases, let $\rho$ denote the root of $\mathcal{T}$. All subtrees will be defined with respect to the tree $\mathcal T$. For a vertex subset $A$, we use $G[A]$ to denote the subgraph of $G$ induced by $A$. In this way, when considering a subtree $T$ that contains a non-tree edge from $G$, then the edge is not included in $T$ but is included in $G[\mathcal T]$.

Recall also the definition of $v^*$ when $G$ is an almost-$\Delta$-regular tree. We define $\tilde{\mathcal{H}}(h,Q)$ to be the set of boundary conditions $\xi$ on $\partial \mathcal{T}$ (or on $\{v^*\}\cup \partial \mathcal{T}$, when applicable) satisfying the following properties:
\begin{enumerate}[(a)]
    \item The total number of vertices contained in non-singleton, non-giant components of $\partial \mathcal{T}$ is at most $Q$;
    \item If $G$ is an almost-$\Delta$-regular tree  then the vertex $v^*$ may be wired  to $\mathcal{C}_1(\xi)$.
    \item There exist constants
    $\beta\in (0,1)$ and $C>0$ such that for every vertex $v\in \mathcal{T}$ with 
    $h(v):=\dist(v,\partial\mathcal{T}) \ge  (\log_d {h^{1.1}})^2$, we have $|\mathcal C_1(\xi)\cap \partial \mathcal{T}_{v,h(v)}| = \Omega(|\partial \mathcal{T}_{v,h(v)}|)$ and the following bounds hold:
    \[
    \left|\mu^1_{G[\mathcal{T}_{v,h(v)}]}(v \sim \mathcal \partial \mathcal{T}_{v,h(v)}) - \mu^{\xi}_{G[\mathcal{T}_{v,h(v)}]}(v \sim \mathcal C_1(\xi)\cap \partial \mathcal{T}_{v,h(v)})\right|
     \leq C \beta ^{h(v)/3},
\]
and 
\[
    \left|\mu^{1,\circlearrowleft}_{G[\mathcal{T}_{v,h(v)}]}(v \sim \mathcal \partial \mathcal{T}_{v,h(v)}) - \mu^{\xi, \circlearrowleft}_{G[\mathcal{T}_{v,h(v)}]}(v \sim \mathcal C_1(\xi)\cap \partial \mathcal{T}_{v,h(v)})\right|
     \leq C \beta ^{h(v)/3}.
\]
\end{enumerate}
For this broader class of boundary conditions, we prove the following mixing time bound.
\begin{theorem}\label{thm:mixing unicyclic}
    Fix $\Delta\ge 3, q>1$ and let $Q\ge 0$ be any integer.
    For $p>\ps$, the random-cluster dynamics on any $n$-vertex graph $G\in \mathcal{G}(h)$ with boundary condition from $\tilde{\mathcal{H}}(h, Q)$ has mixing time
    $n\cdot (\log n)^{O(\log \log n)}$.
\end{theorem}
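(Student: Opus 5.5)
Our plan is to rerun the recursive scheme of Section~\ref{sec: fast mixing} with the class $\mathcal F(h)$ replaced by $\tilde{\mathcal H}(h,Q)$ and the complete tree replaced by the graphs in $\mathcal G(h)$. Let $\tau'_\mix(h)$ be the maximum mixing time over all $G'\in\mathcal G(h)$ and boundary conditions in $\tilde{\mathcal H}(h,Q)$, with and without the root wired to $\mathcal C_1$. The goal is the analogue of Lemma~\ref{lem: mixing recursion}:
\[
\tau'_\mix(h)\le M\log|G|\cdot\max\Big\{\tau'_\mix(h_0+h_1)\tfrac{|G|}{|B_0|},\; d^{h_0}h^4\max\{\tau'_\mix(h_1+h_2),\Th(h_1+h_2)\}\Big\}.
\]
Iterating this exactly as in the proof of Theorem~\ref{thm:tree-mixing} gives $n(\log n)^{O(\log\log n)}$. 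For the base case at height $O((\log\log n)^2)$ we use the canonical paths bound $\exp(O(h))$, which applies to arbitrary boundary conditions. The same blocks are used: $B_0$ is the top $h_0+h_1$ levels of $\mathcal T$ together with the non-tree edge if both its endpoints lie there, and $B_1$ is $G[\cdot]$ restricted to the bottom $h_1+h_2$ levels.

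\textbf{Step 1 (two-block dynamics couples in $O(1)$ steps).} This is the analogue of Lemma~\ref{lem: coupling time 2 block dynamics}. After $B_1$ is updated, we claim the boundary induced on $\partial B_0$ is $(\theta,Q')$-wired for a bounded $Q'$. Each subtree of $B_1$ is a $d$-ary tree of height $h_2$, apart from $O(1)$ defective ones: the at most one cycle, the vertex $v^*$, and the $\le Q$ leaves in non-giant components. By property (c) of $\tilde{\mathcal H}$ and the argument around \eqref{eq: use WSM for theta-wired}, each good subtree root connects to $\mathcal C_1(\xi)$ with probability at least $\theta$, independently across subtrees. Declaring the roots of defective subtrees part of the exceptional set $U$ gives the $(\theta,Q')$-wired structure. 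Theorem~\ref{thm wsm theta,Q wired}, applied through a union bound as in Corollary~\ref{cor:theta-wired-bc}, then shows that with probability $1-\exp(-h^{1+\Omega(1)})$ the induced boundary lies in $\tilde{\mathcal H}(h_0+h_1,Q')$. This is why property (c) uses the weakened rate $\beta^{h(v)/3}$: it absorbs the $\dist/2$ loss in \eqref{eq:wsm:general}. The edge version of Theorem~\ref{thm wsm theta,Q wired} then couples the edges of the top $h_0$ levels with error $d^{\delta h}\beta^{(1-2\delta)h/2}$, which is small for small $\delta$. One further update of $B_1$ finishes the coupling.

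\textbf{Step 2 (simulation).} We apply Lemma~\ref{lem: simulate systematic dynamics} with $\Gamma_0$ equal to the good set from Step~1 and $\Gamma_1$ equal to everything, taking $\varepsilon'=|G|^{-2}$ and $N=\Theta(\log|G|)$. This reduces the problem to bounding the mixing times on $B_0$ and on $B_1$ under the induced boundaries. $B_0$ with a $\Gamma_0$ boundary lies in $\mathcal G(h_0+h_1)$ with a boundary in $\tilde{\mathcal H}$, so its mixing time is at most $\tau'_\mix(h_0+h_1)$.

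\textbf{Step 3 ($B_1$).} As in \eqref{eq: component disjointness}, $B_1$ splits into components $\kappa$. Each consists of $d_\kappa$ trees with wired roots, and at most one component contains the cycle or the defective vertex. Lemma~\ref{lem: simulating BD 3} handles the product over components. For a component made of complete trees, Lemma~\ref{lem: 3rd bound kappa} applies verbatim. For the single component containing a defective piece, we rerun the censoring argument of Lemma~\ref{lem: 3rd bound kappa}, choosing $r_1$ to be a \emph{non-defective} tree, which is possible whenever $d_\kappa\ge2$. The root-to-boundary connection in $\mathcal T_{r_1}$ decouples the remaining trees into independent trees with wired root, and the defective one among them lies in $\mathcal G(h_1+h_2)$, so its mixing time is at most $\tau'_\mix$. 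If $d_\kappa=1$, the component is itself a graph in $\mathcal G(h_1+h_2)$.

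The point needing the most care, and the main obstacle we expect, is that the unique cycle may straddle the two blocks. Its non-tree edge can join a vertex in $B_0\setminus B_1$ to one in $B_1$, or join two different subtrees of $B_1$. In the latter case those subtrees are additionally wired through the cycle edge. We would place the non-tree edge in whichever block contains both endpoints, and otherwise in $B_1$. A non-tree edge joining two subtrees of $B_1$ is then treated as a component of two trees joined by a single edge, and we expect the censoring argument of Lemma~\ref{lem: 3rd bound kappa} to extend to it, paying at most an extra $O(1)$ factor. If the cycle lies entirely within the top $h_0$ levels, Case~1 of Theorem~\ref{thm wsm theta,Q wired} still gives the needed edge decay. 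Finally, one must check that the $O(1)$ defect counts $Q'$ stay bounded along the $O(\log h)$ iterations and do not accumulate. This should hold because at each level only the subtrees containing one of the original $O(1)$ defects inherit them.
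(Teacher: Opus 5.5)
Your route is the paper's: a two-block systematic scan with $B_0$ the top $h_0+h_1$ levels, a $(\theta,Q')$-wired boundary on $\partial B_0$ fed into Theorem~\ref{thm wsm theta,Q wired} and Corollary~\ref{cor:theta-Q-wired-bc}, simulation via Lemma~\ref{lem: simulate systematic dynamics}, censoring on the components of $B_1$, and a crude base case. But as set up, the recursion does not close, for two concrete reasons.

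First, you take $\tau'_{\mix}(h)$ over $\tilde{\mathcal H}(h,Q)$, whose condition~(c) is imposed only for $h(v)\ge(\log_d h^{1.1})^2$. That threshold shrinks with $h$. This is harmless for $B_0$, whose boundary is freshly generated in Step~1. In Step~3, however, the height-$(h_1+h_2)$ pieces of $B_1$ carry the boundary inherited from $\xi$, which is controlled only above the old threshold. That boundary need not lie in $\tilde{\mathcal H}(h_1+h_2,Q)$, so bounding those pieces (including your $d_\kappa=1$ case) by $\tau'_{\mix}(h_1+h_2)$ is unjustified. The paper fixes the top height $h^*$ and $r^*=(\log_d (h^*)^{1.1})^2$, and recurses over the hereditary class $\mathcal H(h,Q)$, where (c) is required only for $h(v)\ge r^*$. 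This is exactly how $\mathcal F(h)$ replaces $\tilde{\mathcal F}(h)$ in Section~\ref{sec: fast mixing}. Your Step~1 boundaries still qualify, since $\tilde{\mathcal H}(h_0+h_1,\cdot)\subseteq\mathcal H(h_0+h_1,\cdot)$.

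Likewise, Step~1 hands $B_0$ a budget $Q'>Q$, so the $\tau'_{\mix}(h_0+h_1)$ on the right is not the quantity on the left. Your inheritance heuristic is the right idea, but a budget conserved under both moves (e.g.\ non-giant boundary vertices plus twice the number of non-tree edges) has to be built into the definition of $\tau'_{\mix}$. The paper instead fixes $Q\ge 2$ and recurses on $B_0$ through $\mathcal H(h_0+h_1,2)$.

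A minor point: the $\exp(O(h))$ canonical-path bound is not valid for arbitrary boundary conditions; the paper notes that multi-component ones can be exponentially slow. What you need is the bound for $Q$-approximate single-component boundaries on treelike graphs, which is Lemma~\ref{lem:treelike ball gap}.

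Second, Step~3 does not actually handle the case you flag, where the non-tree edge $e_\kappa=\{u,v\}$ joins two different subtrees $\mathcal T_{(u)},\mathcal T_{(v)}$ of $B_1$. Three things go wrong:
\begin{itemize}
\item After $\mathcal T_{r_1}$ connects, these two trees remain coupled through $e_\kappa$. The remainder is not a product, and the pair (two root-wired trees joined by $e_\kappa$) is not in $\mathcal G(h_1+h_2)$.
\item If $d_\kappa=2$ and the component consists of exactly these two trees, there is no non-defective $r_1$ to choose.
\item If their roots are not wired by $\eta$, then $e_\kappa$ merges two otherwise separate components.
\end{itemize}

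The missing idea is to put ``$e_\kappa$ closed'' into the burn-in success event and then keep $e_\kappa$ censored, so that the pieces are genuinely decoupled. A last update of $e_\kappa$ whose uniform exceeds $p$ closes it in all monotonically coupled chains, so this costs only a factor $1-p$. The two cases then go as follows.
\begin{itemize}
\item If the two roots are $\eta$-wired: also require $r_{(u)}\sim\mathcal C_1$ inside $\mathcal T_{(u)}$. Couple the rest while $\mathcal T_{(u)}\cup\{e_\kappa\}$ is frozen, then couple $\mathcal T_{(u)}$.
\item If they are not: require both roots to connect. Freeze $\mathcal T_{(u)}\cup\{e_\kappa\}\cup\mathcal T_{(v)}$ while the rest couples, then couple $\mathcal T_{(u)}$, then $\mathcal T_{(v)}\cup\{e_\kappa\}$.
\end{itemize}
This is the paper's type~(i)/(ii) twin-component argument in the proof of Lemma~\ref{lem: 3rd bound kappa unicyclic}. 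As you expected, it costs only constant factors.
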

Theorem~\ref{thm:mixing unicyclic} strictly generalizes Theorem~\ref{thm:tree-mixing}. 
Finally, we show that a typical $(\theta, Q)$-wired boundary condition  also falls inside $\tilde{\mathcal H}(h,Q)$.
\begin{corollary}
    \label{cor:theta-Q-wired-bc}
    Fix $\Delta \ge 3$, $q > 1$, $\theta \in (0,1)$, $Q\ge 0$ and $p > \ps$.
    Let $\mathsf{M}$ be a $(\theta, Q)$-wired distribution over boundary conditions on $\ \partial \mathcal{T}$, and let $\xi \sim \mathsf{M}$. Then $\xi \in \tilde{\mathcal{H}}(h, Q)$ with probability $1-\exp(-\Omega(h^{1.1}))$.
\end{corollary}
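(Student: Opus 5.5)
The plan mirrors the proof of Corollary~\ref{cor:theta-wired-bc}: check the three defining properties (a)--(c) of $\tilde{\mathcal H}(h,Q)$ one at a time, and take a union bound over the vertices of $\mathcal T$ at heights $h(v)\ge h_{\min}:=(\log_d h^{1.1})^2$.

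Properties (a) and (b) come directly from Definition~\ref{def:theta-Q-wired}. Outside the exceptional set $U$ with $|U|=Q$, the restriction of $\xi$ has at most one non-singleton component. Hence the only vertices that can lie in non-singleton, non-giant components are those of $U$, and there are at most $Q$ of them, provided the single wired component is in fact the largest one, $\mathcal C_1(\xi)$. Property (b) only permits extra wiring of $v^*$, so nothing needs checking there.

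For the size condition in (c), fix $v$ with $h(v)\ge h_{\min}$. The wired component restricted to $\partial\mathcal T_{v,h(v)}\setminus U$ stochastically dominates a $\mathrm{Bin}(|\partial\mathcal T_{v,h(v)}|-Q,\theta)$ variable, and $|\partial\mathcal T_{v,h(v)}|\ge (d-2)d^{h(v)-1}$, which is huge. A Chernoff bound then gives $|\mathcal C_1(\xi)\cap\partial\mathcal T_{v,h(v)}|\ge \theta|\partial\mathcal T_{v,h(v)}|/4$, failing with probability at most $\exp(-c\,d^{h(v)})$. Applied at the root, the same bound also shows that this component is the giant one, as needed for (a).

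For the two decay bounds in (c), apply Theorem~\ref{thm wsm theta,Q wired} to the rooted graph $G[\mathcal T_{v,h(v)}]$, using both the plain version and the version with the root wired ($\circlearrowleft$). Two points need checking.

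\emph{Class membership.} $G[\mathcal T_{v,h(v)}]$ should belong to $\mathcal G(h(v))$, at least up to the $O(1)$ perturbations that Lemma~\ref{lem: message contraction purturbed} tolerates. It is a $d$-ary tree, possibly containing the one degree-deficient vertex, and possibly containing the single non-tree edge when both endpoints lie below $v$.

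\emph{Induced boundary law.} The boundary law that $\xi$ induces on $\partial\mathcal T_{v,h(v)}$ should be $(\theta,Q)$-wired, because restricting to a subset of the leaves preserves both the product domination and the bound of $Q$ exceptional vertices.

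The theorem then gives the decay bound $C(\beta_*+o(1))^{h(v)/2}\le C\beta^{h(v)/3}$, with $\beta:=(\beta_*+o(1))^{1/2}$ after adjusting constants, and this holds with probability at least $1-e^{-c d^{\sqrt{h(v)}}}$. For $h(v)\ge h_{\min}$ we have $d^{\sqrt{h(v)}}\ge h^{1.1}$. Summing over the at most $d^{h-h(v)+1}$ vertices at each level, exactly as in Corollary~\ref{cor:theta-wired-bc}, gives a total failure probability of $\sum_x 4d^{h-x+1}e^{-c d^{\sqrt x}}\le e^{-\Omega(h^{1.1})}$.

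The main obstacle is applying Theorem~\ref{thm wsm theta,Q wired} uniformly over subtrees. When the cycle lies near the boundary of a subtree, we need the constants $C$ and $\gamma$ to be uniform, and the linear-in-$l$ Lipschitz factor from Lemma~\ref{lem: cycle gadget lipschitz} must be absorbable; the Case~1/Case~2 split in that proof already handles this, since $l\le h(v)$. A second point is that the event in (c) is phrased with $\mu^{1}$ on the perturbed graph, not on the pure tree. This is consistent with the theorem, which compares the wired and $\xi$ boundaries on the same graph $G$. Finally, I would also check that the constant in the theorem does not depend on which vertex plays the role of $v^*$.
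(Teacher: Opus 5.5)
Your proposal is correct and follows essentially the same route as the paper's proof. Condition (a) comes from the definition and (b) is vacuous. The size part of (c) follows from Chernoff, and the two decay bounds of (c) from Theorem~\ref{thm wsm theta,Q wired} applied to $G[\mathcal T_{v,h(v)}]$ with the induced $(\theta,Q)$-wired law. A level-by-level union bound using $d^{\sqrt{h(v)}}\ge h^{1.1}$ finishes it.

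One trivial slip: the lower bound on $|\partial\mathcal T_{v,h(v)}|$ should be stated as $\Omega(d^{h(v)})$ (e.g.\ $(d-1)d^{h(v)-1}$ with a single deficient vertex), since $(d-2)d^{h(v)-1}$ vanishes when $d=2$.
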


We can now provide the proof of Lemma~\ref{lem:thetaQ-wired-implies-P2}.

\begin{proof}[Proof of Lemma~\ref{lem:thetaQ-wired-implies-P2}]
The lemma follows immediately from Theorem~\ref{thm:mixing unicyclic} 
and Corollary~\ref{cor:theta-Q-wired-bc}, since $B(e)\in \mathcal{G}(R)$ under assumption~\textup{(G1)}, and $\exp(-\Omega(R^{1.1}))=\exp(-\Omega((\log_d n)^{1.1}))=O(n^{-10})$.
In addition, one can verify that the all-wired boundary condition on $B(e)$ belongs to $\tilde{\mathcal{H}}(h,0)$.
\end{proof}

\subsubsection{Proof of Theorem~\ref{thm:mixing unicyclic} and Corollary~\ref{cor:theta-Q-wired-bc}}
    As in the $d$-ary tree case, in the proof of Theorem~\ref{thm:mixing unicyclic} we will fix $h^*$ sufficiently large and set $r^*:=(\log_d {(h^*)^{1.1}})^2$.
Then, for each $h\in (r^*, h^*]$ we define a class of boundary conditions $\mathcal{H}(h,Q)$, which is the same as $\tilde{\mathcal{H}}(h,Q)$ except that condition~(c) is only required to hold for vertices in distance at least $r^*$ from $\partial\mathcal{T}$.
Since $r^* \ge (\log_d {h^{1.1}})^2$, we have  $\tilde{\mathcal{H}}(h,Q) \subseteq \mathcal{H}(h,Q)$.

For a fixed $Q\ge 0$, we perform a recursion on $h$ for the quantity $\Thh$ defined below, where the maximum is taken over graphs in $\mathcal{G}(h)$ and their compatible boundary conditions in $\mathcal{H}(h,Q)$:
\begin{equation}
    \label{eq: unicyclic recursion}
    \Thh := \max_{\substack{
            G\in \mathcal{G}(h)\\
            \xi\in \mathcal{H}(h,Q)
            }}
    \max\Bigl\{
        T_\mix(P^\xi_G),\ T_\mix(P_G^{\xi, \circlearrowleft})
    \Bigr\}.
\end{equation}
Since $\mathcal{H}(h,0)\subseteq \mathcal{H}(h,1)\subseteq \mathcal{H}(h,2)$, we may assume without loss of generality, when upper bounding $\Thh$, that $Q\ge 2$.

Since $G\in \mathcal{G}(h)$ is either a tree or a unicyclic graph, the number of edges in $G$ is in the same order as the number of edges (or vertices) in $\mathcal{T}_h$. Hence, we use $n_h=|\mathcal{T}_h|$ as an approximate for the size of graphs in $\mathcal{G}(h)$.
Also, recall that a decomposition $(h_0,h_1,h_2)$ of $h$ satisfies $h=h_0+h_1+h_2$ and $h_0=h_2=\delta h$, where $\delta\in(0,1/4)$ is a sufficiently small constant; we implicitly round as needed so that $h_0$, $h_1$, and $h_2$ are integers.
The following recursion will lead to Theorem~\ref{thm:mixing unicyclic}. 
\begin{lemma}
    \label{lem: mixing unicyclic recursion}
Fix $\Delta\ge 3 $, $ q>1$ and $Q\ge 2$.
For sufficiently small constant $\delta\in (0,\frac14)$, the following holds.
If $p>\ps$, then for any sufficiently large $h\in (2r^*,h^*]$ with its decomposition $(h_0,h_1,h_2)$
there exists a constant $M>0$ such that the random-cluster dynamics satisfies
\begin{align}
    \label{eq: mixing unicyclic recursion}
\Th(h) &\le
M \log n_h \cdot \max\!\left\{
\Th(h_0+h_1)\cdot \frac{|\mathcal{T}_{h}|}{|\mathcal{T}_{h_0+h_1}|},
\;
d^{h_0}\cdot {h}^{4} \cdot \Th(h_1+h_2)
\right\}.
\end{align}
\end{lemma}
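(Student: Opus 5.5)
The plan is to mirror the proof of Lemma~\ref{lem: mixing recursion} for trees, with the class $\mathcal G(h)$ and the boundary class $\mathcal H(h,Q)$ in place of $\dary$ and $\mathcal F(h)$. We use the same two overlapping blocks, defined through the BFS tree $\mathcal T$ of $G$. The block $B_0$ consists of the edges of $G[\mathcal T_{\rho,h_0+h_1}]$. The block $B_1$ consists of the edges of $G$ lying in the subtrees $\mathcal T_{v,h_1+h_2}$ for $v$ at depth $h_0$, together with the non-tree edge $\{u,w\}$ whenever both endpoints lie below depth $h_0$. Since $B_0\cup B_1=E(G)$, the comparison Lemma~\ref{lem: simulate systematic dynamics} applies verbatim. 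We take $\Gamma_1$ to be all configurations and $\Gamma_0$ to be the configurations whose induced boundary condition on $B_0$ lies in $\mathcal H(h_0+h_1,Q')$ and satisfies edge-WSM on the top $h_0$ levels, for a slightly enlarged constant $Q'$. This yields the analogue of Lemma~\ref{lem: block recursion}, provided we show the analogue of Lemma~\ref{lem: coupling time 2 block dynamics}: the systematic scan block dynamics couples in $O(1)$ steps.

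For that coupling step, we first update $B_1$. Each depth-$(h_0+h_1)$ vertex $x$ is then connected to $\mathcal C_1(\xi)$ with probability at least $\theta>0$, using condition (c) of $\mathcal H(h,Q)$ on $\mathcal T_{x,h_2}$ together with the comparison to $\hat p$-percolation as around~\eqref{eq: use WSM for theta-wired}. The exceptions are $O(1)$ vertices: those whose subtree contains the cycle or $v^*$, those lying above the $\le Q$ non-giant wired leaves, and those connected to one another through a non-tree path in $B_1\setminus B_0$. We put these exceptional vertices into the set $U$ and obtain a $(\theta,Q')$-wired boundary condition on $B_0$. Corollary~\ref{cor:theta-Q-wired-bc} and Theorem~\ref{thm wsm theta,Q wired} (edge form, rate $(\beta_*+o(1))^{h_1/2}$) then give membership in $\Gamma_0$ except with probability $e^{-h^{1+\Omega(1)}}$, as well as $\Pr(X_2^+(B_0)\neq X_2^-(B_0))\le d^{\delta h}(\beta_*+o(1))^{(1-2\delta)h/2}=e^{-\Omega(h)}$ for $\delta$ small. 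Finally, an identical update of $B_1$ completes the coupling. The first block is a graph in $\mathcal G(h_0+h_1)$ with boundary condition in $\mathcal H(h_0+h_1,Q)$, so its term is $\Th(h_0+h_1)$. Here we note that the induced boundary condition satisfies (a) and (c) (with the loosened exponent $h(v)/3$), and that a pinned root gives the $\circlearrowleft$ variant.

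For $B_1$ we decompose along the components $\kappa\in\mathcal C(\eta)$ exactly as before, using Lemma~\ref{lem: simulating BD 3}. Each component is a collection of $d_\kappa$ height-$(h_1+h_2)$ pieces with their roots wired together. At most one piece contains the cycle or $v^*$, and some roots may additionally be wired to $\mathcal C_1(\xi)$ or to one of the $O(1)$ exceptional boundary classes. Each piece is a graph in $\mathcal G(h_1+h_2)$ with boundary condition in $\mathcal H(h_1+h_2,Q)$, possibly with its root pinned. We then redo the censoring argument of Lemma~\ref{lem: 3rd bound kappa}. We choose $r_1$ to be a piece with a regular $d$-ary structure, which exists when $d_\kappa\ge2$; the case $d_\kappa=1$ is immediate. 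By condition (c), $r_1$ connects to $\mathcal C_1$ with probability at least $c_*$. Once $\mathcal T_{r_1}$ is frozen, the remaining pieces become independent and each has its root wired to $\mathcal C_1$, so they are $\circlearrowleft$ instances, all of which are covered by the definition of $\Th(h_1+h_2)$. This gives $O(d_\kappa\log d_\kappa)\,O(h^2)\,\Th(h_1+h_2)$, and summing as in the proof of Lemma~\ref{lem: mixing recursion} produces the factor $d^{h_0}h^4$.

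The main obstacle is bookkeeping the non-tree edge and the $Q$ exceptional wirings so that every induced subproblem stays inside $\mathcal G(\cdot)\times\mathcal H(\cdot,Q)$ with the same $Q$. Two difficulties stand out. First, when the cycle straddles depth $h_0$ or $h_0+h_1$, the induced boundary may wire two boundary vertices through the cycle, creating a new non-giant class. Second, the roots in $B_1$ can be wired to a non-giant boundary class. For the first, I would choose the split levels (shifting $h_0$ by $O(\log h)$ if needed) so that the cycle lies entirely inside one block interior, which is always possible since the cycle is a single short path set. For the second, the extra classes are absorbed into $Q$ by monotonicity, noting that the definition allows any $Q$ and that the overall max already includes smaller $Q$.
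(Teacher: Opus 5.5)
The central gap is the non-tree edge inside $B_1$. Your plan to keep the cycle away from the split levels by shifting $h_0$ by $O(\log h)$ cannot work, because nothing in $\mathcal G(h)$ makes the cycle short. The non-tree edge may join two vertices at depth close to $h$ whose least common ancestor is at or near $\rho$. The cycle then has length up to $2h+1$ and crosses every admissible split level; in the balls $B(e)$ the cycle, when present, typically has length $\Theta(R)$. So you must handle a non-tree edge $e_\kappa=\{u,w\}\in B_1$ whose endpoints lie in two different depth-$h_0$ subtrees $\mathcal T_{(u)}\neq\mathcal T_{(w)}$. There your claims that ``at most one piece contains the cycle'' and that ``once $\mathcal T_{r_1}$ is frozen, the remaining pieces become independent'' are false. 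If $r_1$ is a regular piece, then after freezing it $\mathcal T_{(u)}\cup\{e_\kappa\}\cup\mathcal T_{(w)}$ is still one coupled piece. That piece is not a graph in $\mathcal G(h_1+h_2)$, so its mixing time is not controlled by $\Th(h_1+h_2)$. If $r_{(u)},r_{(w)}$ lie in different $\eta$-classes, even the product decomposition over $\mathcal C(\eta)$ via Lemma~\ref{lem: simulating BD 3} fails, since those two $\eta$-components are linked by $e_\kappa$; your case ``$d_\kappa=1$ is immediate'' fails for the same reason. The paper treats this as a separate twin component (Lemma~\ref{lem: 3rd bound kappa unicyclic}) with its own censoring schedule. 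It burns in until $e_\kappa$ is closed and $r_{(u)}$ is connected to $\mathcal C_1(\xi)$, and also $r_{(w)}$ when the roots are in different $\eta$-classes. It then freezes $\mathcal T_{(u)}$ (respectively $\mathcal T_{(u)}\cup\{e_\kappa\}\cup\mathcal T_{(w)}$) so that the remainder is a product. Finally it couples the remaining pieces, each an almost-$\Delta$-regular tree, possibly with $v^*$ wired. The frozen piece must therefore carry an endpoint of $e_\kappa$, which is the opposite of your choice of a regular piece.

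A second gap is the closure of the recursion in $Q$. Your $B_0$ step yields a $(\theta,Q')$-wired boundary with $Q'>Q$, yet you then use membership in $\mathcal H(h_0+h_1,Q)$, and neither justification you offer works. The inclusion $\mathcal H(\cdot,Q'')\subseteq\mathcal H(\cdot,Q)$ holds only for $Q''\le Q$, so ``the max already includes smaller $Q$'' points the wrong way. Monotonicity of $\mu$ in the boundary condition gives no comparison of mixing times, which can be exponentially slow under multi-component boundaries, so you cannot just unwire the extra classes. If $Q$ grows at each of the $\Theta(\log h)$ levels, \eqref{eq: mixing unicyclic recursion} is not a recursion for one fixed $\Th$. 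The paper instead fixes $Q\ge 2$ and argues that after the $B_1$ update, all but at most two vertices of $\partial\mathcal T_{\rho,h_0+h_1}$ connect to $\mathcal C_1(\xi)$ independently with probability at least $\theta$. The two exceptions are the vertices joined through the non-tree edge when it lies in $B_1\setminus B_0$. The induced boundary is thus $(\theta,2)$-wired, and Corollary~\ref{cor:theta-Q-wired-bc} places it in $\mathcal H(h_0+h_1,2)\subseteq\mathcal H(h_0+h_1,Q)$, with no shifting of levels. Your observation that the $\le Q$ non-giant leaves of $\xi$ can also propagate to $\partial B_0$ is fair, and the paper's count of two passes over it. The repair, however, is a budget argument rather than monotonicity. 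Each such leaf contributes at most one vertex of $\partial B_0$. The two cycle vertices arise at most once, after which $B_0$ is a tree. For instance, allowing $Q$ non-giant boundary vertices for unicyclic graphs and $Q+2$ for trees closes the recursion.
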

Lemma~\ref{lem: mixing unicyclic recursion} is the analogue of Lemma~\ref{lem: mixing recursion} for the more general classes of treelike graphs and boundary conditions $\mathcal G(h)$ and $\mathcal H(h,Q)$. 
The idea of proof is the same, but there 
are however several technical differences 
we briefly summarize:
\begin{itemize}
    \item We are require the more general weak spatial mixing statement Theorem~\ref{thm wsm theta,Q wired} which applies to the graphs in $\mathcal{G}(h)$ and boundary conditions drawn from a $(\theta, Q)$-wired distribution;
    \item To analyze the root to boundary connection probabilities, we perform modifications to the graph and the boundary condition in order to address irregularities in the graphs and the presence of non-singleton  non-giant boundary components;
    \item The argument in the proof of Lemma~\ref{lem: 3rd bound kappa} must be adjusted to allow for a possible additional edge between the two of the trees in the component.
\end{itemize}

The latter two issues are addressed in Appendix~\ref{app:mixing}.
We provide the proof of Theorem~\ref{thm:mixing unicyclic} next.

\begin{proof}[Proof of Theorem~\ref{thm:mixing unicyclic}]
For small constant $h$,
we use the crude bound 
$\Thh = \exp(O(h))$. This bound can be derived 
from a similar bound for trees (previously derived via 
the canonical path method in~\cite[Lemma 17]{BG24-PTRF}) by 
adding or removing $O(1)$ external wirings (as in \cite[Lemma 24]{BG24-PTRF}). For completeness, we provide a proof of this fact in Appendix~\ref{app:mixing}.
As such, we may assume that $h^*$ is a sufficiently large constant and therefore any $h \in (3r^*,h^*]$ is also sufficiently large so that 
 Lemma~\ref{lem: mixing unicyclic recursion} can apply to $h$.
We can then iterate the recursion~\eqref{eq: mixing unicyclic recursion} taking the crude estimate as the base case once the height falls below $3r^*$. 
The resulting recurrence can then be handled in the same manner as in the proof of Theorem~\ref{thm:tree-mixing}.
\end{proof}

We conclude this section with proof of Corollary~\ref{cor:theta-Q-wired-bc}.

\begin{proof}[Proof of Corollary~\ref{cor:theta-Q-wired-bc}]
Fix $h$ sufficiently large. We will show that $\pr\left(\xi\in\tilde{\mathcal{H}}(h, Q)\right)\ge 1-\exp(-\Omega(h^{1.1}))$; the claimed mixing time bound then follows from Theorem~\ref{thm:mixing unicyclic}.
Condition~(a) holds by the definition of a $(\theta,Q)$-wired distribution, and condition~(b) is vacuous in the present setting. It therefore remains only to verify condition~(c), with the stated probability bound.

Let $C,\gamma>0$, and $c>0$ be the constants from Theorem~\ref{thm wsm theta,Q wired}, and let $\beta> \beta_*$.
For any vertex $v\in V(G)$  with $h(v)=\dist(v,\partial\mathcal{T}_h)$, by its definition, 
$G[\mathcal{T}_{v, h(v)}]\in \mathcal{G}(h(v))$.
Moreover, since $\xi$ is drawn from a $(\theta, Q)$-wired distribution, the restriction of $\xi$ to $\partial \mathcal{T}_{v, h(v)}$, denoted as $\xi_{(v)}$, also follows a $(\theta, Q)$-wired distribution $M_{(v)}$.
Chernoff's inequality implies that $|\mathcal C_1(\xi_{(v)})| = \Omega(|\partial \mathcal{T}_{v,h(v)}|)$ with probability $1-e^{-O(d^{h(v)})}$.
For any vertex $v$ with $h(v)\ge \gamma $, Theorem~\ref{thm wsm theta,Q wired} implies that, with probability at least $1-2\exp(-cd^{{\sqrt{h(v)}}})$ over $\xi_{(v)} \sim M_{(v)}$,
\[
\left|\mu^1_{G[\mathcal{T}_{v,h(v)}]}(v \sim \mathcal \partial \mathcal{T}_{v,h(v)}) - \mu^{\xi}_{G[\mathcal{T}_{v,h(v)}]}(v \sim \mathcal C_1(\xi)\cap \partial \mathcal{T}_{v,h(v)})\right|
     \leq C \beta ^{h(v)},
\]
and the same bound holds for the measures $\mu^{1,\circlearrowleft}_{G[\mathcal{T}_{v,h(v)}]}$ and $\mu^{\xi,\circlearrowleft}_{G[\mathcal{T}_{v,h(v)}]}$.
There are $d^{h- h(v)}$ such vertices in the same level.
Note that if $h(v)\ge(\log_d {h^{1.1}})^2$, then $cd^{\sqrt{h(v)}} \ge 2ch^{1.1} \ge 2h\log d $ for a sufficiently large $h$.
Taking a union bound over all vertices of height $h(v) \ge (\log_d {h^{1.1}})^2 \ge \gamma$ and both estimates gives
\begin{align*}
\pr\left(\xi\in\tilde{\mathcal{H}}(h, Q)  \right) &\ge 1 - \sum_{x=\lceil (\log_d {h^{1.1}})^2 \rceil}^h  2d^{h-x} \left[\exp(-cd^{\sqrt{x}}) + \exp(-O(d^{x}))\right] \\
&= 1 - \sum_{x=\lceil (\log_d {h^{1.1}})^2 \rceil}^h 4\exp\left(\log d \cdot (h-x) - cd^{\sqrt{x}}\right) \\
&\ge 1 -  \sum_{x=1}^h 4\exp \left( - \frac{cd^{\sqrt{x}}}{2}\right) 
\ge  1 -  4h \exp\left( - ch^{1.1}\right),
\end{align*}
which is at least $1-\exp(-\Omega(h^{1.1}))$ for all $h$ sufficiently large.
\end{proof}

\bibliography{bib}
\bibliographystyle{alpha}

\cleardoublepage
\addtocontents{toc}{\protect\setcounter{tocdepth}{-1}}
\appendix

\section{Tree recursion: Proof of Lemma~\ref{lem:recursion}}
\label{appendix:tree-recursion}

We prove Lemma~\ref{lem:recursion} next; the proof is a slight generalization of the argument in~\cite[Fact 3.2]{blanca2023sampling}.

\begin{proof}[Proof of Lemma~\ref{lem:recursion}]
    Fix a single-component boundary condition $\xi$ on $\partial \mathcal T^v$. For a vertex $u \in V(\mathcal T^v)$, for ease of notation, we write
   $Z_0(u):=Z^{\xi_{(u)},0}_{\mathcal T^{v,u}}$, 
   $Z_1(u):=Z^{\xi_{(u)},1}_{\mathcal T^{v,u}}$, and $f(u) :=f^{\xi}_{\mathcal T^v}(u)$.
    Let $N(u)$ be the set of children of $u$ in the rooted tree $\mathcal T^v$. 
    To obtain the recurrence on $f(u)$, we let 
    \[
    A:=\prod_{w\in N(u)} \frac{tZ_0(w)+(1-p)Z_1(w)}{q}, \quad
    B:=\prod_{w\in N(u)} \frac{tZ_0(w)+Z_1(w)}{q}.
    \] where $t = p/q + (1-p)$.
    From \cite[Fact 3.2]{blanca2023sampling}, we have
    $Z_0(u) = q^2 A$ and $Z_1(u) = q(B - A)$. By Definition \ref{def:message recursion}, 
    \[
    f(u) = q\frac{Z_1(u)}{Z_0(u)} + 1 = q\frac{B - A}{qA} + 1 = \frac{B}{A}.
    \]
    Also, we have $Z_1(u) = \frac{f(u) - 1}{q} Z_0(u)$. Hence,
    \begin{align*}
        f(u) = \frac{B}{A}&  = \prod_{w \in N(u)} \frac{t Z_0(w) + Z_1(w) }{tZ_0(w) + (1-p)Z_1(w)} = \prod_{w \in N(u)} \frac{t Z_0(w) + \frac{1}{q}(f(w)-1)Z_0(w)}{tZ_0(w) + (1-p)\frac{1}{q}(f(w)-1)Z_0(w)} \\ &= \prod_{w \in N(u)} \frac{tq + (f(w) - 1)}{tq + (1-p)(f(w)  - 1)} = \prod_{w \in N(u)} \frac{f(w) + (q-1)(1-p)}{(1-p)f(w) + p + (q-1)(1-p)}\,,
    \end{align*}
    concluding the proof.
\end{proof}

\section{Fixed point analysis of the tree recursion}
\label{appendix:fp}

In this appendix, we prove Theorem \ref{thm:fixed points:appendix} where we provide a full characterization of the fixed points of the function $g$ in \eqref{eq: function g} for all $\Delta \ge 3$, $q \geq 1$, and $p \in(0, 1)$.

\begin{proof}[Proof of Theorem \ref{thm:fixed points:appendix}]
Note that $g(1) = 1$, which implies that $y=1$ is always a trivial fixed point of $g$. We first introduce some basic properties of $g$. Note that $g$ is twice differentiable on $[1, \infty)$ and we have
\begin{align}\label{eq:g'}
g'(y) = d p\left(\frac{y + (q-1)(1-p)}{(1-p)y + p + (q-1)(1-p)} \right)^{d-1}\frac{1 + (q-1)(1-p)}{\left((1-p)y + p + (q-1)(1-p)\right)^2}.
\end{align}
In particular, $g'(y) > 0$ for all $y \geq $1. Thus, $g$ is strictly increasing on $[1, \infty)$. Furthermore,
\begin{align}\label{eq:g''}
\begin{split}
    g''(y) =& d p\left(\frac{y + (q-1)(1-p)}{(1-p)y + p + (q-1)(1-p)} \right)^{d-2}\frac{1 + (q-1)(1-p)}{\left((1-p)y + p + (q-1)(1-p)\right)^4}\\& \qquad \cdot [(d - 1)p(1+(q-1)(1-p)) - 2(1-p)(y+(q-1)(1-p))].
\end{split}
\end{align}
Hence, $g''(y) = 0$ at the unique point 
\begin{equation} \label{eq: second derivate is 0}
    y_0 := \frac{(d - 1)p(1+(q-1)(1-p))}{2(1-p)}-(q-1)(1-p),
\end{equation}
and thus, $g''$ changes its sign exactly once from positive to negative on $(1, \infty)$ when $y_0 > 1$, or it is negative on $(1, \infty)$ otherwise.

Define $h(y) := g(y) - y$ on $[1, \infty)$. $h$ is also twice differentiable on its domain. The fixed points of $g$ are exactly the zeros of the function $h$. This implies $h''$ changes its sign at most once on the same domain from positive to negative, $h'$ has at most one local maximum and has at most two zeros. Thus $h$ has at most three zeros in $[1, \infty)$.

At $y = 1$, we have
\[
g'(1) = \frac{dp}{p + q(1-p)}.
\]
Thus, $g'(1) = 1$, or $h'(1) = 0$, is equivalent to $p = \frac{q}{d + q - 1} = \ps$. Furthermore, we have $h'(1) < 0$ for $p < \ps$ and $h'(1) > 0$ for $p > \ps$.
Finally, since $g$ is bounded on $[1, \infty)$, we have $h(y) \to -\infty$ as $y \to \infty$.

We now use the characterization of $\pu$ from \cite[Eq. (3.7)]{blanca2023sampling}: 
\begin{equation} \label{eq: pc}
\pu = \sup\{p : \sup_{y > 1} (g (y) - y) \leq 0\} = \sup\{p : \sup_{y > 1} h(y) \leq 0\}.
\end{equation}
Therefore, for Part (i), if $p < \pu$, suppose that $h(\hat y) = 0$, equivalently $g(\hat y ) = \hat y$, for some $\hat y > 1$. By direct computation, we obtain that $\frac{\partial g_p(y)}{\partial p} > 0$ on $y > 1$. Here, we write $g_p$ instead of $g$ to emphasize the dependency on $p$. Then, at $p + \delta$ where $p < p+\delta < \pu$, we have $g_{p+\delta}(\hat y) > g_p (\hat y) = \hat y$, which contradicts to $p + \delta < \pu$. Hence, $h(y) < 0$ for every $y > 1$ and there is exactly one fixed point, $y = 1$.

For Part (ii), assume that $q > 2$. If $p \in (\pu , \ps)$, then $h'(1) < 0$, thus there exists $y_1 > 1$ close to 1 such that $h(y_1) < 0$. Furthermore, since $p > \pu$, there exists $y_2 > 1$ such that $h(y_2) > 0$ by \eqref{eq: pc}. Also, we have $h(y) \to -\infty$ as $y \to \infty$. Hence, $h$ has at least three zeros on $[1, \infty)$ by the continuity of $h$. Indeed, $y = 1$ from $h(1) = 0$, one of the zeros is in $(y_1, y_2)$ and the other is in $(y_2, \infty)$. Recall that $h$ has at most three zeros in $[1, \infty)$ as established by the second-derivative analysis above, so $h$ has exactly three zeros in this regime. Therefore, $g$ has exactly three fixed points.

For Part (iii), if $p > \ps$, then $h'(1) > 0$. Hence, there exists $y_3 > 1$ close to 1 such that $h(y_3) > 0$. Recall that $h(y) \to -\infty$ as $y \to \infty$. Therefore, $h$ has at least one zero in $(1, \infty)$ by the continuity of $h$. Furthermore, since $h''$ either is negative or changes its sign exactly once from positive to negative on the domain, and $h' (1) > 0$, $\lim_{y\to\infty} h'(y) =\lim_{y\to\infty} g'(y) - 1 =-1$, $h'$ has exactly one zero on $(1, \infty)$. Indeed, $h'$ starts positive, ends negative, so $h'$ can cross 0 only once.

Therefore, $h$ has a single local maximum on $(1, \infty)$. Since $h(1) = 0$, $h(y_3) > 0$ for $y_3 > 1$ close to 1, and $h(y) \to -\infty$, it follows that $h$ has exactly one zero in $(1, \infty)$. This implies
\begin{equation}\label{eq: p > ps}
    h(y) > 0 \text{ for } y \in (1, y^*), \quad h(y) < 0 \text{ for } y \in (y^*, \infty)
\end{equation}
for some $y^* > 1$. Therefore, $g$ has exactly two fixed points $y = 1$ and $y^* > 1$ in $[1, \infty)$.

For Part (iv), assume that $q > 2$ and $p = \pu$. We write $h_p$ instead of $h$ to emphasize the dependence of $h$ on $p$. Since $\pu < \ps$, we have $h'(1) < 0$. Therefore, by the continuity of $h$ in $p$ and $y$, there exists $\delta > 0$ and $\eta > 0$ such that $h_p(y) < 0$ for $p \in [\pu, \pu + \eta]$ and $y \in (1, 1+\delta]$. Consider a decreasing sequence $\{p_n\} \to \pu$ with $p_n \in (\pu, \pu + \eta]$. By Part (ii), the function $h_{p_n}$ has a zero $y_n > 1$ for each $n$. Note that $y_n > 1 + \delta$ by above.

On the other hand, since $h_{\pu} (y) \to -\infty$ as $y \to \infty$, by the continuity of $h$, there exist $R > 1$ such that $h_p(y) < 0$ for $p \in [\pu, \pu + \eta]$ and $y \geq R$. This implies $y_n \leq R$ for all $n$. Therefore, $y_n \in [1+\delta, R]$ and there exists $\tilde y \in [1 + \delta, R]$ and a subsequence $\{y_{n_k}\}$ such that $y_{n_k} \to \tilde y$. By continuity of $h$ in $p$ and $y$, we obtain $h_{\pu}(\tilde y) = \lim_{k \to \infty} h_{p_{n_k}}(y_{n_k}) = 0.$ Hence, $h$ has zero at some $\tilde y > 1$.

By the characterization of $\pu$ in \eqref{eq: pc}, since $p = \pu$, we have
\[
\sup_{y> 1} h(y) = 0.
\]
This implies $\tilde y$ is a global maximizer of $h$ on $(1, \infty)$. To see uniqueness, since $h''$ either is negative or changes its sign exactly once from positive to negative on the domain, $h'$ has at most two zeros. Therefore, $h$ has at most one local minimum and one local maximum on $(1,\infty)$. Therefore, $\tilde y$ is a local maximum. Hence, $h$ has two zeros at $y = 1$ and $y = \tilde y$. Therefore, $g$ has exactly two fixed points in this case.

Next, assume that $q > 2$ and $p = \ps$. We have $h'(1) = 0$. Recall that $g''(y) = h''(y) = 0$ at $y_0$ from \eqref{eq: second derivate is 0}.
We observe that
\begin{equation} \label{eq: cond y0}
    y_0 \geq 1 \iff p \geq \frac{2}{d+1}.
\end{equation}
Since $q > 2$, and $p = \ps = \frac{q}{d+q-1} > \frac{2}{d+1}$, we have $y_0 > 1$. This implies $h '' (1) > 0$ and $h''$ changes its sign exactly once from positive to negative on the domain $(1, \infty)$. With $h'(1) = 0, \lim_{y \to \infty}h'(y) = -1$, $h'$ has exactly one zero on $(1, \infty)$. Indeed, $h'$ starts positive, ends negative, and crosses 0 only once on $(1, \infty)$. The remaining part is the same as Part (iii). This implies
\begin{equation}\label{eq: p = ps, q > 2}
    h(y) > 0 \text{ for } y \in (1, y^*), \quad h(y) < 0 \text{ for } y \in (y^*, \infty).
\end{equation}
Therefore, $g$ has exactly two fixed points in $[1, \infty)$.

Finally, for Part (v), assume that $q \leq 2$ and $p = \pu = \ps$. We have $h'(1) = 0$. We want to show that $h(y) < 0$ for every $y > 1$. Here, we have \[p = \ps = \frac{q}{d+q-1} \leq \frac{2}{d+1}\] when $q \leq 2$. From the condition in \eqref{eq: cond y0}, we have $y_0 \leq 1$, and hence, $g''(y) < 0$ for all $y > 1$. Since $h''(y) =g''(y)$, it follows that $h''(y) < 0$ for all $y > 1$. Thus, $h'$ is strictly decreasing on $(1, \infty)$, which implies $h'(y) < 0$ for all $y > 1$ with the fact that $h'(1) = 0$.
Therefore, we conclude that $h(y) < 0$ for all $y > 1$. Hence, $h$ has a unique zero at $y = 1$, which implies $g$ has a unique fixed point in $[1, \infty)$.
\end{proof}

To complete the proof of Lemma ~\ref{lemma:fp:main}
we further show that when $p > \ps$, the nontrivial fixed point of $g$ is attractive and $g'(y^*) \in (0, 1)$.

\begin{proof}[Proof of Lemma \ref{lemma:fp:main}]
Recall the form of $g$ from~\eqref{eq: function g}, and from~\eqref{eq:g'} that $g$ is continuously differentiable and strictly increasing on $[1, \infty)$.
 Moreover, $\lim_{y \to \infty}g(y) = (1-p)^{-d}$, therefore, $g$ is bounded on $[1, \infty)$.
By Theorem \ref{thm:fixed points:appendix}, when $p > \ps$ the map $g$ has exactly two fixed points in $[1, \infty)$, namely 1 and $y^* > 1$. Define $h(y) := g(y) - y$. From \eqref{eq: p > ps} of Theorem \ref{thm:fixed points:appendix}, we have
\[
g(y) > y \text{ for } y \in (1, y^*), \quad g(y) < y \text{ for } y \in (y^*, \infty)
\]
With the initial point $y_0 > 1$, let us denote $y_k = g^{(k)}(y_0)$. Define the recursion as
\[
y_{k+1} = g(y_k)
\]
for $k \geq 0$. If $y_0 = y^*$, then $y_k = y^*$ for all $k \geq 0$ and the claim is trivial.

Secondly, suppose now $y_0 \in (1, y^*)$. Since $h(y_0) > 0$, we have $y_1 = g(y_0) > y_0.$ Also, since $g$ is strictly increasing and $y_0 < y^*$, we have
\[
y_1 = g(y_0) < g(y^*) = y^*.
\]
Therefore, $y_0 < y_1 < y^*$. Inductively, we deduce that $y_k < y_{k + 1} < y^*$ for all $k \geq 0$. Hence, the sequence $\{y_k\}_{k\geq0}$ is increasing and bounded above by $y^*$, so it converges to some $1 < \tilde y \leq y^*$. In other words, $\lim_{k \to \infty} y_k = \tilde y$, and we have $g(\tilde y) = g(\lim_{k \to \infty} y_k) = \lim_{k \to \infty} g(y_k) = \lim_{k \to \infty} y_{k + 1} = \tilde y$. This implies $\tilde y > 1$ is the fixed point, therefore, $\tilde y = y^*$ and $y_k$ converges to $y^*$.

Similarly, suppose now $y_0 \in (y^*, \infty)$. Since $h(y_0) < 0$, we have $y_1 = g(y_0) < y_0.$ Also, since $g$ is strictly increasing and $y_0 > y^*$, we have
\[
y_1 = g(y_0) > g(y^*) = y^*.
\]
Therefore, $y_0 > y_1 > y^*$. Inductively, we deduce that $y_k > y_{k + 1} > y^*$ for all $k \geq 0$. Hence, the sequence $\{y_k\}_{k\geq0}$ is decreasing and bounded below by $y^*$, so it converges to $y^*$ with the same argument as in the case above.

Moreover, since the function $h$ is positive when $y \in (1, y^*)$, negative when $y \in (y^*, \infty)$ and $h(y^*) = 0$, it is strictly decreasing at $y^*$, see the proof of Theorem \ref{thm:fixed points:appendix}. Since $h$ and $g$ are continuously differentiable on $[1, \infty)$, $h'(y^*) = g'(y^*) - 1 < 0$. and thus $g'(y^*) < 1.$
\end{proof}

\section{Stronger spatial mixing rate for large $q$: Proof of Theorem \ref{thm: sharper bound}}\label{appendix:stronger-decay}
In this section, we provide proofs of Lemma \ref{lemma: worst case g'}, as well as of Facts \ref{fact: one sol P and Q}, \ref{fact: q >= 9d} and \ref{fact: q >= Clogd} to complete the proof of Theorem \ref{thm: sharper bound}. We start with the following fact about the monotonicity in $p$ of the fixed point.

\begin{fact} \label{fact: fixed point increasing p}
Fix $d \geq 2, q > 1$ and $p > \ps$. Then, 
$\frac{dy^*(p)}{dp} > 0.$
\end{fact}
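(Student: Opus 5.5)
The plan is to use the implicit function theorem on the fixed-point equation $H(p,y) := h(p,y) - y = 0$ at $y = y^*(p)$. The derivative formula is
\[
\frac{dy^*}{dp} = -\frac{\partial_p H(p,y^*)}{\partial_y H(p,y^*)} = \frac{\partial_p h(p,y^*)}{1 - g'(y^*)}.
\]
So it suffices to establish two sign facts: the denominator $1-g'(y^*)$ is positive, and the numerator $\partial_p h(p,y^*)$ is positive.

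The denominator is handled by Lemma~\ref{lemma:fp:main}, which gives $g'(y^*) \in (0,1)$ for $p > \ps$. This also makes $\partial_y H \neq 0$ at $(p, y^*(p))$, so the implicit function theorem applies. Since $h$ is smooth in $(p,y)$ on $(0,1)\times(1,\infty)$, the branch $p \mapsto y^*(p)$ is $C^1$ locally. By the uniqueness of the nontrivial fixed point (Theorem~\ref{thm:fixed points:appendix}(iii)), this local branch coincides with $y^*(p)$ everywhere on $(\ps,1)$.

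For the numerator, note that $\partial_p h = d\,\Phi(p,y)^{d-1}\,\partial_p\Phi(p,y)$ and $\Phi > 0$, so the sign is that of $\partial_p \Phi(p,y)$ for $y>1$. This is the claim $\partial g_p/\partial p>0$ on $y>1$ already used in the proof of Theorem~\ref{thm:fixed points:appendix}(i). For completeness I would verify it directly. Write $s = 1-p$, $N = y + (q-1)s$, and $D = sy + 1 - s + (q-1)s = 1 + s(y+q-2)$. Then
\[
\partial_s \Phi = \frac{(q-1)D - N(y+q-2)}{D^2}.
\]
Expanding the numerator gives $(q-1) - y(y+q-2) = -(y-1)(y+q-1)$, which is negative for $y>1$. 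Since $p = 1-s$, this yields $\partial_p\Phi = (y-1)(y+q-1)/D^2 > 0$.

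Combining the two signs gives $\frac{dy^*}{dp} > 0$. The only mildly delicate point is justifying differentiability of $y^*$ via the nondegeneracy $g'(y^*)<1$, and that is supplied by Lemma~\ref{lemma:fp:main}.
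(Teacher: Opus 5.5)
Your proof is correct and follows the paper's argument. The paper likewise implicitly differentiates $h(p,y)-y=0$, signs the denominator with Lemma~\ref{lemma:fp:main}, and signs the numerator via monotonicity of $\Phi(p,y)$ in $p$. Your explicit computation $\partial_p\Phi = (y-1)(y+q-1)/D^2$ and your implicit-function-theorem-plus-uniqueness justification that $y^*(p)$ is differentiable supply details the paper only asserts.
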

\begin{proof}
    Let $F(p, y) := h(p, y) - y$. Note that $F$ is continuously differentiable in the domain and $F(p, y^*) = h(p, y^*) - y^* = 0$.
    Using implicit differentiation, 
    \[
    \frac{d y^*(p)}{dp}=-\frac{\partial_p F(p,y^*)}{\partial_y F(p,y^*)}.
    \]
    We show that the numerator is positive and the denominator is negative, which implies $\frac{dy^*(p)}{dp} > 0$.
    Indeed, by Lemma \ref{lemma:fp:main}, we have
    $
    \partial_y F(p,y^*(p)) = \partial_y h(p, y^*(p)) - 1 < 0.
    $
    Moreover, for every fixed $y>1$ the map $p \mapsto h(p, y)$ is strictly increasing
    since $\Phi(p,y)$ is increasing in $p \in (0, 1)$ and $x\mapsto x^d$ is increasing in $[1, \infty)$, hence
    $
    \partial_p F(p,y) = \partial_p h(p,y) > 0
    $
    for all $y > 1$.
\end{proof}

We now are ready to prove Lemma \ref{lemma: worst case g'}.

\begin{proof}[Proof of Lemma \ref{lemma: worst case g'}]
    A direct computation yields that
    \begin{equation*}
    \partial_y h(p,y) = dp \frac{1+(q-1)(1-p)}{\bigl((1-p)y + p + (q-1)(1-p)\bigr)^2}    \Phi(p,y)^{d - 1}.
    \end{equation*}
    
    At the fixed point $y^*(p)$ we have $\Phi(p, y^*(p)) = (y^*)^{1/d}$, and therefore
    \begin{equation}\label{eq:gprime-at-fp}
        \partial_y h(p,y^*) = dp \frac{1+(q-1)(1-p)}{\bigl((1-p)y^* + p + (q-1)(1-p)\bigr)^2}
        (y^*)^{\frac{d - 1}{d}}.
    \end{equation}
    Furthermore, 
    \[
    \frac{\Phi(p, y^*) - 1}{y^* - 1} = \frac{p}{(1-p)y^* + p + (q-1)(1-p)}.
    \]
    Substituting $z := \Phi(p, y^*)$ and $y^* = z^d$, the denominator can be written as
    \[
    (1-p)y^* + p + (q-1)(1-p) = \frac{p(y^* - 1)}{z-1} = \frac{p(z^d - 1)}{z-1}.
    \]
    Plugging this into \eqref{eq:gprime-at-fp} gives
    \begin{equation*}
        \partial_yh(p,y^*) = d p (1+(q-1)(1-p))z^{d-1}\Big(\frac{z-1}{p(z^{d}-1)}\Big)^2 = \Big(\frac{1+(q-1)(1-p)}{p}\Big)\Big(\frac{d z^{d-1}(z-1)^2}{(z^{d}-1)^2}\Big).
    \end{equation*}
The function $\frac{1+(q-1)(1-p)}{p}$ is decreasing  in $p$ when $q > 1$. 
Moreover, by Fact \ref{fact: fixed point increasing p}, $y^*(p)$ is strictly increasing in $(\ps, 1)$, and so $z(p) = y^*(p)^{1/d}$ is also increasing in $p$. Thus, it suffices to show that $H(z)= \frac{d z^{d-1}(z-1)^2}{(z^{d}-1)^2}$ is decreasing in $z > 1$.  
    For this, let
    \[
    S(z) = \sum_{k=0}^{d-1}z^k = \frac{z^d-1}{z-1},
    \] for  $z \in (1, \infty)$,
    so that  $H(z) = d z^{d - 1}{S(z)^{-2}}$ and 
    $$
    H'(z) = dz^{d-2}S(z)^{-3}((d-1)S(z) - 2zS'(z)).$$ 
    The sign of $H'$ is then determined by that of
    \[
    (d-1)S(z) - 2zS'(z) = \sum_{k=0}^{d-1} (d-1-2k)z^k.
    \]
    Pairing the $k$-th and $(d-1-k)$-th terms of the sum (for $k \leq \frac{d-1}{2}$) we observe that
    \[
        (d-1-2k)z^k + (2k-(d-1))z^{d-1-k} = (d-1-2k)(z^k - z^{d-1-k}) < 0,
    \]
    since $d - 1 - 2k > 0$ and $z^k < z^{d-1-k}$ for $z > 1$. If $d$ is odd, the middle term is $0$. Hence $H'(z) < 0$ for all $z > 1$. Since $\partial_y h(p, y^*)$ is the product of two positive decreasing functions in $p$, it is decreasing for $p \geq \ps$. Therefore, $\partial_y h(p, y^*)$ attains its maximum at $p = \ps$.
\end{proof}

We conclude this section by proving Facts~\ref{fact: one sol P and Q}, \ref{fact: q >= 9d}, and \ref{fact: q >= Clogd}; their proofs rely on elementary calculus arguments.

\begin{proof}[Proof of Fact~\ref{fact: one sol P and Q}]
    Any root $x$ of the function $P$ satisfies $
        x^{1-1/d} = \frac{A(x)^2}{d^{a+2}q^2}$.
    The function    
    $x^{1-1/d}$ is increasing and concave on $[1, \infty)$ while $\frac{A(x)^2}{d^{a+2}q^2}$ is increasing and convex on $[1,\infty)$. Therefore, these functions intersect exactly once on $(1,\infty)$, and thus $P$ has a unique root $r_P \in (1,\infty)$.     
    Since $P$ is continuous and has a unique root $r_P$ in $(1,\infty)$, and $P(1) > 0$, it follows that
    $
    P(x)>0$ when  $x\in(1,r_P)$ and $P(x)<0$ when $x\in(r_P,\infty)$.
    
    For the function $Q$, note that $Q(x) = 0$ is equivalent to
    $\left(\frac{B(x)}{A(x)}\right)^d=x$,
    which is exactly the fixed-point equation for $g$ at $p = \ps$; see \eqref{eq:fixed-point y^*}. By Theorem \ref{thm:fixed points:appendix}, $g$ has a unique fixed point in $(1,\infty)$. Therefore, $Q$ has a unique root $r_Q = y^* \in (1,\infty)$. 
    For the behavior of $Q$, we have
    $Q(x)>0$ for $x\in(1,r_Q)$ and $Q(x)<0$ for $x\in(r_Q,\infty)$ from \eqref{eq: p = ps, q > 2} in Theorem \ref{thm:fixed points:appendix}.
\end{proof}

\begin{proof}[Proof of Fact~\ref{fact: q >= 9d}]
Since $A(x) = (d-1)x + dq - d + 1 > (d-1)x$ for all $x \geq 1$, we have
\[
    P(x) < d^{a+2} q^2 x - (d-1)^2 x^{2 + 1/d}.
\]
The right-hand side vanishes at
\begin{equation}
U := \Big( \frac{d^{a+2} q^2}{(d-1)^2} \Big)^{\frac{d}{d+1}}
\label{eq:bound-rp}
\end{equation}
Hence, $P(x) < 0$ for all $x > U$ by Fact \ref{fact: one sol P and Q}, it follows that $r_P < U$.

We next show that $U < r_Q$, which implies $r_P < U < r_Q$. 
Since $Q(x)>0$ on $(1,r_Q)$ and $Q(x)<0$ on $(r_Q,\infty)$ by Fact \ref{fact: one sol P and Q}, the inequality $Q(U)>0$ implies that $U < r_Q.$
Since 
$
B(x) = (d-1)(q-1) + (d+q-1)x > (d + q - 1)x
$ for all $x \geq 1$, we have
\begin{equation}\label{eq:q-bound}
Q(x) = B(x)^d - xA(x)^d > (d + q - 1)^d x^d - x A(x)^d.
\end{equation}
Evaluating at $x = U$ the inequality becomes
\[
Q(U) > U\big((d + q - 1)^d U^{d-1} - A(U)^d \big).
\]
Therefore, it suffices to show that
$
(d + q - 1)^d U^{\,d-1} > A(U)^d$ or equivalently that
\begin{equation}
\left((d- 1) + q\right) U^{\frac{d-1}{d}} > (d-1)U + \left( d(q-1) + 1\right).
\label{eq:cond-root}
\end{equation}

To verify \eqref{eq:cond-root}, it is enough to show
\[
q\,U^{\frac{d-1}{d}} > (d-1)U \quad \text{ and } \quad
(d-1)U^{\frac{d-1}{d}} > d(q-1)+1.
\]

We first prove the second inequality. Since $d(q-1)+1 < d q$, it is enough to show that
$$(d-1)U^{\frac{d-1}{d}} > d q. $$ Using the definition of $U$, this is equivalent to
\[
\Big( \frac{d^{a+2} q^2}{(d-1)^2} \Big)^{\!\frac{d-1}{d+1}} \geq \Big( \frac{d^{a+2} q^2}{(d-1)^2} \Big)^{\!\frac{1}{2}} =  \frac{d^{\frac{a}{2} + 1}}{d-1}  q > \frac{d}{d-1}  q,
\]
which holds for all $d \ge 3$, $a \geq 1$ and $q > 1$. 
(Note that this inequality requires the $d \ge 3$ assumption.) 
We next to prove the first inequality, which is equivalent to 
\[
    q > (d-1) U^{1/d}
   = (d-1)\Big( \frac{d^{a+2} q^2}{(d-1)^2} \Big)^{\!\frac{1}{d+1}}
   = (d-1)^{1-\frac{2}{d+1}} d^{\frac{a+2}{d+1}} q^{\frac{2}{d+1}},
\]
that is,
\[
q^{\frac{d-1}{d+1}} > (d-1)^{\frac{d-1}{d+1}} d^{\frac{a+2}{d+1}}
   \quad\Longleftrightarrow\quad
   q > (d-1)d^{\frac{a+2}{d-1}}.
\]
The function $d^{\frac{a+2}{d-1}}$ is decreasing in $d \ge 3$ for fixed $a \geq 1$, so
$
d^{\frac{a+2}{d-1}} \le 3^{\frac{a+2}{2}}$.
Hence when $q \ge3^{\frac{a+2}{2}}d$ we have
$
q > (d-1)d^{\frac{a+2}{d-1}}$,
and therefore both inequalities above hold. Thus \eqref{eq:cond-root} holds, and so $Q(U)>0$.
\end{proof}

\begin{proof}[Proof of Fact~\ref{fact: q >= Clogd}]
    From \eqref{eq:bound-rp}, we have
    \[
    r_P < U = \Big( \frac{d^{a+2} q^2}{(d-1)^2} \Big)^{\!\frac{d}{d+1}}
    \leq \Big(\frac{9}{4} d^a q^2 \Big)^{\!\frac{d}{d+1}}
    <\frac{9}{4} d^a q^2.
    \]
    Let $C_1 = 3^{{(a+2)}/2}$, using $q \leq C_1 d$, it follows that
    \begin{equation}\label{eq:rpbound}
        r_P < \frac{9}{4} d^a q^2 \le  \frac{9}{4}C_1^2 d ^{a+2}
    \end{equation}
    for all $d\ge3$, $a \ge 1$ and $q > 1$.

    We now find a lower bound of $r_Q$. Let $x_0 := e^{\alpha q}$ for some $\alpha > 0$ to be chosen later. First, we have
    \[
        A(x_0) = (d-1)x_0 + d(q-1) + 1 < (d-1)x_0 + dq
    \]
    Writing $A(x_0) < (d-1)x_0(1+t)$ with $t := \frac{dq}{(d-1)e^{\alpha q}}$, and by \eqref{eq:q-bound}, we obtain
    \[
    Q(x_0) > x_0^d\Big[(d+q-1)^d - x_0 (d-1)^d e^{dt}\Big].
    \]
    Thus, $Q(x_0) > 0$ follows if
    \begin{equation}\label{eq:key-ineq-clean}
    (d+q-1)^d > x_0 (d-1)^d e^{d t} \iff \log\!\left(1+\frac{q}{d-1}\right)
    >
    \frac{\alpha q}{d}
    +
    \frac{dq}{(d-1)e^{\alpha q}}.
    \end{equation}
    
    Let $u:=q/(d-1)$. Then, \eqref{eq:key-ineq-clean} becomes
    \begin{equation}\label{ineq:Log-applied}
    \log(1+u) > \alpha u\frac{d-1}{d} + d u e^{-\alpha q}.
    \end{equation}
    Since $q \leq C_1 d =  3^{\frac{a+2}{2}}d$, we have $u\le C_1\frac{d}{d-1} \leq \frac{3}{2}3^{\frac{a+2}{2}}$ for $d \geq 3, a\geq1$. For all $u \in (0, \frac{3}{2}3^{\frac{a+2}{2}}],$ there exists a constant $b = b(a) > 0$ such that
    $
        \log(1+u)\ge \frac{u}{b}.
    $
Indeed, the function $\frac{u}{\log (1+u)}$ is increasing when $u > 0$, thus the supremum on $(0, \frac{3}{2}3^{\frac{a+2}{2}}]$ is attained at $u =  \frac{3}{2}3^{\frac{a+2}{2}}$, and is 
$$b = \frac{\frac{3}{2}3^{\frac{a+2}{2}}}{\log \left(1 + \frac{3}{2}3^{\frac{a+2}{2}}\right)}.$$
Using $(d-1)/d < 1$ and \eqref{ineq:Log-applied}, it suffices to show that
$
\frac{1}{b} \geq \alpha + d e^{-\alpha q}.
$
Fix $\alpha := \frac{1}{2b}$. Then, it is equivalent to
$
de^{- q/2b} \le \frac{1}{2b}
$
which is satisfied whenever $q \ge C'(a)\log d$ for some sufficiently large constant $C'(a) > 0$. Thus $Q(x_0) > 0$, implying $ r_Q > x_0 = e^{\alpha q}.$

Finally, if $q \geq C''(a) \log d$ for $C'' > 0$ large enough, then
\[
e^{\alpha q} > e^{\alpha \cdot C'' \log d} = d^{\alpha C''} >\frac{9}{4} C_1^2 d^{a+2}.
\]
Therefore, by taking $C_0(a) = \max(C'(a), C''(a))$, we obtain $r_P < r_Q$ from \eqref{eq:rpbound} when $ C_0 \log d \leq q \leq C_1d$ for $d \ge 3$ and $a \ge 1$.
\end{proof}

\section{Deferred proofs of mixing time extension to treelike graphs}
\label{app:mixing}

Recall that in Section~\ref{sec:mixing on treelike graphs}, $\mathcal{G}(h)$ is defined as the class consisting of (almost-)$\Delta$-regular trees and unicyclic graphs of height $h$.
The results in this section extend to a slightly larger class of graphs defined below.

A graph $G=(V,E)$ is called \emph{$(L,h)$-treelike} if it satisfies the following properties:
\begin{enumerate}[(i)]
    \item There exists a distinguished vertex $\rho\in V$ (the \emph{root}) such that the breadth-first-search (BFS) tree $\mathcal{T}_\rho$ rooted at $\rho$ has all leaves at the same depth $h$;
    \item There exists a set $H\subseteq E$ of at most $L$ edges such that $\mathcal{T}_\rho = (V, E \setminus H)$;
    \item All vertices of $G$ have degree at most $\Delta$.
\end{enumerate}

Let $\mathcal{G}(h,L)$ denote the class consisting of  all $(L,h)$-treelike graphs. 
Note that $\mathcal{G}(h) \subseteq \mathcal{G}(h,L)$ for every $L\ge1$.
For a graph $G \in \mathcal{G}(h, L)$, let $\Lambda \subseteq V(G)$ and let $\xi$ be a boundary condition on $\Lambda$. 
We say that $\xi$ is a \emph{$Q$-approximate single-component} boundary condition if there exists a subset $\Lambda_Q \subseteq \Lambda$ with $|\Lambda_Q| \le Q$ such that $\Lambda \setminus \Lambda_Q$ consists only of leaves of $\mathcal{T}_\rho$, and the restriction of $\xi$ to $\Lambda \setminus \Lambda_Q$ is a single-component boundary condition.
In particular, the boundary conditions in $\tilde{\mathcal{H}}(h,Q)$,  which are defined for graphs in $\mathcal{G}(h)$ in Section~\ref{sec:mixing on treelike graphs}, are $(Q+1)$-approximate single-component boundary conditions.

In this section we establish a bound on the mixing time of the random-cluster dynamics on graphs in $\mathcal{G}(h,L)$ under $Q$-approximate single-component boundary conditions. 

\begin{lemma}
\label{lem:treelike ball gap}
Let $Q\ge0$ and $L\ge1$ be integers.
The random-cluster dynamics on any graph $G\in\mathcal{G}(h,L)$ with $n$ vertices and any $Q$-approximate single-component boundary condition $\xi$ satisfies
\[ 
\max\Bigl\{
        T_\mix(P^\xi_G),\ T_\mix(P_G^{\xi, \circlearrowleft}) \Bigr\} = \exp(O(h)).
\]
\end{lemma}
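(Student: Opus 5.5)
We prove the bound $\exp(O(h))$ by reducing to the tree case. Two earlier facts do the work: the canonical paths bound for trees with single-component boundary conditions~\cite[Lemma 17]{BG24-PTRF}, and the comparison principle that adding or removing $O(1)$ edges or external wirings changes the relaxation time only by a constant factor~\cite[Lemma 24]{BG24-PTRF}. We work with the spectral gap $\gap(P)$ and convert at the end using
\[
T_\mix \le \gap^{-1}\log(4/\pi_{\min}), \qquad \log(1/\pi_{\min}) = O(|E(G)|) = O(d^h)\cdot O(1).
\]
Since $\log(1/\pi_{\min})$ is only $\exp(O(h))$, a polynomial-in-$d^h$ bound on $\gap^{-1}$ suffices; in fact we will get $\gap^{-1}=\exp(O(h))$.

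\textbf{Step 1: the base case.} Let $\mathcal T_\rho$ be the BFS tree, and let $\xi_0$ be the restriction of $\xi$ to $\Lambda\setminus\Lambda_Q$. This is a single-component boundary condition on leaves of $\mathcal T_\rho$, and $\mathcal T_\rho$ has depth $h$ and maximal degree $\Delta$. The canonical paths argument of \cite[Lemma 17]{BG24-PTRF} then gives $\gap(P^{\xi_0}_{\mathcal T_\rho})^{-1}=\exp(O(h))$. We should check that the argument there only uses the depth and the degree bound, and not completeness of the tree. It does: the paths flip edges in a BFS/DFS order, and the congestion is controlled by factors of $q^{O(1)}$ per step of a path whose relevant length is $O(h)$. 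If that check is uncomfortable, we can instead embed $\mathcal T_\rho$ as a subtree and use that any tree of depth $h$ with degrees at most $\Delta$ is covered by that proof.

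\textbf{Step 2: the $O(1)$ perturbations.} We pass from $(\mathcal T_\rho,\xi_0)$ to $(G,\xi)$, or to $(G,\xi^{\circlearrowleft})$, in at most $L+Q+1$ elementary moves:
\begin{itemize}
\item add each of the at most $L$ non-tree edges in $H$;
\item wire each of the at most $Q$ vertices of $\Lambda_Q$ into its component of $\xi$;
\item for $\circlearrowleft$, wire the root $\rho$ to $\mathcal C_1(\xi)$.
\end{itemize}

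A single move changes the number of components of any configuration by at most $1$. For an added edge, we compare a chain on $E\cup\{e\}$ with the product of the old chain and an independent update of $e$. The stationary measures have Radon–Nikodym derivatives bounded within $[c,c^{-1}]$ with $c=c(p,q)$, and so do the Dirichlet forms, since transition probabilities change by a bounded factor. The standard Diaconis–Saloff-Coste comparison therefore gives
\[
\gap_{\text{new}}\ge c'\,\gap_{\text{old}}.
\]
The extra independent edge costs only a factor $|E|$ in the discrete-time gap from the edge selection rate, which is $\exp(O(h))$. Iterating over the $O(1)$ moves multiplies $\gap^{-1}$ by at most $\exp(O(h))\cdot C^{L+Q+1}$.

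\textbf{Main obstacle.} The most delicate point is the first one: making sure the canonical paths bound genuinely holds for arbitrary (non-complete) BFS trees with single-component boundary. This requires that the congestion of each edge be bounded by $\exp(O(h))$ rather than by $\exp(O(|E|))$. The plan is to reuse the path construction of \cite{BG24-PTRF}, which routes through configurations that differ from the endpoints only along root-to-leaf paths, so the weight distortion is at most $q^{O(h)}$. The remaining steps are routine bounded-perturbation comparisons.
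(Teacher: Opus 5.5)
Your proposal is correct and follows essentially the paper's argument. Both reduce to the canonical-paths bound of \cite[Lemma 17]{BG24-PTRF} for a tree with a single-component leaf boundary, pay only constant or $\mathrm{poly}(|E|)$ factors in the spectral gap for each of the $O(L+Q)$ local modifications (special vertices, non-tree edges, root wiring), and then convert the gap to a mixing time.

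There is one minor technical difference. For each non-tree edge $\{u,v\}\in H$, the paper first wires $u$ and $v$ together, so that edge becomes an exact independent $\mathrm{Ber}(p)$ factor and the product-chain gap formula applies, and then unwires them via \cite[Corollary 23]{BG24-PTRF}. You instead compare directly against the product of the old chain with an independently resampled edge, using the bounded Radon--Nikodym derivative. Your base boundary $\xi_0$, the restriction of $\xi$ to $\Lambda\setminus\Lambda_Q$, also keeps the base case genuinely on the leaves, whereas the paper rewires the possibly internal special vertices into the giant component.
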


Lemma~\ref{lem:treelike ball gap} is a slight generalization of Lemma~24 of \cite{BG24-PTRF}. While the proof is essentially the same, we reproduce it for completeness.

\begin{proof}[Proof of Lemma~\ref{lem:treelike ball gap}]
Let $G \in \mathcal{G}(h,L)$.
     By definition, there exist a set $H$ of $L$ edges in $E$ such that the BFS tree rooted at vertex $\rho$ is $(V, E \setminus H)$. We modify the boundary condition $\xi$  by first unwiring all existing connections in $\xi$ involving the $Q$ special vertices, and then wiring these $Q$ vertices to the only non-trivial component of $\xi$. If there is no non-trivial component, we wire them to an arbitrary trivial component. With this modification the boundary condition becomes single-component. In addition, for each $\{u,v\} \in H$, we wire $u$ and $v$ into the same component.      
We denote the resulting boundary condition by $\xi_1$.
 Since $\xi_1$ modifies the boundary condition at only $2L+Q$ locations, by \cite[Lemma 2.2]{BGV20}, we have
\begin{equation}
    \label{eq: GAP comp}
    \gap(P_G^\xi) \;\ge\; \gap(P_G^{\xi_1})\,/\,q^{5(2L+Q)};
\end{equation}
here $\gap(\cdot)$ denotes the spectral gap of the corresponding Markov chain.
Observe that
$
    \mu_G^{\xi_1} = \mu_{\mathcal{T}}^{\xi_1} \otimes \prod_{e \in H} \ber(e),
$
where $\{\ber(e)\}_{e}$ denotes i.i.d. Bernoulli random variables with parameter $p$. Then, \cite[Lemma 2.2.11]{Saloff-Coste1997} implies that
\begin{equation}
    \label{eq: gap decomp}
    \gap(P_G^{\xi_1})
    =
    \Omega\Big( \min \Big\{ \gap(P_{\mathcal{T}}^{\xi_1})\cdot  \frac{|E\setminus H|}{|E|}, \frac{1}{|H|}\cdot \frac{|H|}{|E|} \Big\} \Big).
\end{equation}
Next, to obtain a boundary condition supported only on the leaves $\partial \mathcal{T}$, we further modify ${\xi_1}$ by unwiring all pairs $\{u,v\} \in H$, and denote the resulting boundary condition by $\xi_{2}$.
Again, this modification affects at most $2L$ locations, and hence by~\cite[Corollary 23]{BG24-PTRF}, 
\begin{equation}
    \label{eq: GAP comp 2}
    \gap(P_{\mathcal{T}}^{\xi_1}) \;\ge\; \gap(P_{\mathcal{T}}^{\xi_{2}})\,/\,q^{10L} .
\end{equation}
Observe that $\xi_2$ is a single-component boundary condition on the level of leaves.
Also, $\mathcal{T}$ is a tree of height $h$ and of size $O(d^h)$.
Combining~\eqref{eq: GAP comp}, \eqref{eq: gap decomp}, and \eqref{eq: GAP comp 2}, we obtain that
$\gap(P_G^\xi) 
     =
     \Omega\big( \min\big\{ \gap(P_\mathcal{T}^{\xi_{2}}), {|E|^{-1}} \big\} \big)$.
Finally, \cite[Lemma 17]{BG24-PTRF} yields that $\gap(P_\mathcal{T}^{\xi_{2}}) = \exp(-\
\Omega(h))$. 
A similar bound can be deduced for $\gap(P_G^{\xi,\circlearrowleft})$ as it only one additional wiring modification. The mixing time bound follows from the standard relationship between gap and mixing time. 
\end{proof}

\subsection{Deferred proof of Lemma~\ref{lem: mixing unicyclic recursion}}

Recall that $h^*$ is a sufficiently large integer, and set
$
r^* := (\log_d {(h^*)^{1.1}})^2.
$
We also recall that $\Thh$ is defined in \eqref{eq: unicyclic recursion}. Lemma~\ref{lem: mixing unicyclic recursion} gives a recursive bound on $\Thh$ in terms of the height $h$, for $h \in (2r^*,h^*]$. For the reader’s convenience, we restate the lemma below.

\begin{lemma}[Restatement of Lemma~\ref{lem: mixing unicyclic recursion}]
Fix $\Delta \ge 3$ and $q > 1$.
For a sufficiently small constant $0<\delta < \frac14$, the following holds.
If $p>\ps$, then for any sufficiently large $h\in (2r^*,h^*]$ with its decomposition $(h_0,h_1,h_2)$ satisfying
\begin{equation}
    \label{eq: h0 h1 h2 decomposition condition}
h_0+h_1+h_2=h,\qquad
h_0=h_2= \delta h,
\end{equation}
there exists a constant $M>0$ such that the random-cluster dynamics satisfies 
\begin{align*}
\Th(h) &\le
M \log n_h \cdot \max\!\left\{
\Th(h_0+h_1)\cdot \frac{|\mathcal{T}_{h}|}{|\mathcal{T}_{h_0+h_1}|},
\;
d^{h_0}\cdot {h}^{4} \cdot \Th(h_1+h_2)
\right\}.
\end{align*}
where $n_h$ denotes the number of edges in the $\Delta$-regular tree $\mathcal{T}_h$ of height $h$.
\end{lemma}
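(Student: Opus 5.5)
The plan mirrors the proof of Lemma~\ref{lem: mixing recursion}, in three stages. We fix $G\in\mathcal{G}(h)$ with BFS tree $\mathcal T$ rooted at $\rho$, and $\xi\in\mathcal H(h,Q)$. We take the same two overlapping blocks: $B_0=G[\mathcal T_{\rho,h_0+h_1}]$, the top part including the non-tree edge if it lies there, and $B_1$, the edges of $G$ below depth $h_0$. The non-tree edge $\{u,w\}$ is assigned to $B_1$ whenever both endpoints lie below depth $h_0$.

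\textbf{First stage: block recursion.} We prove the analogue of Lemma~\ref{lem: block recursion}. This uses Lemma~\ref{lem: simulate systematic dynamics} with good sets $\Gamma_1=\Omega_{B_0\setminus B_1}$ and $\Gamma_0$ equal to those configurations on $G\setminus B_0$ whose induced boundary condition on $B_0$ lies in $\mathcal H(h_0+h_1,Q)$. We also require $\Gamma_0$ to satisfy the edge-WSM bound for every $e\in E(\mathcal T_{\rho,h_0})$.

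For the analogue of Lemma~\ref{lem: coupling time 2 block dynamics}, we argue as follows. After a $B_1$ update, each $x\in\partial\mathcal T_{\rho,h_0+h_1}$ whose subtree of height $h_2$ is an honest $d$-ary tree (all but $O(Q+1)$ of them) connects to $\mathcal C_1(\xi)$ with probability at least $\theta$. This follows from condition (c) of $\mathcal H$ together with the supercritical percolation lower bound. The few exceptional vertices are those whose subtree contains the cycle, the vertex $v^*$, or one of the $Q$ special boundary vertices. These are declared part of the arbitrary set $U$, so the induced boundary condition on $B_0$ is $(\theta,Q')$-wired for $Q'=O(Q+1)$.

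Corollary~\ref{cor:theta-Q-wired-bc} and Theorem~\ref{thm wsm theta,Q wired} (edge version) then give $E_1$ with probability $1-\exp(-h^{1+\Omega(1)})$. On $E_1$, the two copies agree on $\mathcal T_{\rho,h_0}$ after updating $B_0$, with failure probability $O(d^{\delta h})\beta^{(1-2\delta)h/2}$, which is small for $\delta$ small. Updating $B_1$ then couples everything, so the block dynamics mixes in $O(1)$ steps. The first term of \eqref{eq: mixing unicyclic recursion} follows because $B_0\in\mathcal G(h_0+h_1)$ carries a boundary condition in $\mathcal H(h_0+h_1,Q)$. Here the possible root wiring ($\circlearrowleft$) is harmless, since it is already included in the definition of $\Th$.

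\textbf{Second stage: the $B_1$ block.} Given $\eta$, $B_1$ splits into components $\kappa$: groups of $d_\kappa$ subtrees of height $h_1+h_2$ whose roots are wired together by $\eta$. At most one component is exceptional. This is the one containing the non-tree edge joining two different subtrees $\mathcal T_{r_i},\mathcal T_{r_j}$, or whose subtree contains the cycle, $v^*$, or the non-giant boundary wirings. Each individual subtree (with its cycle, if any) is a graph in $\mathcal G(h_1+h_2)$ with boundary condition in $\mathcal H(h_1+h_2,Q)$. Hence Lemma~\ref{lem: simulating BD 3} and the analogue of Lemma~\ref{lem: 3rd bound kappa} give the second term, exactly as in the proof of Lemma~\ref{lem: mixing recursion}.

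For the analogue of Lemma~\ref{lem: 3rd bound kappa}, we reuse the three-phase censoring scheme. In the first phase, wait for $\mathcal T_{r_1}$ to connect its root to $\mathcal C_1$, which happens with probability $c_*$. In the second phase, couple the other trees, whose roots are now effectively wired. In the third phase, recouple $\mathcal T_{r_1}$.

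\textbf{Main obstacle.} The hardest step is the exceptional component, in which a non-tree edge $\{u,w\}$ joins $\mathcal T_{r_i}$ and $\mathcal T_{r_j}$; this edge destroys the product structure in the second phase. The first approach to try is to choose $r_1$ to be one of the two trees touched by the edge, and to include the edge $\{u,w\}$ in the censored part during the second phase. Once $\{u,w\}$ is frozen, the remaining trees are independent, and the frozen edge only adds at most one extra wiring, at $u$ or $w$, to one of them. That tree then has one internal vertex possibly wired to $\mathcal C_1$, which is covered by the $v^*$ provision (b) and the $\circlearrowleft$ variants of WSM in Theorem~\ref{thm wsm theta,Q wired}, so its mixing time is still bounded by $\Th(h_1+h_2)$ up to the $Q$-adjustment. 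In the third phase, the edge $\{u,w\}$ is updated together with $\mathcal T_{r_1}$, so the graph being updated is $\mathcal T_{r_1}\cup\{u,w\}$, which has at most $O(1)$ extra wirings. Its mixing time is controlled either by membership in $\mathcal G(h_1+h_2)$ or, failing that, by the spectral-gap comparison used in Lemma~\ref{lem:treelike ball gap}, at a cost of a factor $q^{O(1)}$.

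The remaining adjustments are routine. Chernoff and union bounds contribute the $h^4$ and $\log n_h$ factors, and the constant $M$ absorbs the $q^{O(Q+L)}$ losses.
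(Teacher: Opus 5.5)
Your overall architecture is the paper's: two overlapping blocks simulated via Lemma~\ref{lem: simulate systematic dynamics}, $(\theta,\cdot)$-wiredness on $\partial B_0$ after a $B_1$ update, edge WSM on $\mathcal T_{\rho,h_0}$, and a censoring argument on each component of $B_1$. However, the step you single out as the main obstacle has a genuine hole.

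In phase two you claim that once $\mathcal T_{r_1}$ and $\{u,w\}$ are frozen, ``the remaining trees are independent.'' This uses that every other root in the component is wired to $r_1$ by $\eta$, and hence to $\mathcal C_1$ on the burn-in event. That holds only if the two subtrees met by the non-tree edge were already in the same $\eta$-group. If they lie in different groups, the component is two star-shaped groups joined only by $\{u,w\}$. Freezing the tree of $u$ then leaves the group containing $w$ with roots wired to one another but not to $\mathcal C_1$; the frozen edge can at most wire $w$ itself. So there is no product structure, and the Chernoff/union-bound coupling of phase two does not apply.

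The paper treats this case separately with a four-phase scheme:
\begin{enumerate}
\item The burn-in event requires $\{u,w\}$ closed \emph{and} both $r_{(u)}$ and $r_{(w)}$ connected to $\mathcal C_1$ inside their own trees.
\item Phase two censors $E(\mathcal T_{(u)})\cup\{u,w\}\cup E(\mathcal T_{(w)})$, after which every remaining root is wired to $\mathcal C_1$.
\item Phase three recouples $\mathcal T_{(u)}$ alone; with the edge frozen closed it is an honest member of $\mathcal G(h_1+h_2)$.
\item Phase four recouples $\mathcal T_{(w)}\cup\{u,w\}$.
\end{enumerate}

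Relatedly, your fallback in phase three is not valid as stated. Lemma~\ref{lem:treelike ball gap} compares spectral gaps, and turning a gap bound back into a mixing-time bound costs a factor $\log(1/\pi_{\min})=\Theta(|E(\mathcal T_{r_1})|)=d^{\Theta(h)}$, not $q^{O(1)}$. That loss would wreck the recursion. You should instead arrange that each recoupled block is directly controlled by $\Th(h_1+h_2)$, for example by putting ``$\{u,w\}$ closed'' into the burn-in event and keeping the edge frozen while $\mathcal T_{r_1}$ is recoupled.

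There is also a bookkeeping gap in your first stage. You show the boundary condition induced on $B_0$ is $(\theta,Q')$-wired with $Q'=O(Q+1)$, but then use that it lies in $\mathcal H(h_0+h_1,Q)$. Since $\Th$ is defined for one fixed $Q$, the recursion closes only if the defect parameter does not grow from one level to the next. Otherwise, over the $\Theta(\log h)$ iterations, the $Q$-dependent inputs lose their uniformity: for instance the Chernoff step with $d^\ell-Q$ in Lemma~\ref{lem:generalized-positive}, and the $q^{O(Q)}$ loss in the base case.

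The paper closes the loop by asserting that the induced condition is $(\theta,2)$-wired and invoking the standing assumption $Q\ge 2$, so that $\mathcal H(h_0+h_1,2)\subseteq\mathcal H(h_0+h_1,Q)$. You need a non-increasing count of this kind. Some observations that give one:
\begin{itemize}
\item A $v^*$ wired to $\mathcal C_1$ is not a defect at all.
\item Special leaves below distinct vertices of $\partial\mathcal T_{\rho,h_0+h_1}$ lie in disjoint subtrees, so they cannot multiply.
\item The only new defects are the at most two endpoints produced by a non-tree edge lying below depth $h_0+h_1$, and in that case $B_0$ is a tree.
\end{itemize}
So a budget that charges two units for the cycle is preserved.
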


For any graph $G \in \mathcal{G}(h)$, let $\mathcal{T}$ denote the underlying tree representation of $G$: if $G$ is a tree, then $\mathcal{T}=G$; otherwise, $\mathcal{T}$ is the breadth-first-search tree appearing in the definition of $\mathcal{G}(h)$. Let $\rho$ be the root of $\mathcal{T}$.
Suppose that $h$ admits a decomposition $(h_0,h_1,h_2)$ satisfying \eqref{eq: h0 h1 h2 decomposition condition}. 
We define $B_0\subset G$ to be the subgraph induced by $\mathcal{T}_{\rho,h_0+h_1}$, and $B_1\subset G$ to be the subgraph induced by $\mathcal{T}\setminus \mathcal{T}_{\rho,h_0-1}$.
For each $i\in \{0,1\}$, letting $j=1-i$, we write $\Omega_{B_i\setminus B_j}$ for the set of all boundary conditions induced by configurations on $B_i\setminus B_j$, and denote by $|B_i|$ the number of edges in $B_i$.

It is immediate that $B_0=G[\mathcal{T}_{\rho,h_0+h_1}] \in \mathcal{G}(h_0+h_1)$.
With a fixed configuration $\eta \in \Omega_{B_0 \setminus B_1}$, the block $B_1$  decomposes into a collection $\mathcal{C}(\eta)$ of components as follows. The graph $\mathcal{T}\setminus \mathcal{T}_{\rho,h_0}$ is a disjoint union of trees, or a forest. Whenever the roots of several such trees are connected under the configuration $\eta$, we identify the union of them as a single component in $\mathcal{C}(\eta)$. In addition, if $B_1$ is not a forest and the unique edge in $E(B_1)\setminus E(\mathcal{T})$ joins a pair of vertices $(u,w)$ lying in two distinct trees in $\mathcal{T}\setminus \mathcal{T}_{\rho,h_0}$ not already in the same component, then we merge the two corresponding components into a single distinguished component, called the \emph{twin component}.

Note that every component $\kappa \in \mathcal{C}(\eta)$ other than the twin component is formed by $d_\kappa\ge 1$ treelike graphs in $\mathcal{G}(h_3)$ of height $h_3 := h_1+h_2$, with their roots wired together through a configuration in $\Omega_{B_0\setminus B_1}$ and with leaves wired according to some boundary condition $\xi_{(r_j)} \in \mathcal{H}(h_3,Q)$. Here, $d_\kappa$ denotes the number of treelike graphs in $\kappa$.
For the twin component, the same applies after deleting the unique edge $e_\kappa$ that joins two distinct trees, and we define $d_\kappa$ accordingly as the number of treelike graphs in the resulting graph.

The proof of Lemma~\ref{lem: mixing unicyclic recursion} relies on getting all the following pieces, where are analogs of Lemma~\ref{lem: block recursion} and Lemma~\ref{lem: 3rd bound kappa}.
\begin{lemma}
    \label{lem: block recursion unicyclic}
    Let $\Delta\ge 3, q>1,$ $p>\ps$ and let $Q\ge 0$.
    There exists a constant $M_1>0$ such that, for any
    $h\in (2r^*, h^*]$ that is sufficiently large and admits a decomposition
    $(h_0,h_1,h_2)$ satisfying \eqref{eq: h0 h1 h2 decomposition condition},
    any graph $G\in \mathcal{G}(h)$, and any boundary condition
    $\xi\in \mathcal{H}(h,Q)$, the mixing time of the random-cluster dynamics
    $P_G^\xi$ is  at most
    \begin{equation}
        \label{eq: block recursion unicyclic}
        M_1 \log n_h
        \max\left\{
            \max_{\zeta \in \mathcal{H}(h_0+h_1,2)}
            \max\bigl\{
                T_\mix(P^{\zeta}_{B_0}),
                T_\mix(P^{\zeta,\circlearrowleft}_{B_0})
            \bigr\}
            \frac{n_h}{|B_0|},
            \;
            \max_{\substack{\eta\in \Omega_{B_0\setminus B_1}}}
            T_\mix(P_{B_1}^{\xi\sqcup  \eta})
            \frac{n_h}{|B_1|}
        \right\}.
    \end{equation}
The same upper bound applies to the mixing time of  $P_G^{\xi,\circlearrowleft}$.
\end{lemma}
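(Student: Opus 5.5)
The plan is to follow the proof of Lemma~\ref{lem: block recursion} essentially verbatim, replacing the tree inputs by their treelike analogues. First we establish the analogue of Lemma~\ref{lem: coupling time 2 block dynamics}: the systematic scan block dynamics $P_{\textsc{bd}}^\xi$ (and $P_{\textsc{bd}}^{\xi,\circlearrowleft}$) on $G\in\mathcal G(h)$ with blocks $B_0,B_1$ has mixing time at most a constant $M_0$ independent of $h$. This is shown with the monotone coupling of the chains started from all-in and all-out.

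\textbf{Step 1 (after the first $B_1$ update).} After $B_1$ is updated at time $1$, the boundary condition induced on $\partial\mathcal T_{\rho,h_0+h_1}$ by $X_1^\pm(G\setminus B_0)\sqcup\xi$ should be $(\theta,2)$-wired. For every $v$ at height $h_2$ whose subtree $G[\mathcal T_{v,h_2}]$ contains no special vertex, the connection probability of $v$ to $\mathcal C_1(\xi)$ is at least $\mu^1(v\sim\partial)-C\beta^{h_2/3}\ge\theta$. This uses condition (c) of $\mathcal H(h,Q)$ together with the supercritical $\hat p$-percolation lower bound. The exceptional vertices are at most $O(1)$ in number: the endpoints of the non-tree edge when it lies in $B_1\setminus B_0$, and those subtrees touching the $Q$ non-giant boundary vertices. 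We declare these vertices the set $U$ and unwire them, which is allowed by monotonicity. This is where the index $2$ in $\mathcal H(h_0+h_1,2)$ should come from.

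Conditional independence across the disjoint subtrees gives stochastic domination by the product measure. I expect the cycle straddling $B_0\setminus B_1$ and $B_1$ to be handled by conditioning on the edge $e_\kappa$, or by absorbing its endpoints into $U$.

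\textbf{Step 2 (good event $E_1$).} Next we apply Corollary~\ref{cor:theta-Q-wired-bc} to $B_0\in\mathcal G(h_0+h_1)$. Together with the edge version of Theorem~\ref{thm wsm theta,Q wired}, and a union bound over the $O(d^{h_0})$ edges of $G[\mathcal T_{\rho,h_0}]$, this shows that the event $E_1$ holds with probability $1-\exp(-(h_0+h_1)^{1+\Omega(1)})$. On $E_1$ the induced condition lies in $\mathcal H(h_0+h_1,2)$ and each top edge has discrepancy at most $C\beta^{(h_1)/2}$ from the wired law.

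\textbf{Step 3 (coupling).} A union bound then couples $B_0\setminus B_1$ at time $2$ with failure probability $O(d^{\delta h})\beta^{(1-2\delta)h/2}=e^{-\Omega(h)}$ for $\delta$ small. The identity coupling on $B_1$ at time $3$ finishes the argument, giving $\tau_{\textsc{cp}}\le 3$ with probability $1-e^{-\Omega(h)}$.

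\textbf{Step 4 (transfer to single-edge dynamics).} We then invoke Lemma~\ref{lem: simulate systematic dynamics} with the following choices:
\begin{itemize}
\item $\Gamma_1=\Omega_{B_0\setminus B_1}$;
\item $\Gamma_0$ the set of configurations on $B_1\setminus B_0$ whose induced condition is in $\mathcal H(h_0+h_1,2)$ and satisfies the edge WSM bound on $\mathcal T_{\rho,h_0}$;
\item $\rho\le n_h^{-1-\Omega(1)}$, from Step 2, extended to arbitrary times and initial states via the same domination argument;
\item $\varepsilon'=(60M_0n_h)^{-2}$ and $N=\Theta(\log n_h)$.
\end{itemize}
Condition~\eqref{eq: 3 in 1 condition} is checked exactly as before. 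Using \eqref{eq: tmix extend} to convert $T_\mix(\cdot,\varepsilon')$ into $O(\log n_h)\,T_\mix(\cdot)$ yields \eqref{eq: block recursion unicyclic}. The $\circlearrowleft$ case is identical, since root wiring only pins one coordinate; Fact~\ref{fact: pinned coordinate lipschitz} applies there.

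\textbf{Main obstacle.} The main obstacle is Step 1, namely verifying $(\theta,2)$-wiredness uniformly over worst-case chain histories when the cycle and the $Q$ defective boundary vertices sit below $B_0$. I would first try to show that only $O(1)$ vertices of $\partial\mathcal T_{\rho,h_0+h_1}$ can be affected, and that unwiring them suffices by monotonicity. The unwired ones should remain non-giant singletons or form a single extra pair, which is consistent with the budget $2$.
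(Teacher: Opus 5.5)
Your Steps 1--4 follow the paper's proof: the two-block systematic scan coupled from all-in/all-out, and wiredness of the condition $\zeta$ induced on $\partial\mathcal T_{\rho,h_0+h_1}$ after each $B_1$ update. Then Corollary~\ref{cor:theta-Q-wired-bc} with the edge form of Theorem~\ref{thm wsm theta,Q wired} and a union bound over the $O(d^{h_0})$ top edges, coupling on $B_0\setminus B_1$ and then on $B_1$, and finally Lemma~\ref{lem: simulate systematic dynamics} with the same $\varepsilon'$, $N$, $\rho$. (Your direct Bernoulli($\hat p$) domination on $G[\mathcal T_{v,h_2}]$ can replace the paper's appeal to \cite[Lemma 2.2]{BGV20}.)

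The gap is the budget $2$ in Step~1. You flag it but do not establish it, and your own accounting contradicts it. Putting into $U$ both the roots joined through the non-tree edge and the roots above the $Q$ defective leaves gives $|U|$ up to $Q+2$, i.e.\ only $(\theta,Q+2)$-wiredness. The lemma needs $\zeta\in\mathcal H(h_0+h_1,2)$, which is how the recursion closes via $\mathcal H(h_0+h_1,2)\subseteq\mathcal H(h_0+h_1,Q)$ for $Q\ge 2$. The paper excludes only at most two roots, those whose subtrees are joined by the non-tree edge. Roots above defective leaves need no exclusion for the domination part, because condition~(c) of $\mathcal H(h,Q)$ holds at every $v$ with $h(v)=h_2\ge r^*$, whether or not its subtree contains defective leaves.

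That settles the lower bound on the wired component. It does not settle the other requirement of $(\theta,2)$-wiredness: at most one non-singleton class off $U$, i.e.\ condition~(a) for $\zeta$. The paper's sketch does not discuss this either, and your guess that the exceptional roots ``remain singletons or form a single extra pair'' is false. Suppose a non-giant class of $\xi$ meets several subtrees $\mathcal T_{v,h_2}$. Those roots form a non-giant class of $\zeta$ with conditional probability at least $e^{-O(\delta h)}$: open the paths from those roots down to the class, and close all edges hanging off these paths and the parent edges of the remaining class members. So whenever the non-giant classes of $\xi$ meet more than two of these subtrees (or two, with the non-tree edge below $B_0$), $\zeta$ violates the budget $2$ with probability at least $e^{-O(\delta h)}$. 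That is far above the $n_h^{-1-\Omega(1)}$ needed for $\rho$. Unwiring $U$ by monotonicity does not help, since $\Gamma_0$ constrains the actual $\zeta$ under which $B_0$ is resampled.

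The repair is bookkeeping. Use $\mathcal H(h_0+h_1,Q+2)$ in $\Gamma_0$, and run the recursion over the class that allows $Q+2$ defective boundary vertices on trees and $Q$ on unicyclic graphs. This class is closed under passing to $B_0$ and to the $B_1$ components. The reason is that the non-tree edge contributes two roots to $\zeta$ only when it lies below $B_0$, and in that case $B_0$ is a tree. With that change your Steps~2--4 go through as written, since Corollary~\ref{cor:theta-Q-wired-bc} and Theorem~\ref{thm wsm theta,Q wired} hold for any fixed $Q$.
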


In what follows, we use 
$P_{\kappa}^{\xi}$ to denote the random-cluster dynamics on the component $\kappa$ under the combined boundary condition 
$\xi_{(r_1)} \sqcup  \dots \sqcup  \xi_{(r_{d_\kappa})}\subseteq \xi$. 
\begin{lemma}
\label{lem: 3rd bound kappa unicyclic}
Let $p>\ps$ and let $Q\ge 0$ be an integer.
For any $\eta\in \Omega_{B_0\setminus B_1}$, for every component $\kappa \in \mathcal{C}(\eta)$ and every boundary condition $\xi \in \mathcal{H}(h, Q)$,  
\[
\max\{T_{\mix}(P_{\kappa}^{\xi}), T_{\mix}(P_{\kappa}^{\xi, \circlearrowleft})\}
    \;\le\;
    O\!\left(d_\kappa \log d_\kappa\right) \cdot O\!\left(h_3^{2}\right)
    \cdot \Th({h_3}).
\]
\end{lemma}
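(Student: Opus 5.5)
The plan is to follow the proof of Lemma~\ref{lem: 3rd bound kappa} almost verbatim, treating the non-twin and twin components separately. The method is a grand monotone coupling of two censored chains $Y_t^1,Y_t^0$ on $\kappa$, started from all-wired and all-free. As there, the worst-case total variation distance is bounded by $2|E(\kappa)|\,\Pp(Y_t^1\neq Y_t^0)$, and it suffices to show that $\Pp(Y_\tau^1=Y_\tau^0)\ge c$ for one chunk of length
\[
\tau=\tau_1+\tau_2+\tau_3=O(d_\kappa\log d_\kappa\,h_3)\,\Th(h_3).
\]
Boosting over $O(h_3)$ chunks then gives the claimed bound.

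\textbf{Non-twin components.} Each of the $d_\kappa$ pieces is a graph $G_j\in\mathcal{G}(h_3)$ with boundary condition $\xi_{(r_j)}\in\mathcal{H}(h_3,Q)$, and the roots are wired together. Each chunk has three phases.
\begin{enumerate}[(1)]
\item Burn-in for $\tau_1=O(d_\kappa\Th(h_3))$ steps, with no censoring. By comparison with the chain in which $r_1$ is unwired, and using Chernoff on the number of updates inside $G_1$, $r_1$ becomes connected to $\mathcal{C}_1(\xi_{(r_1)})$ with probability at least $c_*/3$. The constant $c_*>0$ comes from condition (c) of $\mathcal{H}$: the connection probability is within $C\beta^{h_3/3}$ of the wired one, which is bounded below by supercritical $\hat p$-percolation. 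By monotonicity, stochastically dominating by the unwired-root chain on $G_1$ is valid even when $G_1$ contains a cycle.
\item Censor $G_1$ for $\tau_2=O(\log|E(\kappa)|\,d_\kappa\log d_\kappa)\Th(h_3)$ steps. On the event $\mathcal W_1$ that $r_1$ is connected to the boundary, the remaining $G_j$ decouple into a product of $\circlearrowleft$-measures. Lemma~\ref{lem: simulating BD 3} together with \eqref{eq: tmix extend} then couples all of them with probability at least $5/6$.
\item Censor everything but $G_1$ for $\tau_3=O(\Th(h_3))$ steps, which couples $G_1$, since both chains target the same measure.
\end{enumerate}
By the definition of $\Th(h_3)$, each mixing time used in these phases is controlled, because $\Th(h_3)$ already includes the $\circlearrowleft$ variant.

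\textbf{Twin component.} Here there is an extra edge $e_\kappa=\{u,w\}$ joining two of the graphs, say $G_1$ and $G_2$. This is the step I expect to be the main obstacle, since the decoupling in phase (2) fails while $e_\kappa$ links $G_1$ to $G_2$. My plan is to additionally censor $e_\kappa$ during phases (2) and (3), and to add a final short phase in which only $e_\kappa$ is updated. Censoring is legitimate for monotone dynamics by Lemma~\ref{lem: censoring}.

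This censoring alone does not resolve the problem. With $e_\kappa$ frozen at its current value, which may differ between the two chains, the target measures in phase (2) differ. To fix this I would first spend an initial sub-phase updating only $e_\kappa$. After the burn-in, both $u$ and $w$ are connected to the wired component with constant probability, in which case $e_\kappa$ is resampled with parameter $p$ in both chains and couples exactly. This happens with constant probability by Theorem~\ref{thm wsm theta,Q wired} and condition (c), since $u$ and $w$ are internal vertices of trees whose subtrees are $\theta$-wired. Once $e_\kappa$ agrees, phase (2) sees identical boundary data: the graph $G_1\cup G_2\cup\{e_\kappa\}$ has fixed value on $e_\kappa$, so the pieces other than $G_1$ again form a product of single-graph measures in $\mathcal{G}(h_3)$ with at most one extra wiring at a single vertex. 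An $O(1)$ extra wiring keeps the boundary condition within $\mathcal{H}(h_3,Q)$ after the reduction to $Q\ge2$ made before Lemma~\ref{lem: mixing unicyclic recursion}, or at worst costs a factor $q^{O(1)}$ via the comparison in Lemma~\ref{lem:treelike ball gap}. This reduction to $\mathcal{H}(h_3,Q)$ is the point I would check most carefully.

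The success probability per chunk remains a positive constant, so the same boosting argument gives the stated $O(d_\kappa\log d_\kappa)\,O(h_3^2)\,\Th(h_3)$ bound.
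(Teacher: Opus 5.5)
Your treatment of components without the extra edge matches the paper's, which simply reruns the proof of Lemma~\ref{lem: 3rd bound kappa}. The twin case, however, has a genuine gap.

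\textbf{The case you model is not the twin component.} A twin component is formed precisely when $e_\kappa=\{u,w\}$ joins trees lying in two \emph{different} components of $\mathcal C(\eta)$. The trees on $u$'s side have their roots wired together by $\eta$, and so do the trees on $w$'s side, but the two groups are linked only through $e_\kappa$. You model $\kappa$ as $d_\kappa$ graphs with all roots wired together plus one edge between $G_1$ and $G_2$. That is only the degenerate case where both trees were already in the same group.

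In the genuine case, freezing $G_1$ on the event $r_1\sim\mathcal C_1(\xi)$ turns only the trees of $G_1$'s own group into $\circlearrowleft$-measures. The trees of the other group still have their roots wired to one another, and not (known to be) to the boundary. So in phase (2) the unfrozen part is not a product of single-graph measures, and Lemma~\ref{lem: simulating BD 3} does not apply. You need a root-to-boundary connection on both sides: after the burn-in, ask that $r_{(u)}$ and $r_{(w)}$ each be connected to $\mathcal C_1(\xi)$ inside their own trees. Then freeze both trees (and $e_\kappa$) during the product phase, and release the two trees one at a time. This is the paper's four-phase scheme.

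\textbf{Coupling $e_\kappa$ to a common value is not enough.} If the common value is open and $u$ is joined to $\mathcal C_1(\xi)$ in the frozen $G_1$, then $w$ becomes an internal vertex wired to $\mathcal C_1(\xi)$. This lies outside $\mathcal H(h_3,Q)$, which allows internal wirings only at the root (via $\circlearrowleft$) and at $v^*$. So $\Th(h_3)$ no longer controls that piece.

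Your fallback via Lemma~\ref{lem:treelike ball gap} only compares spectral gaps. Converting back to a mixing time costs a factor $\log(1/\pi_{\min})$, which is of order the number of edges. That destroys the near-linear recursion.

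The fix is to insist that $e_\kappa$ be \emph{closed} in both chains, as the paper does. Under the monotone coupling, any update of $e_\kappa$ closes it in the top chain, hence in both, with probability at least $1-p$, whatever the connectivity of $u$ and $w$. This also removes the need for your claim that $u$ and $w$ are connected to the wired component with constant probability. That claim is unjustified anyway when $u$ or $w$ lies within $r^*$ of the leaves, where condition (c) gives no information.
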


We first complete the proof of Lemma~\ref{lem: mixing unicyclic recursion}, and then return to establish the auxiliary lemmas.

\begin{proof}[Proof of Lemma~\ref{lem: mixing unicyclic recursion}]
The proof is obtained by modifying the proof of Lemma~\ref{lem: mixing recursion}. 
The only changes are that we invoke Lemma~\ref{lem: block recursion unicyclic} and Lemma~\ref{lem: 3rd bound kappa unicyclic} in place of Lemma~\ref{lem: block recursion} and Lemma~\ref{lem: 3rd bound kappa}. 
Once Lemma~\ref{lem: block recursion unicyclic} is applied, to bound the term 
\[
  \max_{\zeta \in \mathcal{H}(h_0+h_1,2)}
  \max\bigl\{
      T_\mix(P^{\zeta}_{B_0}),
      T_\mix(P^{\zeta,\circlearrowleft}_{B_0})
  \bigr\},
\] in \eqref{eq: block recursion unicyclic}, 
we use the fact that $B_0\in \mathcal{G}(h_0+h_1)$ together with the assumption $Q\ge 2$, and hence the above quantity is at most $\Th(h_0+h_1)$.
\end{proof}

\begin{proof}
    [Proof of Lemma~\ref{lem: block recursion unicyclic}]
    As in the proof of Lemma~\ref{lem: block recursion}, given
$h\in (2r^*, h^*]$, $G\in \mathcal{G}(h)$, and $\xi\in \mathcal{H}(h,Q)$,
we analyze the mixing time of the systematic scan block dynamics $P_{\textsc{bd}}^{\xi}$ on $G$
which alternately updates $B_1$ and $B_0$.
We then use Lemma~\ref{lem: simulate systematic dynamics} to derive upper bounds on
$T_{\mix}(P_G^\xi)$ and $T_{\mix}(P_G^{\xi, \circlearrowleft})$.

To analyze the systematic scan block dynamics, we follow the proof of
Lemma~\ref{lem: coupling time 2 block dynamics} by coupling two copies of the dynamics
started from the all-wired and the all-free configurations.
In particular, we show that whenever the coupled dynamics update $B_1$ at time $t$, the resulting configuration is such that
the boundary conditions on $B_0$ induced by the configurations on $G\setminus B_0$
at times $t$ (and $t+1$) are  $(\theta,2)$-wired 
for some $\theta>0$.
To see this, note that since $G\in \mathcal{G}(h)$ and $\xi \in \mathcal{H}(h,Q)$, after a heat-bath update of $B_1$, all but at most two vertices
$v\in \partial \mathcal{T}_{\rho,h_0+h_1}$ are connected to $\mathcal{C}_1(\xi)$ independently with probability at least    
    \[
    \min \left \{\mu_{G[\mathcal{T}_{v,h_2}]}^{\xi_{(v)}}(v\sim \mathcal{C}_1(\xi)), \mu_{G[\mathcal{T}_{v,h_2}]}^{\xi_{(v)},\circlearrowleft}(v\sim \mathcal{C}_1(\xi)) \right\}
    = \mu_{G[\mathcal{T}_{v,h_2}]}^{\xi_{(v)}}(v\sim \mathcal{C}_1(\xi))
    .\]
For such a vertex $v$, since $h_2 \ge r^*$, $G[\mathcal{T}_{v,h_2}] \in \mathcal{G}(h_2)$ and
$\xi_{(v)}\in \mathcal{H}(h_2,Q)$, we have
    \[
   \mu_{G[\mathcal{T}_{v,h_2}]}^{1}(v\sim \partial \mathcal T_{v,h_2}) - \mu_{G[\mathcal{T}_{v,h_2}]}^{\xi_{(v)}}(v\sim \mathcal{C}_1(\xi))
    \le C\, \beta^{\dist(v,\mathcal{C}_1(\xi))/3},
    \]
    where $\beta\in (0,1)$ and $C>0$ are constants specified by condition (c) in $\mathcal{H}(h,Q)$.
Hence, each such $v\in \partial \mathcal{T}_{\rho,h_0+h_1}$ is connected to
$\mathcal{C}_1(\xi)$ in $G[\mathcal{T}_{v,h_2}]$ with probability at least
\[
 \mu_{G[\mathcal{T}_{v,h_2}]}^{1}(v\sim \partial \mathcal T_{v,h_2})-  \beta^{h_2/3} =: \theta_v.
\]
Moreover, from~\cite[Lemma 2.2]{BGV20} we obtain
\[
\mu_{G[\mathcal{T}_{v,h_2}]}^{1}(v\sim \partial \mathcal T_{v,h_2}) = \Omega\left(\mu_{\mathcal{T}_{v,h_2}}^{1}(v\sim \partial\mathcal{T}_{v,h_2}) \right).
\]
Since $p>\ps$, the quantity
$\mu_{\mathcal{T}_{v,h_2}}^{1}(v\sim \partial\mathcal{T}_{v,h_2})$,
and hence also
$\mu_{\mathcal{T}_{v,h_2}}^{1,\circlearrowleft}(v\sim \partial\mathcal{T}_{v,h_2})$,
is bounded away from $0$ for all $h_2$ sufficiently large (see the argument below \eqref{eq: use WSM for theta-wired}).
Therefore, for $h_2$ sufficiently large, there exists $\theta>0$ that uniformly lower bounds all $\theta_v$; in particular, $\theta$ is $G$-independent and $h_2$-independent.

Since $B_0\in \mathcal{G}(h_0+h_1)$,
    by Corollary~\ref{cor:theta-Q-wired-bc}, if $\zeta$ is drawn from a $(\theta,2)$-wired distribution,
then $\zeta\in \mathcal{H}(h_0+h_1,2)$ with probability
$1 - \exp(-\Omega((h_0+h_1)^{1.1}))$, which is at least
$1 - \exp(-h^{1+\Omega(1)})$ provided that $\delta$ is sufficiently small.
In addition, we use Theorem~\ref{thm wsm theta,Q wired} to show that for the same $\zeta$,
for every edge $e\in E(G[\mathcal{T}_{\rho, h_0}])$, 
with probability at least $1-\exp(-\Omega(d^{\sqrt{h_0+h_1}}))$,
\[
 \mu^1_{B_0}(e\in A) - \mu^\zeta_{B_0}( e\in A) \leq O(\beta_* ^{\dist(e, \partial B_0)/2}) = O(\beta_* ^{h_1/2}).
\]
We union bound this event over all those $O(d^{h_0})$ edges, and the event of $\{\zeta\in \mathcal{H}(h_0+h_1,2)\}$. 
Once this event holds, the coupling argument proceeds exactly as in the proof of Lemma~\ref{lem: coupling time 2 block dynamics}, and hence we omit the details.
\end{proof}

\begin{proof}
[Proof of Lemma~\ref{lem: 3rd bound kappa unicyclic}]
We consider first the case where $\kappa$ is a twin component.
Let $e_\kappa=\{u,v\}\in E(\kappa)$ be the unique edge such that $(V(\kappa), E(\kappa)\setminus\{e_\kappa\})$ is a forest $F$. Let $\mathcal{T}_{(u)}$ and $\mathcal{T}_{(v)}$ denote the two trees in $F$ containing $u$ and $v$, respectively, and let $r_{(u)}$ and $r_{(v)}$ denote their roots.
There are two types of twin components depending on whether   $r_{(u)}$ and $r_{(v)}$ are externally wired: 
\begin{enumerate}[(i)]
    \item $(V(\kappa), E(\kappa)\setminus\{e_\kappa\})$ is still a component of $\mathcal{C}(\eta)$;  or
    \item $(V(\kappa), E(\kappa)\setminus\{e_\kappa\})$ splits into two components of $\mathcal{C}(\eta)$.
\end{enumerate}
We consider a twin component of type~(i) first.
As in the proof of Lemma~\ref{lem: 3rd bound kappa}, we analyze a pair of censored random-cluster dynamics starting from the all-wired and all-free configurations.
We run an uncensored burn-in period after which the event $\mathcal{W}_1$ that 
there exists a path from $r_{(u)}$ to $\partial \mathcal{T}_{(u)} \cap \mathcal{C}_1(\xi)$
and that $e_\kappa$ is closed occurs with constant probability.
Under this event, $\kappa \setminus \mathcal{T}_{(u)}$ has a product structure, so it can be coupled in the second phase of the censoring scheme, and finally the two dynamics couple on $\mathcal{T}_{(u)}$ in the third phase. 
The argument is exactly as in the proof of Lemma~\ref{lem: 3rd bound kappa} under this new event. 

Next we consider a twin component of type~(ii).
In this case we require the following slightly different censoring scheme:
\begin{enumerate}
    \item There is an uncensored burn-in period, after which we consider the event $\mathcal{\widehat W}_1$ that the edge $e_{\kappa}$ is closed and that there exist paths from $r_{(u)}$ to $\partial \mathcal{T}_{(u)} \cap \mathcal{C}_1(\xi)$ and from $r_{(v)}$ to $\partial \mathcal{T}_{(v)} \cap \mathcal{C}_1(\xi)$. This event has constant probability of occurring.
    \item In the second phase, we censor updates to $E(\mathcal{T}_{(u)})\cup \{e_{\kappa}\} \cup E(\mathcal{T}_{(v)})$. Under $\mathcal{\widehat W}_1$ there is a product structure in the uncensored portion of $\kappa$.  
    \item In the third period, we censor all updates except those in $E(\mathcal{T}_{(u)})$. Under $\mathcal{\widehat W}_1$ since $e_\kappa$ is not present, $\mathcal{T}_{(u)}$ is a tree in $\mathcal G(h_3)$.
    \item Finally, in the fourth period, we censor all updates except those to $E(\mathcal{T}_{(v)})\cup \{e_{\kappa}\}$. 
\end{enumerate}
The analysis of this censoring scheme is analogous to the one in Lemma~\ref{lem: 3rd bound kappa}.
\end{proof}

\section{Deferred proofs of simulating block dynamics}
\label{sec: simulating BD}
Since updating a given block may alter the boundary conditions of neighboring blocks, and hence affect their mixing behavior, in our proof of Lemma~\ref{lem: simulate systematic dynamics} we will apply a censoring scheme to regulate the update sequence. This allows us to ensure that each block is updated under “good’’ boundary conditions. To formalize this step, we invoke the following censoring inequality of \cite{PW}, in the standardized form presented in \cite{MS13, FillKahn13, BCV18}.
\begin{lemma}
    \label{lem: censoring}
    Let $p\in(0,1)$ and $q > 1$.
    Let $P$ be the random-cluster dynamics on an arbitrary graph $G=(V,E)$. Given a sequence of integers $0=t_0 < t_1 <t_2 <\dots$ and a sequence of edge sets $A_1, A_2, \dots$, let $\hat{P}$ be a censored version of $P$ that updates in the time interval $(t_i, t_{i+1}]$ only in $A_i \subseteq E$. 
    Then 
    \begin{equation*}
        T_{\mix}(P, \varepsilon) \le T_{\mix}\big(\hat{P}, \frac{\varepsilon}{|E|}\big).
    \end{equation*}
    Moreover, this inequality holds for random sequences of $\{A_i\}$, provided the choice of $A_i$ is independent of the choices that generate the random-cluster dynamics.
\end{lemma}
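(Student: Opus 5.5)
The statement is the censoring inequality of Peres--Winkler (Lemma~\ref{lem: censoring}); I would prove it by combining the censoring inequality with the monotone structure of the random-cluster dynamics for $q>1$. The core input is the Peres--Winkler theorem: for a monotone heat-bath system started from the maximal (or minimal) state, censoring any updates yields a law that is stochastically further from $\mu$. Equivalently, the density of the censored law with respect to $\mu$ stays increasing, and so its total-variation distance to $\mu$ is at least that of the uncensored law. The random-cluster measure with $q\ge1$ is monotone (FKG and the monotone update rule~\eqref{eq:Glauber-update-rule}), so the theorem applies to $P$. It applies with the all-in and all-out initializations and with any deterministic schedule $(t_i,A_i)$. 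A random schedule chosen independently of the dynamics is handled by conditioning on the schedule and averaging.

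\textbf{Step 1 (worst case to extremal starts).} Under the grand monotone coupling, every chain started from any $x_0$ is sandwiched between the chains $X^1_t$ and $X^0_t$ started from all-in and all-out. Hence
\[
\|\mathbb P_{x_0}(X_t\in\cdot)-\mu\|_{\textsc{tv}}\le \mathbb P(X^1_t\ne X^0_t)\le \sum_{e}\big(\mathbb P(e\in X^1_t)-\mathbb P(e\in X^0_t)\big).
\]

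\textbf{Step 2 (bound each edge marginal).} Each summand is at most
\[
\big(\mathbb P(e\in X^1_t)-\mu(e\in A)\big)+\big(\mu(e\in A)-\mathbb P(e\in X^0_t)\big).
\]
The first term is at most $\|\mathbb P(X^1_t\in\cdot)-\mu\|_{\textsc{tv}}$, and by Peres--Winkler this is bounded by the same distance for the censored chain $\hat X^1_t$. The second term is handled symmetrically via $\hat X^0_t$. Summing over the $|E|$ edges gives
\[
\max_{x_0}\|\cdot\|_{\textsc{tv}}\le |E|\cdot \max_{\eta\in\{0,1\}}\|\mathbb P(\hat X^\eta_t\in\cdot)-\mu\|_{\textsc{tv}},
\]
up to a constant factor that can be absorbed by tightening the per-edge estimate. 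Indeed, for a monotone event the stochastic-domination difference is controlled by TV, and one can sum one-sided differences directly, which avoids the factor 2.

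\textbf{Step 3 (conclude).} Take $t=T_{\mix}(\hat P,\varepsilon/|E|)$. Then the right-hand side of Step 2 is at most $\varepsilon$, which gives $T_{\mix}(P,\varepsilon)\le T_{\mix}(\hat P,\varepsilon/|E|)$.

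The main obstacle is the bookkeeping that loses only a factor $|E|$ rather than $2|E|$. I would address it by writing $\mathbb P(X^1_t\neq X^0_t)$ as a sum of one-sided marginal gaps, each bounded by a single TV distance of an extremal censored chain. A second point is verifying that the Peres--Winkler hypotheses (a heat-bath update on a fixed edge preserves increasing densities) hold for the random-cluster update. This is standard, since the conditional law of an edge given the rest is monotone in the rest.
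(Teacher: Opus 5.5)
Your route is the standard derivation of this standardized form: monotone grand coupling to reduce to the two extremal starts, a union bound over edges, and Peres--Winkler to pass from $X^\eta_t$ to the censored $\hat X^\eta_t$. The paper only cites the result, but it carries out this same computation inline in the proof of Lemma~\ref{lem: 3rd bound kappa}.

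\textbf{The gap.} It is the step where you discard the factor $2$. Write $\mu^\eta_t,\hat\mu^\eta_t$ for the laws of $X^\eta_t,\hat X^\eta_t$, $\pi$ for the stationary measure, and $\hat d^\eta(t):=\|\hat\mu^\eta_t-\pi\|_{\textsc{tv}}$. For each edge, $\mathbb P(e\in X^1_t)-\mathbb P(e\in X^0_t)=(\mu^1_t(e)-\pi(e))+(\pi(e)-\mu^0_t(e))$. The two one-sided terms are controlled by the distances to $\pi$ of two \emph{different} laws, $\hat\mu^1_t$ and $\hat\mu^0_t$. Summing over edges gives $|E|\,(\hat d^1(t)+\hat d^0(t))\le 2|E|\max_\eta \hat d^\eta(t)$. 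So at $t=T_{\mix}(\hat P,\varepsilon/|E|)$ you only get worst-case distance below $2\varepsilon$.

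``Summing one-sided differences directly'' cannot fix this, because within this bookkeeping the loss is genuine. Take $d\le\min\{\pi(\mathbf{0}),\pi(\mathbf{1})\}$ and $\nu^\pm:=\pi\pm d(\delta_{\mathbf{1}}-\delta_{\mathbf{0}})$. These have increasing and decreasing densities with respect to $\pi$, which is exactly the structure Peres--Winkler provides. Each is at total-variation distance $d$ from $\pi$, yet $\nu^+(e)-\nu^-(e)=2d$ for every edge $e$. So no per-edge estimate in terms of $\|\nu^\pm-\pi\|_{\textsc{tv}}$ can give $|E|\max_\pm\|\nu^\pm-\pi\|_{\textsc{tv}}$.

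\textbf{What you actually prove.} Your argument establishes $T_{\mix}(P,\varepsilon)\le T_{\mix}(\hat P,\varepsilon/(2|E|))$. This matches the factor $2|E(\kappa)|$ the paper itself incurs in that inline computation, and it suffices for all the paper's applications, which only need polynomially small accuracy.

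To get $|E|$ as stated you need a different input. For instance, the domination form $\hat\mu^0_t\preceq\mu^0_t\preceq\mu^1_t\preceq\hat\mu^1_t$ gives $\mu^1_t(e)-\mu^0_t(e)\le\|\hat\mu^1_t-\hat\mu^0_t\|_{\textsc{tv}}$. That is a single factor $|E|$, but against the two-point distance of the censored chain, which is itself only bounded by twice the distance to stationarity.

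\textbf{A smaller point.} Peres--Winkler concerns a fixed update sequence, while here the edge choices (and possibly the $A_i$) are random. Averaging the total-variation inequality over the sequence $s$ only gives $\|\mu^1_t-\pi\|_{\textsc{tv}}\le\mathbb E\|\hat\mu^{1,s}_t-\pi\|_{\textsc{tv}}$. That is not the distance of the averaged censored law. Instead, use the domination $\mu^{1,s}_t\preceq\hat\mu^{1,s}_t$ conditionally on $s$ and average the edge marginals, which is linear. Only then bound $\hat\mu^1_t(e)-\pi(e)\le\|\hat\mu^1_t-\pi\|_{\textsc{tv}}$.
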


\begin{proof}[Proof of Lemma~\ref{lem: simulate systematic dynamics}]
Fix $\varepsilon\in(0,1/4]$ and let $\varepsilon'\in(0,1/4]$ and $N>0$ satisfy \eqref{eq: 3 in 1 condition}.
Write $T^*:=T^*(\varepsilon',N)$ and $\delta:=\delta(N)=\exp(-2N/9)$.

Suppose $\{X_m\}_{m\ge0}$ is the systematic scan block dynamics with the boundary condition $\xi$.
Let $\{\bar Y_t\}_{t\ge0}$ be the censored single-edge dynamics (with the same boundary condition $\xi$)
defined as follows: for every integer $m\ge1$, during the time interval
$((m-1)T^*,\, mT^*]$ we censor all updates except those on edges in $B_{m \bmod k}$.
Let $\bar P^\xi$ denote its transition kernel.
For $\omega\in \Omega:=\{0,1\}^{E}$, write $\pr_\omega[\cdot]$ for probabilities for the indicated chain started from $\omega$.
For $m\ge0$ and $\omega\in\Omega$ define
\[
\Delta_m(\omega)
:=
\big\|
\pr_\omega(\bar Y_{mT^*}=\cdot)
-
\pr_\omega(X_m=\cdot)
\big\|_{\textsc{tv}}\,,
\]
where $\mathbb P_\omega$ denotes the law of the random-cluster dynamics initialized from $\omega$. 

We claim that for every $\omega\in\Omega$ and every $m\ge0$,
\begin{equation}\label{eq:Delta-bound}
\Delta_m(\omega) \le m(\varepsilon' + \delta + \rho).
\end{equation}
Clearly $\Delta_0(\omega)=0$. Fix $\omega$ and assume the bound holds at time $m$.
Let $i=(m+1)\bmod k$.
Conditioning on $X_m$ and using the Markov property and the triangle inequality gives
\begin{align}\label{eq:Delta-rec}
    \Delta_{m+1}(\omega)
    &\le
    \Delta_m(\omega)
    +
    \sum_{\sigma\in\Omega}
    \pr_\omega(X_m=\sigma) \cdot
        \big\|
        \pr_{\sigma}(\bar Y_{T^*}( B_i)=\cdot)
        -
        \pr_{\sigma}(X_{m+1}( B_i) =\cdot)
        \big\|_{\textsc{tv}}.
\end{align}
Fix $\sigma\in\Omega$ and write $\eta=\sigma(E\setminus B_i)$.
The systematic dynamics draws a sample from  $\pi^{\xi\sqcup \eta}_{B_i}$ on $B_i$.
Let $U$ be the number of uncensored updates hitting $B_i$ during one epoch of length $T^*$.
Then
\[
U\sim\mathrm{Bin}(T^*, |B_i|/|E|).
\]
By the definition of $T^*$, on $\{\eta\in\Gamma_i\}$,
\[
\E [U] = T^*\frac{|B_i|}{|E|}
\ge  \max\{3 T_{\mix}(P^{\xi\sqcup \eta}_{B_i},\varepsilon'), N\}.
\]
Therefore, by a Chernoff bound,
\[
    \pr(
    U < T_{\mix}(P^{\xi\sqcup \eta}_{B_i},\varepsilon')
    )
    \le
    \pr(U \le \E[U]/3)
    \le
    \exp(-2 \E[U]/9)
    \le
    \exp(-2N/9)
    =
    \delta.
\]
Hence, on $\{\eta\in\Gamma_i\}$,
\[
\|
\pr_{\sigma}(\bar Y_{T^*} (B_i)=\cdot)
-
\pi^{\xi\sqcup \eta}_{B_i}
\|_{\textsc{tv}}
\le
\varepsilon' + \delta,
\]
and therefore
\[
\|
\pr_{\sigma}(\bar Y_{T^*}( B_i)=\cdot)
-
\pr_{\sigma}(X_{m+1}(B_i)=\cdot)
\|_{\textsc{tv}}
\le
\varepsilon' + \delta.
\]
On $\{\eta\notin\Gamma_i\}$ we bound the total variation distance by $1$.
Since
$
\pr_\omega\big(X_m(E\setminus B_i)\notin\Gamma_i\big)
\le \rho
$,
plugging into \eqref{eq:Delta-rec} gives
\[
\Delta_{m+1}(\omega)
\le
\Delta_m(\omega) + (\varepsilon'+\delta)+\rho,
\]
which proves \eqref{eq:Delta-bound} by induction.
Let
$
m_0
:=
2k
\lceil
\frac{T_{\mix}(\widehat P_B^\xi,\varepsilon')}{k}
\rceil$. 
Then for every $\omega\in\Omega$,
\[
\|
\pr_\omega(X_{m_0}=\cdot)
-
\pi^\xi
\|_{\textsc{tv}}
\le
\varepsilon',
\]
and therefore
\[
\|
\pr_\omega(\bar Y_{m_0T^*}=\cdot)
-
\pi^\xi
\|_{\textsc{tv}}
\le
\varepsilon'
+
m_0(\varepsilon'+\delta+\rho).
\]
Taking the maximum over $\omega\in\Omega$ and using \eqref{eq: 3 in 1 condition} yields
$
T_{\mix}(\bar P^\xi,\frac{\varepsilon}{|E|})
\le
m_0T^*$. 
By Lemma~\ref{lem: censoring}, we conclude the proof. 
\end{proof}
\end{document}